\DeclareMathAlphabet{\skr}{LS1}{stixscr}{m}{n}
\newtheorem{theorem}{Theorem}[section]
\newtheorem{lemma}[theorem]{Lemma}
\newtheorem{proposition}[theorem]{Proposition}
\newtheorem{corollary}[theorem]{Corollary}
\theoremstyle{definition}
\newtheorem{definition}[theorem]{Definition}
\newtheorem{construction}[theorem]{Construction}
\newtheorem{example}[theorem]{Example}
\newtheorem{remark}[theorem]{Remark}
\newtheorem*{thanx}{Acknowledgement}
\numberwithin{equation}{theorem}
\def\Cl{{\rm Cl}}
\def\cl{{\rm cl}}
\def\CC{{\mathbb C}}
\def\KK{{\mathbb K}}
\def\TT{{\mathbb T}}
\def\ZZ{{\mathbb Z}}
\def\QQ{{\mathbb Q}}
\def\PP{{\mathbb P}}
\def\Aut{{\rm Aut}}
\def\Spec{{\rm Spec}}
\def\cone{{\rm cone}}
\def\neuneinsnull{{\tiny
$
\setlength{\arraycolsep}{1.8pt}
\begin{array}{ccc}
\begin{array}{l}
\text{(9-1-0)}
\\[3pt]
\mathcal{K}^2=9
\end{array}
&
\left[
\begin{array}{ccc}
x_0^2 & x_1^2 & x_2^2
\end{array}
\right]
&
\begin{array}{l}
S(9)
\end{array}                  
\end{array}
$
}}
\def\achteinsnull{{\tiny
$
\setlength{\arraycolsep}{1.8pt}
\begin{array}{ccc}
\begin{array}{l}
\text{(8-1-0)}
\\[3pt]
\mathcal{K}^2=8
\end{array}
&
\left[
\begin{array}{ccc}
x_0^2 & x_1^2 & 2x_2^2
\end{array}
\right]
&
\begin{array}{l}
S(8)
\end{array}                  
\end{array}
$
}}
\def\sechseinsnull{{\tiny
$
\setlength{\arraycolsep}{1.8pt}
\begin{array}{ccc}
\begin{array}{l}
\text{(6-1-0)}
\\[3pt]
\mathcal{K}^2=6
\end{array}
&
\left[
\begin{array}{ccc}
x_0^2 & 2x_1^2 & 3x_2^2
\end{array}
\right]
&
\begin{array}{l}
S(6)
\end{array}                  
\end{array}
$
}}
\def\fuenfeinsnull{{\tiny
$
\setlength{\arraycolsep}{1.8pt}
\begin{array}{ccc}
\begin{array}{l}
\text{(5-1-0)}
\\[3pt]
\mathcal{K}^2=5
\end{array}
&
\left[
\begin{array}{ccc}
x_0^2 & x_1^2 & 5x_2^2
\end{array}
\right]
&
\begin{array}{l}
S(5)
\end{array}                  
\end{array}
$
}}
\def\vierzweieins{{\tiny
$
\setlength{\arraycolsep}{1.8pt}
\begin{array}{ccc}
\begin{array}{l}
\text{(4-2-1)}
\\[3pt]
\mathcal{K}^2=4
\end{array}
&
\left[
\begin{array}{ccc}
x_0^2 & x_1^2 & 2x_2^2
\\[3pt]
\bar 0 & \bar 1 & \bar 1  
\end{array}
\right]
&
\begin{array}{l}
S(8)
\\[3pt]
\ZZ/2\ZZ
\end{array}                  
\end{array}
$
}}
\def\dreidreizwei{{\tiny
$
\setlength{\arraycolsep}{1.8pt}
\begin{array}{ccc}
\begin{array}{l}
\text{(3-3-2)}
\\[3pt]
\mathcal{K}^2=3
\end{array}
&
\left[
\begin{array}{ccc}
x_0^2 & x_1^2 & x_2^2
\\[3pt]
\bar 0 & \bar 1 & \bar 2  
\end{array}
\right]
&
\begin{array}{l}
S(9)
\\[3pt]
\ZZ/3\ZZ
\end{array}                  
\end{array}
$
}}
\def\dreizweieins{{\tiny
$
\setlength{\arraycolsep}{1.8pt}
\begin{array}{ccc}
\begin{array}{l}
\text{(3-2-1)}
\\[3pt]
\mathcal{K}^2=3
\end{array}
&
\left[
\begin{array}{ccc}
x_0^2 & 2x_1^2 & 3x_2^2
\\[3pt]
\bar 0 & \bar 1 & \bar 1  
\end{array}
\right]
&
\begin{array}{l}
S(6)
\\[3pt]
\ZZ/2\ZZ
\end{array}                  
\end{array}
$
}}
\def\zweiviereins{{\tiny
$
\setlength{\arraycolsep}{1.8pt}
\begin{array}{ccc}
\begin{array}{l}
\text{(2-4-1)}
\\[3pt]
\mathcal{K}^2=2
\end{array}
&
\left[
\begin{array}{ccc}
x_0^2 & x_1^2 & 2x_2^2
\\[3pt]
\bar 0 & \bar 1 & \bar 1  
\end{array}
\right]
&
\begin{array}{l}
S(8)
\\[3pt]
\ZZ/4\ZZ
\end{array}                  
\end{array}
$
}}
\def\zweivierdrei{{\tiny
$
\setlength{\arraycolsep}{1.8pt}
\begin{array}{ccc}
\begin{array}{l}
\text{(2-4-3)}
\\[3pt]
\mathcal{K}^2=2
\end{array}
&
\left[
\begin{array}{ccc}
x_0^2 & x_1^2 & 2x_2^2
\\[3pt]
\bar 0 & \bar 1 & \bar 3  
\end{array}
\right]
&
\begin{array}{l}
S(8)
\\[3pt]
\ZZ/4\ZZ
\end{array}                  
\end{array}
$
}}
\def\zweidreieins{{\tiny
$
\setlength{\arraycolsep}{1.8pt}
\begin{array}{ccc}
\begin{array}{l}
\text{(2-3-1)}
\\[3pt]
\mathcal{K}^2=2
\end{array}
&
\left[
\begin{array}{ccc}
x_0^2 & 2x_1^2 & 3x_2^2
\\[3pt]
\bar 0 & \bar 1 & \bar 1  
\end{array}
\right]
&
\begin{array}{l}
S(6)
\\[3pt]
\ZZ/3\ZZ
\end{array}                  
\end{array}
$
}}
\def\zweidreizwei{{\tiny
$
\setlength{\arraycolsep}{1.8pt}
\begin{array}{ccc}
\begin{array}{l}
\text{(2-3-2)}
\\[3pt]
\mathcal{K}^2=2
\end{array}
&
\left[
\begin{array}{ccc}
x_0^2 & 2x_1^2 & 3x_2^2
\\[3pt]
\bar 0 & \bar 1 & \bar 2
\end{array}
\right]
&
\begin{array}{l}
S(6)
\\[3pt]
\ZZ/3\ZZ
\end{array}                  
\end{array}
$
}}
\def\einsneunzwei{{\tiny
$
\setlength{\arraycolsep}{1.8pt}
\begin{array}{ccc}
\begin{array}{l}
\text{(1-9-2)}
\\[3pt]
\mathcal{K}^2=1
\end{array}
&
\left[
\begin{array}{ccc}
x_0^2 & x_1^2 & x_2^2
\\[3pt]
\bar 0 & \bar 1 & \bar 2
\end{array}
\right]
&
\begin{array}{l}
S(9)
\\[3pt]
\ZZ/9\ZZ
\end{array}                  
\end{array}
$
}}
\def\einsneunfuenf{{\tiny
$
\setlength{\arraycolsep}{1.8pt}
\begin{array}{ccc}
\begin{array}{l}
\text{(1-9-5)}
\\[3pt]
\mathcal{K}^2=1
\end{array}
&
\left[
\begin{array}{ccc}
x_0^2 & x_1^2 & x_2^2
\\[3pt]
\bar 0 & \bar 1 & \bar 5
\end{array}
\right]
&
\begin{array}{l}
S(9)
\\[3pt]
\ZZ/9\ZZ
\end{array}                  
\end{array}
$
}}
\def\einsneunacht{{\tiny
$
\setlength{\arraycolsep}{1.8pt}
\begin{array}{ccc}
\begin{array}{l}
\text{(1-9-8)}
\\[3pt]
\mathcal{K}^2=1
\end{array}
&
\left[
\begin{array}{ccc}
x_0^2 & x_1^2 & x_2^2
\\[3pt]
\bar 0 & \bar 1 & \bar 8
\end{array}
\right]
&
\begin{array}{l}
S(9)
\\[3pt]
\ZZ/9\ZZ
\end{array}                  
\end{array}
$
}}
\def\einsachtdrei{{\tiny
$
\setlength{\arraycolsep}{1.8pt}
\begin{array}{ccc}
\begin{array}{l}
\text{(1-8-3)}
\\[3pt]
\mathcal{K}^2=1
\end{array}
&
\left[
\begin{array}{ccc}
x_0^2 & x_1^2 & 2x_2^2
\\[3pt]
\bar 0 & \bar 1 & \bar 3
\end{array}
\right]
&
\begin{array}{l}
S(8)
\\[3pt]
\ZZ/8\ZZ
\end{array}                  
\end{array}
$
}}
\def\einsachtfuenf{{\tiny
$
\setlength{\arraycolsep}{1.8pt}
\begin{array}{ccc}
\begin{array}{l}
\text{(1-8-5)}
\\[3pt]
\mathcal{K}^2=1
\end{array}
&
\left[
\begin{array}{ccc}
x_0^2 & x_1^2 & 2x_2^2
\\[3pt]
\bar 0 & \bar 1 & \bar 5
\end{array}
\right]
&
\begin{array}{l}
S(8)
\\[3pt]
\ZZ/8\ZZ
\end{array}                  
\end{array}
$
}}
\def\einsachteins{{\tiny
$
\setlength{\arraycolsep}{1.8pt}
\begin{array}{ccc}
\begin{array}{l}
\text{(1-8-1)}
\\[3pt]
\mathcal{K}^2=1
\end{array}
&
\left[
\begin{array}{ccc}
x_0^2 & x_1^2 & 2x_2^2
\\[3pt]
\bar 0 & \bar 1 & \bar 1
\end{array}
\right]
&
\begin{array}{l}
S(8)
\\[3pt]
\ZZ/8\ZZ
\end{array}                  
\end{array}
$
}}
\def\einsachtsieben{{\tiny
$
\setlength{\arraycolsep}{1.8pt}
\begin{array}{ccc}
\begin{array}{l}
\text{(1-8-7)}
\\[3pt]
\mathcal{K}^2=1
\end{array}
&
\left[
\begin{array}{ccc}
x_0^2 & x_1^2 & 2x_2^2
\\[3pt]
\bar 0 & \bar 1 & \bar 7
\end{array}
\right]
&
\begin{array}{l}
S(8)
\\[3pt]
\ZZ/8\ZZ
\end{array}                  
\end{array}
$
}}
\def\einssechseins{{\tiny
$
\setlength{\arraycolsep}{1.8pt}
\begin{array}{ccc}
\begin{array}{l}
\text{(1-6-1)}
\\[3pt]
\mathcal{K}^2=1
\end{array}
&
\left[
\begin{array}{ccc}
x_0^2 & 2x_1^2 & 3x_2^2
\\[3pt]
\bar 0 & \bar 1 & \bar 1
\end{array}
\right]
&
\begin{array}{l}
S(6)
\\[3pt]
\ZZ/6\ZZ
\end{array}                  
\end{array}
$
}}
\def\einssechsfuenf{{\tiny
$
\setlength{\arraycolsep}{1.8pt}
\begin{array}{ccc}
\begin{array}{l}
\text{(1-6-5)}
\\[3pt]
\mathcal{K}^2=1
\end{array}
&
\left[
\begin{array}{ccc}
x_0^2 & 2x_1^2 & 3x_2^2
\\[3pt]
\bar 0 & \bar 1 & \bar 5
\end{array}
\right]
&
\begin{array}{l}
S(6)
\\[3pt]
\ZZ/6\ZZ
\end{array}                  
\end{array}
$
}}
\def\einsfuenfeins{{\tiny
$
\setlength{\arraycolsep}{1.8pt}
\begin{array}{ccc}
\begin{array}{l}
\text{(1-5-1)}
\\[3pt]
\mathcal{K}^2=1
\end{array}
&
\left[
\begin{array}{ccc}
x_0^2 & x_1^2 & 5x_2^2
\\[3pt]
\bar 0 & \bar 1 & \bar 1
\end{array}
\right]
&
\begin{array}{l}
S(5)
\\[3pt]
\ZZ/5\ZZ
\end{array}                  
\end{array}
$
}}
\def\einsfuenfzwei{{\tiny
$
\setlength{\arraycolsep}{1.8pt}
\begin{array}{ccc}
\begin{array}{l}
\text{(1-5-2)}
\\[3pt]
\mathcal{K}^2=1
\end{array}
&
\left[
\begin{array}{ccc}
x_0^2 & x_1^2 & 5x_2^2
\\[3pt]
\bar 0 & \bar 1 & \bar 2
\end{array}
\right]
&
\begin{array}{l}
S(5)
\\[3pt]
\ZZ/5\ZZ
\end{array}                  
\end{array}
$
}}
\def\einsfuenfdrei{{\tiny
$
\setlength{\arraycolsep}{1.8pt}
\begin{array}{ccc}
\begin{array}{l}
\text{(1-5-3)}
\\[3pt]
\mathcal{K}^2=1
\end{array}
&
\left[
\begin{array}{ccc}
x_0^2 & x_1^2 & 5x_2^2
\\[3pt]
\bar 0 & \bar 1 & \bar 3
\end{array}
\right]
&
\begin{array}{l}
S(5)
\\[3pt]
\ZZ/5\ZZ
\end{array}                  
\end{array}
$
}}
\def\einsfuenfvier{{\tiny
$
\setlength{\arraycolsep}{1.8pt}
\begin{array}{ccc}
\begin{array}{l}
\text{(1-5-4)}
\\[3pt]
\mathcal{K}^2=1
\end{array}
&
\left[
\begin{array}{ccc}
x_0^2 & x_1^2 & 5x_2^2
\\[3pt]
\bar 0 & \bar 1 & \bar 4
\end{array}
\right]
&
\begin{array}{l}
S(5)
\\[3pt]
\ZZ/5\ZZ
\end{array}                  
\end{array}
$
}}
\title[$\KK^*$-surfaces of Picard number one and integral degree]{$\KK^*$-surfaces of Picard number one \\ and integral degree}
\author[J\"urgen Hausen, Katharina Kir\'{a}ly]{J\"urgen Hausen, Katharina Kir\'{a}ly}
\address{Mathematisches Institut, Universit\"at T\"ubingen,
Auf der Morgenstelle 10, 72076 T\"ubingen, Germany}
\email{juergen.hausen@uni-tuebingen.de}
\address{Mathematisches Institut, Universit\"at T\"ubingen,
Auf der Morgenstelle 10, 72076 T\"ubingen, Germany}
\email{kaki@math.uni-tuebingen.de}
\subjclass[2010]{14L30,14J26}
\pgfplotsset{compat=1.18}
\begin{document}

\begin{abstract}
We give an explicit description of all quasismooth, rational,
projective surfaces of Picard number one that admit
a non-trivial torus action and have an integral canonical self
intersection number.
\end{abstract}

\maketitle

%\cleardoublepage

\section{Introduction}

We provide an explicit description of all quasismooth,
rational, projective surfaces of Picard number one
and integral degree $\mathcal{K}^2$ that admit a
non-trivial, effective action of an algebraic torus $\TT$;
we work over an algebraically closed field $\KK$ of
characteristic zero.
Our description involves the solutions of the
\emph{squared Markov type equation},
that means the triples $u = (u_0,u_1,u_2)$ of positive integers
satisfying
\[ (u_0+u_1+u_2)^2 \ = \ au_0u_1u_2, \]
where $a$ is a positive integer. The solution set
$S(a) \subseteq \ZZ_{>0}^3$ of these equations
is non-empty only for $a = 1,2,3,4,5,6,8,9$ and
the solutions $u \in S(a)$ admit an explicit
stepwise construction; see 
Section~\ref{sec:squaredmarkov} for an elementary
treatment.

If the torus $\TT$ is of dimension two, then we are
concerned with a projective toric surface $Z$
of Picard number one, also called a
\emph{fake weighted projective plane}.
Any such $Z$ has divisor class group 
$\ZZ \oplus \ZZ / \mu \ZZ$ and we can encode $Z$
in terms of its \emph{degree matrix $Q$}, listing
the classes of the three coordinate divisors as
columns:
\[
Q
\ = \
[q_0,q_1,q_2]
\ = \
\left[
\begin{array}{ccc}
u_0 & u_1 & u_2
\\
\bar \eta_0 & \bar \eta_1 & \bar \eta_2
\end{array}
\right],
\qquad
q_i = (u_i,\bar \eta_i) \in \ZZ \oplus \ZZ / \mu \ZZ.
\]
If the degree $a = \mathcal{K}^2$ of $Z$ is an integer, then we obtain
$u \in S(\mu a)$ for the first row of~$Q$.
That means in particular that, up to reordering, the triple
$u$ is of the shape
\[
u
\ = \  
(x_0^2,\xi_1 x_1^2, \xi_2 x_2^2),
\qquad
x_0,x_1,x_2 \in \ZZ,
\qquad
(\xi_1,\xi_2)
\ = \
\begin{cases}
(1,1), & \mu a = 9,
\\
(1,2), & \mu a = 8,
\\
(2,3), & \mu a = 6,
\\
(1,5), & \mu a = 5.
\end{cases}
\]
Our first result explicitly describes all fake weighted
projective planes of integral degree $\mathcal{K}^2$ by
means of 24 infinite series of degree matrices;
note that all members of a series  with ID ($a$-$\mu$-$\eta$)
have the degree $\mathcal{K}^2 = a$ and the order $\mu$ of the
torsion part of their divisor class group in common.

\bigskip

\goodbreak

\begin{theorem}
\label{thm:introthm1}
Let $Z$ be a fake weighted projective plane of integral 
degree.
Then $Z \cong Z(Q)$ for a degree matrix $Q$ from
the following data table:

\medskip

\begin{center}
\begin{tabular}{c|c|c}
&& \\
\hline  
&& \\[-5pt]
\neuneinsnull
&
\achteinsnull
&
\sechseinsnull
\\ 
&& \\[-5pt] 
\hline  
&& \\[-5pt] 
\fuenfeinsnull
&
\vierzweieins
&
\dreidreizwei
\\ 
&& \\[-5pt] 
\hline  
&& \\[-5pt] 
\dreizweieins
&
\zweiviereins
&
\zweivierdrei
\\ 
&& \\[-5pt] 
\hline  
&& \\[-5pt] 
\zweidreieins
&
\zweidreizwei
&
\einsneunzwei
\\
&& \\[-5pt] 
\hline  
&& \\[-5pt]
\einsneunfuenf
&
\einsneunacht
&
\einsachteins
\\
&& \\[-5pt] 
\hline  
&& \\[-5pt]
\einsachtdrei
&
\einsachtfuenf
&
\einsachtsieben
\\
&& \\[-5pt] 
\hline  
&& \\[-5pt]
\einssechseins
&
\einssechsfuenf
&
\einsfuenfeins
\\
&& \\[-5pt] 
\hline  
&& \\[-5pt]
\einsfuenfzwei
&
\einsfuenfdrei
&
\einsfuenfvier
\\
&& \\[-5pt] 
\hline  
&& \\[-5pt]
\end{tabular}
\end{center}

\medskip

\noindent
Moreover, each of the listed degree matrices defines a fake
weighted projective plane of the degree given in its data package.
Finally, any two distinct degree matrices from the table define
non-isomorphic fake weighted projective planes, except they stem
from (1-9-$\ast$) or (1-8-$\ast$) and are both taken from one
of the sets:

\medskip

\begin{center}
{\tiny
$
\setlength{\arraycolsep}{1.8pt}
\left\{   
\left[
\begin{array}{ccc}
1 & 1 & 1
\\[3pt]
\bar 0 & \bar 1 & \bar 2
\end{array}
\right],
\
\left[
\begin{array}{ccc}
1 & 1 & 1
\\[3pt]
\bar 0 & \bar 1 & \bar 5
\end{array}
\right],
\
\left[
\begin{array}{ccc}
1 & 1 & 1
\\[3pt]
\bar 0 & \bar 1 & \bar 8
\end{array}
\right]
\right\}
$,
\hspace{.5cm}
$
\setlength{\arraycolsep}{1.8pt}
\left\{   
\left[
\begin{array}{ccc}
1 & 1 & 4
\\[3pt]
\bar 0 & \bar 1 & \bar 5
\end{array}
\right],
\
\left[
\begin{array}{ccc}
1 & 1 & 4
\\[3pt]
\bar 0 & \bar 1 & \bar 8
\end{array}
\right]
\right\}
$,
\hspace{.5cm}
$
\setlength{\arraycolsep}{1.8pt}
\left\{   
\left[
\begin{array}{ccc}
1 & 1 & 2
\\[3pt]
\bar 0 & \bar 1 & \bar 3
\end{array}
\right],
\
\left[
\begin{array}{ccc}
1 & 1 & 2
\\[3pt]
\bar 0 & \bar 1 & \bar 7
\end{array}
\right]
\right\}
$.
}
\end{center}

\medskip

\end{theorem}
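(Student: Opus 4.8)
The plan is to work throughout with the encoding of a fake weighted projective plane $Z$ by its degree matrix $Q=[q_0,q_1,q_2]$, where $q_i=(u_i,\bar\eta_i)\in K:=\ZZ\oplus\ZZ/\mu\ZZ$, together with two standard facts about this encoding: that $Q$ defines a fake weighted projective plane exactly when its columns have positive free part and any two of them generate $K$ (this also forces $\Cl(Z)=K$, so the torsion order equals $\mu$), and that $Z(Q)\cong Z(Q')$ holds if and only if $Q'$ is obtained from $Q$ by a permutation of columns followed by an automorphism of $K$. I recall that an automorphism of $K$ fixing the positive free weights acts on the torsion row by $\bar\eta_i\mapsto\delta\bar\eta_i+\gamma u_i$ (taken modulo $\mu$) with $\delta\in(\ZZ/\mu\ZZ)^\times$ and $\gamma\in\ZZ/\mu\ZZ$, and that the weight multiset $\{u_0,u_1,u_2\}$, the number $\mathcal{K}^2=(u_0+u_1+u_2)^2/(\mu u_0u_1u_2)$ and the order $\mu$ are isomorphy invariants. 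I split the proof into completeness, realizability and distinctness.

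For completeness, let $Z$ have integral degree $a$ and torsion order $\mu$. By the discussion preceding the theorem the weight vector satisfies $u\in S(\mu a)$, so $\mu a\in\{5,6,8,9\}$ and, after reordering, $u=(x_0^2,\xi_1x_1^2,\xi_2x_2^2)$ with the $\xi$'s as listed and $(x_0,x_1,x_2)$ a solution of the associated generalized Markov equation; running through the factorizations of $\mu a\in\{5,6,8,9\}$ produces exactly the prefixes $(a\text{-}\mu)$ in the table. It remains to normalize the torsion row. Using pairwise generation I first record the residues $u_i\bmod\mu$ forced by the Markov equation (for instance $u_i\equiv 1\bmod 3$ when $\mu a=9$), choose a column whose weight is a unit modulo $\mu$, and apply the $(\delta,\gamma)$-action to bring two entries to $\bar 0,\bar 1$; pairwise generation of those two columns is precisely what makes this solvable. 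The third entry $\bar\eta$ is then pinned down, and the two remaining pairwise-generation conditions cut out the admissible $\eta$: for $\mu=9$ they read $\gcd(\eta,9)=1$ and $\eta\not\equiv 1\bmod 3$, giving $\eta\in\{2,5,8\}$, and analogously $\eta\in\{1,3,5,7\}$ for $\mu=8$, $\{1,5\}$ for $\mu=6$, and so on. This reproduces the full list of IDs.

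Realizability is the converse bookkeeping: for each listed $Q$ one verifies that the columns are positive and pairwise generate $K$ --- again a congruence computation modulo $\mu$ using that $(x_0,x_1,x_2)$ solves the relevant Markov equation and that its entries are pairwise coprime (Section~\ref{sec:squaredmarkov}) --- and that the degree formula evaluates to $a$, which is immediate from $u\in S(\mu a)$.

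The delicate part is distinctness, and this is where I expect the real work. Matrices with different prefixes $(a\text{-}\mu)$ are non-isomorphic since $\mathcal{K}^2$ and $\mu$ are invariants, so it suffices to fix $(a,\mu)$. As the weight multiset is an invariant and the generalized Markov triple is recoverable from it, isomorphic members must carry the same triple, so I may compare $(\bar 0,\bar 1,\bar\eta)$ with $(\bar 0,\bar 1,\bar\eta')$ sharing the same (ordered) weight row; the question reduces to when these lie in one orbit of the group generated by the $(\delta,\gamma)$-action and the permutations in the stabilizer $\mathrm{Stab}(u)\le S_3$ of the weight vector. When the three weights are distinct, $\mathrm{Stab}(u)$ is trivial and, as one column has unit weight, fixing $\bar 0,\bar 1$ forces $\delta=1$, $\gamma=0$; hence distinct $\eta$ give non-isomorphic surfaces and the series are genuinely distinct. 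Coincidences can therefore occur only at the finitely many triples with a repeated weight, and the hard, case-by-case core is to list these and compute the induced involution on $\eta$: for the standard Markov equation the only such triples are $(1,1,1)$ and $(1,1,2)$, and for $\mu a=8$ only $(1,1,1)$. Swapping the two equal columns and renormalizing yields $\eta\mapsto 4-\eta\bmod 9$ at $u=(1,1,4)$, $\eta\mapsto 2-\eta\bmod 8$ at $u=(1,1,2)$, and the full affine group $\mathrm{AGL}_1(\ZZ/9\ZZ)$ on the unordered torsion set at $u=(1,1,1)$; these identify exactly the three sets displayed in the theorem while leaving every other admissible $\eta$ rigid. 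The main obstacle is to organize this last step so as to be \emph{certain} that no further coincidences hide among the composite series: for each $(a,\mu)$ with more than one admissible $\eta$ I must check that either $\mathrm{Stab}(u)$ is always trivial (as for $\mu a=5,6$ and the $\mu=3$ series, whose equations have no column symmetry or no symmetric solutions) or that the induced involution acts trivially on the admissible $\eta$ (as $\eta\mapsto 2-\eta\bmod 4$ shows for the two $\mu=4$ series), thereby confirming that the exceptional list is complete.
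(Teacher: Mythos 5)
Your proposal is correct and follows essentially the same route as the paper: classification of the fake weight vectors via the squared Markov type equations, normalization of the torsion row by automorphisms of $\ZZ\oplus\ZZ/\mu\ZZ$ (the paper's Lemma~\ref{lem:Q2localcl}), congruences of the solution entries modulo $\mu$ to cut out the admissible $\bar\eta$, and an orbit analysis under the stabilizer of the weight vector for distinctness --- which is exactly what the paper packages as ``adjusted degree matrices'' (Proposition~\ref{rem:adjdegmat}) together with the explicit isomorphisms of Examples~\ref{ex:iso-19-111}, \ref{ex:iso-19-114} and~\ref{ex:iso-18-112}. Two small points: your list of repeated-weight triples is garbled as written (it should read $(1,1,1),(1,1,4)\in S(9)$ and $(1,1,2)\in S(8)$, as your subsequent involution computations correctly assume), and the congruences such as $u_i\equiv 1\bmod 3$ for $u\in S(9)$ are not automatic but require the short inductive proofs along the mutation tree that the paper supplies.
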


The proof of Theorem~\ref{thm:introthm1} is provided
in Section~\ref{sec:classifyfwpp} and is split according
to the possible degrees $\mathcal{K}^2$; see
Propositions~\ref{prop:classfwppdeg5689},
\ref{prop:classfwppdeg4}, \ref{prop:classfwppdeg3},
\ref{prop:classfwppdeg2} and \ref{prop:classfwppdeg1}.
The arguments essentially use the unique encoding
of fake weighted projective planes via
\emph{adjusted degree matrices} as provided by
Proposition~\ref{rem:adjdegmat} and, as well,
the various divisibility properties of the entries
of the solution triples of the squared Markov type equations
observed in Sections~\ref{sec:squaredmarkov}
and~\ref{sec:classifyfwpp}.

In Section~\ref{sec:locgi}, we take a closer look at the
possible singularities of our fake weighted projective planes;
by normality, these are at most the three toric
fixed points.
Recall that a \emph{(cyclic) $T$-singularity} is a
quotient singularity $\KK^2/C(dk^2)$ by the subgroup
$C(dk^2) \subseteq \KK^*$ of the $dk^2$-th roots
of unity, acting via
\[
\zeta \cdot (z_1,z_2) \ = \ (\zeta z_1, \zeta^{dpk-1} z_2),
\]  
where $p$ and $k$ are coprime.
Propositions~\ref{prop:gisingdeg9865} to~\ref{prop:gisingdeg1}
provide the constellations of non-trivial local Gorenstein
indices and possible $T$-singularities for all fake weighted
projective planes of integral degree.
As an immediate consequence, one obtains the following.

\goodbreak

\begin{theorem}
\label{thm:introthm2}
Let $Z$ be a fake weighted projective plane of integral
degree.
If $Z$ is isomorphic to a member of one of the series
\begin{center}
(2-3-1), \ (1-8-1), \ (1-8-5), \ (1-6-1), \ (1-5-1), \ (1-5-2), \ (1-5-3),
\end{center}  
then $Z$ has three singularities and precisely one of them
is a $T$-singularity.
Otherwise, the surface $Z$ has at most $T$-singularities.
\end{theorem}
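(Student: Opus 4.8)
The plan is to deduce the statement from the per-series analysis of the toric fixed points packaged in Propositions~\ref{prop:gisingdeg9865} through~\ref{prop:gisingdeg1}, reducing everything to a uniform check over finitely many infinite families once the classification of Theorem~\ref{thm:introthm1} is in hand. Since $Z$ is normal and toric of Picard number one, its only candidate singularities are the three torus fixed points $z_0,z_1,z_2$, where $z_i$ is the point lying off the $i$-th coordinate curve. Each $z_i$ is a cyclic quotient singularity of some type $\frac{1}{n_i}(1,q_i)$; the order $n_i$ is governed by the weight $u_i$ together with the torsion order $\mu$, and the residue $q_i$ is determined by the two columns of the degree matrix $Q$ other than $q_i$ via the Gale-dual cone data. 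Thus the whole assertion reduces to determining, for each of the $24$ series and every member of it, which of the three types are $T$-singularities.

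First I would record the combinatorial criterion for a cyclic quotient singularity to be a $T$-singularity. Unwinding the action $\zeta\cdot(z_1,z_2)=(\zeta z_1,\zeta^{dpk-1}z_2)$ on $\KK^2/C(dk^2)$, a singularity of type $\frac{1}{n}(1,q)$ with $\gcd(q,n)=1$ is a $T$-singularity precisely when $n\mid (q+1)^2$; the data $(d,k,p)$ with $n=dk^2$, $q+1\equiv dpk$ and $\gcd(p,k)=1$ are then extracted from the factorization of $n$ and $q+1$. This converts the geometric claim into a divisibility statement about the entries of $Q$.

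Next, working series by series, I would compute $n_i$ and $q_i$ as explicit functions of the parameters $x_0,x_1,x_2$ of the solution triple $u\in S(\mu a)$ and of the torsion exponents $\bar\eta_i$. Here the shape $u=(x_0^2,\xi_1 x_1^2,\xi_2 x_2^2)$ is decisive: since the weights are, up to the fixed factors $\xi_i\in\{1,2,3,5\}$, perfect squares, the orders $n_i$ and the residues $q_i+1$ are forced into the square-divisibility pattern $n_i\mid (q_i+1)^2$ at every fixed point not carrying a non-square factor, so those points are automatically $T$-singularities, uniformly over the whole infinite family. In the seven exceptional series the interaction of the non-square factors $\xi_i\in\{2,3,5\}$ with the torsion order $\mu$ destroys the square-divisibility at two of the three fixed points, leaving precisely one $T$-singularity among three genuine singular points; in each remaining series one checks that every singular fixed point retains the pattern, so that $Z$ carries at most $T$-singularities. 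Reading off which series contain a non-$T$ point then yields exactly the dichotomy asserted.

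The main obstacle I anticipate is not any single computation but the uniform control over the infinite families: I must establish $n_i\mid (q_i+1)^2$ (or its failure, in the exceptional cases) for \emph{every} member of a series, which rests on the arithmetic of the squared Markov type equations, in particular on the coprimality and square-divisibility relations among $x_0,x_1,x_2$ developed in Sections~\ref{sec:squaredmarkov} and~\ref{sec:classifyfwpp}. A secondary nuisance is the careful bookkeeping of the torsion part $\ZZ/\mu\ZZ$ when passing from the global class-group datum $Q$ to the local type $\frac{1}{n_i}(1,q_i)$ at each fixed point; once this dictionary is fixed, the remaining verification is the routine per-series check recorded in Propositions~\ref{prop:gisingdeg9865}--\ref{prop:gisingdeg1}, from which the theorem follows immediately.
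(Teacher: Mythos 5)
Your proposal is correct and follows essentially the same route as the paper: the theorem is read off from the series-by-series determination of $T$-singular fixed points in Propositions~\ref{prop:gisingdeg9865}--\ref{prop:gisingdeg1}, which is exactly how the paper obtains it. Your criterion $n\mid(q+1)^2$ for a cyclic quotient singularity $\tfrac{1}{n}(1,q)$ is equivalent to the paper's Lemma~\ref{lem:tsing} (namely $\iota(u)^2\mid\cl(u)$, since $\iota(u)=n/\gcd(n,q+1)$ and $\cl(u)=n$), so the two formulations of the divisibility test carry the same content.
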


The class of the fake weighted projective planes of integral
degree contains in particular all fake weighted projective
planes having at most $T$-singularities; see for
instance~\cite[Prop.~2.6]{HaPro}.
Thus, the classification results of Section~\ref{sec:locgi}
complement the classification~\cite[Thm.~4.1]{HaPro}
of $T$-singular projective toric surfaces of Picard number
one in the sense that we add three more series, namely
(1-9-2), (1-9-5) and (1-8-3), to Table~2 given there;
see also Examples~\ref{ex:iso-19-1425} and~\ref{ex:iso-18-192}.

We turn to the $\KK^*$-surfaces of Picard number one and
integral degree.
We show that these naturally correspond
to certain pairs of fake weighted projective planes from
Theorem~\ref{thm:introthm1}.
Every quasismooth, rational, projective $\KK^*$-surface
$X$ of Picard number one admits certain toric
degenerations $Z_1 \reflectbox{$\leadsto$} X \hbox{$\leadsto$} Z_2$,
where $Z_1$ and $Z_2$ are fake weighted projective planes
of the same degree as $X$; see Construction~\ref{constr:degs}
and Proposition~\ref{prop:degprops}.
We will call two fake weighted projective planes
\emph{adjacent} if they arise as such degenerations
from a common $\KK^*$-surface.
Theorem~\ref{thm:adjpartner} ensures that every fake
weighted projective plane $Z_1$ of integral degree
admits an adjacent partner $Z_2$. 
A more concise formulation of adjacency in terms of
degree matrices and supporting notions
are given in Definitions~\ref{def:Qmatadj}
and~\ref{def:ordnontor}.
This allows us to assign to any pair $(Q_1,Q_2)$
of adjacent degree matrices a $\KK^*$-surface $X(Q_1,Q_2)$
and leads to the following.

\begin{theorem}
\label{thm:introthm3}
Let $X$ be a non-toric, quasismooth, rational, projective
$\KK^*$-surface of Picard number one with $\mathcal{K}_X^2 \in \ZZ$. 
Then $X \cong X(Q_1,Q_2)$ with a non-toric, ordered
pair of adjacent degree matrices.
Moreover, distinct ordered pairs $(Q_1,Q_2)$ of adjacent
degree matrices yield non-isomorphic $\KK^*$-surfaces.
\end{theorem}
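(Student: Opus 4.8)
The plan is to set up the asserted correspondence $X \leftrightarrow (Q_1,Q_2)$ in the two usual directions, existence of the representation and its uniqueness, using the toric degenerations of Construction~\ref{constr:degs} as the bridge between $\KK^*$-surfaces and fake weighted projective planes. First I would treat existence. Given $X$ as in the hypothesis, I would produce the two toric degenerations $Z_1 \reflectbox{$\leadsto$} X \hbox{$\leadsto$} Z_2$ and invoke Proposition~\ref{prop:degprops} to see that both $Z_i$ are fake weighted projective planes of the same degree $\mathcal{K}^2 = \mathcal{K}_X^2$, which is integral by assumption, while non-toricity of $X$ passes to the pair. Theorem~\ref{thm:introthm1} then puts each $Z_i$ into normal form, $Z_i \cong Z(Q_i)$ for a degree matrix $Q_i$ from the data table, which by Proposition~\ref{rem:adjdegmat} I may take to be the adjusted degree matrix of $Z_i$. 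Since $Z_1$ and $Z_2$ issue from the common $\KK^*$-surface $X$, the resulting pair $(Q_1,Q_2)$ is adjacent in the sense of Definition~\ref{def:Qmatadj}, and non-toric in the sense of Definition~\ref{def:ordnontor}.

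Next comes the reconstruction $X \cong X(Q_1,Q_2)$, which is the heart of the existence half. The construction $X(Q_1,Q_2)$ is by design the $\KK^*$-surface whose two extremal toric limits carry the prescribed degree matrices, so what must be shown is that the pair of degenerations, together with the adjacency bookkeeping, loses no information about $X$. I would argue this through the canonical $\Cl(X)$-graded Cox ring of $X$: for a quasismooth rational projective $\KK^*$-surface of Picard number one this ring is of trinomial type, and each degeneration $Z_i$ is obtained by passing from the trinomial to one of its two binomial, and hence toric, limits. Reading $Q_i$ off the $\Cl(Z_i)$-grading, one checks that the adjacency data $(Q_1,Q_2)$ records exactly the exponents and grading of the trinomial, so that $X(Q_1,Q_2)$ returns the graded Cox ring of $X$; since $X$ is recovered from its graded Cox ring by the standard quotient construction, the asserted isomorphism follows.

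For uniqueness, suppose $X(Q_1,Q_2) \cong X(Q_1',Q_2')$. The plan is to use that the graded Cox ring is an isomorphism invariant: the isomorphism lifts to a graded isomorphism of Cox rings, up to an automorphism of the grading group, and such isomorphisms of trinomial surfaces reduce to permutations of the arms together with rescalings. These operations carry the two binomial degenerations of one surface to those of the other, so up to the orientation reversal that swaps source and sink, and hence interchanges $Z_1$ and $Z_2$, one obtains $Z(Q_i) \cong Z(Q_i')$. Here lies the delicate point: by the exceptional coincidences recorded in Theorem~\ref{thm:introthm1}, the bare isomorphism class of a single $Z_i$ need \emph{not} determine $Q_i$. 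I would resolve this by recovering the adjusted degree matrix $Q_i$ canonically from the local structure of $X$ at the corresponding elliptic fixed point via Proposition~\ref{rem:adjdegmat}, so that the full adjacent pair is pinned down, and then verify that the only residual ambiguity is precisely the swap symmetry already factored into the notion of an ordered non-toric adjacent pair in Definition~\ref{def:ordnontor}; this yields that distinct ordered pairs give non-isomorphic surfaces.

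The hard part will be exactly the reconstruction of the previous-to-last paragraph and the fine coincidence analysis of the last one. The former requires confirming that the two toric limits, decorated by the adjacency data, retain the complete trinomial and its grading; the latter requires untangling the coincidences of Theorem~\ref{thm:introthm1}, that is, showing that even when the individual fake weighted projective planes $Z_i$ agree abstractly, the adjacent pairing together with the canonical adjusted matrices still separates the $\KK^*$-surfaces, modulo the single source--sink swap. By contrast, the divisibility bookkeeping for the squared Markov triples enters only in checking well-definedness of the construction and of the adjacency relation, and is not where the genuine difficulty resides.
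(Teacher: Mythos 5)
Your overall architecture — existence via the two toric degenerations of Construction~\ref{constr:degs}, then a reconstruction/uniqueness step showing the pair $(Q_1,Q_2)$ carries full information about $X$ — is the same as the paper's, but the paper's proof is short precisely because all of the substance is outsourced to Proposition~\ref{prop:P1toP}, which you never invoke. That proposition is the missing ingredient in your ``heart of the existence half'': it shows that an adjusted degree matrix $Q_1$ whose distinguished fixed point $z(2)$ is a $T$-singularity admits \emph{exactly one} corresponding generator matrix $\tilde P_1$ in the normal form $[\begin{smallmatrix} l_1 & l_1 & -l_2 \\ d_1 & d_1+d_0l_1 & d_2\end{smallmatrix}]$, from whose entries the defining matrix $P$ of $X$ (equivalently, the exponents and grading of your trinomial) is read off. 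Your sentence ``one checks that the adjacency data $(Q_1,Q_2)$ records exactly the exponents and grading of the trinomial'' is an assertion of precisely this statement; as written it is the gap, not a routine verification, and your own closing paragraph concedes as much. The paper then gets injectivity for free from the normal-form uniqueness of $P$ cited from \cite{HaHaHaSp}/\cite{HaKiWr}, rather than from a classification of graded automorphisms of trinomial rings, which you would additionally have to import and justify.

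There is also a concrete flaw in your proposed fix for the coincidence issue. Recovering ``the adjusted degree matrix $Q_i$ canonically from the local structure of $X$ at the corresponding elliptic fixed point'' cannot work: Proposition~\ref{prop:degprops} shows that the affine charts of $X$ at the two elliptic fixed points are isomorphic to charts of \emph{both} central fibers, $X_1 \cong \tilde Z_1 \setminus V(T_1) \cong \tilde Z_2 \setminus V(T_1)$, so local data at a fixed point does not single out one $Q_i$, and Proposition~\ref{rem:adjdegmat} compares two adjusted matrices with a common fake weight vector rather than producing one from local invariants. The coincidences of Theorem~\ref{thm:introthm1} are in fact a non-issue here for two reasons you do not articulate: first, the exceptional identifications involve non-adjusted representatives, and Definition~\ref{def:Qmatadj}(iii) forces both members of the pair to be adjusted, for which Proposition~\ref{rem:adjdegmat} gives uniqueness within an isomorphism class; second, and more fundamentally, adjacency is defined through corresponding generator matrices arising from a \emph{common} matrix $P$ (Definitions~\ref{def:adjacent} and~\ref{def:Qmatadj}(i)), so the pair is tied to $P$ directly and the abstract isomorphism classes of the individual $Z_i$ never enter the uniqueness argument. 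With Proposition~\ref{prop:P1toP} and the normal-form uniqueness in hand, your outline closes up; without them it does not.
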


\tableofcontents

%\cleardoublepage

\section{Squared Markov type equations}
\label{sec:squaredmarkov}

Given any number $a \in \ZZ_{>0}$, the associated
\emph{squared Markov type equation} in the variables
$w_0,w_1,w_2$ is
\[
(w_0+w_1+w_2)^2 \ = \ aw_0w_1w_2.
\]
By a \emph{solution} we mean a triple
$u = (u_0,u_1,u_2) \in \ZZ_{>0}^3$
satisfying this equation and we denote by
$S(a) \subseteq \ZZ^3_{>0}$ the set of
these solutions.

The set $S(a)$ is in bijection with
the solution set of the corresponding usual
Markov type equation, studied exhaustively
by Hurwitz~\cite{Hu}; see Remark~\ref{rem:usualmarkov}.
We expect everything of this section to be
known; see~e.g.~\cite[Thm.~11]{GyMa} for the case $a=9$.
However, for the moment we can't serve with appropriate
references and hence allow ourselves to provide an
elementary self-contained treatment according to our needs.

\begin{lemma}
\label{lem:mutation}
Let $a \in \ZZ_{>0}$. Then we obtain an involution
on the set $S(a)$ of solutions of the squared Markov type
equation by 
\[
\lambda \colon S(a) \ \to \ S(a),
\quad
u \ \mapsto \
\left(u_0, u_1, \tfrac{(u_0+u_1)^2}{u_2}\right)
\ = \
\left(u_0, u_1, a u_0 u_1  - 2 u_0 -2 u_1 - u_2 \right).
\]
\end{lemma}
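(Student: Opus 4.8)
The plan is to view the defining equation, with $w_0 = u_0$ and $w_1 = u_1$ held fixed, as a monic quadratic in the remaining variable $w_2$. Expanding $(u_0+u_1+w_2)^2 = a u_0 u_1 w_2$ gives
\[
w_2^2 \ + \ \bigl(2(u_0+u_1) - a u_0 u_1\bigr) w_2 \ + \ (u_0+u_1)^2 \ = \ 0.
\]
Since $u = (u_0,u_1,u_2) \in S(a)$, the value $u_2$ is a root of this quadratic. Vieta's formulas then pin down the second root, call it $u_2'$, via two expressions simultaneously: from the product of the roots one reads off $u_2' = (u_0+u_1)^2/u_2$, and from their sum one reads off $u_2' = a u_0 u_1 - 2u_0 - 2u_1 - u_2$. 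This already establishes that the two formulas defining $\lambda(u)$ in the statement coincide.

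Next I would verify that $\lambda$ lands in $S(a)$. That $(u_0,u_1,u_2')$ again satisfies the equation is immediate, because $u_2'$ is by construction a root of the very quadratic that encodes the equation for the fixed pair $u_0,u_1$. The arithmetic conditions then come out by playing the two expressions for $u_2'$ against each other: the linear form $a u_0 u_1 - 2u_0 - 2u_1 - u_2$ is manifestly an integer, while the fractional form $(u_0+u_1)^2/u_2$ is manifestly a positive rational. Hence $u_2'$ is a positive integer and $\lambda(u) \in S(a)$.

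Finally, for the involution property I would argue that for fixed $u_0,u_1$ the quadratic displayed above is left unchanged by $\lambda$, and its unordered pair of roots is $\{u_2, u_2'\}$; applying $\lambda$ a second time retains $u_0,u_1$ and replaces the third entry $u_2'$ by its Vieta partner, which is once more $u_2$. Equivalently, the product relation $u_2 u_2' = (u_0+u_1)^2$ yields $(u_0+u_1)^2/u_2' = u_2$ directly. Thus $\lambda(\lambda(u)) = u$.

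The one genuinely delicate point is the simultaneous integrality and positivity of the new third coordinate: neither property is evident from a single formula, since the fraction need not obviously be an integer and the linear form need not obviously be positive. It is exactly the Vieta pairing that supplies both at once, and I expect this interplay to be the crux of the argument, with all remaining verifications being routine expansions.
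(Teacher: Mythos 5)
Your proof is correct and follows essentially the same route as the paper: both rest on the identity $u_2 u_2' = (u_0+u_1)^2$ together with the linear expression $u_2' = au_0u_1 - 2u_0 - 2u_1 - u_2$, played against each other to obtain integrality and positivity of the new third coordinate simultaneously. The only cosmetic difference is that you package this as Vieta's formulas for the quadratic in $w_2$, which makes the verification that $\lambda(u)$ again satisfies the equation automatic, whereas the paper performs a short direct computation for that step.
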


\begin{proof}
Let $u  \in S(a)$.
We show that the two representations of $\lambda(u)$ coincide.
This merely means to compare the third components:
\[
a u_0 u_1  - 2 u_0 -2 u_1 - u_2 
=
\tfrac{(u_0 + u_1+u_2)^2}{u_2} - 2 u_0 -2 u_1 - u_2 
=
\tfrac{(u_0 + u_1)^2}{u_2}.
\]
In particular, $\lambda$ maps solutions to positive integer tuples. 
We prove that $u' := \lambda(u)$ is a solution.
With $\hat{u} := u_0+u_1$, we obtain
\[
a = 
\tfrac{(u_0+ u_1 + u_2)^2}{u_0 u_1 u_2} 
= 
\tfrac{(\hat{u} + u_2)^2}{u_0 u_1 u_2}
=
\tfrac{\left(\hat{u} + \frac{\hat{u}^2}{u_2}\right)^2}{u_0 u_1 \frac{\hat{u}^2}{u_2}}
=
\tfrac{(u_0' + u_1' + u_2')^2}{u_0' u_1' u_2'}. 
\]
\end{proof}

A \emph{one-step mutation} of a triple $u  \in S(a)$ is
a permutation of its entries followed
by the operation~$\lambda$.
A \emph{mutation} of $u$ is a composition
of one-step mutations.

\begin{theorem}
\label{thm:intmarkov}
Fix $a \in \ZZ_{>0}$ such that the equation
$(w_0+w_1+w_2)^2 = aw_0w_1w_2$ admits a solution.
Then we have
\begin{equation*}
a \ \in \ \{1,2,3,4,5,6,8,9\}.
\end{equation*} 
For these $a$, the solutions of the above equation
are precisely the mutations of the following triples
\begin{equation*}
\begin{array}{lllll}
\displaystyle\left(\tfrac{9}{a},\tfrac{9}{a},\tfrac{9}{a}\right),
&
a=1,3,9, 
&&
\displaystyle\left(\tfrac{8}{a},\tfrac{8}{a},\tfrac{16}{a}\right),
&
a=1,2,4,8, 
\\[15pt]
\displaystyle\left(\tfrac{6}{a},\tfrac{12}{a},\tfrac{18}{a}\right),
&
a=1,2,3,6,
&&                                                      
\displaystyle\left(\tfrac{5}{a},\tfrac{20}{a},\tfrac{25}{a}\right),
&
a=1,5. 
\end{array}
\end{equation*} 
\end{theorem}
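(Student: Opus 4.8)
The plan is to combine the mutation structure of Lemma~\ref{lem:mutation} with a Vieta-jumping descent that reduces every solution to a short list of \emph{reduced} ones, and then to carry out an explicit finite search. The starting observation is that the quantity $a = (u_0+u_1+u_2)^2/(u_0u_1u_2)$ is constant along mutations, so it suffices to understand a single representative of each mutation class. I call a solution \emph{reduced} if, after sorting, $u_0 \le u_1 \le u_2 \le u_0+u_1$. Every class contains a reduced solution: sort the entries and apply $\lambda$ to the largest one; since the resulting third entry is $(u_0+u_1)^2/u_2$, this strictly lowers the maximum exactly when $u_2 > u_0+u_1$, and as we work in $\ZZ_{>0}$ the process terminates at a reduced solution. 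By Lemma~\ref{lem:mutation} the steps are invertible, so $S(a)$ is precisely the set of mutations of its reduced members, and the task becomes to classify those.

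Next I would pin reduced solutions down by two bounds on the product $au_0$. For the upper bound, $f(u) = (u_0+u_1+u_2)^2/(u_0u_1u_2)$ is homogeneous of degree $-1$ and the region $u_0 \le u_1 \le u_2 \le u_0+u_1$ is a cone, so $au_0 = u_0 f(u) = f(u/u_0)$; maximizing $f$ on the slice $u_0 = 1$ is an elementary optimization attained at $(1,1,1)$, giving $au_0 \le 9$. For the lower bound, I read the equation as the quadratic $u_2^2 - (au_0u_1 - 2u_0 - 2u_1)u_2 + (u_0+u_1)^2 = 0$; the fundamental condition $u_2 \le u_0+u_1$ forces $u_2$ to be its smaller root, and the sorting condition $u_2 \ge u_1$ then forces the axis of symmetry $\tfrac12(au_0u_1 - 2u_0 - 2u_1)$ to be at least $u_1$, which rearranges to $u_1(au_0 - 4) \ge 2u_0$ and hence $au_0 \ge 5$. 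Comparing the two roots in the same spirit (bounding the larger root $(u_0+u_1)^2/u_2 \le (u_0+u_1)^2/u_1$ against $au_0u_1 - 3u_0 - 3u_1$) yields $u_1 \le 6u_0/(au_0-4)$.

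With $5 \le au_0 \le 9$ and therefore $u_0 \le 9$, only finitely many pairs $(a,u_0)$ occur, and for each the inequality on $u_1$ leaves a short explicit range; solving the quadratic for $u_2$ in every remaining case produces exactly the thirteen reduced triples. These organize into the four displayed families under the scaling $u \mapsto cu$, which sends $S(a)$ into $S(a/c)$ whenever $c \mid a$. This step simultaneously shows $a \in \{1,2,3,4,5,6,8,9\}$, excludes $a=7$ (the value $au_0 = 7$ passes both bounds but produces no integral $u_2$), establishes emptiness of $S(a)$ for all other $a$, and—via the reversibility of the descent—exhibits every solution as a mutation of the listed seed triples.

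I expect the genuinely delicate point to be the lower bound $au_0 \ge 5$ together with its companion inequality $u_1 \le 6u_0/(au_0-4)$: the whole argument collapses to a finite, checkable search only once these root comparisons are extracted correctly, and a careless version of the smaller-root analysis would leave an unbounded range of $u_1$. After that, the homogeneity bound and the case-by-case solving of the quadratic are routine, and matching the thirteen reduced triples to the four families is a direct verification.
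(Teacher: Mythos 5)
Your argument is correct, and it follows the same overall strategy as the paper --- invariance of $a$ under mutation, descent to a sorted triple with $u_2\le u_0+u_1$ (the paper's ``initial'' triples; your descent uses the maximum where the paper uses the norm $u_0+u_1+u_2$, but both terminate and both are reversible since $\lambda$ is an involution), and then a finite search --- but the inequalities that make the search finite are genuinely different. The paper works with Lemma~\ref{lem:discr}: the discriminant bound $\tfrac1{u_0}+\tfrac1{u_1}\le\tfrac a4$ to force $u_0$ large when $a\le 4$, and $\tfrac a4\le\tfrac3{u_2}+\tfrac1{u_0}$ to bound $u_2$, which requires a separate case discussion for each value of $a$. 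You instead package everything into the two-sided bound $5\le au_0\le 9$ together with $u_1\le 6u_0/(au_0-4)$, and recover $u_2$ from the quadratic rather than bounding it; this is more uniform (it kills $a\ge10$ in one stroke and treats all remaining $a$ by a single parametrized search over $(a,u_0,u_1)$), at the cost of a somewhat larger search space (e.g.\ $u_1\le 30$ for $(a,u_0)=(1,5)$). I checked the three key inequalities: $au_0\ge5$ follows as you say because the smaller root of the monic quadratic lies below its vertex, so $u_1\le u_2\le\tfrac12(au_0u_1-2u_0-2u_1)$; the companion bound follows from $au_0u_1-3u_0-3u_1\le (u_0+u_1)^2/u_1$ together with $u_0^2/u_1\le u_0$; and for $au_0\le 9$ the function $(1+y+z)^2/(yz)$ is indeed decreasing in $z$ on $z\le 1+y$ (its $z$-derivative has sign $z-1-y$) and then $(1+2y)^2/y^2$ is decreasing in $y$, so the maximum $9$ is attained at $(1,1,1)$ --- this last step is the one you assert without detail, and you should write it out, since it carries the exclusion of all $a\ge 10$. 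The thirteen reduced triples your search produces match the paper's list of initial triples (one each for $a=9,8,6,5,4$, two each for $a=3,2$, four for $a=1$), and the $a=7$ cases $(a,u_0)=(7,1)$ and $(1,7)$ do fail integrality as claimed, so the classification and the scaling into the four displayed families go through.
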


By the \emph{norm} of a triple~$u \in \ZZ^3_{> 0}$, we mean the
number~$\nu(u) := u_0 + u_1 + u_2$.
Obviously the norm is invariant under coordinate permutations. 
We call a triple $u \in S(a)$ \emph{initial} if
$ u_0 \leq u_1 \leq u_2$ and $u_2 \leq u_0 + u_1$.

\begin{lemma}
\label{lem:norm}
Consider a solution $u \in S(a)$. Set~$u' := \lambda(u)$. 
Then we have the following equivalences:
\begin{align*}
\nu(u ) = \nu(u') \ & \Longleftrightarrow \ u_2  = u_2' \ \Longleftrightarrow \ u_2 = u_0 + u_1,
\\%
\nu(u ) < \nu(u') \ & \Longleftrightarrow \ u_2  < u_2' \ \Longleftrightarrow \ u_2 < u_0 + u_1.
\end{align*}
In particular, an ascendingly ordered $u\in S(a)$ is initial if and only
if $\nu(u) \leq \nu(\tilde{u})$ for any one-step mutation $\tilde{u}$ of $u$.
\end{lemma}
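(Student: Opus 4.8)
The plan is to exploit that the involution $\lambda$ fixes the first two coordinates and alters only the third, via the product relation $u_2 u_2' = (u_0+u_1)^2$ read off directly from its definition. First I would record that, since $u_0,u_1$ are unchanged, $\nu(u') - \nu(u) = u_2' - u_2$; this settles the left-hand equivalences $\nu(u) = \nu(u') \Leftrightarrow u_2 = u_2'$ and $\nu(u) < \nu(u') \Leftrightarrow u_2 < u_2'$ at once. For the right-hand equivalences I set $s := u_0 + u_1$ and combine $u_2 u_2' = s^2$ with positivity of all entries: then $u_2 = u_2' \Leftrightarrow u_2^2 = s^2 \Leftrightarrow u_2 = s$, and likewise $u_2 < u_2' \Leftrightarrow u_2^2 < s^2 \Leftrightarrow u_2 < s$. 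This proves both displayed chains.

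For the ``in particular'' statement, I would first observe that a one-step mutation permutes $u$ and then applies $\lambda$, and that $\lambda$ depends only on which entry lands in the third slot, since it uses the symmetric quantity $u_0+u_1$. Hence, as far as the value of $\nu$ is concerned, there are exactly three one-step mutations, obtained by moving $u_0$, $u_1$ or $u_2$ into third position; each resulting triple again lies in $S(a)$ by permutation-invariance of the defining equation. Applying the first part of the lemma just established to the permuted triple with $u_i$ third yields $\nu(u) \leq \nu(\tilde u) \Leftrightarrow u_i \leq (\text{sum of the other two entries})$. Consequently, ``$\nu(u) \leq \nu(\tilde u)$ for every one-step mutation $\tilde u$'' is equivalent to the three inequalities $u_0 \leq u_1 + u_2$, $u_1 \leq u_0 + u_2$ and $u_2 \leq u_0 + u_1$ holding simultaneously.

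It remains to feed in the ordering. Assuming $u_0 \leq u_1 \leq u_2$, the first two inequalities are automatic, because $u_0 \leq u_1 \leq u_1 + u_2$ and $u_1 \leq u_2 \leq u_0 + u_2$. Thus the triple condition collapses to the single inequality $u_2 \leq u_0 + u_1$, which, together with the assumed ascending order, is exactly the definition of $u$ being initial. This gives the desired equivalence.

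The only genuine bookkeeping, and hence the one place to be careful, is the reduction in the second paragraph: one must verify that the six permutations collapse to three $\nu$-values, identify each correctly with a triangle-type inequality through the first part, and then check that the ascending order trivialises precisely two of the three inequalities, leaving the initiality condition as the sole content. Everything else rests on the identity $u_2 u_2' = (u_0+u_1)^2$ and the positivity of the entries.
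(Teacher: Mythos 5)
Your proof is correct and follows essentially the same route as the paper: the left-hand equivalences from the fact that $\lambda$ fixes the first two coordinates, and the right-hand ones from $u_2u_2'=(u_0+u_1)^2$ together with positivity. You additionally spell out the reduction of the ``in particular'' statement to the three triangle-type inequalities, which the paper leaves implicit; that bookkeeping is carried out correctly.
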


\begin{proof}
By the definition of $\lambda$, the vectors $u$ and $u'$
differ only in the last coordinate.
This gives the first equivalence in each row.
The remaining two equivalences follow from $u_2u_2'= (u_0+u_1)^2$.
\end{proof}

\begin{lemma}
\label{lem:discr}
Consider a solution $u \in S(a)$. %Then the following statements hold
\begin{enumerate}
\item
We have $\tfrac{1}{u_0} + \tfrac{1}{u_1} \leq \tfrac{a}{4}$.
\item
If $u$ is initial, then $\tfrac{a}{4} \leq \tfrac{3}{u_2} + \tfrac{1}{u_0}$.
\end{enumerate}
\end{lemma}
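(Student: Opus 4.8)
The plan is to clear denominators in both inequalities using the defining relation $a u_0 u_1 u_2 = (u_0+u_1+u_2)^2$ and thereby reduce each assertion to an elementary polynomial inequality in $u_0,u_1,u_2$. Since all $u_i$ are positive, multiplying through by the common denominator $4u_0u_1u_2 > 0$ preserves the direction of the inequalities throughout.

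For~(i) I would substitute $a = (u_0+u_1+u_2)^2/(u_0u_1u_2)$ and clear denominators; the asserted bound becomes equivalent to $4u_2(u_0+u_1) \le (u_0+u_1+u_2)^2$. Writing $s := u_0+u_1$, the right-hand side is $(s+u_2)^2$, so after cancellation this is precisely $(s-u_2)^2 \ge 0$, which holds for every triple. In particular, part~(i) needs no \emph{initial} hypothesis.

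For~(ii) the same clearing of denominators reduces the claim to the polynomial inequality $(u_0+u_1+u_2)^2 \le 12u_0u_1 + 4u_1u_2$. I would read the difference
\[
D \ := \ 12u_0u_1 + 4u_1u_2 - (u_0+u_1+u_2)^2 \ = \ -u_2^2 + 2(u_1-u_0)u_2 + (10u_0u_1 - u_0^2 - u_1^2)
\]
as a downward-opening quadratic in $u_2$. Its discriminant simplifies neatly to $32u_0u_1$, so the two roots are $r_\pm = (u_1-u_0) \pm 2\sqrt{2u_0u_1}$, and $D \ge 0$ holds exactly when $r_- \le u_2 \le r_+$. It then remains to place $u_2$ between these roots.

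This placement is where the \emph{initial} hypothesis $u_0 \le u_1 \le u_2 \le u_0+u_1$ enters and is the only genuinely delicate point. For the upper bound I would combine $u_2 \le u_0+u_1$ with the elementary estimate $u_0+u_1 \le r_+$, which after cancelling reduces to $u_0 \le 2u_1$ and hence follows from $u_0 \le u_1$. For the lower bound, $u_1 \le u_2$ already suffices, since $r_- < u_1 - u_0 < u_1 \le u_2$. Combining the two comparisons gives $D \ge 0$ and completes~(ii). I expect the main obstacle to be carrying out the discriminant computation cleanly and checking that both root comparisons really do follow from the single chain $u_0 \le u_1 \le u_2 \le u_0+u_1$, rather than requiring any finer information about the solution.
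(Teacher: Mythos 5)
Your proposal is correct. Part~(i) is essentially the paper's argument in different clothing: the paper observes that the discriminant of the squared Markov equation, viewed as a quadratic in $w_2$, must be non-negative because $u_2$ is a real root, and that discriminant is exactly $4aw_0^2w_1^2\bigl(\tfrac{a}{4}-\tfrac{1}{w_0}-\tfrac{1}{w_1}\bigr)$; your direct verification that $4u_2(u_0+u_1)\le(u_0+u_1+u_2)^2$ reduces to $(u_0+u_1-u_2)^2\ge 0$ is the same fact computed by hand. For part~(ii), however, you take a genuinely different route. The paper uses the initial hypothesis in the form $u_0+u_1+u_2\le 2(u_0+u_1)$ to get $\tfrac{a}{4}\le\tfrac{(u_0+u_1)^2}{u_0u_1u_2}$, expands the square, and bounds the three resulting terms one by one via $u_0\le u_1\le u_2$; this is a short direct chain of estimates. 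You instead clear denominators to the polynomial inequality $(u_0+u_1+u_2)^2\le 12u_0u_1+4u_1u_2$, treat the difference as a downward-opening quadratic in $u_2$ with discriminant $32u_0u_1$, and locate $u_2$ between the roots $(u_1-u_0)\pm 2\sqrt{2u_0u_1}$ using $u_1\le u_2\le u_0+u_1$ and $u_0\le 2u_1$. All of your computations check out (the reduction, the discriminant, and both root comparisons), so the argument is complete; it is somewhat longer than the paper's, but it has the mild advantage of exhibiting exactly which inequality in $u_0,u_1,u_2$ is being proved and where each piece of the initiality hypothesis enters.
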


\begin{proof}
For~(i), write the squared Markov type equation as $f(w_2) = 0$ with
a polynomial $f \in \KK[w_0,w_1][w_2]$.
Then $f$ is of degree two in $w_2$ with discriminant
\[
\Delta(f)
\  = \
a^2 w_0^2 w_1^2 - 4 a w_0^2 w_1 - 4 a w_0 w_1^2
\ = \
4aw_0^2w_1^2\left(\tfrac{a}{4}-\tfrac{1}{w_0} - \tfrac{1}{w_1}\right).
\]
Since $u \in S(a)$ has positive integer entries, $\Delta(f)$
evaluates non-negatively at $u$. The assertion follows.
We turn to~(ii). The definition of an initial triple gives
\[
u_2 \leq u_0 + u_1, \quad u_0 + u_1 + u_2 \leq 2(u_0 +u_1), \quad u_0 \leq u_1 \leq u_2.
\]
We conclude
\[
\tfrac{a}{4}
\ =  \
\tfrac{1}{4} \tfrac{(u_0+u_1+u_2)^2}{u_0u_1u_2}
\ \leq \
\tfrac{(u_0+u_1)^2}{u_0u_1u_2}
\ = \
\tfrac{u_0}{u_1u_2} + \tfrac{2}{u_2} + \tfrac{u_1}{u_0u_2}
\ \leq \
\tfrac{3}{u_2} + \tfrac{1}{u_0}.
\]
\end{proof}

\begin{proof}[Proof of Theorem~\ref{thm:intmarkov}]
By Lemma~\ref{lem:norm} every solution is a mutation of an initial one.
So, it suffices to classify the initial solutions.
For $a \ge 5$, any initial triple $u \in S(a)$ satisfies
the inequality
\[
u_2 \ \leq \ \frac{12}{a-4},
\]
use Lemma~\ref{lem:discr}~(ii) and $1/u_0 \leq 1$.
This  effectively bounds the largest entry
$u_2$ of the initial triples $u \in S(a)$ for $a \ge 5$.
Concretely, we arrive at

\begin{center}
\begin{tabular}{c|c|c|c|c|c|c}
$a$ & $\ge 9$ & $8$ & $7$ & $6$ & $5$
\\ \hline
$u_2$ & $\le 2$ &  $\le 3$ &  $\le 4$ &  $\le 5$ &  $\le 6$ & 
\end{tabular}
\end{center}

\noindent
In each case, we check the (finitely many) ascendingly
ordered $u \in \ZZ_{>0}^3$ satisfying the respective
bound on $u_2$ for initial triples.
For $a \ge 10$ and $a=7$, there are no initial triples
and in each of the other cases, there is exactly one:

\begin{center}
\begin{tabular}{c|c|c|c|c|c}
$a$ & $9$ & $8$ & $6$ & $5$
\\ \hline
$u$ & $(1,1,1)$ &  $(1,1,2)$ &  $(1,2,3)$ &  $(1,4,5)$  
\end{tabular}
\end{center}

\noindent
\emph{Case $a = 4$}.
Here, Lemma~\ref{lem:discr}~(i) rules out the case $u_0 = 1$. 
Thus, $u_0 \ge 2$ and we obtain $u_2 \leq 6$, because otherwise
Lemma~\ref{lem:discr}~(ii) would give
\begin{equation*}
1 = \tfrac{a}{4} \leq \tfrac{3}{u_2} + \tfrac{1}{u_0} \leq \tfrac{3}{7}+ \tfrac{1}{2}.
\end{equation*}
Going through the ascendingly ordered $u \in \ZZ_{>0}^3$ with $u_0 \ge 2$ and $u_2 \leq 6$,
we end up with $(2,2,4)$ as the only initial triple in $S(6)$.

\medskip
\noindent
\emph{Case $a = 3$}. 
Again Lemma~\ref{lem:discr}~(i) rules out the case $u_0 = 1$.
Thus, $u_0 \ge 2$, forcing $u_2 \leq 12$, because otherwise Lemma~\ref{lem:discr}~(ii)
would give
\begin{equation*}
\tfrac{3}{4} = \tfrac{a}{4} \leq \tfrac{3}{u_2} + \tfrac{1}{u_0} \leq \tfrac{3}{13} + \tfrac{1}{2}.
\end{equation*}
The search over all ascendingly ordered $u \in \ZZ_{>0}^3$ with $u_0 \ge 2$ and $u_2 \leq 12$
produces exactly two initial triples in $S(3)$, namely $(2,4,6)$ and $(3,3,3)$.

\medskip
\noindent \emph{Case $a = 2$}. 
Here, Lemma~\ref{lem:discr}~(i) rules out $u_0 = 1$ and $u_0 = 2$.
Thus, $u_0 \ge 3$ and $u_2 \leq 18$, because otherwise Lemma~\ref{lem:discr}~(ii)
would imply
\begin{equation*}
\tfrac{1}{2} \ = \ \tfrac{a}{4} \leq \tfrac{3}{u_2} + \tfrac{1}{u_0} \ \leq \ \tfrac{3}{19} + \tfrac{1}{3}.
\end{equation*}
Inside the set of all ascendingly ordered $u \in \ZZ_{>0}^3$ with $u_0 \ge 3$ and $u_2 \leq 18$,
we find $(4,4,8)$ and $(3,6,9)$ as the only two initial triples belonging to $S(2)$.

\medskip
\noindent \emph{Case $a = 1$}. 
Using Lemma~\ref{lem:discr}~(i), we can exclude $u_0 = 1,2,3,4$. Thus, $u_0 \geq 5$ and we derive $u_2 \leq 60$,
as otherwise Lemma~\ref{lem:discr}~(ii) would claim
\begin{align*}
\tfrac{1}{4} = \tfrac{a}{4} \leq \tfrac{3}{u_2} + \tfrac{1}{u_0} \leq \tfrac{3}{61} + \tfrac{1}{5}.
\end{align*}
The search in the ascendingly ordered $u \in \ZZ_{>0}^3$ with $5 \leq u_0$ and $u_2 \leq 60$
yields exactly four initial triples in $S(1)$, namely
$(9, 9, 9)$, $(8, 8, 16)$, $(6, 12, 18)$ and $(5, 20, 25)$. 
\end{proof}

\begin{remark}
\label{rem:markgraf}
Given $a$, let $T(a) \subseteq S(a)$ denote the subset of all
ascendingly ordered solution triples.
For $a \in \{9,8,6,5\}$, we can regard $T(a)$ as the
vertex set of a tree, where we join two triples $u,u' \in T(a)$
by an edge if they are distinct and arise from each other by
a one-step mutation.
\[
\begin{array}{l}
\begin{tikzpicture}[scale=0.6]
\sffamily
\node[scale=3/2] (a)       at (-5,0) {$\scriptscriptstyle  T(9)$};
\node[] (111) at (0,0) {$\scriptscriptstyle (1,1,1)$};
\node[] (112) at (2,0) {$\scriptscriptstyle (1,1,4)$};
\node[] (125) at (4,0) {$\scriptscriptstyle (1,4,25)$};
\node[] (1513) at (6,2) {$\scriptscriptstyle (1,25,169)$};
\node[] (2529) at (6,-2) {$\scriptscriptstyle (4,25,841)$};
\node[] (113134) at (8,3) {$\scriptscriptstyle (1,169,1156)$};
\node[] (513194) at (8,1) {$\scriptscriptstyle (25,169,37636)$};
\node[] (529533) at (8,-1) {$\scriptscriptstyle (25,841,187489)$};
\node[] (229169) at (8,-3) {$\scriptscriptstyle (4,841,28561)$};
\draw[] (111) edge (112);
\draw[] (112) edge (125);
\draw[] (125) edge (1513);
\draw[] (125) edge (2529);
\draw[] (1513) edge (113134);
\draw[] (1513) edge (513194);
\draw[] (2529) edge (529533);
\draw[] (2529) edge (229169);
\end{tikzpicture}
\\[2ex]
\begin{tikzpicture}[scale=0.6]
\sffamily
\node[scale=3/2] (a)       at (-6,0) {$\scriptscriptstyle  T(8)$};
\node[] (224)        at (-1,0) {$\scriptscriptstyle (1,1,2)$};
\node[] (2418)       at (2,0) {$\scriptscriptstyle (1,2,9)$};
\node[] (218100)     at (4,2) {$\scriptscriptstyle (1,9,50)$};
\node[] (418242)     at (4,-2){$\scriptscriptstyle (2,9,121)$};
\node[] (2100578)    at (7.5,3) {$\scriptscriptstyle (1,50,289)$};
\node[] (181006962)  at (7.5,1) {$\scriptscriptstyle (9,50,3481)$};
\node[] (42423362)   at (7.5,-1){$\scriptscriptstyle (2,121,1681)$};
\node[] (1824216900) at (7.5,-3){$\scriptscriptstyle (9,121,8450)$};
\draw[] (224)  edge (2418);
\draw[] (2418) edge (218100);
\draw[] (2418) edge (418242);
\draw[] (218100) edge (2100578);
\draw[] (218100) edge (181006962);
\draw[] (418242) edge (42423362);
\draw[] (418242) edge (1824216900);
\end{tikzpicture}
\\[2ex]
\end{array}
\]
\[
\begin{array}{l}
\begin{tikzpicture}[scale=0.6]
\sffamily
\node[scale=3/2] (a)       at (-5,0) {$\scriptscriptstyle  T(6)$};
\node[] (2418)       at (-1,0) {$\scriptscriptstyle (1,2,3)$};
\node[] (218100)     at (1,2) {$\scriptscriptstyle (2,3,25)$};
\node[] (418242)     at (1,-2){$\scriptscriptstyle (1,3,8)$};
\node[] (2100578)    at (4.5,3) {$\scriptscriptstyle (3, 25, 392)$};
\node[] (181006962)  at (4.5,1) {$\scriptscriptstyle (2, 25, 243)$};
\node[] (4811445)   at (4.5,-1){$\scriptscriptstyle (3, 8, 121)$};
\node[] (5811849) at (4.5,-3){$\scriptscriptstyle (1, 8, 27)$};
\node[] (5184942436) at (8.5,-3.6){$\scriptscriptstyle (1, 27, 98)$};
\node[] (811849744980) at (8.5,-2.6){$\scriptscriptstyle (8, 27, 1225)$};
\node[] (811445582169) at (8.5,-1.6){$\scriptscriptstyle (3, 121, 1922)$};
\node[] (4144525921)   at (8.5,-0.6){$\scriptscriptstyle (8, 121, 5547)$};
\node[] (51964489)   at (8.5, 0.6){$\scriptscriptstyle (2, 243, 2401)$};
\node[] (91968405)   at (8.5, 1.6){$\scriptscriptstyle (25, 243, 35912)$};
\node[] (12049)   at (8.5, 2.6){$\scriptscriptstyle (3, 392, 6241)$};
\node[] (920841)   at (8.5, 3.6){$\scriptscriptstyle (25, 392, 57963)$};
% [1, 20, 49]
\draw[] (2418) edge (218100);
\draw[] (2418) edge (418242);
\draw[] (218100) edge (2100578);
\draw[] (218100) edge (181006962);
\draw[] (418242) edge (4811445);
\draw[] (418242) edge (5811849);
\draw[] (5811849) edge (5184942436);
\draw[] (5811849) edge (811849744980);
\draw[] (4811445) edge (811445582169);
\draw[] (4811445) edge (4144525921);
\draw[] (181006962) edge (51964489);
\draw[] (181006962) edge (91968405);
\draw[] (2100578) edge  (12049);
\draw[] (2100578) edge  (920841);
\end{tikzpicture}
\\[2ex]
\end{array}
\]
\[
\begin{array}{l}
\begin{tikzpicture}[scale=0.6]
\sffamily
\node[scale=3/2] (a)       at (-5,0) {$\scriptscriptstyle  T(5)$};
\node[] (2418)       at (-1,0) {$\scriptscriptstyle (1,4,5)$};
\node[] (218100)     at (1,2) {$\scriptscriptstyle (1,5,9)$};
\node[] (418242)     at (1,-2){$\scriptscriptstyle (4,5,81)$};
\node[] (2100578)    at (4.5,3) {$\scriptscriptstyle (1,9,20)$};
\node[] (181006962)  at (4.5,1) {$\scriptscriptstyle (5,9,196)$};
\node[] (4811445)   at (4.5,-1){$\scriptscriptstyle (4,81,1445)$};
\node[] (5811849) at (4.5,-3){$\scriptscriptstyle (5,81,1849)$};
\node[] (5184942436) at (8.5,-3.6){$\scriptscriptstyle (81,1849, 744980)$};
\node[] (811849744980) at (8.5,-2.6){$\scriptscriptstyle (5,1849, 42436)$};
\node[] (811445582169) at (8.5,-1.6){$\scriptscriptstyle (81, 1445, 582169)$};
\node[] (4144525921)   at (8.5,-0.6){$\scriptscriptstyle (4, 1445, 25921)$};
\node[] (51964489)   at (8.5, 0.6){$\scriptscriptstyle (9, 196, 8405)$};
\node[] (91968405)   at (8.5, 1.6){$\scriptscriptstyle (5, 196, 4489)$};
\node[] (12049)   at (8.5, 2.6){$\scriptscriptstyle (9, 20, 841)$};
\node[] (920841)   at (8.5, 3.6){$\scriptscriptstyle (1, 20, 49)$};
% [1, 20, 49]
\draw[] (2418) edge (218100);
\draw[] (2418) edge (418242);
\draw[] (218100) edge (2100578);
\draw[] (218100) edge (181006962);
\draw[] (418242) edge (4811445);
\draw[] (418242) edge (5811849);
\draw[] (5811849) edge (5184942436);
\draw[] (5811849) edge (811849744980);
\draw[] (4811445) edge (811445582169);
\draw[] (4811445) edge (4144525921);
\draw[] (181006962) edge (51964489);
\draw[] (181006962) edge (91968405);
\draw[] (2100578) edge  (12049);
\draw[] (2100578) edge  (920841);
%\draw[thick, dotted] (11,2) edge (14,2);
%\draw[thick, dotted] (11,-2) edge (14,-2);
\end{tikzpicture}
\end{array}
\]
\end{remark}

\goodbreak

\begin{proposition}\label{prop:scalesolutions}
Take $a \in {1,2,3,4,5,6,8,9}$, let $a=ba'$
a factorization into positive integers
and consider the set of scaled triples
\[
bS(a)  := \{(bu_0,bu_1,bu_2); \ u \in S(a)\}.
\]
Then we have $bS(a) \subseteq S(a')$.
Moreover, we can express $S(a')$ for $a' = 4,3,2,1$
in terms of the $S(a)$ for $a = 9,8,6,5$ as
\[
S(4) = 2S(8), \qquad
S(3) = 3S(9) \cup 2S(6), \qquad
S(2) = 4S(8) \cup 3S(6),
\]
\[
S(1) = 9S(9) \cup 8S(8)  \cup 6S(6)  \cup 5S(5).
\]
In particular, $T(4)$ is a tree, each of $T(3)$ and $T(2)$
is a union of two disjoint trees and $T(1)$ is a union
of four disjoint trees.
\end{proposition}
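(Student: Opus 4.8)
The plan is to dispatch the inclusion $bS(a) \subseteq S(a')$ by a direct substitution and then bootstrap the four identities from the mutation structure of Theorem~\ref{thm:intmarkov}. For the inclusion, I would take $u \in S(a)$ and plug $v := (bu_0,bu_1,bu_2)$ into the defining equation: one has $\nu(v)^2 = b^2\nu(u)^2 = b^2 a\,u_0u_1u_2$, whereas $a'v_0v_1v_2 = a'b^3u_0u_1u_2$, and these agree precisely because $a = ba'$. Hence $v \in S(a')$, and this already yields the inclusions ``$\supseteq$'' in all four displayed identities, since each scaling factor on the right is the $b$ of a factorization $a = ba'$.

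For the reverse inclusions I would isolate two compatibilities of scaling with the combinatorics of Lemma~\ref{lem:mutation}. First, scaling is compatible with mutation: writing $\lambda_c$ for the involution attached to the parameter $c$, a one-line computation using $a = ba'$ gives $\lambda_{a'}(bu) = b\,\lambda_a(u)$, and permuting coordinates trivially commutes with scaling; thus $u \mapsto bu$ intertwines one-step mutations on $S(a)$ with those on $S(a')$. Second, the greatest common divisor $\gcd(u_0,u_1,u_2)$ is a mutation invariant: permutations preserve it, while $\lambda_a$ fixes $u_0,u_1$ and replaces $u_2$ by $au_0u_1 - 2u_0 - 2u_1 - u_2$, which is divisible by $\gcd(u_0,u_1,u_2)$, so the gcd cannot drop; since $\lambda_a$ is an involution, it cannot rise either.

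Next I would read off from the proof of Theorem~\ref{thm:intmarkov} that for each $a' \in \{4,3,2,1\}$ the initial triples are exactly the scalings $b_i w_i$ of the unique initial triples $w_i$ of the relevant $S(a_i)$ with $a_i \in \{9,8,6,5\}$; for example $(3,3,3) = 3\cdot(1,1,1)$ and $(2,4,6) = 2\cdot(1,2,3)$ when $a' = 3$. Since every solution is a mutation of an initial one (Lemma~\ref{lem:norm}) and each $S(a_i)$ with $a_i \in \{9,8,6,5\}$ is a single mutation class (it has a unique initial triple), the first compatibility shows that the mutation class of $b_i w_i$ inside $S(a')$ equals $b_i S(a_i)$. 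Letting $b_i w_i$ range over all initial triples of $S(a')$ then gives ``$\subseteq$'', and hence the four identities.

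It remains to establish disjointness of the displayed unions and to read off the tree count, and here the gcd invariant does the decisive work. Any $u \in S(a_i)$ with $a_i \in \{9,8,6,5\}$ is primitive: if $\gcd(u) = d$ then $u/d \in S(a_i d)$, forcing $a_i d \le 9$ and therefore $d = 1$; consequently every element of $b_i S(a_i)$ has gcd exactly $b_i$. As the scaling factors $b_i$ appearing in each identity are pairwise distinct, the pieces are pairwise disjoint. Finally $u \mapsto b_i u$ preserves ascending order and, by the first compatibility, is a graph isomorphism from $T(a_i)$ onto $b_i T(a_i)$, so each piece is a tree by Remark~\ref{rem:markgraf}; since gcd is a mutation invariant, no one-step mutation joins two different pieces, so these are exactly the connected components of $T(a')$. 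This gives one tree for $T(4)$, two for each of $T(3)$ and $T(2)$, and four for $T(1)$. The only genuinely delicate point I anticipate is the bookkeeping that matches the initial triples of Theorem~\ref{thm:intmarkov} to the right scalings $b_i w_i$; once that is in place, both the identities and the tree statement follow formally from the two compatibilities.
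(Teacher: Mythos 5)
Your proposal is correct and follows essentially the same route as the paper: the inclusion $bS(a)\subseteq S(a')$ by the same one-line computation, and the reverse inclusions via Theorem~\ref{thm:intmarkov} together with the mutation-invariance of the gcd (the paper's Lemma~\ref{lem:gcdsolutions}, which you reprove). The only packaging difference is that the paper reads the gcd off the initial triple to get $u'=b_i u$ with $u$ integral and then reuses the scaling computation, whereas you transport whole mutation classes via the intertwining $\lambda_{a'}(bu)=b\,\lambda_a(u)$; your version has the minor advantage of making the disjointness of the pieces and the resulting tree count explicit, which the paper leaves implicit.
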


\begin{lemma}\label{lem:gcdsolutions}
Consider a solution $u \in S(a)$ of the squared Markov type equation
and a mutation $\tilde u$ of $u$.
Then $\gcd(u_0,u_1,u_2)=\gcd(\tilde u_0,\tilde u_1,\tilde u_2)$.
\end{lemma}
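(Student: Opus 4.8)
The plan is to reduce the claim to the behavior under a single one-step mutation and then to the elementary operation $\lambda$. Since a mutation is by definition a composition of one-step mutations, and a one-step mutation is a coordinate permutation followed by $\lambda$, it suffices to treat these two building blocks separately. For a coordinate permutation the invariance of the gcd is immediate, as $\gcd$ does not depend on the order of its arguments. Thus the entire statement follows once we establish that $\gcd(u_0,u_1,u_2) = \gcd(u_0',u_1',u_2')$ holds for $u' := \lambda(u)$.

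To handle $\lambda$, I would work with the polynomial expression for the third coordinate supplied in Lemma~\ref{lem:mutation}, namely
\[
u_2' \ = \ a u_0 u_1 - 2u_0 - 2u_1 - u_2,
\]
rather than with the fractional form. Writing $d := \gcd(u_0,u_1,u_2)$, every summand on the right-hand side is divisible by $d$: the entries $u_0,u_1,u_2$ are so by definition, and hence so are $au_0u_1$, $2u_0$ and $2u_1$. Therefore $d \mid u_2'$, and since $u_0' = u_0$ and $u_1' = u_1$, we obtain $d \mid \gcd(u_0',u_1',u_2') =: d'$.

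For the reverse divisibility I would exploit that $\lambda$ is an involution, as recorded in Lemma~\ref{lem:mutation}. Applying $\lambda$ to $u'$ returns $u$, so the same formula, read for $u'$ in place of $u$, yields
\[
u_2 \ = \ a u_0' u_1' - 2u_0' - 2u_1' - u_2'.
\]
Running the previous argument with the roles of $u$ and $u'$ exchanged then gives $d' \mid u_2$, whence $d' \mid d$. Combining the two divisibilities produces $d = d'$, which settles the step for $\lambda$ and therefore the lemma.

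There is no genuine obstacle here; the only point deserving attention is the choice of representation for $\lambda$. The symmetric, polynomial form of the third coordinate renders both divisibility inclusions transparent and lets the involution property do all the remaining work, whereas the fractional form $(u_0+u_1)^2/u_2$ would obscure them. Everything else is routine.
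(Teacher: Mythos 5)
Your proposal is correct and follows essentially the same route as the paper: reduce to a single application of $\lambda$ (permutations being harmless for the gcd) and read off both divisibility inclusions from the polynomial expression $u_2' = au_0u_1 - 2u_0 - 2u_1 - u_2$. You merely spell out the two-sided divisibility argument that the paper compresses into ``the assertion follows.''
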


\begin{proof}
It suffices to treat the case of an
ascendingly ordered $u$ and $\tilde u = \lambda(u)$.
Then $\tilde u_i = u_i$ for $i=0,1$ and
$\tilde u_2 = au_0u_1-2u_0-2u_1-u_2$.
The assertion follows.
\end{proof}

\begin{proof}[Proof of Proposition~\ref{prop:scalesolutions}]
In order to check $bS(a) \subseteq S(a')$, let $u \in S(a)$.
Then the scaled solution $bu$ satisifies    
\[
\frac{(bu_0+bu_1+bu_2)^2}{bu_0bu_1bu_2}
=
\frac{1}{b} \frac{(u_0+u_1+u_2)^2}{u_0u_1u_2}
=
\frac{a}{b}
=
a'.
\]
Thus, $bu \in S(a')$. This verifies in particular all
inclusions ``$\supseteq$'' of the displayed equations
in the proposition.
We show exemplarily $S(4) = 2S(8)$.
Let $u' \in S(4)$. Then Theorem~\ref{thm:intmarkov} tells
us that $u'$ is a mutation of $(2,2,4)$.
Thus, Lemma~\ref{lem:gcdsolutions} ensures $u'=2u$ with
$u \in \ZZ_{>0}^3$ and the above calculation shows
$u \in S(8)$.
\end{proof}

\begin{theorem}
\label{thm:loesungen_Markov_sind_quadratzahlen}
Let $a \in \{1,2,3,4,5,6,8,9\}$ and $u \in S(a)$. 
Then there exist integers~$x_0,x_1,x_2 \in \ZZ_{>0}$
such that, up to permuting the entries, $u$ is of the
form
\begin{equation*}
\begin{array}{lllll}
\displaystyle
\tfrac{9}{a} \left( x_0^2, x_1^2, x_2^2 \right),
&
a=1,3,9, 
&&
\displaystyle
\tfrac{8}{a} \left( x_0^2, x_1^2, 2x_2^2 \right),
&
a=1,2,4,8, 
\\[15pt]
\displaystyle
\tfrac{6}{a} \left( x_0^2, 2x_1^2, 3x_2^2 \right),
&
a=1,2,3,6,
&&                                                      
\displaystyle
\tfrac{5}{a} \left( x_0^2, x_1^2, 5x_2^2 \right),
&
a=1,5 .
\end{array}
\end{equation*} 
Moreover, for $a = 5,6,8,9$ the entries of $u$ are pairwise coprime and for any $a$ the
above numbers $x_0,x_1,x_2$ satisfy the  equation
\begin{equation*}
\xi_0 x_0^2 + \xi_1 x_1^2 + \xi_2 x_2^2 = \sqrt{a \xi_0 \xi_1 \xi_2} x_0x_1x_2, 
\quad 
(\xi_0, \xi_1, \xi_2) 
= 
\begin{cases}
\left(\tfrac{9}{a},\tfrac{9}{a},\tfrac{9}{a}\right), & a= 1,3,9, 
\\[5pt]
\left(\tfrac{8}{a},\tfrac{8}{a},\tfrac{16}{a}\right), & a= 1,2,4,8, 
\\[5pt]
\left(\tfrac{6}{a},\tfrac{12}{a},\tfrac{18}{a}\right), & a= 1,2,3,6, 
\\[5pt]
\left(\tfrac{5}{a},\tfrac{5}{a},\tfrac{25}{a}\right), & a= 1,5.
\end{cases}
\end{equation*}
\end{theorem}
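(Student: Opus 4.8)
The plan is to isolate a single mutation invariant that controls the square structure, namely the multiset of squarefree parts of the three entries, to settle the four \emph{primitive} values $a \in \{5,6,8,9\}$ with it, and then to transfer the result to $a \in \{1,2,3,4\}$ through the scaling identities of Proposition~\ref{prop:scalesolutions}.

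First I would prove the key claim: writing each entry as $u_i = s_i y_i^2$ with $s_i$ squarefree, the multiset $\{s_0,s_1,s_2\}$ is invariant under mutation. Since every one-step mutation is a permutation followed by the operation $\lambda$ of Lemma~\ref{lem:mutation}, and a permutation merely reorders the multiset, it suffices to treat $\lambda$. This map fixes $u_0,u_1$ and sends $u_2$ to $u_2' = (u_0+u_1)^2/u_2$, so that $u_2 u_2' = (u_0+u_1)^2$ is a perfect square. Hence the product of the squarefree parts of $u_2$ and $u_2'$ is a square; as both factors are squarefree they coincide, and the whole multiset is unchanged.

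With this invariant in hand the primitive cases are immediate. By Theorem~\ref{thm:intmarkov} every $u \in S(a)$ with $a = 9,8,6,5$ is a mutation of the initial triple $(1,1,1)$, $(1,1,2)$, $(1,2,3)$, $(1,4,5)$, whose squarefree-part multisets are $\{1,1,1\}$, $\{1,1,2\}$, $\{1,2,3\}$, $\{1,1,5\}$. The invariance forces every solution to share these, which is exactly the assertion that, up to order, $u$ equals $(x_0^2,x_1^2,x_2^2)$, $(x_0^2,x_1^2,2x_2^2)$, $(x_0^2,2x_1^2,3x_2^2)$, $(x_0^2,x_1^2,5x_2^2)$, i.e.\ the claimed shapes with prefactor $1$ in each primitive case. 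For $a \in \{1,2,3,4\}$ I would then invoke Proposition~\ref{prop:scalesolutions}: each such solution is $b\,v$ with $v$ a primitive solution and $b = a'/a$ the relevant scaling factor; substituting the primitive shape of $v$ and multiplying through by $b$ reproduces precisely the stated $\tfrac{a'}{a}$-multiple of the base shape.

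It remains to record coprimality and the companion equation. Lemma~\ref{lem:gcdsolutions} together with $\gcd = 1$ for the four initial triples gives $\gcd(u_0,u_1,u_2)=1$ whenever $a \in \{5,6,8,9\}$; were a prime $p$ to divide two of the entries, reducing $(u_0+u_1+u_2)^2 = a u_0u_1u_2$ modulo $p$ would force $p$ to divide the third as well, contradicting $\gcd=1$, so the entries are pairwise coprime. Finally, inserting $u_i = \xi_i x_i^2$ into $(u_0+u_1+u_2)^2 = a u_0 u_1 u_2$ gives $(\xi_0 x_0^2 + \xi_1 x_1^2 + \xi_2 x_2^2)^2 = a\,\xi_0\xi_1\xi_2\,(x_0x_1x_2)^2$, and taking positive square roots yields the displayed identity. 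The only genuine idea here is the squarefree-part invariance; the residual work is bookkeeping, and the one place demanding care is checking, family by family, that the scaled primitive shapes match the asserted forms and that $\sqrt{a\,\xi_0\xi_1\xi_2}$ is the integer $27/a$, $32/a$, $36/a$, $25/a$.
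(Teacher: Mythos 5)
Your proof is correct, and it takes a genuinely different route to the heart of the matter. Where the paper argues that $au_0u_1u_2=(u_0+u_1+u_2)^2$ is a perfect square, combines this with pairwise coprimality of the entries, and decomposes $u_i=\xi_i x_i^2$ via minimal divisors $\xi_i$ making $a\xi_0\xi_1\xi_2$ a square, you instead observe that the relation $u_2u_2'=(u_0+u_1)^2$ forces the squarefree parts of $u_2$ and $u_2'$ to coincide, so that the \emph{multiset} of squarefree parts is a mutation invariant; reading it off the four initial triples then settles $a=5,6,8,9$ at once. Your invariant is strictly finer than what the paper's argument extracts: in the paper, the case $a=6$ requires an extra divisibility argument (via Lemmas~\ref{lem:Markov_Teiler_n_teilt_alle_u_i} and~\ref{lem:gcdsolutions}, applied stepwise along mutations) to exclude the spurious pattern $\xi=(1,1,6)$, whereas your multiset invariant rules it out automatically since the initial triple $(1,2,3)$ has multiset $\{1,2,3\}$ and not $\{1,1,6\}$. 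You also obtain pairwise coprimality more directly (a prime dividing two entries divides $au_0u_1u_2$, hence $u_0+u_1+u_2$, hence the third entry), bypassing the paper's integral-dependence argument. The trade-off is that the paper's auxiliary lemmas are reused elsewhere in the classification (for instance in the residue computations of Section~\ref{sec:classifyfwpp}), so they are not wasted effort there; but as a self-contained proof of this theorem your approach is cleaner. The remaining steps — transfer to $a\in\{1,2,3,4\}$ via Proposition~\ref{prop:scalesolutions} and the derivation of the companion equation by substituting $u_i=\xi_ix_i^2$ and taking positive square roots — match the paper's.
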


\begin{lemma}
\label{lem:Markov_Teiler_n_teilt_alle_u_i}
Let $a \in \ZZ_{>0}$ and $u \in S(a)$.
Then $\gcd(u_0,u_1) = \gcd(u_0,u_1,u_2)$ holds.
\end{lemma}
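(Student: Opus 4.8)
The plan is to reduce the claimed equality to the single divisibility $\gcd(u_0,u_1) \mid u_2$. Indeed, $\gcd(u_0,u_1,u_2)$ always divides $\gcd(u_0,u_1)$, so once we know $\gcd(u_0,u_1) \mid u_2$, the reverse divisibility is automatic and the two greatest common divisors coincide. Everything thus comes down to showing that every prime power dividing both $u_0$ and $u_1$ divides $u_2$ as well.

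To this end I would argue prime by prime via $p$-adic valuations. Fix a prime $p$ and write $\alpha = v_p(u_0)$, $\beta = v_p(u_1)$, $\gamma = v_p(u_2)$; by the symmetry of the statement in $u_0$ and $u_1$ we may assume $\alpha \leq \beta$, so that $v_p(\gcd(u_0,u_1)) = \alpha$. The goal becomes $\gamma \geq \alpha$.

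The key step is to assume the contrary, $\gamma < \alpha \leq \beta$, and then read off the $p$-adic valuations on both sides of the defining equation $(u_0+u_1+u_2)^2 = a u_0 u_1 u_2$. Under this assumption $u_2$ is the unique summand of strictly smallest valuation, so the sum $u_0+u_1+u_2$ has valuation exactly $\gamma$, and the left hand side has valuation $2\gamma$. The right hand side has valuation $v_p(a) + \alpha + \beta + \gamma$. Comparing the two gives $\gamma = v_p(a) + \alpha + \beta \geq \alpha + \beta \geq 2\alpha$, which contradicts $\gamma < \alpha$ (here $\alpha \geq 1$, since $\alpha > \gamma \geq 0$). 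Hence $\gamma \geq \alpha$ for every prime $p$, which is precisely the divisibility $\gcd(u_0,u_1) \mid u_2$.

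I expect the only genuine subtlety to be the valuation bookkeeping. One must resist the temptation to work directly modulo $d = \gcd(u_0,u_1)$: reducing the equation that way yields only $d \mid u_2^2$, which is strictly weaker than $d \mid u_2$ and does not suffice. It is exactly the passage to $p$-adic valuations, together with the observation that a uniquely minimal summand determines the valuation of the sum, that upgrades the weak conclusion to the full statement.
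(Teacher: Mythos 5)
Your proof is correct, but it follows a genuinely different route from the paper's. The paper rewrites the defining equation as a monic quadratic in $u_2$,
\[
u_2^2 + (2u_0+2u_1-au_0u_1)\,u_2 + (u_0+u_1)^2 \ = \ 0,
\]
divides through by $q^2$ with $q := \gcd(u_0,u_1)$, and observes that the result is an equation of integral dependence for the rational number $u_2/q$ over $\ZZ$; since $\ZZ$ is integrally closed, $u_2/q$ is an integer. Your argument instead fixes a prime $p$ and compares $p$-adic valuations of the two sides of the original equation under the hypothesis $v_p(u_2) < v_p(u_0) \le v_p(u_1)$, using the ultrametric fact that a uniquely minimal summand determines the valuation of the sum; the resulting identity $v_p(u_2) = v_p(a)+v_p(u_0)+v_p(u_1)$ is incompatible with that hypothesis. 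Both arguments are complete and short. The paper's version packages the whole divisibility into one appeal to normality of $\ZZ$ (equivalently, the rational root theorem for monic polynomials) and avoids any case analysis on valuations; yours is more elementary in that it uses nothing beyond valuation arithmetic, and it makes visible exactly where the hypothesis fails, at the cost of a prime-by-prime reduction. Your closing remark is apt: a bare congruence modulo $\gcd(u_0,u_1)$ is indeed too weak, and it is precisely the monic/integral structure (in the paper) or the exact valuation count (in your version) that upgrades it.
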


\begin{proof}
First recall that $u \in S(a)$ means $(u_0 + u_1 + u_2)^2 = a u_0 u_1 u_2$.
We rewrite this equation as  
\[
u_2^2 + (2u_0+2u_1-au_0u_1)u_2 + (u_0+u_1)^2 \ = \ 0.
\]
Now, set $q := \gcd(u_0,u_1)$. The task is to show that
$q$ divides $u_2$. Dividing the above equation by $q^2$ yields
\[
\left(\frac{u_2}{q}\right)^2 + \left(2\frac{u_0+u_1}{q}-a\frac{u_0u_1}{q} \right) \frac{u_2}{q} + \left(\frac{u_0+u_1}{q} \right)^2 \ = \ 0.
\]
Since $q$ divides $u_0+u_1$ and $u_0u_1$, this is an equation
of integral dependence for the fraction $u_2/q$.
As a factorial ring $\ZZ$ is normal and we can conclude 
$u_2/q \in \ZZ$.
\end{proof}

\begin{proof}[Proof of Theorem~\ref{thm:loesungen_Markov_sind_quadratzahlen}]
Consider any solution $u \in S(a)$ of the squared Markov type equation,
where $a=9,8,6,5$.
Theorem~\ref{thm:intmarkov} and Lemma~\ref{lem:Markov_Teiler_n_teilt_alle_u_i}
tell us $\gcd(u_0,u_1,u_2)=1$.
Let $\xi_i$ be the minimal divisors of $u_i$ such that  
$a\xi_0\xi_1\xi_2$ is a square number and set
$x_i := u_i/\xi_i$.
Then we can write 
\[
(u_0+u_1+u_2)^2 \ = \ au_0u_1u_2 \ = \ a\xi_0\xi_1\xi_2x_0x_1x_2,
\]
where we may assume $\xi_0 \le \xi_1 \le \xi_2$.
By Lemma~\ref{lem:gcdsolutions}, the $u_i$ are pairwise
coprime.
Thus, the $x_i$ are pairwise coprime as well and,
consequently, must be square numbers.
We claim that, according to the value of $a$,  the
vector $\xi = (\xi_0,\xi_1,\xi_2)$ is given as follows:
\begin{center}
\begin{tabular}{c|c|c|c|c|c}
$a$ & $9$ & $8$ & $6$ & $5$
\\ \hline
$\xi$ & $(1,1,1)$ &  $(1,1,2)$ &  $(1,2,3)$ &  $(1,1,5)$  
\end{tabular}
\end{center}
This is obvious for $a=9,8,5$. For $a=6$,
Lemmas~\ref{lem:Markov_Teiler_n_teilt_alle_u_i}
and~\ref{lem:gcdsolutions} yield that precisely
one of the entries of $u$ is divisible by $2$
and precisely one by $3$.
For the initial triple $(1,2,3)$, no entry is divisible by $6$.
Applying stepwise Lemmas~\ref{lem:Markov_Teiler_n_teilt_alle_u_i}
and~\ref{lem:gcdsolutions}, we see that 
the latter holds for any mutation of
$(1,2,3)$. This excludes $\xi = (1,1,6)$.

Thus, the assertion is verified for $a=9,8,6,5$.
For $a=4,3,2,1$, we stress again Theorem~\ref{thm:intmarkov}
and Lema~\ref{lem:gcdsolutions}
and see that in this case the solutions all are obtained by
scaling solutions of the cases $a=9,8,6,5$ with suitable integers.
The assertion follows.
\end{proof}

\begin{remark}
\label{rem:usualmarkov}
Theorem~\ref{thm:loesungen_Markov_sind_quadratzahlen} shows
in particular that the solutions of the \emph{squared} Markov
type equations can be obtained from the solutions
of the usual Markov type equations considered in~\cite{Hu},
see also~\cite[Sec.~3]{KaNo}.
\end{remark}

%\cleardoublepage

\section{Background on fake weighted projective spaces}

A \emph{fake weighted projective space} is a $\QQ$-factorial,
projective toric variety of Picard number one.
The defining data of these varieties are  
\emph{projective generator matrices} that means integral
$n \times (n+1)$ matrices
\[
P \ = \ [v_0, \ldots, v_n]
\]
with pairwise distinct primitive columns $v_0, \ldots, v_n \in \ZZ^n$
generating $\QQ^n$ as a convex cone. 
With $P$ we associate the unique fan $\Sigma(P)$ in $\ZZ^n$ having
the maximal cones
\[
\sigma_i \ := \ \cone(v_j; \ j \ne i), \quad i = 0, \ldots, n.
\]
The fake weighted projective space $Z(P)$ is the toric
variety defined by $\Sigma(P)$.
Up to isomorphy, we obtain every fake weighted projective space
this way.

\begin{remark}
\label{rem:genmatiso}
Given projective generator matrices $P,P'$,
one has $Z(P) \cong Z(P')$ if and only if $P' = S \cdot P \cdot T$
with a unimodular matrix $S$ and a permutation matrix~$T$.
\end{remark}

We write $\TT^n$ for the $n$-fold direct product
$\KK^* \times \ldots \times \KK^*$.
A \emph{quasitorus} is an algebraic group isomorphic
to a direct product of $\TT^n \times G$ with $G$ finite
abelian.
Every toric variety admits a natural presentation
as a quotient of an affine space by a quasitorus.
For the fake weighted projective spaces, this looks as
follows.

\begin{remark}
Let $P = (p_{ij})$ be a projective $n \times (n+1)$ generator
matrix. Then~$P$ defines a homomorphism of tori
\[
\pi \colon \TT^{n+1} \ \to \ \TT^n,
\qquad
(t_0,\ldots, t_n)
\ \mapsto \ 
(t_0^{p_{10}} \cdots t_n^{p_{1n}}, \ldots , t_0^{p_{n0}} \cdots t_n^{p_{nn}}).
\]
The kernel $H := \ker(\pi) \subseteq \TT^{n+1}$
is given as $H = \KK^* \times G$ and has 
$K = \ZZ \oplus \Gamma$ as its character group.
Moreover, $H$ acts naturally on $\KK^{n+1}$ and we have
\[
Z(P) \ = \ (\KK^{n+1} \setminus \{0\}) / H.
\]
This gives rise to homogeneous coordinates:
for $z \in \KK^{n+1} \setminus \{0\})$,
write $[z] \in Z(P)$ for the associated point.
For instance, the toric fixed points in $Z(P)$
are
\[
z(0) := [1,0, \ldots, 0],
\quad
z(1) := [0,1,0, \ldots, 0],
\quad
\ldots,
\quad
z(n) := [0, \ldots, 0,1].
\]
\end{remark}

\begin{construction}
\label{constr:dualseq}
Let $P$ be a projective $n \times (n+1)$ generator matrix.
Then $P$ and its transpose $P^*$ fit into a pair of mutually
dual exact sequences
\[
\xymatrix@R=.5cm{
0
\ar[r]
&
{\ZZ}
\ar[r]
&
{\ZZ^{n+1}}
\ar[r]^{P}
&
{\ZZ^n}
&
\\
0
\ar@{<-}[r]
&
K
\ar@{<-}[r]_{Q}
&
{\ZZ^{n+1}}
\ar@{<-}[r]_{\quad P^*}
&
{\ZZ^n}
\ar@{<-}[r]
&
0.
}
\]
Fixing a splitting $K = \ZZ \oplus \Gamma$ into a free
part and the torsion part, we can represent the projection
$\ZZ^{n+1} \to K$ by a \emph{degree matrix $Q$} corresponding
to $P$, that means
\[
Q
\ = \ 
[q_0, \ldots, q_n]
\ = \ 
\left[
\begin{array}{ccc}
u_0 & \ldots & u_n
\\
\eta_0 & \ldots  & \eta_n  
\end{array}
\right],
\qquad
u_i \in \ZZ_{>0},
\quad
\eta_i \in \Gamma
\]
such that any $n$ of the columns $q_i = (u_i,\eta_i)$ of $Q$
generate the group $K = \ZZ \oplus \Gamma$.
If $\Gamma = 0$, then the second row of $Q$ is omitted.
\end{construction}

We denote by $\Cl(Z)$ the divisor class group of a normal
variety and by $\Cl(Z,z)$ the \emph{local class group}
of a point $z \in Z$, that means the factor group of all Weil
divisors by those being principal near $z$.
The order of $\Cl(Z,z)$ is denoted by $\cl(z)$.
The \emph{fake weight vector} of $Z(P)$ is 
\[
w \ = \ w(P) \ = \ (w_0,\ldots,w_n),
\qquad
w_i \ := \ \vert \det(v_j; \ j \ne i) \vert.
\]

\begin{proposition}
\label{prop:clandcr}
Consider $Z = Z(P)$, its fake weight vector~$w$
and $K = \ZZ \oplus \Gamma$ as in
Construction~\ref{constr:dualseq}.
The divisor class group and any degree matrix
of $Z$ satisfy
\[
\Cl(Z) \ \cong \ K,
\qquad
Q
\ = \
\left[
\begin{array}{ccc}
u_0 & \ldots & u_n
\\
\eta_0 & \ldots & \eta_n
\end{array}
\right],
\quad
\begin{array}{l}
\mu := \gcd(w_0, \ldots ,w_n),
\\[2pt]
u := \mu^{-1} \cdot w.
\end{array}
\]
The torsion part $\Gamma$ of $\Cl(Z)$ is of
order $\mu$.
The local class groups of the toric fixed points
and their orders are given by 
\[
\Cl(Z,z(i)) \ = \ K / \ZZ \cdot q_i,
\qquad
\cl(z(i)) \ = \ [K : \ZZ \cdot q_i] \ = \ w_i,  
\]
where $q_i = (u_i,\eta_i)$ are the columns of the degree matrix $Q$.
Finally, the Cox ring of $Z$ together with its $\Cl(Z)$-grading
is given by
\[
\mathcal{R}(Z)
\ = \
\KK[T_0,\ldots, T_n],
\qquad\qquad
\deg(T_i) \ = \ q_i \ \in \ K.
\]
\end{proposition}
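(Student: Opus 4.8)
The plan is to derive every assertion from the two mutually dual exact sequences of Construction~\ref{constr:dualseq} together with the standard toric dictionary.

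First I would establish $\Cl(Z)\cong K$ and the Cox ring statement. The bottom sequence presents $K$ as $\ZZ^{n+1}/\im(P^*)$, and $P^*\colon\ZZ^n\to\ZZ^{n+1}$ is exactly the map sending a character $m$ to the vector $(\langle m,v_i\rangle)_i$ of vanishing orders of the corresponding rational function along the coordinate divisors $D_i=\{T_i=0\}$. Since the columns $v_i$ generate $\QQ^n$ as a cone, they span $\QQ^n$, so $P^*$ is injective and $\Cl(Z)=\ZZ^{n+1}/\im(P^*)\cong K$; under this identification the class of $D_i$ is the $i$-th column $q_i$ of $Q$. The Cox ring assertion is then Cox's quotient presentation: the coordinate ring of $\KK^{n+1}$ in the quotient $Z(P)=(\KK^{n+1}\setminus\{0\})/H$ is $\KK[T_0,\ldots,T_n]$, graded by $\Cl(Z)=K$ via $\deg(T_i)=[D_i]=q_i$.

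Next I would pin down the two numerical ingredients. For the first row $u=\mu^{-1}w$, I use that the top sequence identifies the free rank-one summand of $K$ with $\ker(P)\subseteq\ZZ^{n+1}$: dually, the projection $\ZZ^{n+1}\to K\to\ZZ$ onto the free part is the functional defining $\ker(P)$. By Cramer's rule the $n+1$ columns of $P$ satisfy the relation $\sum_i(-1)^i\det(v_j;\,j\neq i)\,v_i=0$, so the primitive generator of $\ker(P)$ has $i$-th entry $\pm w_i/\mu$ with $\mu=\gcd(w_0,\ldots,w_n)$; matching signs and the positivity convention $u_i>0$ gives $u_i=w_i/\mu$, that is $u=\mu^{-1}w$. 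For the torsion order, I would argue that the torsion subgroup $\Gamma$ of $K=\ZZ^{n+1}/\im(P^*)$ is isomorphic to the cokernel $\ZZ^n/\im(P)$ of the top-row map $P$, since passing to the transpose interchanges free corank and carries the torsion of one cokernel onto that of the other. As $P$ is surjective over $\QQ$, the group $\ZZ^n/\im(P)$ is finite of order equal to the greatest common divisor of the maximal minors of $P$; these minors are precisely the $\pm w_i$, whence $|\Gamma|=\gcd(w_0,\ldots,w_n)=\mu$.

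Finally I would treat the local class groups. The fixed point $z(i)$ is the distinguished point of the maximal cone $\sigma_i=\cone(v_j;\,j\neq i)$, which is full-dimensional because the $v_j$ with $j\neq i$ span $\QQ^n$; the standard description of local class groups of simplicial toric varieties then gives $\Cl(Z,z(i))=\ZZ^n/\langle v_j;\,j\neq i\rangle$, of order $|\det(v_j;\,j\neq i)|=w_i$. Transporting this through $\Cl(Z)\cong K$, the local class group is $K$ modulo the classes of the coordinate divisors not meeting $z(i)$; since $z(i)=[0,\ldots,1,\ldots,0]$ lies off exactly $D_i$, we obtain $\Cl(Z,z(i))=K/\ZZ q_i$, and a Smith normal form computation for $K=\ZZ\oplus\ZZ/\mu\ZZ$ and $q_i=(u_i,\eta_i)$ confirms $[K:\ZZ q_i]=\mu u_i=w_i$, consistent with the determinant. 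The main obstacle I anticipate is the sign and normalization bookkeeping in the second step: making the two dual sequences genuinely compatible so that the free part of $Q$ is the positive normalized weight vector $u=\mu^{-1}w$ rather than a sign- or index-twisted variant, and establishing the duality isomorphism $\Gamma\cong\ZZ^n/\im(P)$ cleanly rather than by ad hoc computation.
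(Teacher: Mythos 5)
Your proposal is correct, but it takes a more self-contained route than the paper. The paper's own proof consists of a single observation -- that $P$ annihilates the fake weight vector $w$, so that $u=\mu^{-1}w$ generates $\ker(P)$ and hence gives the first row of $Q$ -- and then delegates everything else (the isomorphism $\Cl(Z)\cong K$, the local class groups, the Cox ring) to the cited results \cite[Lemma~2.1.4.1 and Prop.~2.4.2.3]{ArDeHaLa}. You instead derive all of these facts directly from the two dual exact sequences and the standard toric dictionary: the identification $\Cl(Z)=\ZZ^{n+1}/\im(P^*)$, the Smith-normal-form comparison of $\mathrm{coker}(P)$ and $\mathrm{coker}(P^*)$ giving $\vert\Gamma\vert=\gcd(w_0,\dots,w_n)$, and the excision sequence $\Cl(Z)\to\Cl(Z\setminus D_i)$ for the local class groups. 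This buys a proof readable without the external reference, at the cost of length. Two points are worth tightening. First, your ``matching signs'' step is where the actual content of the paper's one-line argument sits: you need that the signed minors $(-1)^i\det(v_j;\,j\neq i)$ all carry the \emph{same} sign, so that the primitive kernel generator is $\pm\mu^{-1}w$ rather than a sign-twisted variant; this follows because the $v_i$ positively span $\QQ^n$ (so some nonnegative relation exists) and all $w_i>0$ (so the unique relation has no vanishing coefficient), forcing all coefficients strictly positive. Second, your closing index computation is phrased for $K=\ZZ\oplus\ZZ/\mu\ZZ$, which is only guaranteed for $n=2$; the proposition is stated for arbitrary $n$, where $\Gamma$ is a general finite abelian group of order $\mu$. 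The computation survives: filtering $K/\ZZ q_i$ by the image of the torsion part gives $[K:\ZZ\cdot q_i]=u_i\,\vert\Gamma\vert=u_i\mu=w_i$ for any finite $\Gamma$, so this is a matter of wording, not of substance.
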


\begin{proof}
For the shape of $Q$, note that $P$ annihilates
the fake weight vector~$w$ and thus~$u$ generates the
kernel of $P$.
For the remaining statements see
\cite[Lemma~2.1.4.1 and Prop.~2.4.2.3]{ArDeHaLa}.
\end{proof}

\begin{corollary}
\label{cor:fwpstors}
For any $n$-dimensional fake weighted projective
space, the torsion part of $\Cl(Z)$ is generated
by at most $n-1$ elements.
\end{corollary}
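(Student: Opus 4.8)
The plan is to read off the torsion of $\Cl(Z) \cong K$ from the Smith normal form of the generator matrix $P$ and to exploit the primitivity of its columns. By Construction~\ref{constr:dualseq} and Proposition~\ref{prop:clandcr}, the group $K$ is the cokernel $\ZZ^{n+1}/\im(P^*)$ of the injection $P^* \colon \ZZ^n \to \ZZ^{n+1}$. Writing the Smith normal form of $P$, which has the same invariant factors $d_1 \mid \cdots \mid d_n$ as $P^*$, and recalling that $P$ has full row rank $n$ (its columns span $\QQ^n$), one obtains
\[
K \ \cong \ \ZZ \ \oplus \ \bigoplus_{i=1}^n \ZZ/d_i\ZZ,
\]
where the free summand accounts for the corank $(n+1) - n = 1$. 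Hence the torsion part is $\Gamma \cong \bigoplus_{i=1}^n \ZZ/d_i\ZZ$, and its minimal number of generators equals the number of indices $i$ with $d_i > 1$.

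First I would observe that this number is a priori at most $n$; the whole point is to improve the bound to $n-1$ by producing one trivial invariant factor. The smallest invariant factor $d_1$ equals the $\gcd$ of all entries of $P$. Since every column $v_i$ of $P$ is primitive, the entries of the single column $v_0$ already have $\gcd$ equal to $1$; a fortiori the $\gcd$ over all entries of $P$ equals $1$, so $d_1 = 1$. Consequently $\ZZ/d_1\ZZ$ is trivial and
\[
\Gamma \ \cong \ \bigoplus_{i=2}^n \ZZ/d_i\ZZ
\]
is generated by at most $n-1$ elements, as claimed.

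I expect no serious obstacle here: the argument is in essence the remark that primitivity of a single column kills the first invariant factor. The one point deserving care is the bookkeeping of the free rank when passing from $P$ to its cokernel, that is, making sure the corank contributes exactly one copy of $\ZZ$ and that the torsion of $\ZZ^{n+1}/\im(P^*)$ really is $\bigoplus \ZZ/d_i\ZZ$; both are immediate from the Smith normal form together with the full row rank of $P$.

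Alternatively, and perhaps more geometrically, one can identify $\Gamma$ with the finite group $\ZZ^n/\im(P) = \ZZ^n/\langle v_0,\ldots,v_n\rangle$ and argue directly: since $v_0$ is primitive, the sublattice $\ZZ v_0$ is saturated, so $\ZZ^n/\ZZ v_0$ is free of rank $n-1$; then $\Gamma$ is a quotient of $\ZZ^n/\ZZ v_0$ by the images of $v_1,\ldots,v_n$, and therefore needs at most $n-1$ generators. I would present whichever of these two formulations sits most comfortably with the surrounding notation.
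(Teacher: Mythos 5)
Your argument is correct. The paper states this as an immediate corollary of Construction~\ref{constr:dualseq} and Proposition~\ref{prop:clandcr} without writing out a proof, and your reasoning supplies exactly the intended justification: the torsion part of $K = \ZZ^{n+1}/\im(P^*)$ is governed by the $n$ invariant factors of $P$, and primitivity of a single column forces the first invariant factor to be $1$ (equivalently, in your second formulation, $\ZZ v_0$ is saturated, so $\Gamma$ is a quotient of the rank-$(n-1)$ lattice $\ZZ^n/\ZZ v_0$). Either formulation is fine; both the identification of $\Gamma$ with $\ZZ^n/\langle v_0,\ldots,v_n\rangle$ and the Smith normal form bookkeeping are standard and check out.
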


\begin{remark}
\label{rem:wps}
Consider a fake weighted projetive plane $Z=Z(P)$.
If the associated fake weight vector $w$ is primitive,
then $Q = [w_0,\ldots,w_n]$ and $Z = \PP(w_0,\ldots,w_n)$
is an ordinary weighted projective space.
\end{remark}

\begin{proposition}
\label{prop:fwppKsquare}
Consider a fake weighted projective plane $Z = Z(P)$.
An anticanonical divisor on $Z$ is given by
\[
-\mathcal{K}_Z \ = \ D_0+ D_1 + D_2,
\qquad
D_i :=  V(T_i) \subseteq Z.
\]
Moreover, the canonical self intersection number
can be expressed in terms of the
fake weight vector $w = (w_0,w_1,w_2)$ as
\[
\mathcal{K}_Z^2
\ = \
\frac{(w_0+w_1+w_2)^2}{w_0w_1w_2}.
\]
\end{proposition}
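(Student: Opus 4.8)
The plan is to treat the two assertions separately, deriving the first from the standard toric description of the anticanonical class and then reducing the second to a single intersection-number computation on the rank-one space $\Cl(Z)\otimes\QQ$.

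For the anticanonical divisor I would invoke the general toric fact that the canonical class of a toric variety is represented by minus the sum of the torus-invariant prime divisors. The fan $\Sigma(P)$ of a fake weighted projective plane has exactly the three rays through $v_0,v_1,v_2$, so the associated invariant divisors are precisely $D_0,D_1,D_2$ with $D_i = V(T_i)$, and therefore $-\mathcal{K}_Z = D_0+D_1+D_2$. By Proposition~\ref{prop:clandcr} the class of $D_i$ in $\Cl(Z)\cong K = \ZZ\oplus\Gamma$ equals $q_i=(u_i,\eta_i)$, so under the projection $K\otimes\QQ\cong\QQ$ to the free part it is sent to $u_i$.

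For the self-intersection the key point is that $Z$ has Picard number one, whence $\Cl(Z)\otimes\QQ\cong\QQ$ is one-dimensional. A symmetric bilinear form on a one-dimensional space is multiplication by a single rational constant $c$, so the intersection form satisfies $D_i\cdot D_j = c\,u_i u_j$ for all $i,j$, and it suffices to pin down $c$ by one value. To this end I would use that two distinct rays $v_j,v_k$ span the maximal cone $\sigma_i=\cone(v_j,v_k)$ of $\Sigma(P)$, whose multiplicity is $\mult(\sigma_i)=|\det(v_j,v_k)|=w_i$ by the very definition of the fake weight vector. The standard intersection formula for simplicial toric surfaces then gives $D_j\cdot D_k = 1/w_i$ for $\{j,k\}=\{0,1,2\}\setminus\{i\}$. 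Matching this with $c\,u_j u_k$ and using $w_i=\mu u_i$ from Proposition~\ref{prop:clandcr} yields $c = 1/(\mu\,u_0u_1u_2)$, a value independent of the chosen $i$ and hence consistent. Expanding $\mathcal{K}_Z^2=(D_0+D_1+D_2)^2 = c\,(u_0+u_1+u_2)^2$ and substituting $u_i=w_i/\mu$, the powers of $\mu$ cancel and produce the claimed $\mathcal{K}_Z^2 = (w_0+w_1+w_2)^2/(w_0w_1w_2)$.

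The main obstacle is the correct normalization of the toric intersection numbers in the merely $\QQ$-factorial, non-smooth setting, where the factor $1/\mult(\sigma_i)$ is essential and intersection numbers are only $\QQ$-valued. I would either cite the intersection theory of simplicial toric surfaces in~\cite{ArDeHaLa} directly, or else verify $D_j\cdot D_k=1/w_i$ by pulling the $D_i$ back along a smooth toric resolution $Z'\to Z$ and tracking the discrepancies; in either case the one-dimensionality of $\Cl(Z)\otimes\QQ$ keeps the bookkeeping light, since a single correctly computed product already fixes the whole intersection form.
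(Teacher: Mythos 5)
Your argument is correct, and it is genuinely more self-contained than the paper's. The paper disposes of the first claim as standard toric geometry and, for the self-intersection, normalizes the generator matrix to the form $P=\left[\begin{smallmatrix}l_0&l_1&l_2\\ d_0&d_1&d_2\end{smallmatrix}\right]$ and then simply cites an external formula (\cite[Rem.~3.3]{HaHaSp}) that evaluates $\mathcal{K}_Z^2$ from that data. You instead exploit Picard number one directly: the intersection form on the one-dimensional space $\Cl(Z)\otimes\QQ$ is a single constant $c$ against the degrees $u_i$, and one toric intersection number $D_j\cdot D_k=1/\mult(\sigma_i)=1/w_i$ pins it down as $c=1/(\mu\,u_0u_1u_2)$, independently of $i$; substituting $w_i=\mu u_i$ from Proposition~\ref{prop:clandcr} then gives the stated formula, and your bookkeeping (the powers of $\mu$ cancelling as $\mu^2/\mu^3=1/\mu$) is right. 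What your route buys is transparency and independence from the companion paper; what it costs is that you must justify the normalization $D_j\cdot D_k=1/|\det(v_j,v_k)|$ in the merely $\QQ$-factorial setting, which you correctly flag and which is indeed standard for simplicial toric surfaces (verifiable by pulling back along a toric resolution, as you suggest). The built-in consistency check that the three choices of $i$ give the same $c$ is a nice bonus absent from the citation-based proof.
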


\begin{proof}
The first statement is standard toric geometry.
For the second one, we may assume that the
generator matrix $P$ of $Z$ is of the form
\[
P
\ = \
\left[
\begin{array}{lll}
l_0 & l_1 & l_2 
\\
d_0 & d_1 & d_2
\end{array} 
\right],
\qquad
l_i \ne 0, \ i = 0,1,2.
\] 
Then we can use for instance~\cite[Rem.~3.3]{HaHaSp}
for the computation of the canonical self intersection
number.
\end{proof}

The following construction shows how to gain fake weighted
projective planes $Z$ directly from degree matrices and
it allows us to switch between the presentations of a
given $Z$ in terms of a degree matrix or a corresponding
degree matrix.

\begin{construction}
\label{constr:degmat2Z}
Let $K = \ZZ \oplus \Gamma$ with $\Gamma$
finite abelian and let $Q = [q_0, \ldots, q_n]$
with $q_i \in \ZZ_{>0} \oplus \Gamma$
a \emph{degree matrix in $K$}, i.e., 
any $n$ of the $q_i$ generate $K$ as a group.
The quasitorus $H = \Spec \, \KK[K]$
has $K$ as its character group and acts
on $\KK^{n+1}$ via 
\[
h \cdot z \ = \ (\chi^{q_0}(h) z_0, \ldots, \chi^{q_n}(h) z_n).
\]
The orbit space $Z(Q) := \KK^{n+1} \setminus \{0\}$
is a fake weighted projective space.
More precisely, starting with $Q$, we can reproduce
the pair of mutually dual exact sequences from
Construction~\ref{constr:dualseq},
\[
\xymatrix@R=.5cm{
0
\ar[r]
&
{\ZZ}
\ar[r]
&
{\ZZ^{n+1}}
\ar[r]^{P}
&
{\ZZ^n}
&
\\
0
\ar@{<-}[r]
&
K
\ar@{<-}[r]_{Q}
&
{\ZZ^{n+1}}
\ar@{<-}[r]_{\quad P^*}
&
{\ZZ^n}
\ar@{<-}[r]
&
0,
}
\]
by choosing the columns of $P^*$ as the members of a
$\ZZ$-basis for the kernel of~$Q$, regarded as
the map $\ZZ^{n+1} \to K$, $e_i \to q_i$.
Then the transpose $P$ is a projective generator
matrix with $Q$ as corresponding degree matrix 
and we have
\[
Z(Q) \ \cong \ Z(P).
\]  
\end{construction}

\begin{example}
Consider a $2 \times 3$ degree matrix $Q$ in $K$.
Then, thanks to Corollary~\ref{cor:fwpstors},
we have $K = \ZZ \oplus \ZZ/\mu\ZZ$, that means
\[
Q
\ = \
\left[
\begin{array}{ccc}
u_0 & u_1 & u_2
\\[3pt]
\bar \eta_0 & \bar \eta_1 & \bar \eta_2
\end{array}
\right],
\qquad
u_0,u_1,u_2 \in \ZZ_{>0},
\quad
\bar \eta_0,\bar \eta_1,\bar \eta_2 \in \ZZ/ \mu \ZZ.
\]
Moreover, the quasitorus $H = \Spec \, \KK[K]$ is explicitly given
as $H = \KK^* \times C(\mu)$, where $C(\mu) \subseteq \KK^*$
is the group of $\mu$-th roots of unity and $H$ acts on $\KK^3$ via
\[
(t,\zeta) \cdot (z_0,z_1,z_2) \ = \ (t^{u_0}\zeta^{\eta_0}z_0, \, t^{u_1}\zeta^{\eta_1}z_1, \, t^{u_2}\zeta^{\eta_2}z_2).
\]
\end{example}

\begin{proposition}
\label{prop:isochar}
Let $Q,Q'$ be degree matrices in $K = \ZZ \oplus \Gamma$
and $Z,Z'$ the associated weighted projective spaces.
Then the following statements are equivalent:
\begin{enumerate}
\item
We have an isomorphism of the varieties $Z \cong Z'$.
\item
We have an isomorphism of graded algebras $\mathcal{R}(Z) \cong \mathcal{R}(Z')$.
\item
$Q' = [\imath(q_0), \ldots, \imath(q_n)] \cdot B$ with $\imath \in \Aut(K)$
and a permutation matrix $B$.
\end{enumerate}
\end{proposition}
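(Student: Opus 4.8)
The plan is to prove the cycle of implications $(i)\Rightarrow(iii)\Rightarrow(ii)\Rightarrow(i)$, exploiting throughout that by Proposition~\ref{prop:clandcr} the Cox ring $\mathcal{R}(Z)=\KK[T_0,\dots,T_n]$ is a polynomial ring whose $K$-grading is entirely recorded by the columns $q_i=\deg(T_i)$ of $Q$, and that by Construction~\ref{constr:degmat2Z} the variety $Z=Z(Q)\cong Z(P)$ is recovered from $Q$ via a projective generator matrix $P$ sitting in the dual exact sequences of Construction~\ref{constr:dualseq}.

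For $(i)\Rightarrow(iii)$ I would pass to generator matrices $P,P'$ attached to $Q,Q'$. By Remark~\ref{rem:genmatiso}, the isomorphism $Z\cong Z'$ yields $P'=S\,P\,T$ with $S$ unimodular and $T$ a permutation matrix. The heart of the argument is to transport this relation to the degree side by dualizing. Since $Q$ is the cokernel of $P^*$, one has $\ker(Q)=\im(P^*)$, and likewise $\ker(Q')=\im(P'^*)$. From $P'^*=T^*P^*S^*$ and the invertibility of $S^*$ I get $\im(P'^*)=T^*\im(P^*)=T^{-1}\ker(Q)$, using $T^*=T^{-1}$ for a permutation matrix. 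Hence $\ker(Q')=T^{-1}\ker(Q)=\ker(Q\circ T)$. As $Q'$ and $Q\circ T$ are two surjections $\ZZ^{n+1}\to K$ with the same kernel, they differ by an automorphism $\imath\in\Aut(K)$, that is $Q'=\imath\circ Q\circ T=[\imath(q_0),\dots,\imath(q_n)]\cdot T$; setting $B:=T$ gives precisely~(iii).

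The implication $(iii)\Rightarrow(ii)$ is a direct construction: if $B$ realises the permutation $\pi$, then sending $T_i\mapsto T_{\pi^{-1}(i)}$ defines a $\KK$-algebra isomorphism $\varphi$ of the two polynomial rings which, carrying the grading automorphism $\imath$, satisfies $\deg_{Z'}(\varphi(T_i))=q'_{\pi^{-1}(i)}=\imath(q_i)$ and is therefore graded. For $(ii)\Rightarrow(i)$ I would invoke the functoriality of the Cox construction: writing $Z=(\KK^{n+1}\setminus\{0\})/H$ with $H=\Spec\KK[K]$ as in Construction~\ref{constr:degmat2Z}, a graded isomorphism $\varphi$ of Cox rings is the comorphism of an $H$-equivariant isomorphism of the total coordinate spaces $\KK^{n+1}$ which fixes the origin (the maximal homogeneous ideal) and hence preserves the invariant open subset $\KK^{n+1}\setminus\{0\}$; passing to quotients gives $Z\cong Z'$. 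This last step rests on the general correspondence between variety isomorphisms and graded Cox-ring isomorphisms, for which I would cite~\cite{ArDeHaLa}.

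I expect the dualization in $(i)\Rightarrow(iii)$ to be the main obstacle: one must keep careful track of transposes and of the identification $K\cong\Cl(Z)$, so that the resulting $\imath$ is a genuine automorphism of the fixed group $K=\ZZ\oplus\Gamma$ rather than merely an abstract isomorphism onto a second copy. The descent in $(ii)\Rightarrow(i)$ is conceptually routine once the equivariance and the preservation of the irrelevant locus are noted, and the remaining implication is purely formal.
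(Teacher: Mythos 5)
Your proposal is correct, but it is organised differently from the paper's proof. The paper establishes the two biconditionals separately: (i)$\Leftrightarrow$(ii) is delegated entirely to the general statement that two $\QQ$-factorial Mori dream spaces of Picard number one are isomorphic iff their Cox rings are isomorphic as graded algebras (\cite[Rem.~3.3.4.2]{ArDeHaLa}), and (ii)$\Leftrightarrow$(iii) is declared clear from Proposition~\ref{prop:clandcr}, i.e.\ from the fact that a graded isomorphism of the polynomial Cox rings must permute the variables up to scalars while twisting the grading by an automorphism of $K$ (this uses positivity of the $u_i$). You instead run the cycle (i)$\Rightarrow$(iii)$\Rightarrow$(ii)$\Rightarrow$(i), invoking the Cox-ring correspondence only for (ii)$\Rightarrow$(i), and replacing the other direction by a direct lattice argument: Remark~\ref{rem:genmatiso} gives $P'=SPT$, and dualizing the exact sequences of Construction~\ref{constr:dualseq} yields $\ker(Q')=\ker(Q\circ T)$, hence $Q'=\imath\circ Q\circ T$. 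This is a legitimate and more explicit route; what it buys is that the passage from a variety isomorphism to the combinatorial datum (iii) is carried out concretely on the lattice side rather than through the graded-algebra formalism, and it makes visible exactly where the automorphism $\imath$ of the fixed group $K$ comes from. The cost is that the heavy lifting is shifted onto Remark~\ref{rem:genmatiso}, whose ``only if'' direction (an abstract isomorphism of fake weighted projective spaces can be made torus-equivariant and hence comes from a lattice isomorphism respecting the fans) is itself of comparable depth to the cited Cox-ring statement; since the paper states that remark without proof, your use of it is fair within this text. The only point to watch is the bookkeeping of column permutations versus precomposition with $T$ in $Q'=[\imath(q_0),\ldots,\imath(q_n)]\cdot B$, which you handle correctly.
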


\begin{proof}
The equivalence of the first two statements holds more
generally for any two $\QQ$-factorial Mori dream spaces of
Picard number one; see~\cite[Rem.~3.3.4.2]{ArDeHaLa}.
The equivalence of the second and the third
and statement is clear by Proposition~\ref{prop:clandcr}
and the definition of an isomorphism of graded algebras.
\end{proof}

Proposition~\ref{prop:isochar} becomes an efficient
classification tool in the case of a cyclic torsion part
when we combine it with the following.

\begin{lemma}
\label{lem:autzopluszn}
Consider the abelian group $G := \ZZ \oplus \ZZ/ \mu \ZZ$.
Then any choice of $\epsilon = \pm 1$, $\bar a \in \ZZ/ \mu \ZZ$ and
$\bar c \in (\ZZ/ \mu \ZZ)^*$ defines an automorphism
\[
\varphi_{\epsilon, \bar a, \bar c} \colon G \ \to \ G, 
\qquad
(k, \bar m) \ \mapsto \ (\epsilon k, \, \bar a \bar k + \bar c \bar m).
\]
Conversely, every automorphism $\varphi \colon G \to G$
is of this form.
In particular, $\Aut(G)$ is of order $2 \mu \Phi(\mu)$,
where $\Phi$ denotes Euler's totient function.
\end{lemma}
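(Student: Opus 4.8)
The plan is to treat the two assertions separately: first that each triple $(\epsilon,\bar a,\bar c)$ yields an automorphism, then the converse that every element of $\Aut(G)$ arises this way; the order formula will then drop out from a bijection between parameter triples and automorphisms.

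For the forward direction, I would first record that $\varphi_{\epsilon,\bar a,\bar c}$ is well defined, since $k$ enters the second coordinate only through the genuine integer $k$ while the ambiguity in the representative of $\bar m$ is absorbed by multiplication with $\bar c \in \ZZ/\mu\ZZ$; additivity in both arguments is then immediate from the defining formula, so $\varphi_{\epsilon,\bar a,\bar c}$ is a homomorphism. To see that it is bijective, I would compute the composition law
\[
\varphi_{\epsilon',\bar a',\bar c'} \circ \varphi_{\epsilon,\bar a,\bar c}
\ = \
\varphi_{\epsilon'\epsilon,\ \epsilon\bar a' + \bar c'\bar a,\ \bar c'\bar c},
\]
which shows that these maps are closed under composition and contain the identity $\varphi_{1,\bar 0,\bar 1}$. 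Because $\epsilon \in \{\pm 1\}$ is invertible in $\ZZ$ and $\bar c \in (\ZZ/\mu\ZZ)^*$ is invertible, one can solve for a two-sided inverse of the same shape, so $\varphi_{\epsilon,\bar a,\bar c} \in \Aut(G)$.

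For the converse, let $\varphi \in \Aut(G)$ be arbitrary. The key structural input is that the torsion subgroup $\{0\}\oplus\ZZ/\mu\ZZ$, being the set of elements of finite order, is characteristic; hence $\varphi(0,\bar 1) = (0,\bar c)$, and since $\varphi$ preserves the order $\mu$ of $(0,\bar 1)$, the element $\bar c$ generates $\ZZ/\mu\ZZ$, i.e.\ $\bar c \in (\ZZ/\mu\ZZ)^*$. Writing $\varphi(1,\bar 0) = (\epsilon,\bar a)$, I would pass to the free quotient $G/(\text{torsion}) \cong \ZZ$, on which $\varphi$ induces an automorphism; as $\Aut(\ZZ) = \{\pm 1\}$, this forces $\epsilon = \pm 1$. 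Evaluating $\varphi$ on $(k,\bar m) = k(1,\bar 0) + m(0,\bar 1)$ for any integer lift $m$ of $\bar m$ then reproduces exactly the formula for $\varphi_{\epsilon,\bar a,\bar c}$, the outcome being independent of the lift precisely because $\bar c\bar m$ is well defined.

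Finally, reading off $\varphi(1,\bar 0)$ and $\varphi(0,\bar 1)$ shows that $\varphi$ determines the triple $(\epsilon,\bar a,\bar c)$ uniquely, so the assignment $(\epsilon,\bar a,\bar c) \mapsto \varphi_{\epsilon,\bar a,\bar c}$ is a bijection onto $\Aut(G)$; counting the parameters gives $\abs{\Aut(G)} = 2\mu\Phi(\mu)$. I expect the only genuine subtlety to lie in the converse, where one must argue that $\epsilon$ and $\bar c$ are forced to be units rather than arbitrary elements; this is exactly the point at which the two structural facts—that the torsion subgroup is characteristic and that $\Aut(\ZZ) = \{\pm 1\}$—do the real work, while everything else is a direct verification.
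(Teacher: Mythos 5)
Your proof is correct and follows essentially the same route as the paper: both directions rest on the torsion subgroup being characteristic, so that $\varphi(0,\bar 1)=(0,\bar c)$ with $\bar c$ a unit, and on forcing $\epsilon=\pm 1$ from the image of $(1,\bar 0)$ (the paper does this by noting the images of the two generators must still generate $G$, you by passing to $G/\mathrm{tors}\cong\ZZ$ — a cosmetic difference). Your explicit composition law is a slightly more systematic way of verifying invertibility than the paper's ``one directly checks,'' but the substance is identical.
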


\begin{proof}
One directly checks that $\varphi_{\epsilon, \bar a, \bar c}$ 
is an automorphism of $G$.
Let $\varphi \colon G \to G$ be any automorphism.
We claim
\[
\varphi(1,\bar 0) \ = \ (\epsilon, \bar a),
\quad
\epsilon \ = \ \pm 1,
\quad \bar a \in \ZZ/ \mu \ZZ,
\qquad
\varphi(0,\bar 1) \ = \ (0, \bar c),
\quad  \bar c \in (\ZZ/ \mu \ZZ)^*.
\]
Indeed, $(0,\bar 1)$ generates the torsion part
$\ZZ/ \mu \ZZ$, which in turn is fixed by $\varphi$,
hence generated by $\varphi(0,\bar 1)$.
This gives the second identity.
So far, we have
\[
\ZZ \oplus \ZZ / \mu \ZZ
\ = \ 
\ZZ \cdot \varphi(1,\bar 0) \oplus \ZZ \cdot \varphi(0,\bar 1)
\ = \
\ZZ \cdot (\epsilon, \bar a) \oplus \ZZ \cdot (0, \bar c)
\]
with some $\epsilon \in \ZZ$ and $\bar a \in \ZZ/ \mu \ZZ$.
This forces $\epsilon = \pm 1$, verifying the claim.
Now, comparing the values on $(1,\bar 0)$ and $(0,\bar 1)$
shows $\varphi = \varphi_{\epsilon, \bar a, \bar c}$.
\end{proof}

\begin{remark}
\label{rem:admopq}
Consider $K = \ZZ \oplus \ZZ / \mu \ZZ$ 
and a degree matrix $Q = [q_0, \ldots, q_n]$
in $K$, where $q_i = (u_i, \bar \eta_i)$.
Then only the automorphims of the
form $\varphi_{1, \bar a, \bar c}$
from Lemma~\ref{lem:autzopluszn}
respect positivity in the first
line of $Q$.
Moreover, observe
\[
\left[
\begin{array}{ccc}
\varphi_{1, \bar a, \bar c}  (q_0)
&
\varphi_{1, \bar a, \bar c}  (q_1)
&
\varphi_{1, \bar a, \bar c}  (q_2)
\end{array}
\right]
\ = \ 
\left[
\begin{array}{cc}
1 & 0
\\      
a & c
\end{array}
\right]
\cdot
\left[
\begin{array}{ccc}
u_0 & u_1 & u_2
\\      
\bar \eta_0 & \bar \eta_1 & \bar \eta_2
\end{array}
\right].
\]
That means that we can realize in practice the
application of an automorphism $\varphi_{1, \bar a, \bar c}$
by matrix multiplication or, equivalently, by
stepwise adding multiples of the upper to the
lower row and scaling the lower row by units.
\end{remark}

\begin{remark}
Let $Q = [q_0,\ldots,q_n]$ be a degree matrix in $K = \ZZ \oplus \Gamma$.
Then the fake weight vector of $Z(Q)$ can be written as
\[
w \ = \ w(Q) \ = \ (w_0, \ldots, w_n), \qquad w_i \ = \ [K:\ZZ \cdot q_i].
\]
We say that $Q$ is \emph{associated} with $w = w(Q)$. Note that
$w(Q) = w(Q')$ may happen even if $Z(Q)$ and $Z(Q')$ are not
isomorphic to each other.
\end{remark}

\begin{definition}
\label{def:adjusted}
Let $a \in \{1,2,3,4,5,6,8,9\}$, $w \in S(a)$, 
$\mu := \gcd(w_0,w_1,w_2)$ and $u := \mu^{-1} \cdot w$.
We call $w$ \emph{adjusted} if one
of the following holds
\begin{itemize}
\item
$a=9$ and $w_0 \le w_1 \le w_2$,
\item
$a=8$, $w_0 \le w_1$ and $2 \mid w_2$,  
\item
$a=6$, $2 \mid w_1$ and $3 \mid w_2$,  
\item
$a=5$, $w_0 \le w_1$ and $5 \mid w_2$,  
\item
$a\le 4$ and $u$ is adjusted.
\end{itemize}
Moreover, we call a degree matrix $Q$ associated
with $w \in S(a)$ \emph{adjusted} if
$w$ is adjusted and for $a \le 4$, the matrix $Q$
is of the form
\[
Q
\ = \
\left[
\begin{array}{ccc}
u_0 & u_1 & u_2
\\[3pt]
\bar 0 & \bar 1 & \bar \eta  
\end{array}
\right],
\quad
1 \le \eta < \mu,
\qquad
\bar \eta
=
\begin{cases}
\bar 2,                  & \text{if } a=9, \ u = (1,1,1),
\\
\bar 2, \bar 5,          & \text{if } a=9, \ u = (1,1,4),
\\
\bar 1, \bar 3, \bar 5,  & \text{if } a=8, \ u = (1,1,2).
\end{cases}
\]
\end{definition}

\begin{proposition}
\label{rem:adjdegmat}
Let $a \in \{1,2,3,4,5,6,8,9\}$,
$w \in S(a)$ adjusted and
$Q,Q'$ adjusted degree matrices 
associated with $w$.
Then we have
\[
Z(Q) \cong Z(Q') \ \iff \ Q=Q'.
\]  
\end{proposition}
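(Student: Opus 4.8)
The backward implication is trivial, so the plan is to assume $Z(Q)\cong Z(Q')$ and deduce $Q=Q'$. First I would invoke the equivalence (i)$\Leftrightarrow$(iii) of Proposition~\ref{prop:isochar} to get $Q'=[\imath(q_0),\imath(q_1),\imath(q_2)]\cdot B$ with $\imath\in\Aut(K)$, $K=\ZZ\oplus\ZZ/\mu\ZZ$, and $B$ a permutation matrix, and then use Lemma~\ref{lem:autzopluszn} to write $\imath=\varphi_{\epsilon,\bar b,\bar c}$. Since $Q,Q'$ are associated with the same adjusted $w$, their first rows both equal the fixed positive tuple $u=\mu^{-1}w$; as $\imath$ scales the first row by $\epsilon$, Remark~\ref{rem:admopq} (positivity) forces $\epsilon=1$, so $\imath$ fixes the first row and the permutation $\pi$ underlying $B$ can only interchange indices $i$ with equal $u_i$.

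Next I would dispose of the easy cases. If $\mu=1$ (that is $a\in\{5,6,8,9\}$), the torsion row is absent and $[w]$ is the only degree matrix associated with $w$, so $Q=Q'$. If $\mu>1$ (so $a\le 4$) and the entries of $u$ are pairwise distinct, then $\pi=\id$, the columns of $\imath(Q)$ and $Q'$ agree, and I would exploit that the degree-matrix condition ``$q_0=(u_0,\bar 0)$ and $q_1=(u_1,\bar 1)$ generate $K$'' gives $\gcd(u_0,\mu)=1$; comparing torsion entries in columns $0$ and $1$ then yields $\bar b=\bar 0$ and $\bar c=\bar 1$, i.e.\ $\imath=\id$ and $Q=Q'$.

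The substantial part is the triples with a repeated entry. Here I would first note, via Theorem~\ref{thm:loesungen_Markov_sind_quadratzahlen} (entries of $u\in S(a\mu)$ pairwise coprime) and the equation $(2+x)^2=a\mu\,x$, that in adjusted order $u\in\{(1,1,1),(1,1,4),(1,1,2)\}$. For $u=(1,1,1)$ Definition~\ref{def:adjusted} admits only $\bar\eta=\bar 2$, so there is a single adjusted matrix. For $u=(1,1,4)$ and $(1,1,2)$ the entry $u_2$ is distinct, so $\pi$ is either trivial (handled above) or the transposition of $0,1$; in the latter case comparing $\varphi_{1,\bar b,\bar c}(Q)\cdot B$ with $Q'$ forces $\bar b=\bar 1$, $\bar c=-\bar 1$ and $\bar\eta'=\overline{u_2-\eta}$. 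Thus an isomorphism can only identify $\bar\eta$ with $\overline{u_2-\eta}$, and since Definition~\ref{def:adjusted} picks the admissible $\bar\eta$ as one representative per orbit of the involution $\eta\mapsto u_2-\eta$ on the residues with $\gcd(\eta,\mu)=\gcd(\eta-u_2,\mu)=1$, distinct admissible values stay unidentified, giving $\bar\eta=\bar\eta'$.

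The hard part will be exactly this repeated-entry bookkeeping, where a nontrivial column permutation survives and one must pin down which pairs $(\imath,B)$ preserve the adjusted torsion row $(\bar 0,\bar 1,\ast)$. I expect $u=(1,1,1)$ to be the most delicate: all three columns may then be permuted, the induced action on $\eta$ is the full anharmonic $S_3$-action $\eta\mapsto 1-\eta,\ \eta^{-1},\dots$, and I would need to check that it collapses the admissible residues to the single class $\bar 2$, consistently with Definition~\ref{def:adjusted}. Verifying that the admissible lists for $(1,1,4)$ and $(1,1,2)$ meet each involution-orbit exactly once is the remaining computational core.
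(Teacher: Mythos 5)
Your proof is correct and follows exactly the route the paper takes: the paper's own proof of this proposition is the single line ``The assertion follows from Proposition~\ref{prop:isochar}'', and your argument is precisely the detailed unwinding of that reduction via Lemma~\ref{lem:autzopluszn} and Remark~\ref{rem:admopq}, with your orbit computations for $u=(1,1,1),(1,1,4),(1,1,2)$ consistent with the isomorphisms the paper records in Examples~\ref{ex:iso-19-111}, \ref{ex:iso-19-114} and~\ref{ex:iso-18-112}. One small remark: for $u=(1,1,1)$ Definition~\ref{def:adjusted} admits only the single value $\bar\eta=\bar 2$, so the full $S_3$-orbit check you flag as the most delicate step is not actually needed for the injectivity asserted here (it is only needed to see that every such $Z$ admits an adjusted matrix, i.e.\ for completeness of the classification).
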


\begin{proof}
The assertion follows from Proposition~\ref{prop:isochar}.
\end{proof}

\begin{example}
For $a= 1,2,3,4,5,6,8,9$, we list divisor class group,
corresponding generator and adjusted degree matrices
for the fake weighted projective planes having an
initial triple as fake weight vector:
  
%%%%%%%%%%%%%%%  
%%% ZEILE 1 %%%
%%%%%%%%%%%%%%%  

{\tiny
\[
%%%%%%%% 
%%%%%%%%
\begin{array}{lc}
a=9
&
\ZZ 
\\[2pt]
\left[
\setlength{\arraycolsep}{3pt}
\begin{array}{rrr}
1 & 1 & -2
\\      
0 & 1 & -1
\end{array}
\right]
& 
\left[
\setlength{\arraycolsep}{3pt}
\begin{array}{rrr}
1 & 1 & 1
\end{array}
\right]
\end{array}  
%%%%%%%%
\qquad
%%%%%%%% 
\begin{array}{lc}
a=8
&
\ZZ
\\[2pt]
\left[
\setlength{\arraycolsep}{3pt}
\begin{array}{rrr}
1 & 1 & -1
\\      
0 & -2 & 1
\end{array}
\right]
& 
\left[
\setlength{\arraycolsep}{3pt}
\begin{array}{rrr}
1 & 1 & 2
\end{array}
\right]
\end{array}  
%%%%%%%%
\qquad
%%%%%%%%
\begin{array}{lc}
a=6
&
\ZZ
\\[2pt]
\left[
\setlength{\arraycolsep}{3pt}
\begin{array}{rrr}
1 & 1 & -1
\\      
0 & -3 & 2
\end{array}
\right]
& 
\left[
\setlength{\arraycolsep}{3pt}
\begin{array}{rrr}
1 & 2 & 3
\end{array}
\right]
\end{array}  
\]
}
  
%%%%%%%%%%%%%%%  
%%% ZEILE 2 %%%
%%%%%%%%%%%%%%%

{\tiny
\[
%%%%%%%%
%%%%%%%%
\begin{array}{lc}
a=5
&
\ZZ
\\[2pt]
\left[
\setlength{\arraycolsep}{3pt}
\begin{array}{rrr}
1 & 1 & -1
\\      
0 & -5 & 4
\end{array}
\right]
& 
\left[
\setlength{\arraycolsep}{3pt}
\begin{array}{rrr}
1 & 4 & 5
\end{array}
\right]
\end{array}  
%%%%%%%% 
\qquad
%%%%%%%% 
\begin{array}{lc}
a=4
&
\ZZ \oplus \ZZ/2\ZZ
\\[2pt]
\left[
\setlength{\arraycolsep}{3pt}
\begin{array}{rrr}
1 & 1 & -1
\\      
0 & -4 & 2
\end{array}
\right]
& 
\left[
\setlength{\arraycolsep}{3pt}
\begin{array}{rrr}
1 & 1 & 2
\\      
\bar 0 & \bar 1 & \bar 1
\end{array}
\right]
\end{array}  
%%%%%%%%
\qquad
%%%%%%%% 
\begin{array}{lc}
a=3
&
\ZZ \oplus \ZZ/3\ZZ
\\[2pt]
\left[
\setlength{\arraycolsep}{3pt}
\begin{array}{rrr}
1 & 1 & -1
\\      
0 & -3 & 3
\end{array}
\right]
& 
\left[
\setlength{\arraycolsep}{3pt}
\begin{array}{rrr}
1 & 1 & 1
\\      
\bar 0 & \bar 1 & \bar 2
\end{array}
\right]
\end{array}  
\]
}
 
%%%%%%%%%%%%%%%  
%%% ZEILE 3 %%%
%%%%%%%%%%%%%%%

{\tiny
\[
%%%%%%%%
%%%%%%%%
\begin{array}{lc}
a=3
&
\ZZ \oplus \ZZ/2\ZZ
\\[2pt]
\left[
\setlength{\arraycolsep}{3pt}
\begin{array}{rrr}
1 & 1 & -1
\\      
0 & -6 & 4
\end{array}
\right]
& 
\left[
\setlength{\arraycolsep}{3pt}
\begin{array}{rrr}
1 & 2 & 3
\\      
\bar 0 & \bar 1 & \bar 1
\end{array}
\right]
\end{array}  
%%%%%%%% 
\qquad
%%%%%%%% 
\begin{array}{lc}
a=2
&
\ZZ \oplus \ZZ/4\ZZ
\\[2pt]
\left[
\setlength{\arraycolsep}{3pt}
\begin{array}{rrr}
1 & 1 & -1
\\      
0 & -8 & 4
\end{array}
\right]
& 
\left[
\setlength{\arraycolsep}{3pt}
\begin{array}{rrr}
1 & 1 & 2
\\      
\bar 0 & \bar 1 & \bar 1
\end{array}
\right]
\end{array}  
%%%%%%%%
\qquad
%%%%%%%% 
\begin{array}{lc}
a=2
&
\ZZ \oplus \ZZ/4\ZZ
\\[2pt]
\left[
\setlength{\arraycolsep}{3pt}
\begin{array}{rrr}
2 & 2 & -3
\\      
1 & -3 & 1
\end{array}
\right]
& 
\left[
\setlength{\arraycolsep}{3pt}
\begin{array}{rrr}
1 & 1 & 2
\\      
\bar 0 & \bar 1 & \bar 3
\end{array}
\right]
\end{array}  
\]
}
 
%%%%%%%%%%%%%%%  
%%% ZEILE 4 %%%
%%%%%%%%%%%%%%%

{\tiny
\[
%%%%%%%%
%%%%%%%%
\begin{array}{lc}
a=2
&
\ZZ \oplus \ZZ/3\ZZ
\\[2pt]
\left[
\setlength{\arraycolsep}{3pt}
\begin{array}{rrr}
1 & 1 & -1
\\      
0 & -9 & 6
\end{array}
\right]
& 
\left[
\setlength{\arraycolsep}{3pt}
\begin{array}{rrr}
1 & 2 & 3
\\      
\bar 0 & \bar 1 & \bar 1
\end{array}
\right]
\end{array}  
%%%%%%%% 
\qquad
%%%%%%%% 
\begin{array}{lc}
a=2
&
\ZZ \oplus \ZZ/3\ZZ
\\[2pt]
\left[
\setlength{\arraycolsep}{3pt}
\begin{array}{rrr}
3 & 3 & -3
\\      
1 & -2 & 1
\end{array}
\right]
& 
\left[
\setlength{\arraycolsep}{3pt}
\begin{array}{rrr}
1 & 2 & 3
\\      
\bar 0 & \bar 1 & \bar 2
\end{array}
\right]
\end{array}  
%%%%%%%%
\qquad
%%%%%%%% 
\begin{array}{lc}
a=1
&
\ZZ \oplus \ZZ/9\ZZ
\\[2pt]
\left[
\setlength{\arraycolsep}{3pt}
\begin{array}{rrr}
3 & 3 & -6
\\      
1 & -2 & 1
\end{array}
\right]
& 
\left[
\setlength{\arraycolsep}{3pt}
\begin{array}{rrr}
1 & 1 & 1
\\      
\bar 0 & \bar 1 & \bar 2
\end{array}
\right]
\end{array}  
\]
}
 
%%%%%%%%%%%%%%%  
%%% ZEILE 5 %%%
%%%%%%%%%%%%%%%

{\tiny
\[
%%%%%%%%
%%%%%%%%
\begin{array}{lc}
a=1
&
\ZZ \oplus \ZZ/8\ZZ
\\[2pt]
\left[
\setlength{\arraycolsep}{3pt}
\begin{array}{rrr}
1 & 1 & -1
\\      
0 & -16 & 8
\end{array}
\right]
& 
\left[
\setlength{\arraycolsep}{3pt}
\begin{array}{rrr}
1 & 1 & 2
\\      
\bar 0 & \bar 1 & \bar 1
\end{array}
\right]
\end{array}  
%%%%%%%%
\qquad
%%%%%%%% 
\begin{array}{lc}
a=1
&
\ZZ \oplus \ZZ/8\ZZ
\\[2pt]
\left[
\setlength{\arraycolsep}{3pt}
\begin{array}{rrr}
4 & 4 & -4
\\      
1 & -3 & 1
\end{array}
\right]
& 
\left[
\setlength{\arraycolsep}{3pt}
\begin{array}{rrr}
1 & 1 & 2
\\      
\bar 0 & \bar 1 & \bar 3
\end{array}
\right]
\end{array}  
%%%%%%%% 
\qquad
%%%%%%%% 
\begin{array}{lc}
a=1
&
\ZZ \oplus \ZZ/8\ZZ
\\[2pt]
\left[
\setlength{\arraycolsep}{3pt}
\begin{array}{rrr}
2 & 2 & -2
\\      
0 & -7 & 3
\end{array}
\right]
& 
\left[
\setlength{\arraycolsep}{3pt}
\begin{array}{rrr}
1 & 1 & 2
\\      
\bar 0 & \bar 1 & \bar 5
\end{array}
\right]
\end{array}  
\]
}

%%%%%%%%%%%%%%%  
%%% ZEILE 6 %%%
%%%%%%%%%%%%%%%

{\tiny
\[
%%%%%%%%
%%%%%%%%
\begin{array}{lc}
a=1
&
\ZZ \oplus \ZZ/6\ZZ
\\[2pt]
\left[
\setlength{\arraycolsep}{2.96pt}
\begin{array}{rrr}
1 & 1 & -1
\\      
0 & -18 & 12
\end{array}
\right]
& 
\left[
\setlength{\arraycolsep}{2.96pt}
\begin{array}{rrr}
1 & 2 & 3
\\      
\bar 0 & \bar 1 & \bar 1
\end{array}
\right]
\end{array}  
%%%%%%%% 
\qquad
%%%%%%%% 
\begin{array}{lc}
a=1
&
\ZZ \oplus \ZZ/6\ZZ
\\[2pt]
\left[
\setlength{\arraycolsep}{2.96pt}
\begin{array}{rrr}
3 & 3 & -3
\\      
2 & -4 & 2
\end{array}
\right]
& 
\left[
\setlength{\arraycolsep}{2.96pt}
\begin{array}{rrr}
1 & 2 & 3
\\      
\bar 0 & \bar 1 & \bar 5
\end{array}
\right]
\end{array}  
%%%%%%%%
\qquad
%%%%%%%% 
\begin{array}{lc}
a=1
&
\ZZ \oplus \ZZ/5\ZZ
\\[2pt]
\left[
\setlength{\arraycolsep}{2.96pt}
\begin{array}{rrr}
1 & 1 & -1
\\      
0 & -25 & 20
\end{array}
\right]
& 
\left[
\setlength{\arraycolsep}{2.96pt}
\begin{array}{rrr}
1 & 4 & 5
\\      
\bar 0 & \bar 1 & \bar 1
\end{array}
\right]
\end{array}  
\]
}

%%%%%%%%%%%%%%%  
%%% ZEILE 7 %%%
%%%%%%%%%%%%%%%

{\tiny
\[
%%%%%%%%
%%%%%%%%
\begin{array}{lc}
a=1
&
\ZZ \oplus \ZZ/5\ZZ
\\[2pt]
\left[
\setlength{\arraycolsep}{3pt}
\begin{array}{rrr}
5 & 5 & -5
\\      
3 & -2 & 1
\end{array}
\right]
& 
\left[
\setlength{\arraycolsep}{3pt}
\begin{array}{rrr}
1 & 4 & 5
\\      
\bar 0 & \bar 1 & \bar 2
\end{array}
\right]
\end{array}  
%%%%%%%% 
\qquad
%%%%%%%% 
\begin{array}{lc}
a=1
&
\ZZ \oplus \ZZ/5\ZZ
\\[2pt]
\left[
\setlength{\arraycolsep}{3pt}
\begin{array}{rrr}
5 & 5 & -5
\\      
1 & -4 & 3
\end{array}
\right]
& 
\left[
\setlength{\arraycolsep}{3pt}
\begin{array}{rrr}
1 & 4 & 5
\\      
\bar 0 & \bar 1 & \bar 3
\end{array}
\right]
\end{array}  
%%%%%%%%
\qquad
%%%%%%%% 
\begin{array}{lc}
a=1
&
\ZZ \oplus \ZZ/5\ZZ
\\[2pt]
\left[
\setlength{\arraycolsep}{3pt}
\begin{array}{rrr}
5 & 5 & -5
\\      
2 & -3 & 2
\end{array}
\right]
& 
\left[
\setlength{\arraycolsep}{3pt}
\begin{array}{rrr}
1 & 4 & 5
\\      
\bar 0 & \bar 1 & \bar 4
\end{array}
\right]
\end{array}  
\]
}

\end{example}

%\cleardoublepage

\section{Proof of Theorem~\ref{thm:introthm1}}
\label{sec:classifyfwpp}

We classify the fake weighted projective planes
$Z$ of integral degree $\mathcal{K}_Z^2$.
Proposition~\ref{prop:fwppKsquare} shows that the
potential fake weight vectors of such $Z$ are the
solution triples of the squared Markov type
equations.
It turns out that in fact every solution triple
occurs as a fake weight vector.
We determine and verify all associated adjusted
degree matrices and list them in a redundance free
manner.

\begin{proposition}
Let $Z$ be a projective toric surface of Picard number
one of degree $a := \mathcal{K}_Z^2 \in \ZZ$.
Then $a \in \{1,2,3,4,5,6,8,9\}$. 
\end{proposition}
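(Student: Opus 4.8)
The plan is to reduce the statement to Theorem~\ref{thm:intmarkov}, which already lists precisely those $a$ for which the squared Markov type equation $(w_0+w_1+w_2)^2 = aw_0w_1w_2$ admits a positive integral solution. The bridge between the geometry and the number theory is Proposition~\ref{prop:fwppKsquare}, which expresses the degree of a fake weighted projective plane $Z = Z(P)$ in terms of its fake weight vector $w = (w_0,w_1,w_2)$ as
\[
\mathcal{K}_Z^2 \ = \ \frac{(w_0+w_1+w_2)^2}{w_0w_1w_2}.
\]
So the first step is to observe that a projective toric surface of Picard number one is exactly a fake weighted projective plane, hence falls under the setup of Section~3 and carries a well-defined fake weight vector $w \in \ZZ_{>0}^3$.

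Given this, the argument is essentially a one-line translation. Suppose $a := \mathcal{K}_Z^2$ is an integer. Writing $w = w(P)$ for the fake weight vector of $Z$, the displayed formula shows that $w$ is a positive integral solution of the squared Markov type equation for this value of $a$; that is, $w \in S(a)$ with $a \in \ZZ_{>0}$. In particular the solution set $S(a)$ is non-empty, so Theorem~\ref{thm:intmarkov} applies and forces $a \in \{1,2,3,4,5,6,8,9\}$, which is exactly the claim.

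I expect no serious obstacle here, since all the real work has been done in Section~2. The only point requiring a word of care is the identification of an arbitrary projective toric surface $Z$ of Picard number one with a fake weighted projective plane $Z(P)$, and the verification that its fake weight entries $w_i = \vert \det(v_j;\ j\ne i)\vert$ are genuine positive integers so that $w$ lies in $\ZZ_{>0}^3$; this is immediate from the definition of a projective generator matrix and from Proposition~\ref{prop:clandcr}. Once $w$ is recognized as a solution triple, the conclusion is purely a matter of quoting Theorem~\ref{thm:intmarkov}.
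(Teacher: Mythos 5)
Your proof is correct and follows exactly the paper's argument: identify $Z$ with a fake weighted projective plane $Z(P)$, use Proposition~\ref{prop:fwppKsquare} to see that the fake weight vector is a solution triple of the squared Markov type equation, and then quote Theorem~\ref{thm:intmarkov}. No issues.
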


\begin{proof}
We may assume $Z=Z(P)$. Then   
Proposition~\ref{prop:fwppKsquare}
yields that the fake weight vector of $Z$ is a
solution triple of a squared Markov type equation.
Thus, Theorem~\ref{thm:intmarkov} yields the
assertion.
\end{proof}

\begin{proposition}
\label{prop:classfwppdeg5689}
Let $Z$ be a projective toric surface of Picard number one
of degree $a \in \{5,6,8,9\}$.
Then $Z$ is isomorphic to a weighted projective space
$\PP(w_0,w_1,w_2)$ for exactly one adjusted $w \in S(a)$.
\end{proposition}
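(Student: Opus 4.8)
The plan is to treat existence and uniqueness separately, in both cases reducing everything to the combinatorics of the fake weight vector. The essential observation is that for $a \in \{5,6,8,9\}$ every solution triple in $S(a)$ is primitive, so the surface in question is already an \emph{ordinary} weighted projective space, and isomorphisms among such spaces are governed entirely by permutations of the weights.

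For existence, I would start from $Z = Z(P)$ and invoke Proposition~\ref{prop:fwppKsquare} to see that its fake weight vector $w$ lies in $S(a)$. By Theorem~\ref{thm:loesungen_Markov_sind_quadratzahlen} the entries of $w$ are pairwise coprime for $a \in \{5,6,8,9\}$, hence $\gcd(w_0,w_1,w_2)=1$ and $w$ is primitive. Remark~\ref{rem:wps} then yields $Z \cong \PP(w_0,w_1,w_2)$. Since permuting the entries of $w$ produces an isomorphic weighted projective space, it remains to check that $w$ can be reordered into an adjusted triple; this follows directly from the normal form $(x_0^2, \xi_1 x_1^2, \xi_2 x_2^2)$ of Theorem~\ref{thm:loesungen_Markov_sind_quadratzahlen}, which exhibits exactly the divisibility pattern demanded by Definition~\ref{def:adjusted} for each value of $a$.

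For uniqueness, the key tool is Proposition~\ref{prop:isochar}. Primitivity makes the torsion part of the class group trivial, so a degree matrix is merely the $1\times 3$ row $[w_0,w_1,w_2]$ over $K=\ZZ$; since $\Aut(\ZZ)=\{\pm 1\}$ while positivity of the weights forbids the sign $-1$, one concludes that $\PP(w)\cong\PP(w')$ holds if and only if $w'$ is a permutation of $w$. Thus two adjusted solutions defining isomorphic spaces share the same multiset of weights, and it suffices to verify that the adjusting conditions pin down the ordering uniquely. For $a=9$ this is the ascending order; for $a=8$ and $a=5$ the divisibility requirement singles out the last entry as the unique multiple of $2$, respectively $5$, while $w_0\le w_1$ fixes the remaining two; for $a=6$ the normal form forces exactly one entry divisible by $2$ and a \emph{distinct} one divisible by $3$, so the positions of all three entries are determined.

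The main obstacle is the bookkeeping in the case $a=6$: one must be certain that no entry is divisible by $6$, so that the conditions $2\mid w_1$ and $3\mid w_2$ genuinely refer to two different coordinates. This is precisely the content of the exclusion of $\xi=(1,1,6)$ established inside the proof of Theorem~\ref{thm:loesungen_Markov_sind_quadratzahlen}. A secondary point to record in each case is the behaviour under ties: equal entries are forced to equal $1$ by pairwise coprimality, so the ordering conditions remain unambiguous and the adjusted representative is genuinely unique.
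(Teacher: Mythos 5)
Your proof is correct and follows essentially the same route as the paper: fake weight vector in $S(a)$ via Proposition~\ref{prop:fwppKsquare}, pairwise coprimality from Theorem~\ref{thm:loesungen_Markov_sind_quadratzahlen} to land in an ordinary weighted projective space, and reduction of uniqueness to permutations of the weights. The paper compresses the uniqueness step into the single sentence that adjusted triples make the presentation redundance free; your explicit verification (including the $a=6$ bookkeeping and the tie-breaking via coprimality) supplies exactly the details it leaves implicit.
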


\begin{proof}
We may assume $Z=Z(P)$. By Proposition~\ref{prop:fwppKsquare},
the fake weight vector $w$ of $Z$ satisfies $w \in S(a)$.
By Theorem~\ref{thm:loesungen_Markov_sind_quadratzahlen},
the entries $w_0,w_1,w_2$ of $w$ are pairwise coprime.
Thus, $Z \cong \PP(w_0,w_1,w_2)$ is an ordinary weighted
projective plane; see Remark~\ref{rem:wps}.
Taking adjusted solution triples, makes the presentation
redundance free.
\end{proof}

For the fake weighted projective planes $Z$ of degree
$a \in \{1,2,3,4\}$, we will encounter torsion in the
divisor class group. We will list the resulting $Z$
in terms of adjusted degree matrices in the sense
of Definition~\ref{def:adjusted}.
Whereas the procedure of finding and verifying
those follows a common pattern for all $q=1,2,3,4$,
the detailed arguing relies on the specific divisibility
properties of the solution triples from $S(a)$ in each
case.

\begin{lemma}
\label{lem:Q2localcl}
Let $Q = [q_0,q_1,q_2]$ be a degree matrix 
in $\ZZ \oplus \ZZ / \mu \ZZ$, write
$q_i = (u_i, \bar \eta_i)$ and assume
$\gcd(u_0,\mu)= 1$.
Then we find an automorphism of
$\ZZ \oplus \ZZ / \mu \ZZ$ which,
applied to columns, turns $Q$ into
\[
Q'
\ = \ 
\left[
\begin{array}{ccc}
u_0 & u_1 & u_2
\\  
\bar 0 & \bar 1 & \bar \eta
\end{array}
\right],
\qquad
\bar \eta \in (\ZZ / \mu \ZZ)^*.
\]
We have $Z(Q) \cong Z(Q')$ and $Q,Q'$ share the
same corresponding generator matrices.
Moreover, the local class group 
of $z(i) \in Z(Q)$ is cyclic of order $\mu u_i$
and the fake weight vector is given by
$w(Q) = \mu \cdot (u_0,u_1,u_2)$.
\end{lemma}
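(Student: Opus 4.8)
The plan is to produce the normal form $Q'$ by applying two automorphisms of the type $\varphi_{1,\bar a,\bar c}$ from Lemma~\ref{lem:autzopluszn}; by Remark~\ref{rem:admopq} these are precisely the automorphisms preserving positivity of the first row, and they act on $Q$ by left multiplication with $\left[\begin{smallmatrix} 1 & 0 \\ a & c\end{smallmatrix}\right]$. Since $\gcd(u_0,\mu)=1$, the class $\bar u_0$ is a unit in $\ZZ/\mu\ZZ$, so choosing $\bar a := -\bar\eta_0\bar u_0^{-1}$ and $\bar c := \bar 1$ clears the lower entry of the first column without touching the first row. This yields a degree matrix $\left[\begin{smallmatrix} u_0 & u_1 & u_2 \\ \bar 0 & \bar\eta_1' & \bar\eta_2'\end{smallmatrix}\right]$.

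The key step is to see that $\bar\eta_1'$ and, afterwards, $\bar\eta$ are units. Being a degree matrix means that any two columns generate $K=\ZZ\oplus\ZZ/\mu\ZZ$. Writing $K=\ZZ^2/\langle(0,\mu)\rangle$, the first two columns generate $K$ exactly when $(u_0,0),(u_1,\eta_1'),(0,\mu)$ generate $\ZZ^2$, i.e.\ when the $2\times 2$ minors $u_0\eta_1'$, $u_0\mu$, $u_1\mu$ of $\left[\begin{smallmatrix} u_0 & u_1 & 0 \\ 0 & \eta_1' & \mu\end{smallmatrix}\right]$ have gcd one. This already forces $\gcd(u_0\eta_1',\mu)=1$, since any common prime divisor of $\mu$ and $u_0\eta_1'$ would divide all three minors; together with $\gcd(u_0,\mu)=1$ it gives $\gcd(\eta_1',\mu)=1$, so $\bar\eta_1'\in(\ZZ/\mu\ZZ)^*$. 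Applying $\varphi_{1,\bar 0,(\bar\eta_1')^{-1}}$ now fixes the first column, normalizes the second to $(u_1,\bar 1)$, and turns the third into $(u_2,\bar\eta)$ with $\bar\eta:=(\bar\eta_1')^{-1}\bar\eta_2'$. The identical minor argument for columns one and three yields $\gcd(\eta,\mu)=1$, so $\bar\eta\in(\ZZ/\mu\ZZ)^*$, completing the construction of $Q'$.

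For the remaining assertions I would argue as follows. The composite automorphism $\imath\in\Aut(K)$ satisfies $Q'=[\imath(q_0),\imath(q_1),\imath(q_2)]$, so $Z(Q)\cong Z(Q')$ by Proposition~\ref{prop:isochar}; moreover, since $\imath$ is an automorphism, $\ker(Q')=\ker(Q)$ as maps $\ZZ^3\to K$, whence the $\ZZ$-basis $P^*$ of this common kernel from Construction~\ref{constr:degmat2Z} produces the same generator matrix for both. For the local data I would compute with $Q'$ via Proposition~\ref{prop:clandcr}: with $K=\ZZ^2/\langle(0,\mu)\rangle$ one gets $\Cl(Z,z(i))=K/\ZZ\cdot q_i'=\ZZ^2/\langle(0,\mu),(u_i,\eta_i')\rangle$, where $(\eta_0',\eta_1',\eta_2')=(0,1,\eta)$. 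Each such quotient has order $\bigl|\det\left[\begin{smallmatrix}0 & u_i \\ \mu & \eta_i'\end{smallmatrix}\right]\bigr|=\mu u_i$, and it is cyclic because the defining $2\times 2$ matrix has entries of gcd one (from $\gcd(u_0,\mu)=1$ for $i=0$, the unit $1$ for $i=1$, and the unit $\bar\eta$ for $i=2$), so its first Smith invariant factor equals $1$. Hence $\Cl(Z,z(i))$ is cyclic of order $\mu u_i$ and $w(Q)_i=\cl(z(i))=\mu u_i$, giving $w(Q)=\mu\cdot(u_0,u_1,u_2)$.

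The main obstacle is the middle step: everything hinges on extracting, from the degree-matrix condition together with $\gcd(u_0,\mu)=1$, that the cleared lower entries are units. The rest is routine bookkeeping with Smith normal forms and the cited isomorphism criterion.
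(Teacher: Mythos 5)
Your proposal is correct and follows essentially the same route as the paper: clear $\bar \eta_0$ using that $\bar u_0$ is a unit, deduce from the degree-matrix condition that the remaining second-row entries are units, rescale by $(\bar \eta_1')^{-1}$, and compute the local class groups as $K/\ZZ\cdot q_i'$. The only cosmetic difference lies in the final verification: the paper exhibits explicit cyclic generators of each $\Cl(Z,z(i))$, whereas you read off cyclicity and the order $\mu u_i$ from the Smith normal form of the relation matrix with rows $(0,\mu)$ and $(u_i,\eta_i')$; both are valid.
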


\begin{proof}
Proposition~\ref{prop:isochar} and
Lemma~\ref{lem:autzopluszn} tell us
that suffices to apply suitable operations from
Remark~\ref{rem:admopq}.
Thanks to $\gcd(u_0,\mu)=1$, we have $\bar u_0 \in (\ZZ/\mu \ZZ)^*$.
Hence $\bar \eta_0 = \bar a \bar u_0$ for some $a \in \ZZ$.
Subtracting $\bar a \cdot (\bar u_0,\bar u_1,\bar u_2)$
from the second row, we achieve $\bar \eta_0 = \bar 0$.
As each of the pairs $q_0,q_1$ and $q_0,q_2$ generates
$\ZZ \oplus \ZZ / \mu \ZZ$ as a group, we can conclude
$\bar \eta_1,\bar \eta_2 \in (\ZZ/\mu \ZZ)^*$.
Scaling the second row of $Q$ with the multiplicative
inverse on $\bar \eta_1$ finally turns $Q$ into $Q'$.
 
By Construction~\ref{constr:degmat2Z} and Proposition~\ref{prop:clandcr},
the local class groups $K_i := \Cl(Z,z_i)$ are given
as $K_i = K / \ZZ \cdot q_i'$, where
$K = \ZZ \oplus \ZZ / \mu \ZZ$
and $q_i'$ is the $i$-th column of $Q'$.
Clearly $K_i$ is generated by the classes of $(1,\bar 0)$
and $(0, \bar 1)$.
We treat $K_0$.
Let $\zeta, \kappa \in \ZZ$ with $\zeta \mu + \kappa u_0 =1$.
Then we observe
\[
(1,\bar 0) \ = \  \zeta \mu \cdot (1, \bar 1) + \kappa \cdot (u_0 , \bar 0),
\qquad
(0,\bar 1) \ = \ (1,\bar 1) - (1,\bar 0),
\]
and conclude that the class of $(1,\bar 1)$ generates $K_0$.
Moreover, due to the coprimeness of $u_0$ and $\mu$, the class
$(1,\bar 1)$  in $K_0$ of order $\mu u_0$.
Now consider $K_i$ for $i=1,2$.
Set $\eta_1 := 1$ and $\eta_2 := \eta$.
Choosing $\zeta_i \in \ZZ$ with $\bar \zeta_i \bar \eta_i=- \bar 1$
in $\ZZ / \mu \ZZ$. Then
\[
(0,\bar 1) \ = \  \zeta_i u_i \cdot (1, \bar 0) -  \zeta_i \cdot (u_i , \bar \eta_i),
\]
and thus the class of $(1,\bar 0)$ generates $K_i$,
where $i=1,2$.
Consequently, each of the local class groups $\Cl(Z,z(i)) = K_i$
is cyclic of order $\mu u_i$.
The statement on the fake weight vector 
is clear by Proposition~\ref{prop:clandcr}.
\end{proof}

\begin{proposition}
\label{prop:classfwppdeg4}
Let $Z$ be a fake weighted projective plane of degree~4.
Then $Z \cong Z(Q)$ with $w = w(Q)$ adjusted
and there are $x_0,x_1,x_2 \in \ZZ_{>0}$ such that
\[
(x_0^2,x_1^2,2x_2^2) \ \in \ S(8),
\qquad
w \ = 2 \cdot (x_0^2,x_1^2,2x_2^2).
\]
Moreover, the divisor class group is 
$\Cl(Z) = \ZZ \oplus \ZZ/2\ZZ$
and the degree matrix~$Q$ can be choosen as
\[
\qquad
Q(w)
\ = \
\left[
\begin{array}{ccc}
x_0^2 & x_1^2 & 2x_2^2
\\[3pt]
\bar 0 & \bar 1 & \bar 1  
\end{array}
\right].
\]
Any such matrix is a degree matrix of a fake
weighted projective plane of degree~4 and distinct
matrices represent non-isomorphic fake weighted
projective planes.
\end{proposition}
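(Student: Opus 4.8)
The plan is to extract the fake weight vector from Proposition~\ref{prop:fwppKsquare}, determine its shape via the Markov results of Section~\ref{sec:squaredmarkov}, bring the degree matrix into normal form with Lemma~\ref{lem:Q2localcl}, and separate isomorphism classes using Proposition~\ref{rem:adjdegmat}. First I would treat an arbitrary $Z$ of degree $4$. Writing $Z = Z(P)$, Proposition~\ref{prop:fwppKsquare} identifies the fake weight vector $w$ as a solution in $S(4)$. By Proposition~\ref{prop:scalesolutions} we have $S(4) = 2S(8)$, so $u := \tfrac12 w$ lies in $S(8)$, and Theorem~\ref{thm:loesungen_Markov_sind_quadratzahlen} gives, after a permutation, $u = (x_0^2,x_1^2,2x_2^2)$ with pairwise coprime entries; hence $w = 2(x_0^2,x_1^2,2x_2^2)$. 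Pairwise coprimality forces $x_0,x_1$ to be odd and singles out $2x_2^2$ as the unique even entry, so ordering the two odd entries ascendingly makes $w$ adjusted. Since $\mu = \gcd(w_0,w_1,w_2) = 2\gcd(u_0,u_1,u_2) = 2$, Proposition~\ref{prop:clandcr} yields $\Cl(Z) = \ZZ \oplus \ZZ/2\ZZ$.

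Next I would produce the normal form. With the even entry placed last we have $\gcd(u_0,\mu) = \gcd(x_0^2,2) = 1$, so Lemma~\ref{lem:Q2localcl} applies and transforms any degree matrix of $Z$, via an automorphism of $\ZZ \oplus \ZZ/2\ZZ$, into
\[
Q'
\ = \
\left[
\begin{array}{ccc}
x_0^2 & x_1^2 & 2x_2^2
\\
\bar 0 & \bar 1 & \bar \eta
\end{array}
\right],
\qquad
\bar \eta \in (\ZZ/2\ZZ)^*.
\]
Because $(\ZZ/2\ZZ)^* = \{\bar 1\}$, necessarily $\bar\eta = \bar 1$, so $Q' = Q(w)$ and $Z \cong Z(Q(w))$.

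For the converse direction I would start from an arbitrary triple $(x_0^2,x_1^2,2x_2^2) \in S(8)$ and verify that $Q(w)$ is a genuine degree matrix, i.e.\ that any two of its columns generate $\ZZ \oplus \ZZ/2\ZZ$. Unwinding this generation condition to the relevant gcd computation, the pairs involving the first column are immediate from $\gcd(x_0^2,2x_1^2) = \gcd(x_0^2,4x_2^2) = 1$, while the pair $q_1,q_2$ amounts to $\gcd(x_1^2 - 2x_2^2,\,2x_1^2,\,4x_2^2) = 1$; both again reduce to the pairwise coprimality of $x_0,x_1,x_2$ together with the oddness of $x_1$. Lemma~\ref{lem:Q2localcl} then returns the fake weight vector $w(Q(w)) = 2u$, and Proposition~\ref{prop:fwppKsquare} computes $\mathcal{K}^2 = (2\nu(u))^2/(8u_0u_1u_2) = 4$ from $u \in S(8)$.

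Finally, for non-isomorphism I would use that the multiset of local class group orders at the toric fixed points is an isomorphism invariant, equal to $\{w_0,w_1,w_2\}$ by Proposition~\ref{prop:clandcr} and Lemma~\ref{lem:Q2localcl}. Restricting to adjusted $w$ fixes a canonical ordering of this multiset, so $Z(Q(w)) \cong Z(Q(w'))$ forces $w = w'$, and then the uniqueness of Proposition~\ref{rem:adjdegmat} (or simply the fact that $\bar\eta = \bar 1$ is forced) gives $Q(w) = Q(w')$. I expect the main obstacle to be the column-generation check for $Q(w)$, in particular the pair $q_1,q_2$ whose torsion entries coincide so that no entry $\bar 0$ is available; everything else is a direct assembly of the cited statements.
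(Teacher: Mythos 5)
Your proposal is correct and follows essentially the same route as the paper: reduce via $S(4)=2S(8)$ and Theorem~\ref{thm:loesungen_Markov_sind_quadratzahlen}, normalize $Q$ with Lemma~\ref{lem:Q2localcl} (where $(\ZZ/2\ZZ)^*=\{\bar 1\}$ forces $\bar\eta=\bar 1$), verify the converse, and separate isomorphism classes via Proposition~\ref{rem:adjdegmat}. The only difference is cosmetic: you check that two columns generate $\ZZ\oplus\ZZ/2\ZZ$ by the gcd-of-minors criterion, whereas the paper chases explicit elements $(1,\bar 0)$, $(1,\bar 1)$, $(b,\bar 0)$ in their span; both computations are correct.
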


\begin{proof}
We may assume $Z = Z(P)$ with a generator matrix $P$.
Let $Q$ be a corresponding degree matrix.
Suitably renumbering the columns of both, we achieve
that $w = w(P)$ is adjusted.
By Proposition~\ref{prop:fwppKsquare}, we have
$w \in S(4)$.
Proposition~\ref{prop:scalesolutions} says 
$w = 2 \cdot u$ with $u \in S(8)$.
From Theorem~\ref{thm:loesungen_Markov_sind_quadratzahlen}
we infer $u = (x_0^2,x_1^2,2x_2^2)$ and pairwise coprimeness
of the entries of $u$.
Proposition~\ref{prop:clandcr} shows $\Cl(Z) = \ZZ \oplus \ZZ/2\ZZ$
and establishes the first row of $Q$.
Since $x_0$ is coprime to $2x_2^2$, it must be odd.
Thus, Lemma~\ref{lem:Q2localcl} allows us to assume
$Q = Q(w)$ as claimed.

Now consider any $Q(w)$ as in the assertion,
that means a matrix with first row 
$u = (x_0^2,x_1^2,2x_2^2)$ from $S(8)$
and second row $(\bar 0, \bar 1, \bar 1)$
with entries in $\ZZ/2\ZZ$.
We verify that $Q(w)$ is a degree matrix of a
fake weighted projective plane with the
claimed properties.
Given two columns $q_i,q_j$ of $Q$ we have to show
that these generate $K = \ZZ \oplus \ZZ/2\ZZ$ as a group.
Since $u_i,u_j$ are coprime, we find $(1, \bar 0)$
or $(1, \bar 1)$ in the span of $q_i,q_j$.
In the first case, we are done.
In the second case, observe that we find also an
element $(b, \bar 0)$ with $b$ odd in the span of
$q_i,q_j$.
Subtracting the $(b-1)$-fold of $(1,\bar 1)$ gives
$(1, \bar 0)$ and we are done as well.
Thus, $Q(w)$ is a degree matrix in $K$.
Set $Z = Z(Q(w))$.
Lemma~\ref{lem:Q2localcl} ensures that $Z$ has fake
weight vector $2 \cdot u \in S(4)$
and we can apply Proposition~\ref{prop:fwppKsquare}
to see that $Z$ is of degree 4.
Finally, Proposition~\ref{rem:adjdegmat}
ensures that distinct matrices $Q(w)$ define
non-isomorphic fake weighted projective
planes.
\end{proof}

\begin{proposition}
\label{prop:classfwppdeg3}
Let $Z$ be a fake weighted projective plane of degree 3.
Then $Z \cong Z(Q)$ with $w=w(Q)$ adjusted
and there are $x_0,x_1,x_2 \in \ZZ_{>0}$ such that
we are in one of the following situations:
\begin{enumerate}
\item  
we have $(x_0^2,x_1^2,x_2^2) \in S(9)$,
$w = 3 \cdot (x_0^2,x_1^2,x_2^2)$,
the divisor class group is
$\Cl(Z) = \ZZ \oplus \ZZ/3\ZZ$,
and the degree matrix $Q$ can be chosen as
\[
\qquad\qquad
Q(w)
\ = \
\left[
\begin{array}{ccc}
x_0^2 & x_1^2 & x_2^2
\\[3pt]
\bar 0 & \bar 1 & \bar 2
\end{array}
\right],
\]
\item
we have $(x_0^2,2x_1^2,3x_2^2) \in S(6)$,
$w = 2 \cdot (x_0^2,2x_1^2,3x_2^2)$,
the divisor class group is
$\Cl(Z) = \ZZ \oplus \ZZ/2\ZZ$
and the degree matrix $Q$ can be chosen
as
\[
\qquad\qquad
Q(w)
\ = \
\left[
\begin{array}{ccc}
x_0^2 & 2x_1^2 & 3x_2^2
\\[3pt]
\bar 0 & \bar 1 & \bar 1  
\end{array}
\right].
\]
\end{enumerate}
Any matrix as above is a degree matrix of a fake weighted
projective plane of degree~3 and distinct
matrices represent non-isomorphic fake weighted projective
planes.
\end{proposition}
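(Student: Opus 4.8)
The plan is to mirror the proof of Proposition~\ref{prop:classfwppdeg4}, the essential new ingredient being the decomposition $S(3) = 3S(9) \cup 2S(6)$ from Proposition~\ref{prop:scalesolutions}, which produces exactly the two announced cases, together with a closer look at the torsion in case~(i), where $\Cl(Z) = \ZZ \oplus \ZZ/3\ZZ$ carries the two nonzero elements $\bar 1, \bar 2$.

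For the forward direction I would write $Z = Z(P)$ with a corresponding degree matrix $Q$ and renumber the columns so that $w = w(P)$ is adjusted; by Proposition~\ref{prop:fwppKsquare} we have $w \in S(3)$. Proposition~\ref{prop:scalesolutions} then puts us either in case~(i), $w = 3u$ with $u \in S(9)$, or in case~(ii), $w = 2u$ with $u \in S(6)$, and Theorem~\ref{thm:loesungen_Markov_sind_quadratzahlen} supplies the announced square shapes of $u$ together with the pairwise coprimeness of its entries. Reading off $\mu = \gcd(w_0,w_1,w_2)$ gives $\mu = 3$ in case~(i) and $\mu = 2$ in case~(ii), so Proposition~\ref{prop:clandcr} yields the stated divisor class groups and the first row of $Q$. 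In case~(i) every entry of $u$ is coprime to $3$, and in case~(ii) the adjusted ordering leaves the odd square $x_0^2$ in the first column; either way $\gcd(u_0,\mu) = 1$, so Lemma~\ref{lem:Q2localcl} normalizes $Q$ to the shape with torsion row $(\bar 0, \bar 1, \bar\eta)$ and $\bar\eta \in (\ZZ/\mu\ZZ)^*$.

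The step that has no analogue in the degree~$4$ situation is the determination of $\bar\eta$ in case~(i), since $(\ZZ/3\ZZ)^* = \{\bar 1, \bar 2\}$. Here I would first record that every entry of a triple in $S(9)$ is congruent to $1$ modulo $3$: the entries are squares, hence each is $\equiv 0$ or $1 \pmod 3$; as their gcd is $1$ they cannot all be $\equiv 0$, and since $(u_0+u_1+u_2)^2 = 9u_0u_1u_2$ forces $3 \mid (u_0+u_1+u_2)$, all three residues must equal $1$. Consequently the last two columns $(u_1,\bar 1)$ and $(u_2,\bar\eta)$ of the normalized matrix generate $\ZZ \oplus \ZZ/3\ZZ$ precisely when $3 \nmid (u_2 - \eta u_1)$, which fails for $\eta = 1$ but holds for $\eta = 2$; as Lemma~\ref{lem:Q2localcl} returns an actual degree matrix, the value $\bar\eta = \bar 2$ is forced, giving the matrix $Q(w)$ of case~(i). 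In case~(ii) we have $(\ZZ/2\ZZ)^* = \{\bar 1\}$, so $\bar\eta = \bar 1$ is automatic and the normalization is exactly that of Proposition~\ref{prop:classfwppdeg4}.

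For the converse I would start from either displayed matrix and check that any two columns generate $K$: in case~(i) the congruences $u_i \equiv 1 \pmod 3$ make the relevant torsion differences $-u_0$, $-2u_0$ and $u_2 - 2u_1$ all coprime to $3$, and in case~(ii) the generation argument of Proposition~\ref{prop:classfwppdeg4} carries over verbatim. Lemma~\ref{lem:Q2localcl} then identifies the fake weight vector as $\mu \cdot u$, which lies in $S(3)$ by Proposition~\ref{prop:scalesolutions}, and Proposition~\ref{prop:fwppKsquare} confirms $\mathcal{K}_Z^2 = 3$. Both displayed shapes are adjusted degree matrices in the sense of Definition~\ref{def:adjusted}, so Proposition~\ref{rem:adjdegmat} gives that distinct such matrices define non-isomorphic surfaces, case~(i) and case~(ii) being already separated by their distinct torsion groups. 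I expect the one genuine obstacle to be the handling of $\bar\eta$ in case~(i): the nontrivial unit group of $\ZZ/3\ZZ$ means one cannot simply quote the degree~$4$ argument, and the arithmetic of $S(9)$ modulo $3$ must be invoked both to force $\bar\eta = \bar 2$ and to verify generation in the converse.
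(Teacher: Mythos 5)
Your proposal is correct and follows essentially the same route as the paper: the decomposition $S(3)=3S(9)\cup 2S(6)$, the normalization via Lemma~\ref{lem:Q2localcl}, the exclusion of $\bar\eta=\bar 1$ in case~(i) using $\bar u_i=\bar 1$ in $\ZZ/3\ZZ$, and the appeal to Proposition~\ref{rem:adjdegmat} all match the paper's proof. The only (harmless) deviation is that you re-derive the content of Lemma~\ref{lem:a9wimod3} by a direct argument from the square shape of the entries and $3\mid(u_0+u_1+u_2)$, where the paper proves it by induction over mutations.
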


\begin{lemma}
\label{lem:a9wimod3}
Let $u \in S(9)$.
Then $\bar u_i = \bar 1 \in \ZZ/3 \ZZ$ holds for $i=0,1,2$.
In particular, $3 \nmid u_i$ for $i=0,1,2$.
\end{lemma}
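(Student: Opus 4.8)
The plan is to exploit the fact, provided by Theorem~\ref{thm:intmarkov}, that every solution $u \in S(9)$ arises as a mutation of the initial triple $(1,1,1)$. Since a mutation is a composition of one-step mutations, and each one-step mutation is a permutation of the entries followed by the involution $\lambda$ from Lemma~\ref{lem:mutation}, it suffices to show that the property ``$\bar u_i = \bar 1 \in \ZZ/3\ZZ$ for $i=0,1,2$'' is invariant under one-step mutations and to verify it on the initial triple. I would then conclude by induction on the number of one-step mutations.

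The base case is immediate: the triple $(1,1,1)$ has all entries congruent to $\bar 1$ modulo $3$. A permutation of the entries clearly preserves the stated property, so the only point left is its invariance under $\lambda$. Here I would use the \emph{polynomial} form of $\lambda$ from Lemma~\ref{lem:mutation}, which for $a=9$ reads $u_2' = 9u_0u_1 - 2u_0 - 2u_1 - u_2$, rather than the fractional one, so that the reduction modulo~$3$ is transparent. Reducing modulo $3$ kills the term $9u_0u_1$ and leaves $\bar u_2' = -2\bar u_0 - 2\bar u_1 - \bar u_2 = \bar u_0 + \bar u_1 - \bar u_2$ in $\ZZ/3\ZZ$. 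Assuming $\bar u_0 = \bar u_1 = \bar u_2 = \bar 1$, this gives $\bar u_2' = \bar 1 + \bar 1 - \bar 1 = \bar 1$, while the first two entries are unchanged; hence the property is preserved.

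Combining these observations yields $\bar u_i = \bar 1$ for every $u \in S(9)$ and each $i$, and in particular $3 \nmid u_i$, as asserted. I do not expect any serious obstacle: the argument is a straightforward invariance-under-mutation induction, and the only things to be careful about are to invoke the mutation-generation statement of Theorem~\ref{thm:intmarkov} specifically for $a=9$ and to keep the computation in the group $\ZZ/3\ZZ$ throughout.
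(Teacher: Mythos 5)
Your proposal is correct and follows essentially the same route as the paper: verify the congruence on the initial triple $(1,1,1)$ and propagate it through one-step mutations using the polynomial form $u_2' = 9u_0u_1 - 2u_0 - 2u_1 - u_2$ reduced modulo $3$, which is exactly the paper's computation $\bar u_2' = -\bar 5 = \bar 1$. No issues.
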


\begin{proof} 
Clearly the assertion holds for $u=(1,1,1)$.
By Theorem~\ref{thm:intmarkov}, we only
have to show that, if $u \in S(9)$ satisfies
the assertion, then $u' = \lambda(u)$ does so.
Recall
\[
u_0' \ = \ u_0,
\quad
u_1' \ = \ u_1,
\quad
u_2' \ = \ 9u_0u_1 - 2u_0 - 2u_1 - u_2
\]
from the construction of $\lambda$ given in
Lemma~\ref{lem:mutation}.
For $u_0'$ and $u_1'$ is nothing to show
and, passing to the classes in $\ZZ / 3 \ZZ$,
we directly see $\bar u_2' = - \bar 5 = \bar 1$.
\end{proof}

\begin{proof}[Proof of Proposition~\ref{prop:classfwppdeg3}]
We may assume $Z = Z(P)$ and $w = w(P)$ being adjusted.
By Proposition~\ref{prop:fwppKsquare}, we have
$w \in S(3)$.
According to Proposition~\ref{prop:scalesolutions},
we either have $w = 3 \cdot u$ with $u \in S(9)$
or $w = 2 \cdot u$ with $u \in S(6)$.
We go through each of these two cases
and establish the claimed shape for a corresponding
degree matrix $Q$.

\medskip
\noindent
\emph{Case (i)}: $w = 3 \cdot u$ with $u \in S(9)$.
Theorem~\ref{thm:loesungen_Markov_sind_quadratzahlen}
provides us with $u = (x_0^2,x_1^2,x_2^2)$ and
Proposition~\ref{prop:clandcr} yields that the torsion
part of $\Cl(Z)$ is of order three.
Moreover, Lemma~\ref{lem:a9wimod3} says in particular
$\gcd(x_0^2,3) = 1$ and thus Lemma~\ref{lem:Q2localcl}
shows that $Q$ can be choosen to be of the shape
\[
\qquad\qquad
Q
\ = \
\left[
\begin{array}{ccc}
x_0^2 & x_1^2 & x_2^2
\\[3pt]
\bar 0 & \bar 1 & \bar \eta
\end{array}
\right],
\qquad
\eta \in (\ZZ/ 3 \ZZ)^*.
\]
We show $\bar \eta \ne  \bar 1$.
Otherwise, due to Lemma~\ref{lem:a9wimod3},
adding $\bar 2 \cdot (\bar x_0^2, \bar x_1^2, \bar x_2^2)$
turns the second row of $Q$
into $(\bar 2, \bar 0, \bar 0)$,
which contradicts to Remark~\ref{rem:admopq},
guaranteeing that this operation respects
the properties of a degree matrix.

We check that each of the matrices $Q(w)$ from
Case~(i) is a degree matrix in
$K := \ZZ \oplus \ZZ / 3 \ZZ$
defining a fake weighted projective plane $Z$ as claimed.
Given two columns $q_i,q_j$ of $Q(w)$, we use
Lemma~\ref{lem:a9wimod3} to observe
\[
u_jq_i - u_iq_j \ = \ (0, \bar \eta_i + \bar 2 \bar \eta_j) \ \in \ \{(0,\bar 1), \, (0,\bar 2)\},  
\]
where $q_k = (u_k, \bar \eta_k)$.
Since $u_i$ and $u_j$ are coprime,
$(1, \bar \zeta)$ lies in the span of $q_i,q_j$
for some $\zeta \in \ZZ / 3 \ZZ$.
We conclude that $q_i,q_j$ generate $K$ as a group.
Lemma~\ref{lem:Q2localcl} ensures
that $Z$ has fake weight vector
$3 \cdot (x_0^2,x_1^2,x_2^2) \in S(3)$. Thus,
Proposition~\ref{prop:fwppKsquare}
yields that $Z$ is of degree 3.

\medskip
\noindent
\emph{Case (ii)}: $w = 2 \cdot u$ with $u \in S(6)$.
By Theorem~\ref{thm:loesungen_Markov_sind_quadratzahlen}
we know $u = (x_0^2,2x_1^2,3x_2^2)$ and that $u$
has pairwise coprime entries.
In particular, $x_0^2$ is odd.
Proposition~\ref{prop:clandcr} gives
$\Cl(Z) = \ZZ \oplus \ZZ/ 2\ZZ$
and by Lemma~\ref{lem:Q2localcl}, the degree
matrix can be choosen as
\[
\qquad\qquad
Q
\ = \
\left[
\begin{array}{ccc}
x_0^2 & 2 x_1^2 & 3 x_2^2
\\[3pt]
\bar 0 & \bar 1 & \bar 1
\end{array}
\right].
\] 

We check that each of the matrices $Q(w)$ from Case~(ii)
is a degree matrix in $K := \ZZ \oplus \ZZ / 2 \ZZ$
defining a fake weighted projective plane $Z$ as claimed.
As $u = (x_0^2,2 x_1^2,3 x_2^2)$ has pairwise coprime
entries, $x_0$ and $x_2$ are odd.
Thus, for any two columns $q_i,q_j$ of $Q(w)$, we obtain
\[
u_j q_i - u_i q_j \ = \ (0,\bar 1).
\]
Moreover, since $u_i$ and $u_j$ are coprime,
$(1, \bar \zeta)$ lies in the span of $q_i,q_j$
for $\zeta \in \ZZ/2\ZZ$.
We conclude that~$q_i,q_j$ generate $K$ as a group.
Lemma~\ref{lem:Q2localcl} ensures
that $Z$ has fake weight vector $2 \cdot u \in S(3)$ 
and Proposition~\ref{prop:fwppKsquare}
guarantees that~$Z$ is of degree 3.

Finally, in each of the Cases (i) and (ii),
Proposition~\ref{rem:adjdegmat} tells us that
distinct matrices $Q(w)$ define
non-isomorphic fake weighted projective planes.
\end{proof}

\begin{proposition}
\label{prop:classfwppdeg2}
Let $Z$ be a fake weighted projective plane of degree $2$.
Then $Z \cong Z(Q)$ with $w=w(Q)$ adjusted
and there are $x_0,x_1,x_2 \in \ZZ_{>0}$ such that
we are in one of the following situations:
\begin{enumerate}
\item  
we have $(x_0^2,x_1^2,x_2^2) \in S(8)$,
$w = 4 \cdot (x_0^2,x_1^2, 2 x_2^2)$,
the divisor class group is
$\Cl(Z) = \ZZ \oplus \ZZ/4\ZZ$
and the degree matrix $Q$ can be chosen as
\[
\qquad\qquad
Q(w, \bar 1)
\ = \
\left[
\begin{array}{ccc}
x_0^2 & x_1^2 & 2x_2^2
\\[3pt]
\bar 0 & \bar 1 & \bar 1  
\end{array}
\right],
\qquad
Q(w, \bar 3)
\ = \
\left[
\begin{array}{ccc}
x_0^2 & x_1^2 & 2x_2^2
\\[3pt]
\bar 0 & \bar 1 & \bar 3
\end{array}
\right],
\]
\item
we have $(x_0^2, 2x_1^2, 3x_2^2) \in S(6)$,
$w = 3 \cdot (x_0^2, 2x_1^2, 3x_2^2)$,
the divisor class group is
$\Cl(Z) = \ZZ \oplus \ZZ/3\ZZ$
and the degree matrix $Q$ can be chosen
as
\[
\qquad\qquad
Q(w, \bar 1)
\ = \
\left[
\begin{array}{ccc}
x_0^2 & 2x_1^2 & 3x_2^2
\\[3pt]
\bar 0 & \bar 1 & \bar 1  
\end{array}
\right],
\qquad
Q(w, \bar 2)
\ = \
\left[
\begin{array}{ccc}
x_0^2 & 2x_1^2 & 3x_2^2
\\[3pt]
\bar 0 & \bar 1 & \bar 2
\end{array}
\right].
\]
\end{enumerate}
Any matrix as above is a degree matrix of a fake weighted
projective plane of degree~$2$ and distinct
matrices represent non-isomorphic fake weighted projective
planes.
\end{proposition}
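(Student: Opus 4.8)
The plan is to follow verbatim the strategy of Propositions~\ref{prop:classfwppdeg4} and~\ref{prop:classfwppdeg3}, adapting it to the torsion orders $4$ and $3$ that occur in degree~$2$. First I would put $Z = Z(P)$ and renumber the columns so that $w = w(P)$ is adjusted; Proposition~\ref{prop:fwppKsquare} gives $w \in S(2)$. Since $S(2) = 4S(8) \cup 3S(6)$ by Proposition~\ref{prop:scalesolutions}, exactly two cases arise: either $w = 4u$ with $u \in S(8)$, or $w = 3u$ with $u \in S(6)$. Theorem~\ref{thm:loesungen_Markov_sind_quadratzahlen} supplies the square-number shapes $u = (x_0^2,x_1^2,2x_2^2)$ respectively $u = (x_0^2,2x_1^2,3x_2^2)$ together with pairwise coprimeness of the entries, and Proposition~\ref{prop:clandcr} identifies $\Cl(Z)$ as $\ZZ \oplus \ZZ/4\ZZ$ in the first case and $\ZZ \oplus \ZZ/3\ZZ$ in the second.

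Next I would normalise the degree matrix. In case~(i), coprimeness of the entries of $u$ forces $x_0$ to be odd, hence $\gcd(x_0^2,4)=1$; in case~(ii), coprimeness forces $3 \nmid x_0^2$, hence $\gcd(x_0^2,3)=1$. In both situations Lemma~\ref{lem:Q2localcl} applies and turns $Q$ into the form with first-column torsion $\bar 0$, second-column torsion $\bar 1$ and third-column torsion $\bar\eta \in (\ZZ/\mu\ZZ)^*$. The units are $\{\bar 1,\bar 3\}$ for $\mu=4$ and $\{\bar 1,\bar 2\}$ for $\mu=3$, which yields precisely the four listed matrices $Q(w,\bar 1),Q(w,\bar 3)$ and $Q(w,\bar 1),Q(w,\bar 2)$. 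The point to stress here is that, in contrast to case~(i) of Proposition~\ref{prop:classfwppdeg3}, no congruence analogous to Lemma~\ref{lem:a9wimod3} is available to exclude a unit: a direct computation of $u_jq_i - u_iq_j$ shows the torsion coordinate is always a unit, so both values of $\bar\eta$ genuinely survive.

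For the converse I would check that each listed matrix is a degree matrix defining a surface with the asserted invariants. Given two columns $q_i,q_j$, coprimeness of $u_i,u_j$ produces an element $(1,\ast)$ in their span, while $u_jq_i - u_iq_j = (0,\cdot)$ lands in the torsion; using that odd squares are units modulo $4$ and that $2$ is a unit modulo $3$, one sees the torsion coordinate is always a generator, so $q_i,q_j$ generate $K$. Then Lemma~\ref{lem:Q2localcl} gives the fake weight vector $w \in S(2)$, and Proposition~\ref{prop:fwppKsquare} confirms $\mathcal{K}_Z^2 = 2$. For non-isomorphism, matrices from case~(i) and case~(ii) have different torsion orders and hence define non-isomorphic surfaces, while two distinct matrices inside one case are distinct adjusted degree matrices associated with the same adjusted $w$, so Proposition~\ref{rem:adjdegmat} rules out an isomorphism.

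I expect the main obstacle to be the $\bar\eta$-bookkeeping together with the non-isomorphism claim: one must be sure that both units really occur and are not collapsed by a hidden symmetry. The only delicate instance is the symmetric triple $u=(1,1,2)$, where the equal first two entries permit a column swap; here I would verify, using only the positivity-respecting automorphisms $\varphi_{1,\bar a,\bar c}$ of Remark~\ref{rem:admopq}, that swapping columns $0$ and $1$ and rescaling the torsion row returns $Q(w,\bar 1)$ to itself rather than to $Q(w,\bar 3)$, so that the two matrices stay non-isomorphic. Since the exceptional coincidences of Theorem~\ref{thm:introthm1} occur only in degree~$1$, this symmetry causes no collapse in degree~$2$, and Proposition~\ref{rem:adjdegmat} handles the generic triples automatically.
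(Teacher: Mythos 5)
Your proposal is correct and follows essentially the same route as the paper's proof: the decomposition $S(2)=4S(8)\cup 3S(6)$, the square-number shapes and coprimeness from Theorem~\ref{thm:loesungen_Markov_sind_quadratzahlen}, normalisation via Lemma~\ref{lem:Q2localcl}, the verification that $u_jq_i-u_iq_j$ has unit torsion coordinate, and Proposition~\ref{rem:adjdegmat} for non-isomorphy. The only cosmetic difference is that the paper invokes the sharper congruence Lemmas~\ref{lem:a_gl_8_w_i_mod_4} and~\ref{lem:a_gl_6_w_i_mod_3} where you use direct parity/coprimeness observations, which suffice here; your extra check of the symmetric triple $(1,1,2)$ is already subsumed in Proposition~\ref{rem:adjdegmat}.
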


\begin{lemma}
\label{lem:a_gl_8_w_i_mod_4} 
Let $u \in S(8)$ be adjusted. Then
$(\bar{u}_0, \bar{u}_1, \bar{u}_2) = (\bar{1}, \bar{1}, \bar{2})$
holds in $\ZZ/ 4\ZZ$.
\end{lemma}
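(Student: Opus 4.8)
The plan is to mimic the mod-$3$ argument of Lemma~\ref{lem:a9wimod3}, but to work now modulo $4$ and to track the \emph{multiset} of residues rather than an ordered pattern. This is the essential adaptation: for $a=8$ the exceptional even entry genuinely wanders among the three slots under permutations, so an ordered invariant is not preserved by mutations, whereas the unordered multiset of residues is.

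First I would record the base case. The initial triple $(1,1,2)\in S(8)$ has residues $\{\bar1,\bar1,\bar2\}$ in $\ZZ/4\ZZ$. By Theorem~\ref{thm:intmarkov}, every $u\in S(8)$ is a mutation of $(1,1,2)$, that is, a composition of one-step mutations. Hence it suffices to show that the property ``the multiset $\{\bar u_0,\bar u_1,\bar u_2\}\subseteq\ZZ/4\ZZ$ equals $\{\bar1,\bar1,\bar2\}$'' is preserved by a single one-step mutation.

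For the inductive step I would use the explicit formula for $\lambda$ from Lemma~\ref{lem:mutation}: reducing $u_2'=8u_0u_1-2u_0-2u_1-u_2$ modulo $4$ kills the leading term, leaving $u_2'\equiv -2u_0-2u_1-u_2\pmod 4$. Since a one-step mutation is a permutation followed by $\lambda$, and the target property is permutation-invariant, I only need to distinguish the two cases according to the residue of the entry $u_2$ that $\lambda$ replaces. If $u_2\equiv\bar2$, then the other two entries are $\equiv\bar1$ and one computes $u_2'\equiv -2-2-2\equiv\bar2$; if $u_2\equiv\bar1$, then the other two are $\bar1,\bar2$ and one gets $u_2'\equiv -6-1\equiv\bar1$. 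In both cases the multiset of residues is unchanged, which closes the induction.

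Finally, I would read off the ordered statement from Definition~\ref{def:adjusted}: for $a=8$ an adjusted $u$ satisfies $2\mid u_2$ and $u_0\le u_1$. By Theorem~\ref{thm:loesungen_Markov_sind_quadratzahlen} the entries of $u\in S(8)$ are pairwise coprime and, up to order, of the shape $(x_0^2,x_1^2,2x_2^2)$, so $u_2$ is the unique even entry while $u_0,u_1$ are odd. Combined with the multiset statement this forces $\bar u_0=\bar u_1=\bar1$ and $\bar u_2=\bar2$, as claimed. I do not anticipate a real obstacle: the only point needing care is the bookkeeping in the inductive step, namely recognizing that the clean invariant is the unordered multiset and that the ordered conclusion $(\bar1,\bar1,\bar2)$ is recovered only after the adjusted normalization fixes which slot is even. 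As a byproduct the computation shows $u_2=2x_2^2\equiv2\pmod4$, i.e.\ $x_2$ is odd, which is precisely the information not immediately visible from the square description alone.
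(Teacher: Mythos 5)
Your proposal is correct and follows essentially the same route as the paper: base case $(1,1,2)$, then induction over one-step mutations using $u_2'\equiv -2u_0-2u_1-u_2 \pmod 4$ from Lemma~\ref{lem:mutation}, split into the two cases $\bar u_2=\bar 2$ and $\bar u_2=\bar 1$, with the identical computations. The paper compresses the permutation bookkeeping into the phrase ``fulfills the claim after adjusting,'' whereas you make the unordered-multiset invariant and the final reordering via Definition~\ref{def:adjusted} explicit; this is a presentational difference only (and your appeal to pairwise coprimeness at the end is not even needed, since $2\mid u_2$ from the adjustment already pins down the even slot).
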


\begin{proof} 
The claim holds for~$u = (1,1,2)$.
By Theorem~\ref{thm:intmarkov} it suffices 
to show that if $u \in S(8)$ fulfills
the claim after adjusting, then
$u' := \lambda(u)$ does so. 
Lemma~\ref{lem:mutation} says
\begin{equation*}
u_0' \ = \ u_0,
\quad
u_1' \ = \ u_1,
\quad
u_2' \ = \ 8 u_0u_1 - 2u_0 - 2u_1 - u_2.
\end{equation*}
If $\bar u_2 = \bar 2$ holds, then
$\bar u_0 = \bar u_1 = \bar 1$ and we compute
$\bar u_2' = - \bar u_2 = \bar 2$.
If $\bar u_2 = \bar 1$ holds, then 
one of $\bar u_0, \bar u_1$ equals $\bar 1$
and the other equals $\bar 2$.
We obtain $\bar u_2' = - \bar 3 = \bar 1$.
\end{proof}

\begin{lemma}
\label{lem:a_gl_6_w_i_mod_3}  
Let $u \in S(6)$ be adjusted. Then
$(\bar u_0, \bar u_1, \bar u_2)  = (\bar 1, \bar 2, \bar 0)$
holds in $\ZZ/ 3\ZZ$.
\end{lemma}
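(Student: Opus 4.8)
The plan is to deduce the claim directly from the explicit normal form furnished by Theorem~\ref{thm:loesungen_Markov_sind_quadratzahlen}, rather than running a fresh induction over mutations as in the proofs of Lemmas~\ref{lem:a9wimod3} and~\ref{lem:a_gl_8_w_i_mod_4}. For $a=6$ that theorem tells us that, up to permutation, every $u \in S(6)$ has the shape $(x_0^2, 2x_1^2, 3x_2^2)$ with $x_0,x_1,x_2 \in \ZZ_{>0}$, and that the three entries are pairwise coprime.

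First I would pin down the permutation forced by the \emph{adjusted} condition. In the normal form $(x_0^2, 2x_1^2, 3x_2^2)$ pairwise coprimeness forces $x_0$ and $x_2$ to be odd and $3 \nmid x_0$, $3 \nmid x_1$; hence the unique even entry is $2x_1^2$ and the unique entry divisible by $3$ is $3x_2^2$, and these occupy distinct slots since no entry is divisible by $6$. The requirements $2 \mid u_1$ and $3 \mid u_2$ of Definition~\ref{def:adjusted} therefore place $2x_1^2$ in position $1$ and $3x_2^2$ in position $2$, so that an adjusted $u \in S(6)$ is literally $u = (x_0^2, 2x_1^2, 3x_2^2)$.

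It then remains to read off the residues modulo $3$. The last entry gives $\bar u_2 = \overline{3x_2^2} = \bar 0$ at once. For the first two, I would invoke $3 \nmid x_0$ and $3 \nmid x_1$ from the previous step, so that $\overline{x_0^2} = \overline{x_1^2} = \bar 1$ in $\ZZ/3\ZZ$ (as $\bar x^2 = \bar 1$ for every $x$ coprime to $3$). Consequently $\bar u_0 = \bar 1$ and $\bar u_1 = \overline{2x_1^2} = \bar 2$, which is exactly the asserted triple $(\bar 1, \bar 2, \bar 0)$.

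I do not expect a genuine obstacle here: the substantive work is already carried by Theorem~\ref{thm:loesungen_Markov_sind_quadratzahlen}, and the only point needing a moment's care is confirming that the adjusted ordering yields the stated positions, i.e.\ that the unique even entry and the unique three-divisible entry are indeed distinct. Should one prefer to mirror the inductive style of the two preceding lemmas, one could instead verify the claim on the initial triple $(1,2,3)$ and check invariance under the one-step mutation $\lambda$ via the recursion $u_2' = 6u_0u_1 - 2u_0 - 2u_1 - u_2$ of Lemma~\ref{lem:mutation}; but tracking how the adjusted positions move under the permutation step makes that route clumsier than the direct argument, so I would favour the latter.
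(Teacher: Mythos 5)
Your proof is correct, and there is no circularity: Theorem~\ref{thm:loesungen_Markov_sind_quadratzahlen} is proved from Theorem~\ref{thm:intmarkov} and Lemmas~\ref{lem:gcdsolutions}, \ref{lem:Markov_Teiler_n_teilt_alle_u_i} only, so you may freely invoke it here. However, your route differs from the paper's. The paper proves this lemma by the same mutation induction used for Lemmas~\ref{lem:a9wimod3} and~\ref{lem:a_gl_8_w_i_mod_4}: it checks the claim on the initial triple $(1,2,3)$ and then verifies, via the recursion $u_2' = 6u_0u_1 - 2u_0 - 2u_1 - u_2$ and a three-way case split on $\bar u_2 \in \{\bar 0, \bar 1, \bar 2\}$, that the property persists under one-step mutation (after re-adjusting). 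You instead read the residues off the normal form $(x_0^2, 2x_1^2, 3x_2^2)$: pairwise coprimeness forces $x_0, x_2$ odd and $3 \nmid x_0 x_1$, so the unique even entry and the unique entry divisible by $3$ are distinct and the adjusted ordering places them in positions $1$ and $2$; then $\overline{x_0^2} = \overline{x_1^2} = \bar 1$ in $\ZZ/3\ZZ$ gives the triple $(\bar 1, \bar 2, \bar 0)$ immediately. Your argument is shorter and sidesteps the slightly awkward bookkeeping of how adjustedness interacts with permutation-plus-mutation in the inductive hypothesis; the trade-off is that it leans on the heavier Theorem~\ref{thm:loesungen_Markov_sind_quadratzahlen} (whose own proof is a mutation induction), whereas the paper's version stays self-contained and stylistically parallel to the adjacent lemmas. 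Note also that your shortcut is specific to the modulus $3$ (every unit squares to $\bar 1$); the analogous mod-$4$, mod-$6$ and mod-$8$ statements need the extra parity information that the paper's inductive scheme supplies uniformly.
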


\begin{proof} 
The assertion is true for~$u = (1,2,3)$.
By Theorem~\ref{thm:intmarkov}, we only have
to show that, if $u \in S(6)$ fulfills the claim
after adjusting, then $u' = \lambda(u)$ does so. 
According to Lemma~\ref{lem:mutation}, we have
\[
u_0' \ = \ u_0,
\quad
u_1' \ = \ u_1,
\quad
u_2' \ = \ 6 u_0u_1 - 2u_0 - 2u_1 - u_2.
\]
If $\bar u_2 = \bar 0$, then we may assume
$\bar u_0 = \bar 1$ and $\bar u_1 = \bar 2$
and compute $\bar u_2' = \bar 0$.
If $\bar u_2 = \bar 1$, then we may assume
$\bar u_0 = \bar 0$ and $\bar u_1 = \bar 2$
and compute $\bar u_2' = -\bar 5 = \bar 1$.
If $\bar u_2 = \bar 2$, then we may assume
$\bar u_0 = \bar 0$ and $\bar u_1 = \bar 1$
and compute $\bar u_2' = -\bar 4 = \bar 2$.
\end{proof}

\begin{proof}[Proof of Proposition~\ref{prop:classfwppdeg2}] 
We may assume $Z = Z(P)$ and that $w = w(P)$ is adjusted.
Let $Q$ be a degree matrix corresponding to $P$.
Proposition~\ref{prop:fwppKsquare} yields $w \in S(2)$.
According to Proposition~\ref{prop:scalesolutions},
we either have~$w = 4 \cdot u$ with~$u \in S(8)$
or~$w = 3 \cdot u$ with~$u \in S(6)$.

\medskip 
\noindent
\emph{Case (i)}: $w = 4 \cdot u$ with $u \in S(8)$.
Then $u = (x_0^2,x_1^2, 2 x_2^2)$ and $u$ has pairwise
coprime entries; see 
Theorem~\ref{thm:loesungen_Markov_sind_quadratzahlen}.
Moreover, from Proposition~\ref{prop:clandcr}
and Corollary~\ref{cor:fwpstors} we infer that
the toriosn part of $\Cl(Z)$ is cyclic of order four.
Coprimeness of $x_0^2$ and $2 x_2^2$ forces
$x_0^2$ to be odd and thus Lemma~\ref{lem:Q2localcl}
allows us to assume
\[
\qquad\qquad
Q
\ = \
\left[
\begin{array}{ccc}
x_0^2 & x_1^2 & 2 x_2^2
\\[3pt]
\bar 0 & \bar 1 & \bar \eta
\end{array}
\right],
\qquad
\bar \eta \in (\ZZ/ 4 \ZZ)^* = \{\bar 1, \bar 3\}.
\]

We verify that each of the matrices $Q(w, \bar \eta)$
from Case~(i) is a degree 
matrix in $K := \ZZ \oplus \ZZ / 4 \ZZ$, defining
a fake weighted projective plane $Z$ as claimed.
Lemma~\ref{lem:a_gl_8_w_i_mod_4} delivers
$(\bar u_0, \bar u_1, \bar u_2) = (\bar 1, \bar 1, \bar 2)$
for $u = (x_0^2, x_1^2, 2 x_2^2)$.
Thus, given two columns $q_i,q_j$ of $Q(w, \bar \eta)$, 
we can compute
\begin{equation*}
u_j q_i - u_i q_j \ \in \ \{ (0, \bar{1}), (0, \bar{3}) \}.
\end{equation*}
Moreover, since $u_i$ and $u_j$ are coprime,
$(1, \bar \zeta)$ lies in the span of $q_i,q_j$
for some $\zeta \in \ZZ/4\ZZ$.
We conclude that~$q_i,q_j$ generate $K$ as a group.
Lemma~\ref{lem:Q2localcl} establishes the
fake weight vector $w = 4 \cdot u \in S(2)$
and Proposition~\ref{prop:fwppKsquare} says 
that~$Z$ is of degree~$2$.

\medskip 
\noindent
\emph{Case (ii)}: $w = 3 \cdot u$ with $u \in S(6)$. 
Then $u = (x_0^2, 2 x_1^2, 3 x_2^2)$ and $u$ has
pairwise coprime entries as provided by 
Theorem~\ref{thm:loesungen_Markov_sind_quadratzahlen}.
Proposition~\ref{prop:clandcr} shows that $\Cl(Z)$
has torsion part of order three.
Moreover, $\gcd(x_0^2,3) = 1$ brings Lemma~\ref{lem:Q2localcl}
into the game and we can assume 
\[
\qquad\qquad
Q
\ = \
\left[
\begin{array}{ccc}
x_0^2 & 2 x_1^2 & 3 x_2^2
\\[3pt]
\bar 0 & \bar 1 & \bar \eta
\end{array}
\right],
\qquad
\bar \eta \in (\ZZ/ 3 \ZZ)^*.
\]

We check that each of the matrices $Q(w, \bar \eta)$ from
Case~(ii) is a degree matrix in $K := \ZZ \oplus \ZZ / 3 \ZZ$
and defines a fake weighted projective plane $Z$ as claimed.
From Lemma~\ref{lem:a_gl_6_w_i_mod_3} we know
\[
(\bar x_0^2, \bar 2 \bar x_1^2, \bar{3} \bar x_2^2)
\ = \
(\bar{1}, \bar{2}, \bar{0})
\ \in \
\ZZ/3\ZZ.
\]
Thus, given two columns $q_i,q_j$ of $Q$, we see that 
$u_j q_i - u_i q_j$ equals either $(0, \bar 1)$ or
$(0, \bar 2)$, where $u = (x_0^2,2x_1^2,3x_2^2)$.
Since $u_i$ and $u_j$ are coprime, we find additionally 
$(1, \bar \zeta)$ in the span of $q_i,q_j$ for some
$\zeta \in \ZZ/3\ZZ$.
Thus, $q_i,q_j$ generate $K$ as a group.
Lemma~\ref{lem:Q2localcl}
tells us that the fake weight vector of~$Z$
is $w = 3 \cdot u \in S(2)$. 
Thus, Proposition~\ref{prop:fwppKsquare} ensures that~$Z$
is of degree~$2$.

As in the preceding proofs, Proposition~\ref{rem:adjdegmat}
guarantees that in all the cases just treated, distinct
matrices define non-isomorphic varieties.
\end{proof}

\begin{proposition}
\label{prop:classfwppdeg1}
Let $Z$ be a fake weighted projective plane of degree $1$.
Then $Z \cong Z(Q)$ with $w=w(Q)$ adjusted
and there are $x_0,x_1,x_2 \in \ZZ_{>0}$ such that
we are in one of the following situations:
\begin{enumerate}
\item  
we have $(x_0^2, x_1^2, x_2^2) \in S(9)$,
$w = 9 \cdot (x_0^2,x_1^2,  x_2^2)$,
the divisor class group is given 
by~$\Cl(Z) = \ZZ \oplus \ZZ/9\ZZ$
and the degree matrix $Q$ can be chosen as
\[
\qquad\qquad
Q(w, \bar \eta)
\ = \
\left[
\begin{array}{ccc}
x_0^2 & x_1^2 & x_2^2
\\[3pt]
\bar 0 & \bar 1 & \bar \eta 
\end{array}
\right],
\qquad
\bar{\eta} = \bar{2}, \bar{5}, \bar{8}.
\]
\item  
we have $(x_0^2, x_1^2, 2 x_2^2) \in S(8)$,
$w = 8 \cdot (x_0^2, x_1^2,  2 x_2^2)$,
the divisor class group is
$\Cl(Z) = \ZZ \oplus \ZZ/8\ZZ$
and the degree matrix $Q$ can be chosen as
\[
\qquad\qquad
Q(w, \bar \eta)
\ = \
\left[
\begin{array}{ccc}
x_0^2 & x_1^2 & 2x_2^2
\\[3pt]
\bar 0 & \bar 1 & \bar \eta  
\end{array}
\right],
\qquad
\bar{\eta} = \bar 1, \bar 3, \bar 5, \bar 7.
\]
\item  
we have $(x_0^2, 2 x_1^2, 3 x_2^2) \in S(6)$,
$w = 6 \cdot (x_0^2, 2 x_1^2,  3 x_2^2)$,
the divisor class group is
$\Cl(Z) = \ZZ \oplus \ZZ/6\ZZ$
and the degree matrix $Q$ can be chosen as
\[
\qquad\qquad
Q(w,  \bar{\eta})
\ = \
\left[
\begin{array}{ccc}
x_0^2 & 2x_1^2 & 3x_2^2
\\[3pt]
\bar 0 & \bar 1 & \bar{\eta} 
\end{array}
\right],
\qquad
\bar{\eta} =  \bar 1, \bar 5.
\]
\item  
we have $(x_0^2, x_1^2, 5 x_2^2) \in S(5)$,
$w = 5 \cdot (x_0^2, x_1^2, 5 x_2^2)$,
the divisor class group is
$\Cl(Z) = \ZZ \oplus \ZZ/5\ZZ$
and the degree matrix $Q$ can be chosen as
\[
\qquad\qquad
Q(w, \bar{\eta})
\ = \
\left[
\begin{array}{ccc}
x_0^2 & x_1^2 & 5x_2^2
\\[3pt]
\bar 0 & \bar 1 & \bar{\eta}
\end{array}
\right],
\qquad
\bar{\eta} =  \bar 1, \bar 2, \bar 3, \bar 4.
\]
\end{enumerate}
Any $Q(w, \bar{\eta})$ as above is a degree matrix of a fake
weighted projective plane of degree~$1$.
Two distinct $Q(w, \bar{\eta})$ define non-isomorphic
fake weighted projective planes, except they stem from (1-9-$\ast$) or
(1-8-$\ast$) and are both taken from one of the~sets

\medskip

\begin{center}
{\tiny
$
\setlength{\arraycolsep}{1.8pt}
\left\{   
\left[
\begin{array}{ccc}
1 & 1 & 1
\\[3pt]
\bar 0 & \bar 1 & \bar 2
\end{array}
\right],
\
\left[
\begin{array}{ccc}
1 & 1 & 1
\\[3pt]
\bar 0 & \bar 1 & \bar 5
\end{array}
\right],
\
\left[
\begin{array}{ccc}
1 & 1 & 1
\\[3pt]
\bar 0 & \bar 1 & \bar 8
\end{array}
\right]
\right\}
$,
\hspace{.5cm}
$
\setlength{\arraycolsep}{1.8pt}
\left\{   
\left[
\begin{array}{ccc}
1 & 1 & 4
\\[3pt]
\bar 0 & \bar 1 & \bar 5
\end{array}
\right],
\
\left[
\begin{array}{ccc}
1 & 1 & 4
\\[3pt]
\bar 0 & \bar 1 & \bar 8
\end{array}
\right]
\right\}
$,
\hspace{.5cm}
$
\setlength{\arraycolsep}{1.8pt}
\left\{   
\left[
\begin{array}{ccc}
1 & 1 & 2
\\[3pt]
\bar 0 & \bar 1 & \bar 1
\end{array}
\right],
\
\left[
\begin{array}{ccc}
1 & 1 & 2
\\[3pt]
\bar 0 & \bar 1 & \bar 7
\end{array}
\right]
\right\}
$.
}

\medskip

\end{center}
\noindent
In these cases, for any pair of degree matrices stemming from
a common set, the associated fake weighted projective planes
are in fact isomorphic to each other.
\end{proposition}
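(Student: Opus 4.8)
The plan is to run the same argument as in Propositions~\ref{prop:classfwppdeg4}, \ref{prop:classfwppdeg3} and~\ref{prop:classfwppdeg2}, the genuinely new feature being that certain normalized matrices now coincide. As before, I would put $Z = Z(P)$ with $w = w(P)$ adjusted, note $w \in S(1)$ by Proposition~\ref{prop:fwppKsquare}, and split along $S(1) = 9S(9) \cup 8S(8) \cup 6S(6) \cup 5S(5)$ from Proposition~\ref{prop:scalesolutions}. In the case $w = \mu u$ with $\mu \in \{9,8,6,5\}$ and $u \in S(\mu)$, Theorem~\ref{thm:loesungen_Markov_sind_quadratzahlen} yields the stated first row and pairwise coprimality of its entries, Proposition~\ref{prop:clandcr} gives $\Cl(Z) = \ZZ \oplus \ZZ/\mu\ZZ$, and, as $\gcd(u_0,\mu)=1$, Lemma~\ref{lem:Q2localcl} normalizes $Q$ so that its second row becomes $(\bar 0,\bar 1,\bar\eta)$ with $\bar\eta \in (\ZZ/\mu\ZZ)^{*}$.

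To pin down the admissible $\bar\eta$, I would use the elementary fact that two columns $(a,\bar s)$, $(b,\bar t)$ generate $\ZZ \oplus \ZZ/\mu\ZZ$ exactly when $\gcd(a,b)=1$ and $\gcd(bs-at,\mu)=1$. For the pairs $q_0,q_1$ and $q_0,q_2$ this merely reproduces $\bar\eta \in (\ZZ/\mu\ZZ)^{*}$, whereas for $q_1,q_2$ it demands $\gcd(u_1\eta - u_2,\mu)=1$. Feeding in the residues from Lemmas~\ref{lem:a9wimod3}, \ref{lem:a_gl_8_w_i_mod_4} and~\ref{lem:a_gl_6_w_i_mod_3} (and, where needed, the parities forced by coprimality) then singles out exactly the tabulated $\bar\eta$: for $\mu = 9$ the condition reads $\eta \not\equiv 1 \pmod 3$, leaving $\{\bar 2,\bar 5,\bar 8\}$, while for $\mu = 8,6,5$ it is automatic and all units survive. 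Reading the criterion in the opposite direction shows, conversely, that every listed $Q(w,\bar\eta)$ is a degree matrix, and Lemma~\ref{lem:Q2localcl} together with Proposition~\ref{prop:fwppKsquare} then certifies degree~$1$.

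The isomorphism bookkeeping is the crux. By Proposition~\ref{prop:isochar}, $Z(Q) \cong Z(Q')$ holds iff $Q'$ arises from $Q$ by an automorphism of $K$ followed by a column permutation; since the torsion order $\mu$ and the local class group orders $\mu u_i$ are invariants, this can occur only for matrices sharing the same $\mu$ and the same adjusted first row $u$, so the analysis localizes to a fixed $u$. Among the positivity-preserving automorphisms $\varphi_{1,\bar a,\bar c}$ of Remark~\ref{rem:admopq}, the only one fixing the normal form $(\bar 0,\bar 1,\,\cdot\,)$ is the identity (here one uses $\gcd(u_0,\mu)=1$). Hence a coincidence can be produced only by a column permutation interchanging two columns with equal first-row entry, followed by renormalization, and a direct computation shows that interchanging the two entries equal to $1$ in a first row $(1,1,u_2)$ sends $\bar\eta \mapsto \overline{u_2 - \eta}$.

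It remains to run through the possible repeated first-row entries. Pairwise coprimality forces any repeated value to equal $1$, so the relevant quadratic $(2+c)^2 = \mu c$ has positive integer roots only for $\mu = 9$ (giving $c = 1,4$) and $\mu = 8$ (giving $c = 2$), leaving the triples $u = (1,1,1)$, $u = (1,1,4)$ in $S(9)$ and $u = (1,1,2)$ in $S(8)$, with no repetitions in $S(6)$ or $S(5)$. For $u = (1,1,4)$ the swap $\eta \mapsto 4-\eta$ fixes $\bar 2$ and interchanges $\bar 5,\bar 8$; for $u = (1,1,2)$ the swap $\eta \mapsto 2-\eta$ fixes $\bar 1,\bar 5$ and interchanges $\bar 3,\bar 7$; and for $u = (1,1,1)$, where all three columns may be permuted, the transpositions $\eta \mapsto 1-\eta$ and $\eta \mapsto \eta^{-1}$ together fuse $\{\bar 2,\bar 5,\bar 8\}$ into a single orbit. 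These nontrivial orbits are precisely the exceptional sets, and for every remaining $u$ the first row has distinct entries, so no nontrivial permutation preserves the normal form and Proposition~\ref{rem:adjdegmat} yields pairwise non-isomorphic surfaces. The main obstacle is exactly this last step: one must be certain to have located all repeated-entry first rows and to have computed each induced involution on $\bar\eta$ correctly, since a slip would either overlook a coincidence or wrongly merge two genuinely distinct surfaces.
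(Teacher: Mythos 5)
Your proof is correct, and its first half coincides with the paper's: the same reduction $w \in S(1) = 9S(9)\cup 8S(8)\cup 6S(6)\cup 5S(5)$, the same normalization via Lemma~\ref{lem:Q2localcl}, and the same use of the residue lemmas to decide which $\bar\eta$ yield degree matrices --- your criterion $\gcd(u_1\eta-u_2,\mu)=1$ for the pair $q_1,q_2$ is just a cleaner packaging of the paper's computation of $u_jq_i-u_iq_j$, and it does isolate $\{\bar 2,\bar 5,\bar 8\}$ for $\mu=9$ while letting all units survive for $\mu=8,6,5$. Where you genuinely diverge is the isomorphism bookkeeping. The paper delegates ``distinct $\Rightarrow$ non-isomorphic'' to Proposition~\ref{rem:adjdegmat} and verifies the exceptional isomorphisms by exhibiting explicit generator matrices in Examples~\ref{ex:iso-19-111}, \ref{ex:iso-19-114} and~\ref{ex:iso-18-112}. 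You instead argue structurally: localize to a fixed adjusted first row via the invariants $\mu$ and the multiset of local class group orders $\mu u_i$, note that the only positivity-preserving automorphism fixing the normal form $(\bar 0,\bar 1,\,\cdot\,)$ is the identity, classify the repeated-entry first rows by solving $(2+c)^2=\mu c$ (giving exactly $(1,1,1)$, $(1,1,4)$ in $S(9)$ and $(1,1,2)$ in $S(8)$), and compute the induced involutions $\bar\eta\mapsto\overline{u_2-\eta}$ and, for $(1,1,1)$, also $\bar\eta\mapsto\bar\eta^{-1}$. This yields a uniform, self-contained derivation of exactly which $\bar\eta$ coincide, with nothing outsourced to external computations; the paper's route is shorter on the page but leaves the decisive verifications to the example section. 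Both the positive and negative directions of the isomorphy claim come out of your orbit computation, so nothing is missing.

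One byproduct of your computation deserves mention: for $u=(1,1,2)$, $\mu=8$ you obtain the fusion $\{\bar 3,\bar 7\}$ with $\bar 1$ and $\bar 5$ fixed. This agrees with Example~\ref{ex:iso-18-112}, with Definition~\ref{def:adjusted} and with the corresponding set in Theorem~\ref{thm:introthm1}, but not with the third exceptional set as printed in the statement of Proposition~\ref{prop:classfwppdeg1}, which lists $\bar 1$ in place of $\bar 3$; that is evidently a typo in the statement rather than a flaw in your argument.
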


\begin{lemma}
\label{lem:a_gl_8_w_i_mod_8} 
Let $u \in S(8)$ be adjusted. Then
$(\bar{u}_0, \bar{u}_1, \bar{u}_2) = (\bar{1}, \bar{1}, \bar{2})$
in $\ZZ/ 8\ZZ$.
\end{lemma}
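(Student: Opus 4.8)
The plan is to reduce everything to the structural description of $S(8)$ from Theorem~\ref{thm:loesungen_Markov_sind_quadratzahlen}, combined with the elementary fact that every odd square is congruent to $1$ modulo $8$. First I would invoke Theorem~\ref{thm:loesungen_Markov_sind_quadratzahlen}: an adjusted $u \in S(8)$ has, up to permutation, the shape $u = (x_0^2, x_1^2, 2x_2^2)$ with pairwise coprime entries, and the adjustment requirement $2 \mid u_2$ from Definition~\ref{def:adjusted} forces the unique even entry $2x_2^2$ into the last slot. Thus $u_0 = x_0^2$ and $u_1 = x_1^2$ are the two odd entries and $u_2 = 2x_2^2$ is the even one.

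Next I would pin down the parities of the $x_i$. Pairwise coprimeness of $u_0,u_1,u_2$ gives $x_0$ and $x_1$ odd at once: if, say, $x_0$ were even, then $2$ would divide both $u_0 = x_0^2$ and $u_2 = 2x_2^2$, contradicting $\gcd(u_0,u_2) = 1$. The remaining point — and the only genuine obstacle — is to show that $x_2$ is odd as well, since coprimeness alone does not constrain the parity of $x_2$.

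To rule out $x_2$ even, the quickest route is to use the congruence already established in Lemma~\ref{lem:a_gl_8_w_i_mod_4}, namely $u_2 \equiv 2 \pmod 4$; this forces $2x_2^2 \equiv 2 \pmod 4$, hence $x_2^2$ is odd and so is $x_2$. If one prefers a self-contained argument, one can reason directly modulo $8$: assuming $x_2$ even gives $u_2 = 2x_2^2 \equiv 0 \pmod 8$, while $u_0 \equiv u_1 \equiv 1 \pmod 8$ as odd squares, so the left-hand side of the defining equation satisfies $(u_0+u_1+u_2)^2 \equiv 2^2 = 4 \pmod 8$, whereas the right-hand side $8u_0u_1u_2 \equiv 0 \pmod 8$, a contradiction. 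Either way, $x_2$ is odd.

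With $x_0,x_1,x_2$ all odd, the fact that every odd square is $\equiv 1 \pmod 8$ yields $u_0 = x_0^2 \equiv \bar 1$, $u_1 = x_1^2 \equiv \bar 1$ and $u_2 = 2x_2^2 \equiv \bar 2$ in $\ZZ/8\ZZ$, which is precisely the asserted triple $(\bar 1,\bar 1,\bar 2)$; the ordering coming from adjustment is irrelevant here, as the two odd entries share the residue $\bar 1$. I would remark that the same conclusion can also be reached by the mutation induction used for Lemma~\ref{lem:a_gl_8_w_i_mod_4}: the base triple $(1,1,2)$ satisfies the claim, and for $u' = \lambda(u)$ one has $u_2' \equiv -2u_0 - 2u_1 - u_2 \pmod 8$ by Lemma~\ref{lem:mutation}, which evaluates to $\bar 2$ when the modified slot carried residue $\bar 2$ and to $\bar 1$ when it carried $\bar 1$, so that the residue multiset $\{\bar 1,\bar 1,\bar 2\}$, and hence the adjusted normal form, is preserved under one-step mutation; Theorem~\ref{thm:intmarkov} then finishes.
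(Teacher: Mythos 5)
Your argument is correct, but it takes a genuinely different route from the paper. The paper proves this lemma by induction along the mutation tree: it checks the base triple $(1,1,2)$ and then uses the recursion $u_2' = 8u_0u_1 - 2u_0 - 2u_1 - u_2$ from Lemma~\ref{lem:mutation} to show that the residue pattern $(\bar 1,\bar 1,\bar 2)$ in $\ZZ/8\ZZ$ is preserved under one-step mutation, exactly as in the proof of Lemma~\ref{lem:a_gl_8_w_i_mod_4}; this needs nothing beyond Theorem~\ref{thm:intmarkov}. You instead import the structural result of Theorem~\ref{thm:loesungen_Markov_sind_quadratzahlen} --- that an adjusted $u\in S(8)$ has the form $(x_0^2,x_1^2,2x_2^2)$ with pairwise coprime entries --- and reduce the claim to the classical fact that odd squares are $\equiv 1 \pmod 8$. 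You correctly isolate the one nontrivial point, the oddness of $x_2$, and both of your arguments for it are sound: the appeal to the already-established congruence $u_2\equiv 2\pmod 4$ of Lemma~\ref{lem:a_gl_8_w_i_mod_4}, and the self-contained parity contradiction in which $x_2$ even would force $(u_0+u_1+u_2)^2\equiv 4\pmod 8$ against $8u_0u_1u_2\equiv 0\pmod 8$. There is no circularity, since Theorem~\ref{thm:loesungen_Markov_sind_quadratzahlen} is proved independently of this lemma. Your approach has the merit of explaining \emph{why} the residues come out as $(\bar 1,\bar 1,\bar 2)$ (they are forced by the square shape of the entries), whereas the paper's induction is more uniform with the other mod-$\mu$ lemmas of Sections~\ref{sec:classifyfwpp} and~\ref{sec:locgi}; your closing sketch of that induction agrees with the paper's actual proof.
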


\begin{proof} 
The claim holds for~$u = (1,1,2)$.
By Theorem~\ref{thm:intmarkov} it suffices 
to show that if $u \in S(8)$ fulfills
the claim after adjusting, then
$u' := \lambda(u)$ does so. 
Lemma~\ref{lem:mutation} says
\begin{equation*}
u_0' \ = \ u_0,
\quad
u_1' \ = \ u_1,
\quad
u_2' \ = \ 8 u_0u_1 - 2u_0 - 2u_1 - u_2.
\end{equation*}
If $\bar u_2 = \bar 2$ holds, then
$\bar u_0 = \bar u_1 = \bar 1$ and we compute
$\bar u_2' = - \bar 6 = \bar 2$.
If $\bar u_2 = \bar 1$ holds, then 
one of $\bar u_0, \bar u_1$ equals $\bar 1$
and the other equals $\bar 2$.
We obtain $\bar u_2' = - \bar 7 = \bar 1$.
\end{proof}

\begin{lemma}
\label{lem:a_gl_6_w_i_mod_6}  
Let $u \in S(6)$ be adjusted. Then
$(\bar u_0, \bar u_1, \bar u_2)  = (\bar 1, \bar 2, \bar 3)$ 
in $\ZZ/ 6\ZZ$.
\end{lemma}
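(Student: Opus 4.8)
The plan is to avoid running a fresh mutation induction and instead recover the residues modulo $6$ from those modulo $2$ and modulo $3$ via the Chinese Remainder Theorem, using the preceding lemma for the mod-$3$ part. First I would extract the parities. Since $u \in S(6)$ is adjusted, the definition of adjustedness gives $2 \mid u_1$ and $3 \mid u_2$, and by Theorem~\ref{thm:loesungen_Markov_sind_quadratzahlen} the entries of $u$ are pairwise coprime. Coprimeness with the even entry $u_1$ forces $u_0$ and $u_2$ to be odd, so modulo $2$ we have $(\bar u_0, \bar u_1, \bar u_2) = (\bar 1, \bar 0, \bar 1)$.

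Next I would invoke Lemma~\ref{lem:a_gl_6_w_i_mod_3}, which yields $(\bar u_0, \bar u_1, \bar u_2) = (\bar 1, \bar 2, \bar 0)$ in $\ZZ/3\ZZ$ for adjusted $u$; note that the adjusting requirement is literally the same in both lemmas, so the very same triple is adjusted in either sense and no compatibility issue arises. Since $6 = 2 \cdot 3$ with $\gcd(2,3)=1$, the Chinese Remainder Theorem recovers each $u_i$ modulo $6$ from its residues modulo $2$ and modulo $3$: from residues $(1,1)$ modulo $(2,3)$ one recovers $\bar 1$, from $(0,2)$ one recovers $\bar 2$, and from $(1,0)$ one recovers $\bar 3$. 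This gives $(\bar u_0, \bar u_1, \bar u_2) = (\bar 1, \bar 2, \bar 3)$ in $\ZZ/6\ZZ$, as claimed.

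For consistency with the pattern of Lemmas~\ref{lem:a9wimod3}--\ref{lem:a_gl_8_w_i_mod_8}, one could instead argue directly by mutation: the initial triple $(1,2,3)$ already realizes $(\bar 1, \bar 2, \bar 3)$ modulo $6$, and for the inductive step Lemma~\ref{lem:mutation} gives $u_2' \equiv -2u_0 - 2u_1 - u_2 \pmod 6$, from which a short check in the three cases $\bar u_2 \in \{\bar 1, \bar 2, \bar 3\}$ shows that the multiset of residues $\{\bar 1, \bar 2, \bar 3\}$ is preserved by $\lambda$. I expect the only genuine subtlety of this second route to be the re-adjusting step: one must observe that for $a = 6$ the adjusting permutation is \emph{uniquely} determined by the divisibility requirements $2 \mid u_1$ and $3 \mid u_2$ (since, by Theorem~\ref{thm:loesungen_Markov_sind_quadratzahlen}, the even entry and the entry divisible by $3$ are distinct), so that passing from the preserved multiset back to the ordered residues is unambiguous. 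The first route sidesteps this point entirely and is the one I would present.
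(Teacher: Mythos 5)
Your primary (CRT) route is correct and genuinely different from the paper's proof. The paper runs the same mutation induction it uses for the other residue lemmas: it verifies the claim for the initial triple $(1,2,3)$ and then checks, in the three cases $\bar u_2 = \bar 1, \bar 2, \bar 3$, that $u_2' = 6u_0u_1 - 2u_0 - 2u_1 - u_2$ preserves the residue pattern modulo $6$ after re-adjusting. Your argument instead assembles the mod-$6$ residues from mod-$2$ and mod-$3$ data: adjustedness together with the pairwise coprimeness supplied by Theorem~\ref{thm:loesungen_Markov_sind_quadratzahlen} pins down the parities as $(\bar 1, \bar 0, \bar 1)$, Lemma~\ref{lem:a_gl_6_w_i_mod_3} (which appears earlier, uses the identical adjustedness convention for $a=6$, and is proved independently, so there is no circularity) gives $(\bar 1, \bar 2, \bar 0)$ modulo $3$, and the Chinese Remainder Theorem then forces $(\bar 1, \bar 2, \bar 3)$ modulo $6$. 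This is shorter and avoids a second pass over the mutation tree, at the cost of a dependency on the mod-$3$ lemma, whereas the paper's proof is self-contained modulo Theorem~\ref{thm:intmarkov} and Lemma~\ref{lem:mutation}. Your secondary route is essentially the paper's argument, and your observation that the adjusting permutation is unique --- the even entry and the entry divisible by $3$ are distinct entries by coprimeness --- is precisely what makes the paper's phrase \emph{fulfills the claim after adjusting} unambiguous.
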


\begin{proof} 
The assertion is valid for~$u = (1,2,3)$.
By Theorem~\ref{thm:intmarkov}, we only have
to show that, if $u \in S(6)$ fulfills the claim
after adjusting, then $u' = \lambda(u)$ does as well. 
According to Lemma~\ref{lem:mutation}, we have
\[
u_0' \ = \ u_0,
\quad
u_1' \ = \ u_1,
\quad
u_2' \ = \ 6 u_0u_1 - 2u_0 - 2u_1 - u_2.
\]
If $\bar u_2 = \bar 3$, then we may assume
$\bar u_0 = \bar 1$ and $\bar u_1 = \bar 2$
and compute $\bar u_2' = - \bar{9} = \bar 3$.
If $\bar u_2 = \bar 1$, then we may assume
$\bar u_0 = \bar 2$ and $\bar u_1 = \bar 3$
and compute $\bar u_2' = -\bar{11} = \bar 1$.
If $\bar u_2 = \bar 2$, then we may assume
$\bar u_0 = \bar 1$ and $\bar u_1 = \bar 3$
and compute $\bar u_2' = -\bar{10} = \bar 2$.
\end{proof}

\begin{lemma}\label{lem:a_gl_5_w_i_mod_5} 
Let $u \in S(5)$ be adjusted. Then
$(\bar u_0, \bar u_1, \bar u_2)
=
(\bar 1, \bar 4, \bar 0),
\,
(\bar 4, \bar 1, \bar 0),
$ 
in~$\ZZ/ 5\ZZ$. 
\end{lemma}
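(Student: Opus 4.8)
The plan is to follow verbatim the inductive scheme already used for Lemmas~\ref{lem:a9wimod3}, \ref{lem:a_gl_8_w_i_mod_4}, \ref{lem:a_gl_6_w_i_mod_3}, \ref{lem:a_gl_8_w_i_mod_8} and~\ref{lem:a_gl_6_w_i_mod_6}. First I would settle the base case: the initial triple $u = (1,4,5)$ has residues $(\bar 1, \bar 4, \bar 0)$ in $\ZZ/5\ZZ$, which is one of the two asserted tuples. By Theorem~\ref{thm:intmarkov} every element of $S(5)$ is a mutation of $(1,4,5)$, so the assertion follows once I show that a single one-step mutation preserves it. Since a coordinate permutation only reshuffles residues, the real content is that the unordered multiset of residues $\{\bar 0, \bar 1, \bar 4\}$ is invariant under the operation $\lambda$; the two tuples in the statement then correspond precisely to the two ways the integer ordering $u_0 \le u_1$ can distribute $\bar 1$ and $\bar 4$ over the first two slots, while adjustment forces $5 \mid u_2$ and hence $\bar u_2 = \bar 0$.

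For the inductive step I would reduce the mutation formula from Lemma~\ref{lem:mutation} modulo $5$: since $5u_0u_1 \equiv \bar 0$, one gets
\[
\bar u_2' \ = \ -2\bar u_0 - 2\bar u_1 - \bar u_2 \ \in \ \ZZ/5\ZZ.
\]
Writing a one-step mutation as a permutation bringing the entry to be recomputed into the third slot, followed by $\lambda$, I would split into three cases according to the residue of that entry. If $\bar u_2 = \bar 0$, I may assume $\bar u_0 = \bar 1$ and $\bar u_1 = \bar 4$ and obtain $\bar u_2' = -\bar{10} = \bar 0$; if $\bar u_2 = \bar 1$, I may assume $\bar u_0 = \bar 0$ and $\bar u_1 = \bar 4$ and obtain $\bar u_2' = -\bar 9 = \bar 1$; if $\bar u_2 = \bar 4$, I may assume $\bar u_0 = \bar 0$ and $\bar u_1 = \bar 1$ and obtain $\bar u_2' = -\bar 6 = \bar 4$. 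In every case the recomputed entry carries the same residue, so the multiset $\{\bar 0, \bar 1, \bar 4\}$ is reproduced, completing the induction.

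The only genuine subtlety, rather than an obstacle, is the bookkeeping: a one-step mutation is a permutation followed by $\lambda$, so to cover all of them I must verify the residue computation for each choice of recomputed entry, using the symmetry of the formula in $u_0, u_1$ to fix the residues of the remaining two entries, exactly as in the proof of Lemma~\ref{lem:a_gl_6_w_i_mod_6}. I would also make explicit, once, why adjustment is available and pins $\bar u_2 = \bar 0$: by Theorem~\ref{thm:loesungen_Markov_sind_quadratzahlen} the entries of $u \in S(5)$ are pairwise coprime, and the equation $(u_0+u_1+u_2)^2 = 5u_0u_1u_2$ forces $5 \mid u_0u_1u_2$, so exactly one entry is divisible by $5$. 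Beyond this the argument is routine; the one place to be careful is the sign in the three modular evaluations $-\bar{10}$, $-\bar 9$ and $-\bar 6$.
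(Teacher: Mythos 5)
Your proposal is correct and follows exactly the same route as the paper's proof: base case $u=(1,4,5)$, reduction via Theorem~\ref{thm:intmarkov} to invariance of the residue multiset under one-step mutation, and the identical three case computations $-\bar{10}$, $-\bar 9$, $-\bar 6$ in $\ZZ/5\ZZ$. The extra remark on why adjustment pins $\bar u_2 = \bar 0$ (exactly one entry divisible by $5$) is a harmless addition that the paper leaves implicit in Definition~\ref{def:adjusted}.
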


\begin{proof} 
The assertion is true for~$u = (1,4,5)$.
Due to Theorem~\ref{thm:intmarkov}, it is enough
to show that, if $u \in S(5)$ fulfills the claim
after adjusting, then $u' = \lambda(u)$ does so. 
By Lemma~\ref{lem:mutation}, we have
\[
u_0' \ = \ u_0,
\quad
u_1' \ = \ u_1,
\quad
u_2' \ = \ 5 u_0u_1 - 2u_0 - 2u_1 - u_2.
\]
If $\bar u_2 = \bar 0$, then up to switching
$\bar u_0 = \bar 1$ and $\bar u_1 = \bar 4$
and we obtain $\bar u_2' = - \bar{10} = \bar 0$.
If $\bar u_2 = \bar 1$, then up to switching
$\bar u_0 = \bar 0$ and $\bar u_1 = \bar 4$
and we obtain $\bar u_2' = -\bar 9 = \bar 1$.
If $\bar u_2 = \bar 4$, then up to switching
$\bar u_0 = \bar 0$ and $\bar u_1 = \bar 1$
and we obtain $\bar u_2' = -\bar 6 = \bar 4$.
\end{proof}

\begin{proof} [Proof of Proposition~\ref{prop:classfwppdeg1}] 
We may assume $Z = Z(P)$ and that $w = w(P)$ is an adjusted
fake weight vector. Let $Q$ be a degree matrix
corresponding to $P$.
Proposition~\ref{prop:fwppKsquare} yields~$w \in S(1)$.
By Proposition~\ref{prop:scalesolutions},
one of the following holds 
\[
w = 9 \cdot u, \, u \in S(9), \
w = 8 \cdot u, \, u \in S(8), \
w = 6 \cdot u, \, u \in S(6), \
w = 5 \cdot u, \, u \in S(5).
\]

\medskip 
\noindent
\emph{Case (i):} $w = 9 \cdot u$ with $u \in S(9)$.
Theorem~\ref{thm:loesungen_Markov_sind_quadratzahlen}
provides us with $u = (x_0^2,x_1^2, x_2^2)$, having
pairwise coprime entries.
Due to Proposition~\ref{prop:clandcr} and Corollary~\ref{cor:fwpstors},
the torsion part of $\Cl(Z)$ is cyclic of order nine.
Lemmas~\ref{lem:Q2localcl} and~\ref{lem:a9wimod3}
allow us to assume
\[
\qquad\qquad
Q
\ = \
\left[
\begin{array}{ccc}
x_0^2 & x_1^2 &   x_2^2
\\[3pt]
\bar 0 & \bar 1 & \bar \eta
\end{array}
\right],
\qquad
\bar{\eta} \in (\ZZ/ 9 \ZZ)^* = \{\bar 1, \bar 2, \bar 4, \bar 5, \bar 7, \bar 8\}.
\]
We exclude $\bar \eta = \bar 1, \bar 4 , \bar 7$.
Otherwise, $Q$ would be a degree matrix for one of these values.
Then $(0,\bar 1)$ must lie in the span of the last two columns
of $Q$.
Since $x_1,x_2$ are coprime, this means
$\bar x_2^2  = \bar x_1^2 \bar \eta + \bar 1$ in
$\ZZ / 9 \ZZ$.
Inserting the values of $\bar \eta$ gives
\[
\bar x_2^2 \ = \ \bar x_1^2 + \bar 1, \qquad 
\bar x_2^2 \ = \ \bar 4 \bar x_1^2 + \bar 1, \qquad 
\bar x_2^2 \ = \ \bar 7 \bar x_1^2 + \bar 1,
\]
as identities in $\ZZ/9\ZZ$. Moreover, Lemma~\ref{lem:a9wimod3}
says that $\bar x_1^2, \bar x_2^2$ are taken from
$\{\bar 1, \bar 4, \bar 7\}$ as well.
A direct computation shows that none of the above
identities can be satisfied this way.
Thus, $\bar \eta$ must be one of $\bar 2, \bar 5, \bar 8$.

We show that $Q(w, \bar \eta)$ for $\bar \eta = \bar 2, \bar 5, \bar 8$
is a degree matrix in $K := \ZZ \oplus \ZZ / 9 \ZZ$.
Since $u= (x_0^2, x_1^2, x_2^2)$ has are pairwise coprime
entries, we find an element of
the form $(1, \bar \zeta)$ with $\bar \zeta \in \ZZ/9\ZZ$ in the
span of any two columns of $Q$. Moreover,
$(0, \bar \kappa_{ij})$ lies  the span of the columns $q_i,q_j$ of
$Q(w, \bar \eta)$ for 
\[ 
\bar \kappa_{ij} 
\ := \
\bar x_j^2 \bar \eta_i - \bar x_i^2 \bar \eta_j,
\]
where we write $q_k = (x_k^2,\bar \eta_k)$.
Recall that due to Lemma~\ref{lem:a9wimod3} the classes
$\bar x_i^2, \bar x_j^2$ belong to $\{\bar 1, \bar 4, \bar 7\}$.
Thus, for $i=0$ and $j=1,2$ we see due to $\bar \eta_0 = \bar 0$
that $\bar \kappa_{ij} = \bar x_0^2 \bar \eta_j$ is a unit in $\ZZ/9\ZZ$.
Moreover,
\[
\bar \kappa_{12} = \bar x_2^2 - \bar \eta_2 \bar x_1^2 \in (\ZZ/9\ZZ)^*
\]
is checked via directly by inserting all possible
values $\bar 1, \bar 4, \bar 7$ for the $\bar x_i$ and $\bar 2, \bar 5, \bar 8$
for~$\bar \eta_2$.
Lemma~\ref{lem:Q2localcl}
shows that the fake weight vector of the
fake weighted projective
plane $Z$ associated with $Q(w,\bar \eta)$ is  
$w = 9 \cdot u \in S(1)$ for $u= (x_0^2, x_1^2, x_2^2)$. 
Thus, Proposition~\ref{prop:fwppKsquare} yields
that~$Z$ is of degree~$1$.

\medskip 
\noindent
\emph{Case (ii):} $w = 8 \cdot u$ with $u \in S(8)$.
Then $u = (x_0^2,x_1^2, 2 x_2^2)$
and $u$ has pairwise coprime entries; see
Theorem~\ref{thm:loesungen_Markov_sind_quadratzahlen}.
Proposition~\ref{prop:clandcr} and Corollary~\ref{cor:fwpstors}
ensure that $\Cl(Z)$ has cyclic torsion part of order eight.
As $x_0$ is odd by coprimeness of $x_0, 2x_2$,
Lemma~\ref{lem:Q2localcl} allows us to assume 
\[
\qquad\qquad
Q
\ = \
\left[
\begin{array}{ccc}
x_0^2 & x_1^2 & 2 x_2^2
\\[3pt]
\bar \eta_0 & \bar \eta_1 & \bar \eta_2
\end{array}
\right],
\qquad
\bar 0, \bar 1, \bar \eta \in \ZZ/ 8 \ZZ = \{\bar 1, \bar 3, \bar 5, \bar 7\}.
\]

Let us see why each $Q(w, \bar \eta)$ is a degree
matrix in $K := \ZZ \oplus \ZZ / 8 \ZZ$,
defining a fake weighted projective plane
$Z$ as claimed.
Lemma~\ref{lem:a_gl_8_w_i_mod_8} delivers
$(\bar{u}_0, \bar{u}_1, \bar{u}_2) = (\bar{1}, \bar{1}, \bar{2})$
for $u = (x_0^2,x_1^2,2 x_2^2)$.
Using this, we derive for any 
two columns $q_i,q_j$ of $Q(w, \bar{\eta})$
the following:
\[
u_j q_i - u_i q_j \ \in \ \{ (0, \bar{\zeta}); \ \zeta  \in (\ZZ/ 8 \ZZ)^* \}.
\]
Indeed, for $q_0,q_j$ this obvious and for $q_1,q_2$,
we arrive at $\bar \zeta = \bar 2 - \bar \eta$,
which is a unit for $\eta = \bar 1, \bar 3, \bar 5, \bar 7$.
Moreover, since $u_i$ and $u_j$ are coprime,
$(1, \bar \zeta)$ belongs to the span of $q_i,q_j$.
We conclude that $q_i,q_j$ generate $K$ as a group.
Lemma~\ref{lem:Q2localcl} delivers the
fake weight vector $w = 8 \cdot u \in S(1)$
with $u= (x_0^2, x_1^2, 2 x_2^2)$.
So, Proposition~\ref{prop:fwppKsquare}
yields $\mathcal{K}_Z^2=1$.

\medskip 
\noindent
\emph{Case (iii):} $w = 6 \cdot u$ with $u \in S(6)$.
Then  $u = (x_0^2, 2 x_1^2, 3 x_2^2)$
and $u$ has pairwise coprime entries
by Theorem~\ref{thm:loesungen_Markov_sind_quadratzahlen}.
Proposition~\ref{prop:clandcr} yields that the torsion part of $\Cl(Z)$
is $\ZZ/ 6 \ZZ$.
Moreover, $x_0^2$ is coprime to $6$ and thus Lemma~\ref{lem:Q2localcl}
says that we may assume 
\[
\qquad\qquad
Q
\ = \
\left[
\begin{array}{ccc}
x_0^2 & x_1^2 & 2 x_2^2
\\[3pt]
\bar 0 & \bar 1 & \bar \eta
\end{array}
\right],
\qquad
\bar{\eta} \in (\ZZ/ 6 \ZZ)^* = \{\bar 1, \bar 5\}.
\]

Let us see that each of the matrices $Q(w, \bar \eta)$ %from Case~(iii)
is a degree matrix in $K := \ZZ \oplus \ZZ / 6 \ZZ$ and
defines the desired fake weighted projective plane $Z$.
For $u= (x_0^2, 2x_1^2, 3x_2^2)$, Lemma~\ref{lem:a_gl_6_w_i_mod_6} delivers
$
(\bar{x}_0^2, \bar{2} \bar{x}_1^2, \bar{3}\bar{x}_2^2)
=
(\bar{1}, \bar{2}, \bar{3})
$. 
Using this, we obtain that any two columns $q_i, q_j$ of $Q(w, \bar \eta)$,
satisfy
\[
u_j q_i - u_i q_j \ \in \ \{ (0, \bar{1}), (0, \bar{5}) \}.
\]
For $q_0,q_j$ this is obvious.
Moreover, one computes
$u_2q_1-u_1q_2 = (0, \bar 3 - \bar 2 \bar \eta)$.
By coprimeness of $u_i$ and $u_j$, 
the span of $q_i,q_j$ contains $(1, \bar \zeta)$ 
for some $\zeta \in \ZZ/6\ZZ$.
Hence,~$q_i,q_j$ generate $K$ as a group.
Lemma~\ref{lem:Q2localcl} provides us with the
fake weight vector $w = 6 \cdot u \in S(1)$. 
Using Proposition~\ref{prop:fwppKsquare}, we get $\mathcal{K}_Z^2 = 1$.

\medskip 
\noindent
\emph{Case (iv):} $w = 5 \cdot u$ with $u \in S(5)$.
Then $u = (x_0^2, x_1^2, 5 x_2^2)$
and $u$ has pairwise coprime entries by 
Theorem~\ref{thm:loesungen_Markov_sind_quadratzahlen}.
According to Proposition~\ref{prop:clandcr},
the torsion part of $\Cl(Z)$ is cyclic of order five
and by Lemma~\ref{lem:Q2localcl}, we can choose
\[
\qquad\qquad
Q
\ = \
\left[
\begin{array}{ccc}
x_0^2 & x_1^2 & 5 x_2^2
\\[3pt]
\bar 0 & \bar 1 & \bar \eta
\end{array}
\right],
\qquad
\bar{\eta} \in (\ZZ/ 5 \ZZ)^*.
\]

We check that each of the  $Q(w, \bar \eta)$ 
is a degree matrix in $K := \ZZ \oplus \ZZ / 5 \ZZ$,
defining a fake weighted projective plane $Z$ as claimed.
We know  
$
(\bar x_0^2, \bar x_1^2, \bar 5 \bar x_2^2)
=
(\bar 1, \bar 4, \bar 0)
$
from Lemma~\ref{lem:a_gl_5_w_i_mod_5}.
Set $u=(x_0^2, x_1^2, 5 x_2^2)$.
Then, given two columns $q_i, q_j$ of $Q(w, \bar{\eta}_2)$, 
we observe
\[
  u_j q_i - u_i q_j \ \in \ \{ (0, \bar \zeta); \ \bar \zeta \in (\ZZ/5\ZZ)^* \}.
\]
This is obvious for $q_0,q_j$ and
$u_2q_1-u_1q_2$ evaluates to $(0, - \bar 4 \bar \eta)$.
Moreover, since $u_i$ and $u_j$ are coprime,
some $(1, \bar \zeta)$ lies in the span of
$q_i,q_j$.
Lemma~\ref{lem:Q2localcl} gives
the fake weight vector $w = 5 \cdot u \in S(1)$.
Proposition~\ref{prop:fwppKsquare} yields that~$Z$ is of 
degree~$1$.

Once more, Proposition~\ref{rem:adjdegmat}
guarantees that in all the cases, apart from the exceptions
listed in the assertion, distinct matrices define non-isomorphic
varieties.
See Examples~\ref{ex:iso-19-111}, \ref{ex:iso-19-114}
and~\ref{ex:iso-18-112} for the verifications of the
isomorphies in the exceptional cases.
\end{proof}

%\cleardoublepage

\section{Local Gorenstein indices and $T$-singularities}
\label{sec:locgi}

We investigate the singularities of the fake weighted
projective planes of integral degree, classified
in the preceding section.
Recall that the only possible singular points of a fake
weighted projective plane $Z = Z(P)$ are the toric
fixed points $z(k) \in Z$.
Moreover, $z(k) \in Z$ is singular, unless
the entries of the fake weight vector $w = w(P)$
are coprime and we have $w_k=1$; in this case,
$Z$ must be an ordinary weighted projective plane.

Let us recall the notion of a $T$-singularity;
see also~\cite[Sections~2,~4]{HaPro}.
Let~$k,p$ be coprime positive integers, denote
by $C(dk^2) \subseteq \KK^*$ the group of $dk^2$-th
roots of unity and consider the action 
\[
C(dk^2) \times \KK^2 \ \to \ \KK^2,
\qquad
\zeta \cdot z \ = \ (\zeta z_1, \, \zeta^{dpk-1} z_2).
\]
Then $U := \KK^2/\KK^*$ is an affine toric surface
and the image $u \in U$ of $0 \in \KK^2$ is singular
as soon as $k>1$.
A \emph{(cyclic) $T$-singularity}, also called a
\emph{singularity of type}
\[
\frac{1}{dk^2}(1,dpk-1),
\]  
is a surface singularity isomorphic to $u \in U$
as above.
The \emph{local Gorenstein index}~$\iota(z)$ of 
a point~$z$ in a normal variety~$Z$ is the order of the 
canonical divisor class in the local class group $\Cl(Z,z)$.

\begin{lemma}
\label{lem:tsing} 
Let $U$ be an affine toric surface with fixed point~$u\in U$. 
Then the following statements are equivalent:
\begin{enumerate}
\item
$u \in U$ is of type $\tfrac{1}{d k^2} (1, dpk-1).$
\item
$\iota(u)^2$ divides $\cl(u)$.
\end{enumerate}
If these statements hold, then $d = \cl(u)/\iota(u)^2$ and
$k= \iota(u)$ and there exists $b \in \ZZ$ such that $U$
has generator matrix
\begin{equation*}
P
\ = \
\left[ 
\begin{array}{cc}
\iota(u) & \iota(u) 
\\
d\iota(u)+b & b 
\end{array}
\right].
\end{equation*}
\end{lemma}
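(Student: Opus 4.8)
The plan is to reduce everything to the standard toric normal form of a two‑dimensional cyclic quotient singularity and then to run an elementary divisibility argument on the pair $(\cl(u),\iota(u))$. Since $u$ is the fixed point of an affine toric surface $U=U_\sigma$, it is a cyclic quotient singularity; after a unimodular change of the lattice I may assume that $\sigma$ has primitive ray generators $(0,1)$ and $(n,-q)$, so that $U$ has generator matrix
\[
P_0 \ = \ \left[\begin{array}{cc} 0 & n \\ 1 & -q \end{array}\right],
\qquad n=\cl(u),\quad \gcd(n,q)=1,\quad 0\le q<n.
\]
This is precisely the cyclic quotient singularity of type $\tfrac1n(1,q)$, and $\cl(u)=\vert\det P_0\vert=n$ recovers the order of the local class group in accordance with Proposition~\ref{prop:clandcr}.

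Next I would pin down $\iota(u)$. The local class group $\Cl(U,u)$ is $\ZZ^2$ modulo the image of the map $m\mapsto(\langle m,(0,1)\rangle,\langle m,(n,-q)\rangle)$, inside which the canonical class $\mathcal{K}_U=-D_1-D_2$ is represented by $-(1,1)$, of the same order as $(1,1)$. A direct computation shows that $t\cdot(1,1)$ lies in this image exactly when $n\mid t(q+1)$, whence
\[
\iota(u) \ = \ \frac{n}{\gcd(n,q+1)}.
\]
With this formula in hand the equivalence is pure arithmetic. If $u$ is of type $\tfrac{1}{dk^2}(1,dpk-1)$ with $\gcd(p,k)=1$, then $n=dk^2$ and $q+1=dpk$, so $\gcd(n,q+1)=dk\,\gcd(k,p)=dk$ and $\iota(u)=k$; hence $\iota(u)^2=k^2$ divides $dk^2=\cl(u)$, and moreover $k=\iota(u)$ and $d=\cl(u)/\iota(u)^2$. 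Conversely, assuming $\iota(u)^2\mid\cl(u)$, I set $k:=\iota(u)$ and $d:=\cl(u)/k^2$, so that $n=dk^2$; the Gorenstein formula forces $\gcd(n,q+1)=n/k=dk$, so $q+1=dkp$ for some $p$, i.e.\ $q=dpk-1$, while $\gcd(dk^2,dkp)=dk$ forces $\gcd(k,p)=1$. Thus $u$ is of type $\tfrac{1}{dk^2}(1,dpk-1)$, which settles (i)$\Leftrightarrow$(ii) together with the stated values $k=\iota(u)$ and $d=\cl(u)/\iota(u)^2$.

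Finally I would produce the asserted generator matrix by transforming $P_0$ (now with $n=dk^2$ and $q=dpk-1$) by a unimodular matrix. The main point, and the step needing genuine care, is to exploit $\gcd(p,k)=1$: choose $s_{21},s_{22}\in\ZZ$ with $ps_{22}-ks_{21}=1$ and set
\[
S \ = \ \left[\begin{array}{cc} p & k \\ s_{21} & s_{22} \end{array}\right]\ \in\ \SL_2(\ZZ).
\]
A short calculation, using $n=dk^2$ and $q+1=dpk$, then gives $S\cdot(0,1)^{\mathrm t}=(k,\,s_{22})^{\mathrm t}$ and $S\cdot(n,-q)^{\mathrm t}=(k,\,s_{22}-dk)^{\mathrm t}$, so both columns simultaneously acquire first entry $k$. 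Writing $b:=s_{22}-dk$ and recalling that a generator matrix of $U$ is determined only up to left multiplication by a unimodular matrix (and a permutation of columns), this exhibits
\[
P \ = \ S\cdot P_0 \ = \
\left[\begin{array}{cc} k & k \\ dk+b & b \end{array}\right]
\]
as a generator matrix of $U$ with $k=\iota(u)$ and $d=\cl(u)/\iota(u)^2$, as required; one also checks $\gcd(k,b)=\gcd(k,s_{22})=1$ from $ps_{22}-ks_{21}=1$, so the columns are indeed primitive. I expect this normalization to be the only non‑formal part of the argument; the remainder is bookkeeping with Proposition~\ref{prop:clandcr} and the Gorenstein index formula.
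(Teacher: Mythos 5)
Your proof is correct, and all the claims of the lemma (the equivalence, the values of $d$ and $k$, and the existence of the stated generator matrix with primitive columns) are established. It does, however, take a noticeably different route from the paper. The paper works directly with the group quotient: in the forward direction it writes down the candidate matrix $P$, exhibits the toric morphism $\pi\colon\KK^2\to U'$ and identifies $\ker(\pi)$ with $C(dk^2)$ acting as in the definition of a $T$-singularity, then reads off $\cl(u)=\vert\det P\vert$ and $\iota(u)=k$ from the Gorenstein index formula cited from \cite[Rem.~3.7]{HaHaSp}; in the converse direction it asserts that a suitable unimodular transformation brings the generator matrix into the form with both first entries equal to $\iota(u)$ and then reverses the quotient construction. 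You instead pass to the classical normal form $\tfrac1n(1,q)$ of a cyclic quotient surface singularity, derive the formula $\iota(u)=n/\gcd(n,q+1)$ from scratch, and reduce the equivalence to elementary arithmetic in $(n,q)$; the generator matrix is then produced by an explicit element of $\SL_2(\ZZ)$ built from a B\'ezout relation for $(p,k)$. What your version buys is self-containedness (no appeal to the external Gorenstein index formula) and an explicit justification of the normalization step that the paper only asserts (``a suitable unimodular transformation turns $P$ into\dots''), including the primitivity check $\gcd(k,b)=1$. What it costs is that you invoke, without proof, the standard dictionary identifying the quotient $\KK^2/C(n)$ for the action $\zeta\cdot(z_1,z_2)=(\zeta z_1,\zeta^q z_2)$ with the toric surface of the cone $\cone((0,1),(n,-q))$ --- precisely the fact the paper's kernel computation verifies by hand in the case it needs. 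Both are legitimate; just be aware that the reduction of $p$ modulo $k$ (so that $0\le dpk-1<dk^2$) is harmless here since $\gcd(dk^2,dpk)=dk$ is unchanged, and that the smallness of the action, needed so that $\cl(u)=n$, follows from $\gcd(dk^2,dpk-1)=1$.
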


\begin{proof}
Let $u \in U$ be of type $\frac{1}{dk^2}(1,dpk-1)$.
Choose $a,b \in \ZZ$ such that $ak - bp = 1$.
Consider the affine toric surface $U'$ given by the
generator matrix
\[
P
\ = \
\left[
\begin{array}{cc}
k & k 
\\
dk + b & b
\end{array}
\right].
\]
The associated homomorphism $\TT^2 \to \TT^2$,
$t \mapsto (t_1^kt_2^k, t_1^{dk+b}t_2^b)$
extends to a toric morphism $\pi \colon \KK^2 \to U'$.
Moreover, we obtain an isomorphism
\[
C(dk^2) \ \to \ \ker(\pi),
\qquad
\zeta \ \mapsto \ (\zeta, \zeta^{dpk-1}).
\]
We conclude $U' \cong \KK^2/C(dk^2)$ with $C(dk^2)$ acting
as needed for type $\frac{1}{dk^2}(1,dpk-1)$.
Thus $U' \cong U$ and, using~\cite[Rem.~3.7]{HaHaSp},
we obtain  
\[
\cl(u) \ = \ \vert \det(P) \vert \ = \ dk^2,
\qquad
\iota(u) \ = \ k.
\]
Assume $\cl(u) = d\iota(u)^2$. The affine toric surface~$U$
is given by a generator matrix~$P$.
With~$k := \iota(u)$, a suitable unimodular transformation
turns~$P$ into
\[
P
\ = \
\left[
\begin{array}{cc}
k & k 
\\
c & b
\end{array}
\right],
\qquad
\gcd(c,k) = \gcd(b,k)=1.
\]
By assumption $\cl(u) = \vert \det(P) \vert$
equals $d\iota(u)^2 = dk^2$.
Thus, we may assume $c = dk+b$.
Take $a,p \in \ZZ$ with
$ak - bp = 1$ and $p \ge 1$.
Then we have an action
\[
C(dk^2) \times \KK^2 \ \to \ \KK^2,
\qquad
\zeta \cdot z \ = \ (\zeta z_1, \, \zeta^{dpk-1} z_2).
\]
With similar arguments as above, we verify that
$U$ is the 
quotient~$\KK^2/C(dk^2)$ for this action
and thus see that $u \in U$ is of type
$\frac{1}{dk^2}(1,dpk-1)$. 
\end{proof}

\begin{example}
\label{ex:tsing}
Take integers $c,c_0,l$ with $c_0 < 0$ and $l > 0$ such that
$l$ and $c$ are coprime and consider the affine toric surface
$U$ with the generator matrix
\[
P
\ = \
\left[ 
\begin{array}{cc}
l & l
\\
c & c + lc_0 
\end{array}
\right].
\]
The toric fixed point $u \in U$ has local class group
order $\cl(u) = -c_0l^2$ and local Gorenstein index
$\iota(u) = l$; see~\cite[Rem.~3.7]{HaHaSp}.
Thus, $U$ is at most $T$-singular.
\end{example}

The aim of the section is to determine the local Gorenstein indices
and the $T$-singularities for all fake weighted projective
planes of integral degree. We begin with the following auxiliary
observations.

\begin{lemma}
\label{lem:x_k_teilt_iota_k}
Let $w \in S(a)$, set
$\mu := \gcd(w_0,w_1,w_2)$ and consider
a degree matrix in $\ZZ \oplus \ZZ / \mu \ZZ$
associated with $w$ of the following shape
(last row omitted for $a \ge 5$):
\begin{equation*}
Q
\ = \
[q_0, q_1, q_2] 
\ = \
\left[
\begin{array}{ccc}
x_0^2 & \xi_1 x_1^2 & \xi_2 x_2^2
\\[3pt]
\bar 0 & \bar 1 & \bar \eta 
\end{array}
\right],
\qquad
(\bar{\eta} \in \ZZ/ \mu \ZZ)^*,
\end{equation*}
where $(x_0^2, \xi_1 x_1^2, \xi_2 x_2^2) \in S( \mu a)$
is as in Theorem~\ref{thm:loesungen_Markov_sind_quadratzahlen}.
Then the anticanonical divisor class of the (fake) weighted
projective plane associated with $Q$ is
\[
w_Z 
\ = \
(\sqrt{ \mu a \xi_1 \xi_2} x_0  x_1 x_2, \,  \bar{1} + \bar{\eta})
\ \in \
\Cl(Z)
\ = \
\ZZ \oplus \ZZ / \mu \ZZ.
\]
Moreover, the local Gorenstein index $\iota_k$ of the toric
fixed point $z(k) \in Z(Q)$ equals the minimal positive multiple
$\nu x_k$ such that $\nu x_k w_Z \in \ZZ \cdot q_k$. 
\end{lemma}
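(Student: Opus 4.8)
The plan is to read $w_Z$ off as the column sum $q_0+q_1+q_2$ of $Q$ and then to interpret the local Gorenstein index $\iota_k$ as the order of that class in the local class group $K/\ZZ q_k$. For the anticanonical class: by Proposition~\ref{prop:fwppKsquare} an anticanonical divisor of $Z=Z(Q)$ is $D_0+D_1+D_2$ with $D_i=V(T_i)$, and by Proposition~\ref{prop:clandcr} we have $[D_i]=\deg(T_i)=q_i$ in $\Cl(Z)=K$. Hence $w_Z=q_0+q_1+q_2$, whose torsion entry is $\bar 0+\bar 1+\bar\eta=\bar 1+\bar\eta$. For the free entry, $u:=(x_0^2,\xi_1 x_1^2,\xi_2 x_2^2)\in S(\mu a)$ satisfies, using $\xi_0=1$, the relation $(u_0+u_1+u_2)^2=\mu a\,u_0u_1u_2=\mu a\,\xi_1\xi_2\,(x_0x_1x_2)^2$; taking positive square roots gives $u_0+u_1+u_2=\sqrt{\mu a\,\xi_1\xi_2}\,x_0x_1x_2$, which is the identity recorded in Theorem~\ref{thm:loesungen_Markov_sind_quadratzahlen}. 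This establishes the formula for $w_Z$; I write $N:=\sqrt{\mu a\,\xi_1\xi_2}\in\ZZ_{>0}$.

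Next I would reformulate the index. By definition $\iota_k$ is the order of $\mathcal K_Z$, equivalently of $w_Z=-\mathcal K_Z$, in $\Cl(Z,z(k))=K/\ZZ q_k$ (Proposition~\ref{prop:clandcr}). The set $\{m\in\ZZ;\ m w_Z\in\ZZ q_k\}$ is the kernel of the homomorphism $\ZZ\to K/\ZZ q_k$, $m\mapsto m w_Z$, hence the subgroup $\iota_k\ZZ$; so every positive $m$ with $m w_Z\in\ZZ q_k$ is a multiple of $\iota_k$. Consequently the assertion ``$\iota_k$ is the least positive multiple $\nu x_k$ with $\nu x_k w_Z\in\ZZ q_k$'' is equivalent to the single divisibility $x_k\mid\iota_k$: granting it, $\iota_k=(\iota_k/x_k)\,x_k$ is such a multiple, and being the least element of $\iota_k\ZZ_{>0}$ it is the least one of this form.

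So it remains to prove $x_k\mid\iota_k$, for which it suffices to show $x_k\mid m$ for every $m$ with $m w_Z\in\ZZ q_k$. Such an $m$ admits $t\in\ZZ$ with $m N x_0x_1x_2=t\,\xi_k x_k^2$ on the free coordinate; dividing by $x_k$ and using $x_0x_1x_2=x_k\prod_{j\ne k}x_j$ turns this into $m N\prod_{j\ne k}x_j=t\,\xi_k x_k$. Fixing a prime $p\mid x_k$, pairwise coprimeness of the $x_i$ gives $p\nmid\prod_{j\ne k}x_j$, so comparing $p$-adic valuations $v_p$ yields $v_p(m)=v_p(x_k)+v_p(t)+v_p(\xi_k)-v_p(N)$. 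Thus $v_p(m)\ge v_p(x_k)$ as soon as $v_p(N)\le v_p(\xi_k)$, and ranging over all $p\mid x_k$ gives $x_k\mid m$.

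The whole weight of the argument therefore lies in the inequality $v_p(N)\le v_p(\xi_k)$ for primes $p\mid x_k$, and this is the step I expect to be delicate. Since $p\mid x_k$, pairwise coprimeness of the $u_i=\xi_i x_i^2$ forces $p\nmid\xi_j$ for $j\ne k$; as $\xi_0=1$ this gives $v_p(N^2)=v_p(\mu a)+v_p(\xi_k)$, so the inequality reduces to $v_p(\mu a)\le v_p(\xi_k)$. I would then check this against the four shapes of $(\xi_0,\xi_1,\xi_2)$: for $\mu a=6,5$ the prime divisors of $\mu a$ sit exactly in the corresponding $\xi_i$, so either $v_p(\mu a)=v_p(\xi_k)$ or $p\nmid x_k$ by coprimeness; the genuinely hard cases are $\mu a=9$ and $\mu a=8$, where $v_p(\mu a)$ exceeds every $v_p(\xi_k)$ and one must instead exclude $p\mid x_k$ outright, using Lemma~\ref{lem:a9wimod3} (no $u_i$ is divisible by $3$) and Lemma~\ref{lem:a_gl_8_w_i_mod_8} (the even entry has $v_2=1$). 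I would emphasize that a naive ``$\gcd(x_k,N)=1$'' is \emph{false} in general---for $\mu a=6$ one may well have $2\mid x_k$ or $3\mid x_k$---so the point is not coprimeness of $x_k$ and $N$, but that every prime shared by $x_k$ and $N$ is already absorbed by $\xi_k$.
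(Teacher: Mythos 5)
Your proposal is correct and follows essentially the same route as the paper: read off $w_Z=q_0+q_1+q_2$, use the square-root identity from Theorem~\ref{thm:loesungen_Markov_sind_quadratzahlen}, reduce the index claim to $x_k\mid\iota_k$, and settle that by a case analysis over $\sqrt{\mu a\xi_1\xi_2}=3,4,5,6$ using pairwise coprimeness together with Lemmas~\ref{lem:a9wimod3} and~\ref{lem:a_gl_8_w_i_mod_8}. Your $p$-adic-valuation bookkeeping ($v_p(\mu a)\le v_p(\xi_k)$ or $p\nmid x_k$) is just a more systematic phrasing of the same coprimeness argument the paper runs case by case.
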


\begin{proof}
Proposition~\ref{prop:fwppKsquare} says that $D_0+D_1+D_2$
is an anticanonical divisor on $Z$ for $D_k = V(T_k)$.
Moreover, the divisor class of $D_k$ equals the column $q_k$.
Thus, using Theorem~\ref{thm:loesungen_Markov_sind_quadratzahlen}
for the last equality, we obtain
\[
w_Z 
\ = \
q_0 + q_1 + q_2
\ = \
( x_0^2 + \xi_1 x_1^2 + \xi_2 x_2^2, \,  \bar{1} + \bar{\eta} )
\ = \
( \sqrt{ \mu a \xi_1 \xi_2} x_0  x_1 x_2, \,  \bar{1} + \bar{\eta} ).
\]
The local Gorenstein index $\iota_k$ is the order 
of $w_Z$ in the local class group of $z(k)$, which
according to Proposition~\ref{prop:clandcr} is given
as
\[
\Cl(Z, z(k)) \ = \ (\ZZ \oplus \ZZ / \mu \ZZ) /  \ZZ \cdot q_k.
\]
Thus, the local Gorenstein index of $z(k)$ is the minimal positive
integer $\iota_k$ satisfying $\iota_k w_Z \in \ZZ \cdot q_k$.
By the above description of $w_Z$, the latter implies
\[
\sqrt{ \mu a \xi_1 \xi_2} x_0  x_1 x_2 \iota_k \ \in \  \xi_k x_k^2 \ZZ.
\]  

\medskip

\noindent
\emph{Case 1}: $\sqrt{ \mu a \xi_1 \xi_2} = 3$.
Then $\xi_1 = \xi_2=1$ and $3x_0x_1x_2\iota_k$ is a multiple of $\xi_k x_k^2$.
Moreover, $3,x_0,x_1,x_2$ are pairwise coprime by
Lemma~\ref{lem:a9wimod3}.
We conclude $x_k \mid \iota_k$.

\medskip

\noindent
\emph{Case 2}: $\sqrt{\mu a \xi_1 \xi_2} = 4$. 
Then $\xi_1 = 1$, $\xi_2=2$ and $4x_0x_1x_2\iota_k$ is a
multiple of~$\xi_k x_k^2$.
Pairwise coprimeness of $x_0,x_1,2x_2$ yields $x_0,x_1 \mid \iota_0$.
Further, $x_2$ is odd due to Lemma~\ref{lem:a_gl_8_w_i_mod_8}.
Thus, $x_2 \mid \iota _2$.

\medskip

\noindent
\emph{Case 3}: $\sqrt{\mu a \xi_1 \xi_2} = 5$.
Then $\xi_1 = 1$, $\xi_2=5$ and $5x_0x_1x_2\iota_k$ is a
multiple of~$\xi_k x_k^2$.  
Pairwise coprimeness of $x_0,x_1,5x_2$ directly gives $x_k \mid \iota_k$.

\medskip

\noindent
\emph{Case 4}: $\sqrt{ a  \mu \xi_1 \xi_2} = 6$.
Then $\xi_1 = 2$, $\xi_2=3$ and $6x_0x_1x_2\iota_k$ is a multiple of~$\xi_k x_k^2$.   
Again, pairwise coprimeness of $x_0, 2x_1, 3x_2$ leads to $x_k \mid \iota_k$.
\end{proof}

\begin{remark}
\label{rem:nwinZq}
In the situation of Lemma~\ref{lem:x_k_teilt_iota_k}, set $\xi_0 := 1$
and $\eta_0 := 0$, $\eta_1 := 1$, $\eta_2 := \eta$. Moreover,
fix $n \in \ZZ_{>0}$ and define
\[
\beta_k(n)
\ := \ 
n \cdot \frac{\sqrt{\mu a \xi_1\xi_2}}{\xi_k} \cdot \frac{x_0x_1x_2}{x_k},
\qquad
k = 0,1,2.
\]
Then membership of the multiple $nx_kw_Z$ in $\ZZ \cdot q_k$ is characterized
as follows in terms of modular identity:
\[
n x_k w_Z  \in  \ZZ \cdot q_k
\ \iff \
n x_k w_Z  =  \beta_k(n) q_k
\ \iff \
\bar n  (\bar 1 + \bar \eta) \bar x_k = \overline{\beta_k(n)} \bar \eta_k  \in  \ZZ/\mu \ZZ.
\]
\end{remark}

\begin{proposition}
\label{prop:gisingdeg9865}
The following table lists the divisor class group,  
the adjusted degree matrix~$Q$,
the anticanonical class $w_Z$, the
constellation of local Gorenstein indices
$\iota_k = \iota(z(k))$
and the constellation of $T$-singularities
for the fake weighted projective planes
$Z = Z(Q)$ of degrees $9,8,6,5$:

\bigskip

\begin{center}

{\small
\begin{tabular}{c|c|c|c|c|c}
{\rm ID}
&
$\Cl(Z)$
&
$Q$
&
$w_Z$
&
$(\iota_0,\iota_1,\iota_2)$
&$(\pm,\pm,\pm)$ 
\\[4pt] \hline &&&& \\[-6pt]
(9-1-0)
&
$\ZZ$
&      
$
\begin{array}{c}                          
\left[
\begin{array}{ccc}
x_0^2 & x_1^2 & x_2^2
\end{array}
\right]
\\[3pt]
\scriptstyle (x_0^2,x_1^2,x_2^2) \ \in \ S(9)
\end{array}                  
$
&                  
$
\left[
\begin{array}{c}
3x_0x_1x_2
\end{array}
\right]
$
&
$
\begin{array}{c}
(x_0,x_1,x_2)
\end{array}
$
&
$(+,+,+)$  
\\[14pt] \hline &&&&  \\[-4pt]
(8-1-0)
&
$\ZZ$
&      
$
\begin{array}{c}                          
\left[
\begin{array}{ccc}
x_0^2 & x_1^2 & 2x_2^2
\end{array}
\right]
\\[3pt]
\scriptstyle (x_0^2,x_1^2,2x_2^2) \ \in \ S(8)
\end{array}                  
$
&                  
$
\left[
\begin{array}{c}
4x_0x_1x_2
\end{array}
\right]
$
&
$
\begin{array}{c}
(x_0,x_1,x_2)
\end{array}
$
&
$(+,+,+)$  
\\[14pt] \hline &&&&  \\[-4pt]
(6-1-0)
&
$\ZZ$
&      
$
\begin{array}{c}                          
\left[
\begin{array}{ccc}
x_0^2 & 2x_1^2 & 3x_2^2
\end{array}
\right]
\\[3pt]
\scriptstyle (x_0^2,2x_1^2,3x_2^2) \ \in \ S(6)
\end{array}                  
$
&                  
$
\left[
\begin{array}{c}
6x_0x_1x_2
\end{array}
\right]
$
&
$
\begin{array}{c}
(x_0,x_1,x_2)
\end{array}
$
&
$(+,+,+)$  
\\[14pt] \hline &&&&  \\[-4pt]
(5-1-0)
&
$\ZZ$
&      
$
\begin{array}{c}                          
\left[
\begin{array}{ccc}
x_0^2 & x_1^2 & 5x_2^2
\end{array}
\right]
\\[3pt]
\scriptstyle (x_0^2,x_1^2,5x_2^2) \ \in \ S(5)
\end{array}                  
$
&                  
$
\left[
\begin{array}{c}
5x_0x_1x_2
\end{array}
\right]
$
&
$
\begin{array}{c}
(x_0,x_1,x_2)
\end{array}
$
&
$(+,+,+)$  
\\[14pt] \hline &&&&  \\[-4pt]
\end{tabular}
}

\bigskip

\end{center}
\end{proposition}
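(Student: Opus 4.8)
The plan is to read off all six columns of the table from results already established, treating the four degrees uniformly via the scaling factor $c_a := \sqrt{a\xi_1\xi_2}$, where $(\xi_0,\xi_1,\xi_2)$ are the coefficients attached to the degree $a$ in Theorem~\ref{thm:loesungen_Markov_sind_quadratzahlen}, namely $(1,1,1)$, $(1,1,2)$, $(1,2,3)$, $(1,1,5)$ for $a=9,8,6,5$. First I note that for $a \in \{5,6,8,9\}$ Theorem~\ref{thm:loesungen_Markov_sind_quadratzahlen} yields pairwise coprime weights, so Proposition~\ref{prop:classfwppdeg5689} together with Remark~\ref{rem:wps} identifies $Z$ with an ordinary weighted projective plane. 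This gives $\Cl(Z) = \ZZ$ and the degree matrix $Q = [x_0^2, \xi_1 x_1^2, \xi_2 x_2^2]$ of the second and third columns (here $\mu = 1$, so the torsion row is absent). The anticanonical class is then immediate from Lemma~\ref{lem:x_k_teilt_iota_k} with $\mu = 1$: one has $w_Z = c_a\, x_0 x_1 x_2$, and evaluating $c_a = \sqrt{a\xi_1\xi_2}$ produces the values $3,4,6,5$ recorded in the fourth column.

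For the local Gorenstein indices I would invoke Lemma~\ref{lem:x_k_teilt_iota_k} once more, which already supplies the divisibility $x_k \mid \iota_k$ and hence $\iota_k \geq x_k$. For the reverse inequality it suffices to check $x_k w_Z \in \ZZ \cdot q_k$. Writing $x_0 x_1 x_2 = x_k \prod_{i \ne k} x_i$ and using $q_k = \xi_k x_k^2$, one finds $x_k w_Z = c_a\, x_k^2 \prod_{i \ne k} x_i$, so the membership reduces to the divisibility $\xi_k \mid c_a \prod_{i \ne k} x_i$, which holds because already $\xi_k \mid c_a$. Inspecting the four triples $(\xi_0,\xi_1,\xi_2)$ against the corresponding $c_a$ confirms $\xi_k \mid c_a$ in each entry ($2 \mid 4$ for $a=8$; $2 \mid 6$ and $3 \mid 6$ for $a=6$; $5 \mid 5$ for $a=5$; and $\xi_k = 1$ otherwise). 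Therefore $\iota_k = x_k$, as claimed in the fifth column.

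Finally, the $T$-singularity column follows from the criterion of Lemma~\ref{lem:tsing}: the toric fixed point $z(k)$ is a $T$-singularity precisely when $\iota(z(k))^2$ divides $\cl(z(k))$. By Proposition~\ref{prop:clandcr} (see also Lemma~\ref{lem:x_k_teilt_iota_k}) the local class group order is $\cl(z(k)) = w_k = \xi_k x_k^2$, while $\iota_k^2 = x_k^2$ by the previous step. Since $\xi_k$ is a positive integer, $x_k^2$ divides $\xi_k x_k^2$ trivially, so every fixed point meets the criterion, giving $(+,+,+)$ throughout and completing the table.

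I expect no genuine obstacle here: the entire computation is governed by the identity $w_Z = c_a\, x_0 x_1 x_2$ of Lemma~\ref{lem:x_k_teilt_iota_k} and the elementary observation $\xi_k \mid c_a$. The only point requiring minor care is keeping track of which $\xi_k$ and which $c_a$ belong to each degree, which becomes a finite, case-free verification once the uniform notation $c_a = \sqrt{a\xi_1\xi_2}$ is in place.
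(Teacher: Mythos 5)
Your proposal is correct and follows essentially the same route as the paper: both reduce the Gorenstein-index claim to checking $x_k w_Z \in \ZZ\cdot q_k$ via Lemma~\ref{lem:x_k_teilt_iota_k} and the $T$-singularity claim to $\iota_k^2 \mid w_k$ via Lemma~\ref{lem:tsing}. The paper states these two divisibilities as "observe" and "obviously," whereas you verify them explicitly through the uniform reduction $\xi_k \mid \sqrt{a\xi_1\xi_2}$, which is a welcome but not substantively different elaboration.
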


\begin{proof}
This is classically known, compare~\cite{HaPro}.
In our setting, one can proceed as follows.
Observe that $x_kw_Z \in \ZZ w_k$ holds
in all cases. 
Thus, Lemma~\ref{lem:x_k_teilt_iota_k} establishes
the claim on the local Gorenstein indices.
Obviously, $x_k^2 \mid w_k$ holds in all cases.
Thus, Proposition~\ref{lem:tsing} shows that
the $z(k)$ are at most $T$-singular.
\end{proof}

\begin{proposition}
\label{prop:gisingdeg4}
The following table lists the divisor class group,  
the adjusted degree matrix~$Q$,
the anticanonical class $w_Z$, the
constellation of local Gorenstein indices
$\iota_k = \iota(z(k))$
and the constellation of $T$-singularities
for the fake weighted projective planes
$Z = Z(Q)$ of degree $4$:

\bigskip

\begin{center}

{\small
\begin{tabular}{c|c|c|c|c|c}
{\rm ID}
&
$\Cl(Z)$
&
$Q$
&
$w_Z$
&
$(\iota_0,\iota_1,\iota_2)$
&$(\pm,\pm,\pm)$ 
\\[4pt] \hline &&&& \\[-6pt]
(4-2-1)
&
$\ZZ \oplus \ZZ / 2\ZZ$
&      
$
\begin{array}{c}                          
\left[
\begin{array}{ccc}
x_0^2 & x_1^2 & 2x_2^2
\\[3pt]
\bar 0 & \bar 1 & \bar 1  
\end{array}
\right]
\\[3pt]
\scriptstyle (x_0^2,x_1^2,2x_2^2) \ \in \ S(8)
\end{array}                  
$
&                  
$
\left[
\begin{array}{c}
4x_0x_1x_2
\\[3pt]
\bar 0  
\end{array}
\right]
$
&
$
\begin{array}{c}
(x_0,x_1,x_2)
\end{array}
$
&
$(+,+,+)$  
\\[14pt] \hline &&&&  \\[-4pt]
\end{tabular}
}

\bigskip

\end{center}
\end{proposition}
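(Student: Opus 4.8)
The plan is to specialise the general apparatus of Lemma~\ref{lem:x_k_teilt_iota_k}, Remark~\ref{rem:nwinZq}, Lemma~\ref{lem:Q2localcl} and Lemma~\ref{lem:tsing} to the single series (4-2-1), where $\mu = 2$, $a = 4$ and $(\xi_1,\xi_2) = (1,2)$, so that $\mu a \xi_1 \xi_2 = 16$ and $\sqrt{\mu a \xi_1\xi_2} = 4$. First I would read off the anticanonical class. Its torsion part is $\bar 1 + \bar \eta = \bar 1 + \bar 1 = \bar 0$ in $\ZZ / 2 \ZZ$, so Lemma~\ref{lem:x_k_teilt_iota_k} yields immediately $w_Z = (4x_0x_1x_2, \bar 0)$, as recorded in the table.

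For the local Gorenstein indices, Lemma~\ref{lem:x_k_teilt_iota_k} already supplies the divisibilities $x_k \mid \iota_k$, so it remains to show $x_k w_Z \in \ZZ \cdot q_k$ for each $k$, which then forces $\iota_k = x_k$. I would verify this via the modular criterion of Remark~\ref{rem:nwinZq} taken with $n = 1$: the identity to check is $(\bar 1 + \bar \eta)\,\bar x_k = \overline{\beta_k(1)}\,\bar\eta_k$ in $\ZZ / 2 \ZZ$. Its left-hand side vanishes because $\bar 1 + \bar\eta = \bar 0$, and its right-hand side vanishes because $\beta_k(1) = (4/\xi_k)(x_0x_1x_2/x_k)$ takes the even values $4x_1x_2$, $4x_0x_2$, $2x_0x_1$ for $k = 0,1,2$. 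Hence the criterion holds uniformly and $(\iota_0,\iota_1,\iota_2) = (x_0,x_1,x_2)$.

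Finally, for the $T$-singularity column I would invoke Lemma~\ref{lem:Q2localcl}, by which the local class group of $z(k)$ is cyclic of order $\mu u_k$, that is $\cl(z(0)) = 2x_0^2$, $\cl(z(1)) = 2x_1^2$ and $\cl(z(2)) = 4x_2^2$. Since $\iota_k^2 = x_k^2$ divides each of these orders, the implication (ii)$\Rightarrow$(i) of Lemma~\ref{lem:tsing} shows that every toric fixed point $z(k)$ is a cyclic $T$-singularity, yielding the constellation $(+,+,+)$. The whole argument is a routine specialisation of the degree $9,8,6,5$ computation in Proposition~\ref{prop:gisingdeg9865}; the only point demanding attention is the bookkeeping modulo $\mu = 2$ in the second step, and there the coincidence $\bar 1 + \bar\eta = \bar 0$ collapses the Gorenstein-index computation to the uniform choice $n = 1$, so I do not anticipate a genuine obstacle.
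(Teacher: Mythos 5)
Your proposal is correct and follows essentially the same route as the paper's own proof: the anticanonical class from Lemma~\ref{lem:x_k_teilt_iota_k}, the modular criterion of Remark~\ref{rem:nwinZq} with the same values $\beta_0=4x_1x_2$, $\beta_1=4x_0x_2$, $\beta_2=2x_0x_1$ all reducing the conditions to $\bar 0=\bar 0$ in $\ZZ/2\ZZ$, and then Lemma~\ref{lem:Q2localcl} together with Lemma~\ref{lem:tsing} for the local class group orders $2x_0^2,2x_1^2,4x_2^2$ and the $T$-singularity verdict. No gaps.
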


\begin{proof}
The ID, the divisor class group and the degree matrix
are taken from Proposition~\ref{prop:classfwppdeg4}.
Lemma~\ref{lem:x_k_teilt_iota_k} provides us with
the desired representation of $w_Z \in \ZZ \oplus \ZZ/2\ZZ$.
Now look at the from Remark~\ref{rem:nwinZq}
characterizing $nx_kw_Z \in \ZZ \cdot q_k$ by means of
conditions in $\ZZ/2\ZZ$. We have
\[
\beta_0(n) \ = \ 4nx_1x_2,
\qquad
\beta_1(n) \ = \ 4nx_0x_2,
\qquad
\beta_2(n) \ = \ 2nx_0x_1.
\]
Thus, all conditions become $\bar 0 = \bar 0$
and hold trivially.
In particular, $x_kw_Z \in \ZZ \cdot q_k$
and Lemma~\ref{lem:x_k_teilt_iota_k}
says $\iota_k = x_k$ for $k=0,1,2$.
By Lemma~\ref{lem:Q2localcl}, the local class
group orders of $z(0),z(1),z(2)$ are 
$2x_0^2, 2x_1^2, 4x_2^2$.
Thus, $\iota_k^2$ divides $\cl(z(k))$ in all
cases and Lemma~\ref{lem:tsing} yields
that each $z(k)$ is $T$-singular.
\end{proof}

\begin{proposition}
\label{prop:gisingdeg3}
The following table lists 
the divisor class group,  
the adjusted degree matrix~$Q$,
the anticanonical class $w_Z$,
the  constellation of local Gorenstein indices
$\iota_k = \iota(z(k))$
and the constellation of $T$-singularities
for all fake weighted projective planes $Z = Z(Q)$
of degree $3$:

\bigskip

\begin{center}

{\small
\setlength{\tabcolsep}{4.6pt}
\begin{tabular}{c|c|c|c|c|c}
{\rm ID}
&
$\Cl(Z)$
&
$Q$
&
$w_Z$
&
$(\iota_0,\iota_1,\iota_2)$
&$(\pm,\pm,\pm)$ 
\\[4pt] \hline &&&& \\[-6pt]
(3-3-2)
&
$\ZZ \oplus \ZZ / 3\ZZ$
&      
$
\begin{array}{c}                          
\left[
\begin{array}{ccc}
x_0^2 & x_1^2 & x_2^2
\\[3pt]
\bar 0 & \bar 1 & \bar 2  
\end{array}
\right]
\\[3pt]
\scriptstyle (x_0^2,x_1^2,x_2^2) \ \in \ S(9)
\end{array}                  
$
&                  
$
\left[
\begin{array}{c}
3x_0x_1x_2
\\[3pt]
\bar 0  
\end{array}
\right]
$
&
$
\begin{array}{c}
(x_0,x_1,x_2)
\end{array}
$
&
$(+,+,+)$  
\\[14pt] \hline &&&&  \\[-4pt]
(3-2-1)
&
$\ZZ \oplus \ZZ / 2\ZZ$
&      
$
\begin{array}{c}                          
\left[
\begin{array}{ccc}
x_0^2 & 2x_1^2 & 3x_2^2
\\[3pt]
\bar 0 & \bar 1 & \bar 1  
\end{array}
\right]
\\[3pt]
\scriptstyle (x_0^2,2x_1^2,3x_2^2) \ \in \ S(6)
\end{array}                  
$
&                  
$
\left[
\begin{array}{c}
6x_0x_1x_2
\\[3pt]
\bar 0  
\end{array}
\right]
$
&
$
\begin{array}{c}
(x_0,2x_1,x_2)
\end{array}
$
&
$(+,+,+)$  
\\[14pt] \hline &&&&  \\[-4pt]
\end{tabular}
}

\bigskip

\end{center}
\end{proposition}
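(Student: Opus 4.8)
The plan is to follow the pattern of the proof of Proposition~\ref{prop:gisingdeg4}, handling the two series (3-3-2) and (3-2-1) in turn. First I would note that the divisor class groups $\ZZ \oplus \ZZ/3\ZZ$ and $\ZZ \oplus \ZZ/2\ZZ$ together with the displayed adjusted degree matrices are precisely the data supplied by cases~(i) and~(ii) of Proposition~\ref{prop:classfwppdeg3}. The anticanonical class $w_Z$ is then read off from Lemma~\ref{lem:x_k_teilt_iota_k}: for (3-3-2) one has $\mu a = 9$ and $(\xi_0,\xi_1,\xi_2)=(1,1,1)$, giving $w_Z = (3x_0x_1x_2,\, \bar 1 + \bar 2) = (3x_0x_1x_2,\, \bar 0)$, while for (3-2-1) one has $\mu a = 6$ and $(\xi_0,\xi_1,\xi_2)=(1,2,3)$, giving $w_Z = (6x_0x_1x_2,\, \bar 1 + \bar 1) = (6x_0x_1x_2,\, \bar 0)$.

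To pin down the local Gorenstein indices I would use Remark~\ref{rem:nwinZq}, which characterizes $n x_k w_Z \in \ZZ \cdot q_k$ by the modular identity $\bar n (\bar 1 + \bar\eta)\bar x_k = \overline{\beta_k(n)}\,\bar\eta_k$ in $\ZZ/\mu\ZZ$. For (3-3-2) the factor $\bar 1 + \bar\eta = \bar 0$ in $\ZZ/3\ZZ$ kills the left-hand side, and each $\beta_k(n) = 3n\,x_0x_1x_2/x_k$ is divisible by $3$, so the right-hand side vanishes too; the identity holds for every $n$, whence the minimal admissible multiple of $x_k$ is $x_k$ itself and $\iota_k = x_k$. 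The series (3-2-1) is where the one genuinely nontrivial step arises: here $\beta_0(n) = 6nx_1x_2$, $\beta_1(n) = 3nx_0x_2$, $\beta_2(n) = 2nx_0x_1$, and again $\bar 1 + \bar\eta = \bar 0$ in $\ZZ/2\ZZ$. For $k=0$ the factor $\bar\eta_0 = \bar 0$ and for $k=2$ the coefficient $\beta_2(n)$ is even, so both identities hold for all $n$ and $\iota_0 = x_0$, $\iota_2 = x_2$; but for $k=1$ the identity becomes $\bar 0 = \overline{3nx_0x_2}$ in $\ZZ/2\ZZ$, and since $x_0$ and $x_2$ are odd by the pairwise coprimeness of $(x_0^2,2x_1^2,3x_2^2) \in S(6)$ recorded in Theorem~\ref{thm:loesungen_Markov_sind_quadratzahlen}, this holds exactly when $n$ is even, forcing $\iota_1 = 2x_1$.

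Finally, the $T$-singularity pattern follows from the local class group orders $\cl(z(k)) = \mu u_k$ provided by Lemma~\ref{lem:Q2localcl}. For (3-3-2) these are $3x_0^2, 3x_1^2, 3x_2^2$, each divisible by $\iota_k^2 = x_k^2$; for (3-2-1) they are $2x_0^2, 4x_1^2, 6x_2^2$, divisible respectively by $x_0^2$, $(2x_1)^2 = 4x_1^2$ and $x_2^2$. Thus $\iota_k^2 \mid \cl(z(k))$ throughout, and Lemma~\ref{lem:tsing} certifies that each of the three toric fixed points is a $T$-singularity, yielding $(+,+,+)$ in both rows.

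I expect the sole obstacle to be the $k=1$ computation in (3-2-1), where one must check that the oddness of $x_0$ and $x_2$ really obstructs membership for odd $n$, so that the Gorenstein index genuinely jumps from $x_1$ to $2x_1$; every other modular identity collapses because the relevant $\beta_k(n)$ carries enough factors of $\mu$.
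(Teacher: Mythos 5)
Your proposal is correct and follows essentially the same route as the paper: the first columns come from the degree-3 classification, the anticanonical classes from Lemma~\ref{lem:x_k_teilt_iota_k}, the Gorenstein indices from the modular criteria of Remark~\ref{rem:nwinZq} (with the same key observation that $x_0x_2$ is odd, forcing $\iota_1=2x_1$ in series (3-2-1)), and the $T$-singularity verdict from Lemma~\ref{lem:Q2localcl} combined with Lemma~\ref{lem:tsing}. No gaps.
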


\begin{proof}
The first three columns of the table are taken from
Proposition~\ref{prop:classfwppdeg4} and
Lemma~\ref{lem:x_k_teilt_iota_k} yields
the anticanonical classes.

For the ID (3-3-2), we find $\beta_k(n) = 3nx_0x_1x_2/x_k$
for $k=1,2,3$ in Remark~\ref{rem:nwinZq} 
and see that all conditions in $\ZZ / 3\ZZ$
characterizing $nx_kw_Z \in \ZZ \cdot q_k$
are trivially satisfied.
Thus, Lemma~\ref{lem:x_k_teilt_iota_k} shows
$\iota_k = x_k$ for $k=0,1,2$.
By Lemma~\ref{lem:Q2localcl}, the 
local class group order of $z(k)$ is $3x_k^2$.
By Lemms~\ref{lem:tsing}, all $z(k)$
are $T$-singular. 

We turn to the ID (3-2-1).
For $k=0,2$, we see $x_kw_Z \in \ZZ \cdot q_k$,
hence, $\iota_0 = x_0$ and $\iota_2=x_2$,
where we apply Lemma~\ref{lem:x_k_teilt_iota_k} and
Remark~\ref{rem:nwinZq} as before.
Also for $k=1$, we obtain $\beta_1(n) = 3nx_0x_2$
in Remark~\ref{rem:nwinZq} and the 
conditions in $\ZZ/2\ZZ$ characterizing
$nx_1w_Z \in \ZZ \cdot q_1$ are
\[
n=1: \ \bar 0 = \bar x_0 \bar x_2,
\qquad
n=2: \ \bar 0 = \bar 0.
\]
By Theorem~\ref{thm:loesungen_Markov_sind_quadratzahlen},
the numbers $x_0, 2x_1, 3x_2$ are pairwise coprime.
Hence, $x_0x_2$ is odd and the equation for
$n=1$ doesn't hold.
We conclude $\iota_1=2x_1$.
Moreover, the local class group orders are
$2x_0^2$, $4x_1^2$, $6x_2^2$ by Lemma~\ref{lem:Q2localcl}.
Again $\iota_k^2$ divides $\cl(z(k))$ for $k = 0,1,2$
and Lemma~\ref{lem:tsing} ensures that
each $z(k)$ is $T$-singular.
\end{proof}

\begin{proposition}
\label{prop:gisingdeg2}
The following table lists 
the divisor class group,  
the adjusted degree matrix~$Q$,
the anticanonical
class $w_Z$, the possible
constellations of local Gorenstein indices
$\iota_k = \iota(z(k))$
and the constellation of $T$-singularities
for all fake weighted projective planes $Z = Z(Q)$
of degree $2$:

\bigskip

\begin{center}

{\small
\setlength{\tabcolsep}{3.8pt}
\begin{tabular}{c|c|c|c|c|c}
{\rm ID}
&
$\Cl(Z)$
&
$Q$
&
$w_Z$
&
$(\iota_0,\iota_1,\iota_2)$
&$(\pm,\pm,\pm)$ 
\\[4pt] \hline &&&& \\[-6pt]
(2-4-1)
&
$\ZZ \oplus \ZZ / 4\ZZ$
&      
$
\begin{array}{c}                          
\left[
\begin{array}{ccc}
x_0^2 & x_1^2 & 2x_2^2
\\[3pt]
\bar 0 & \bar 1 & \bar 1  
\end{array}
\right]
\\[3pt]
\scriptstyle (x_0^2,x_1^2,2x_2^2) \ \in \ S(8)
\end{array}                  
$
&                  
$
\left[
\begin{array}{c}
4x_0x_1x_2
\\[3pt]
\bar 2  
\end{array}
\right]
$
&
$
\begin{array}{c}
(2x_0,2x_1,x_2)
\end{array}
$
&
$(+,+,+)$  
\\[14pt] \hline &&&&  \\[-4pt]
(2-4-3)
&
$\ZZ \oplus \ZZ / 4\ZZ$
&      
$
\begin{array}{c}                          
\left[
\begin{array}{ccc}
x_0^2 & x_1^2 & 2x_2^2
\\[3pt]
\bar 0 & \bar 1 & \bar 3 
\end{array}
\right]
\\[3pt]
\scriptstyle (x_0^2,x_1^2,2x_2^2) \ \in \ S(8)
\end{array}                  
$
&                  
$
\left[
\begin{array}{c}
4x_0x_1x_2
\\[3pt]
\bar 0  
\end{array}
\right]
$
&
$
\begin{array}{c}
(x_0,x_1,2x_2)
\end{array}
$
&
$(+,+,+)$  
\\[14pt] \hline &&&&  \\[-4pt]
(2-3-1)
&
$\ZZ \oplus \ZZ / 3 \ZZ$
&      
$
\begin{array}{c}                          
\left[
\begin{array}{ccc}
x_0^2 & 2x_1^2 & 3x_2^2
\\[3pt]
\bar 0 & \bar 1 & \bar 1
\end{array}
\right]
\\[3pt]
\scriptstyle (x_0^2,2x_1^2,3x_2^2) \ \in \ S(6)
\end{array}                  
$
&                  
$
\left[
\begin{array}{c}
6x_0x_1x_2
\\[3pt]
\bar 2
\end{array}
\right]
$
&
$
\begin{array}{c}
(3x_0,3x_1,3x_2)
\\[3pt]
(3x_0,3x_1,x_2)
\end{array}
$
&
$(-,-,+)$  
\\[14pt] \hline &&&&  \\[-4pt]
(2-3-2)
&
$\ZZ \oplus \ZZ / 3 \ZZ$
&      
$
\begin{array}{c}                          
\left[
\begin{array}{ccc}
x_0^2 & 2x_1^2 & 3x_2^2
\\[3pt]
\bar 0 & \bar 1 & \bar 2
\end{array}
\right]
\\[3pt]
\scriptstyle (x_0^2,2x_1^2,3x_2^2) \ \in \ S(6)
\end{array}                  
$
&                  
$
\left[
\begin{array}{c}
6x_0x_1x_2
\\[3pt]
\bar 0  
\end{array}
\right]
$
&
$
\begin{array}{c}
(x_0,x_1,3x_2)
\end{array}
$
&
$(+,+,+)$  
\\[14pt] \hline &&&&  \\[-4pt]
\end{tabular}
}

\bigskip

\end{center}
\end{proposition}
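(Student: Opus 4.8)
The plan is to follow, step by step, the scheme already carried out in the proofs of Propositions~\ref{prop:gisingdeg4} and~\ref{prop:gisingdeg3}. The ID, the divisor class group and the adjusted degree matrix $Q$ in the first three columns are copied directly from Proposition~\ref{prop:classfwppdeg2}. To obtain the anticanonical class $w_Z$ I apply Lemma~\ref{lem:x_k_teilt_iota_k}: in every row $\mu a$ equals $8$ (rows (2-4-$\ast$)) or $6$ (rows (2-3-$\ast$)), so $\sqrt{\mu a\,\xi_1\xi_2}$ evaluates to $4$ respectively $6$, giving the first entry $4x_0x_1x_2$ or $6x_0x_1x_2$; the torsion entry is $\bar 1+\bar\eta$, which is $\bar 2$ for $\eta=1$ and $\bar 0$ for $\eta=3$ in $\ZZ/4\ZZ$, and $\bar 2$ for $\eta=1$ and $\bar 0$ for $\eta=2$ in $\ZZ/3\ZZ$. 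This reproduces the $w_Z$ column.

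For the local Gorenstein indices I feed $w_Z$ into Remark~\ref{rem:nwinZq}, reducing $nx_kw_Z\in\ZZ\cdot q_k$ to the single modular identity $\bar n\,\bar x_k(\bar 1+\bar\eta)=\overline{\beta_k(n)}\,\bar\eta_k$ in $\ZZ/\mu\ZZ$, and read off the least admissible $n$. In the rows (2-4-3) and (2-3-2), where $\bar 1+\bar\eta=\bar 0$, the left hand side vanishes identically, so the condition collapses to $\overline{\beta_k(n)}\,\bar\eta_k=\bar 0$; for $k=0,1$ this holds at $n=1$, while for $k=2$ it forces $n$ to be even, respectively divisible by $3$, yielding $(\iota_0,\iota_1,\iota_2)=(x_0,x_1,2x_2)$ and $(x_0,x_1,3x_2)$. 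In the rows (2-4-1) and (2-3-1) the relevant inputs are the residue normal forms $(\bar u_0,\bar u_1,\bar u_2)=(\bar 1,\bar 1,\bar 2)$ from Lemma~\ref{lem:a_gl_8_w_i_mod_4} (forcing all three $x_i$ odd) and $(\bar 1,\bar 2,\bar 0)$ from Lemma~\ref{lem:a_gl_6_w_i_mod_3} (forcing $x_0,x_1$ coprime to $3$), together with the coprimality of the $x_i$ from Theorem~\ref{thm:loesungen_Markov_sind_quadratzahlen}; these pin down each $\iota_k$ as tabulated, with the single exception treated next.

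The step I expect to be delicate is the entry $\iota_2$ for the series (2-3-1). Here the $k=2$ condition simplifies to $3\mid n(x_2-x_0x_1)$. In contrast to the parallel computation for (2-4-1), where Lemma~\ref{lem:a_gl_8_w_i_mod_4} forces all $x_i$ odd and hence $x_2-x_0x_1$ automatically even, the normal form $u=(x_0^2,2x_1^2,3x_2^2)$ carries the factor $3$ in the coefficient $\xi_2=3$ rather than in $x_2$, so nothing constrains $x_2\bmod 3$. Consequently both $3\mid(x_2-x_0x_1)$ and $3\nmid(x_2-x_0x_1)$ genuinely occur (for instance via the solutions $(1,2,3)$ and $(1,8,3)$ of $S(6)$), producing the two listed constellations $(3x_0,3x_1,x_2)$ and $(3x_0,3x_1,3x_2)$. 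Finally I settle the $T$-singularity pattern with Lemma~\ref{lem:tsing}, testing $\iota_k^2\mid\cl(z(k))$ against the local class group orders $\mu u_k$ supplied by Lemma~\ref{lem:Q2localcl}. The only failures are $z(0)$ and $z(1)$ of (2-3-1), where $\iota_k^2=9x_k^2$ exceeds $\cl(z(k))=3x_0^2$ respectively $6x_1^2$; at $z(2)$ one has $\cl(z(2))=9x_2^2$, which is divisible by $\iota_2^2$ whether $\iota_2=x_2$ or $3x_2$, so the sign pattern $(-,-,+)$ is uniform across both sub-cases, while every fixed point in the remaining three rows is $T$-singular.
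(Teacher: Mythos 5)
Your proposal is correct and follows essentially the same route as the paper's proof: the same input from Proposition~\ref{prop:classfwppdeg2}, Lemma~\ref{lem:x_k_teilt_iota_k} and Remark~\ref{rem:nwinZq} for the modular characterization of $nx_kw_Z\in\ZZ\cdot q_k$, the residue Lemmas~\ref{lem:a_gl_8_w_i_mod_4} and~\ref{lem:a_gl_6_w_i_mod_3}, and the test $\iota_k^2\mid\cl(z(k))$ from Lemmas~\ref{lem:Q2localcl} and~\ref{lem:tsing}. Your only genuine addition is the explicit verification via $(1,2,3)$ and $(1,8,3)$ that both constellations in the (2-3-1) row actually occur, which the paper leaves implicit; everything else matches the published argument.
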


\begin{proof}
Proposition~\ref{prop:classfwppdeg2} delivers IDs, divisor class
groups and degree matrices, and Lemma~\ref{lem:x_k_teilt_iota_k}
the anticanonical classes.

For the IDs (2-4-$\ast$), Lemma~\ref{lem:a_gl_8_w_i_mod_4} tells us 
$\bar x_k = \bar 1$ or $\bar x_k = \bar 3$ in $\ZZ/4\ZZ$
for $k=0,1,2$,
and Lemma~\ref{lem:Q2localcl} computes the local class group orders
of $z(0), z(1), z(2)$ as $4x_0^2, 4x_1^2, 8x_2^2$.
Now consider the ID (2-4-1). There, Lemma~\ref{lem:x_k_teilt_iota_k} and
Remark~\ref{rem:nwinZq} characterize the membership
$nx_kw_Z \in \ZZ \cdot q_k$ by the following identities in $\ZZ/4\ZZ$:
\[
\begin{array}{lll}
k=0 \colon & n = 1 \colon \ \bar 2 \bar x_0 = \bar 0, &  n = 2 \colon \ \bar 0 = \bar 0, 
\\
k=1 \colon & n = 1 \colon \ \bar 2 \bar x_1 = \bar 0, &  n = 2 \colon \ \bar 0 = \bar 0, 
\\
k=2 \colon & n = 1 \colon \ \bar 2 \bar x_2 =  \bar 2 \bar x_0 \bar x_1. &
\end{array}  
\] 
The first equations for $k=0,1$ never hold, whereas the one
for $k=2$ always holds.
Thus, $(\iota_0,\iota_1,\iota_2)$ equals $(2x_0,2x_1,x_2)$.
By Lemma~\ref{lem:tsing}, each $z(k)$ 
is $T$-singular.
For the ID (2-4-3), the identities in $\ZZ/4\ZZ$ characterizing
$nx_kw_Z \in \ZZ \cdot q_k$ provided by 
Lemma~\ref{lem:x_k_teilt_iota_k}
and Remark~\ref{rem:nwinZq} are
\[
\begin{array}{lll}
k=0 \colon & n = 1 \colon \ \bar 0 = \bar 0, & 
\\
k=1 \colon & n = 1 \colon \ \bar 0 = \bar 0, &  
\\
k=2 \colon & n = 1 \colon \ \bar 0  =  \bar 2 \bar x_0 \bar x_1, &  n = 2 \colon \ \bar 0 = \bar 0.
\end{array}  
\] 
Thus, $(\iota_0,\iota_1,\iota_2)$ equals $(x_0,x_1,2x_2)$.
Consequently, Lemma~\ref{lem:tsing} tells us
that each of $z(0)$, $z(1)$ and $z(2)$ is $T$-singular.

For the IDs (2-3-$\ast$), Lemma~\ref{lem:a_gl_6_w_i_mod_3}
tells us $\bar x_k = \bar 1$ or $\bar x_k = \bar 2$ for $k=0,1$,
and Lemma~\ref{lem:Q2localcl} computes the local class group orders
of $z(0), z(1), z(2)$ as $3x_0^2, 6x_1^2, 9x_2^2$.
For the ID (2-3-1), Lemma~\ref{lem:x_k_teilt_iota_k} and
Remark~\ref{rem:nwinZq} characterize $nx_kw_Z \in \ZZ \cdot q_k$ by
the following identities in $\ZZ/3\ZZ$:
\[
\begin{array}{llll}
k=0 \colon & n = 1 \colon \ \bar 2 \bar x_0 = \bar 0, & n = 2 \colon \ \bar x_0 = \bar 0, & n = 3 \colon \ \bar 0 = \bar 0, 
\\
k=1 \colon & n = 1 \colon \ \bar 2 \bar x_1 = \bar 0, & n = 2 \colon \ \bar x_1 = \bar 0, & n = 3 \colon \ \bar 0 = \bar 0, 
\\
k=2 \colon & n = 1 \colon \ \bar 2 \bar x_2 =  \bar 2 \bar x_0 \bar x_1, &  n = 2 \colon  \bar x_2 =   \bar x_0 \bar x_1, & n = 3 \colon \ \bar 0 = \bar 0.
\end{array}  
\] 
This implies $\iota_0=3x_0$ and $\iota_1 = 3x_1$.
For $k=2$, the first two equations are
equivalent, which excludes $\iota_2=2$ and leaves
us with $\iota_2 = x_2, 3x_2$.
By Lemma~\ref{lem:tsing}, only $z(2)$ is $T$-singular.
For the ID (2-3-2),
the identities in $\ZZ/3\ZZ$
characterizing $nx_kw_Z \in \ZZ \cdot q_k$ are
\[
\begin{array}{llll}
k=0 \colon & n = 1 \colon \ \bar 0 = \bar 0, & & 
\\
k=1 \colon & n = 1 \colon \ \bar 0 = \bar 0, & &
\\
k=2 \colon & n = 1 \colon \ \bar 0 =  \bar x_0 \bar x_1, &  n = 2 \colon  \bar 0 =   \bar 2 \bar x_0 \bar x_1, & n = 3 \colon \ \bar 0 = \bar 0.
\end{array}  
\] 
We conclude $\iota_0 = x_0$, $\iota_1 = x_1$ and $\iota_2 = 3x_2$.
Stressing once more Lemma~\ref{lem:tsing},
we see that all points $z(k)$ are $T$-singularities.
\end{proof}

\begin{proposition}
\label{prop:gisingdeg1}
The following table lists the divisor class group,  
the adjusted degree matrix~$Q$, anticanonical
class $w_Z$, the possible constellations of local
Gorenstein indices $\iota_k = \iota(z(k))$
and the constellation of $T$-singularities
for all fake weighted projective planes $Z=Z(Q)$
of degree $1$:

\bigskip

\begin{center}

{\small
\setlength{\tabcolsep}{3.8pt}
\begin{tabular}{c|c|c|c|c|c}
{\rm ID}
&
$\Cl(Z)$
&
$Q$
&
$w_Z$
&
$(\iota_0,\iota_1,\iota_2)$
&$(\pm,\pm,\pm)$ 
\\[4pt] \hline &&&& \\[-6pt]
(1-9-2)
&
$\ZZ \oplus \ZZ / 9\ZZ$
&      
$
\begin{array}{c}                          
\left[
\begin{array}{ccc}
x_0^2 & x_1^2 & x_2^2
\\[3pt]
\bar 0 & \bar 1 & \bar 2  
\end{array}
\right]
\\[3pt]
\scriptstyle (x_0^2,x_1^2,x_2^2) \ \in \ S(9)
\end{array}                  
$
&                  
$
\left[
\begin{array}{c}
3x_0x_1x_2
\\[3pt]
\bar 3  
\end{array}
\right]
$
&
$
\begin{array}{c}
(3x_0, x_1, 3x_2)
\\[3pt]
(3x_0, 3x_1, x_2)
\end{array}
$
&
$(+,+,+)$  
\\[14pt] \hline &&&&  \\[-4pt]
(1-9-5)
&
$\ZZ \oplus \ZZ / 9\ZZ$
&      
$
\begin{array}{c}                          
\left[
\begin{array}{ccc}
x_0^2 & x_1^2 & x_2^2
\\[3pt]
\bar 0 & \bar 1 & \bar 5
\end{array}
\right]
\\[3pt]
\scriptstyle (x_0^2,x_1^2,x_2^2) \ \in \ S(9)
\end{array}                  
$
&                  
$
\left[
\begin{array}{c}
3x_0x_1x_2
\\[3pt]
\bar 6  
\end{array}
\right]
$
&
$
\begin{array}{c}
(3x_0, x_1, 3 x_2)
\\[3pt]
(3x_0, 3x_1, x_2)
\end{array}
$
&
$(+,+,+)$  
\\[14pt] \hline &&&&  \\[-4pt]
(1-9-8)
&
$\ZZ \oplus \ZZ / 9 \ZZ$
&      
$
\begin{array}{c}                          
\left[
\begin{array}{ccc}
x_0^2 & x_1^2 & x_2^2
\\[3pt]
\bar 0 & \bar 1 & \bar 8
\end{array}
\right]
\\[3pt]
\scriptstyle (x_0^2,x_1^2,x_2^2) \ \in \ S(9)
\end{array}                  
$
&                  
$
\left[
\begin{array}{c}
3x_0x_1x_2
\\[3pt]
\bar 0
\end{array}
\right]
$
&
$
\begin{array}{c}
(x_0,3x_1,3x_2)
\end{array}
$
&
$(+,+,+)$  
\\[14pt] \hline &&&&  \\[-4pt]
(1-8-1)
&
$\ZZ \oplus \ZZ / 8 \ZZ$
&      
$
\begin{array}{c}                          
\left[
\begin{array}{ccc}
x_0^2 & x_1^2 & 2x_2^2
\\[3pt]
\bar 0 & \bar 1 & \bar 1
\end{array}
\right]
\\[3pt]
\scriptstyle (x_0^2,x_1^2,2x_2^2) \ \in \ S(8)
\end{array}                  
$
&                  
$
\left[
\begin{array}{c}
4x_0x_1x_2
\\[3pt]
\bar 2  
\end{array}
\right]
$
&
$
\begin{array}{c}
(4x_0,4x_1,x_2)
\\[3pt]
(4x_0,4x_1,2x_2)
\end{array}
$
&
$(-,-,+)$  
\\[14pt] \hline &&&&  \\[-4pt]
(1-8-3)
&
$\ZZ \oplus \ZZ / 8 \ZZ$
&      
$
\begin{array}{c}                          
\left[
\begin{array}{ccc}
x_0^2 & x_1^2 & 2x_2^2
\\[3pt]
\bar 0 & \bar 1 & \bar 3
\end{array}
\right]
\\[3pt]
\scriptstyle (x_0^2,x_1^2,2x_2^2) \ \in \ S(8)
\end{array}                  
$
&                  
$
\left[
\begin{array}{c}
4x_0x_1x_2
\\[3pt]
\bar 4  
\end{array}
\right]
$
&
$
\begin{array}{c}
(2x_0,x_1,4x_2)
\end{array}
$
&
$(+,+,+)$  
\\[14pt] \hline &&&&  \\[-4pt]
(1-8-5)
&
$\ZZ \oplus \ZZ / 8 \ZZ$
&      
$
\begin{array}{c}                          
\left[
\begin{array}{ccc}
x_0^2 & x_1^2 & 2x_2^2
\\[3pt]
\bar 0 & \bar 1 & \bar 5
\end{array}
\right]
\\[3pt]
\scriptstyle (x_0^2,x_1^2,2x_2^2) \ \in \ S(8)
\end{array}                  
$
&                  
$
\left[
\begin{array}{c}
4x_0x_1x_2
\\[3pt]
\bar 6  
\end{array}
\right]
$
&
$
\begin{array}{c}
(4x_0,4x_1,x_2)
\\[3pt]
(4x_0,4x_1,2x_2)
\end{array}
$
&
$(-,-,+)$  
\\[14pt] \hline &&&&  \\[-4pt]
(1-8-7)
&
$\ZZ \oplus \ZZ / 8 \ZZ$
&      
$
\begin{array}{c}                          
\left[
\begin{array}{ccc}
x_0^2 & x_1^2 & 2x_2^2
\\[3pt]
\bar 0 & \bar 1 & \bar 7
\end{array}
\right]
\\[3pt]
\scriptstyle (x_0^2,x_1^2,2x_2^2) \ \in \ S(8)
\end{array}                  
$
&                  
$
\left[
\begin{array}{c}
4x_0x_1x_2
\\[3pt]
\bar 0
\end{array}
\right]
$
&
$
\begin{array}{c}
(x_0,2x_1,4x_2)
\end{array}
$
&
$(+,+,+)$  
\\[14pt] \hline &&&&  \\[-4pt]
(1-6-1)
&
$\ZZ \oplus \ZZ / 6 \ZZ$
&      
$
\begin{array}{c}                          
\left[
\begin{array}{ccc}
x_0^2 & 2x_1^2 & 3x_2^2
\\[3pt]
\bar 0 & \bar 1 & \bar 1
\end{array}
\right]
\\[3pt]
\scriptstyle (x_0^2,2x_1^2,3x_2^2) \ \in \ S(6)
\end{array}                  
$
&                  
$
\left[
\begin{array}{c}
6x_0x_1x_2
\\[3pt]
\bar 2
\end{array}
\right]
$
&
$
\begin{array}{c}
(3x_0,6x_1,x_2)
\\[3pt]
(3x_0,6x_1,3x_2)
\end{array}
$
&
$(-,-,+)$  
\\[14pt] \hline &&&&  \\[-4pt]
(1-6-5)
&
$\ZZ \oplus \ZZ / 6 \ZZ$
&      
$
\begin{array}{c}                          
\left[
\begin{array}{ccc}
x_0^2 & 2x_1^2 & 3x_2^2
\\[3pt]
\bar 0 & \bar 1 & \bar 5
\end{array}
\right]
\\[3pt]
\scriptstyle (x_0^2,2x_1^2,3x_2^2) \ \in \ S(6)
\end{array}                  
$
&                  
$
\left[
\begin{array}{c}
6x_0x_1x_2
\\[3pt]
\bar 0
\end{array}
\right]
$
&
$
\begin{array}{c}
(x_0,2x_1,3x_2)
\end{array}
$
&
$(+,+,+)$  
\\[14pt] \hline &&&&  \\[-4pt]
(1-5-1)
&
$\ZZ \oplus \ZZ / 5 \ZZ$
&      
$
\begin{array}{c}                          
\left[
\begin{array}{ccc}
x_0^2 & x_1^2 & 5x_2^2
\\[3pt]
\bar 0 & \bar 1 & \bar 1
\end{array}
\right]
\\[3pt]
\scriptstyle (x_0^2,x_1^2,5x_2^2) \ \in \ S(5)
\end{array}                  
$
&                  
$
\left[
\begin{array}{c}
5x_0x_1x_2
\\[3pt]
\bar 2
\end{array}
\right]
$
&
$
\begin{array}{c}
(5x_0,5x_1,x_2)
\\[3pt]
(5x_0,5x_1,5x_2)  
\end{array}
$
&
$(-,-,+)$  
\\[14pt] \hline &&&&  \\[-4pt]
(1-5-2)
&
$\ZZ \oplus \ZZ / 5 \ZZ$
&      
$
\begin{array}{c}                          
\left[
\begin{array}{ccc}
x_0^2 & x_1^2 & 5x_2^2
\\[3pt]
\bar 0 & \bar 1 & \bar 2
\end{array}
\right]
\\[3pt]
\scriptstyle (x_0^2,x_1^2,5x_2^2) \ \in \ S(5)
\end{array}                  
$
&                  
$
\left[
\begin{array}{c}
5x_0x_1x_2
\\[3pt]
\bar 3
\end{array}
\right]
$
&
$
\begin{array}{c}
(5x_0,5x_1,x_2)
\\[3pt]
(5x_0,5x_1,5x_2)  
\end{array}
$
&
$(-,-,+)$  
\\[14pt] \hline &&&&  \\[-4pt]
(1-5-3)
&
$\ZZ \oplus \ZZ / 5 \ZZ$
&      
$
\begin{array}{c}                          
\left[
\begin{array}{ccc}
x_0^2 & x_1^2 & 5x_2^2
\\[3pt]
\bar 0 & \bar 1 & \bar 3
\end{array}
\right]
\\[3pt]
\scriptstyle (x_0^2,x_1^2,5x_2^2) \ \in \ S(5)
\end{array}                  
$
&                  
$
\left[
\begin{array}{c}
5x_0x_1x_2
\\[3pt]
\bar 4
\end{array}
\right]
$
&
$
\begin{array}{c}
(5x_0,5x_1,x_2)
\\[3pt]
(5x_0,5x_1,5x_2)  
\end{array}
$
&
$(-,-,+)$  
\\[14pt] \hline &&&&  \\[-4pt]
(1-5-4)
&
$\ZZ \oplus \ZZ / 5 \ZZ$
&      
$
\begin{array}{c}                          
\left[
\begin{array}{ccc}
x_0^2 & x_1^2 & 5x_2^2
\\[3pt]
\bar 0 & \bar 1 & \bar 4
\end{array}
\right]
\\[3pt]
\scriptstyle (x_0^2,x_1^2,5x_2^2) \ \in \ S(5)
\end{array}                  
$
&                  
$
\left[
\begin{array}{c}
5x_0x_1x_2
\\[3pt]
\bar 0
\end{array}
\right]
$
&
$
\begin{array}{c}
(x_0,x_1,5x_2)
\end{array}
$
&
$(+,+,+)$  
\\[14pt] \hline &&&&  \\[-4pt]
\end{tabular}
}

\bigskip

\end{center}
\end{proposition}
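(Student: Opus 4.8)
The plan is to build the table row by row, importing the first three data columns and then computing the Gorenstein indices and $T$-types. The IDs, the divisor class groups $\Cl(Z)$ and the adjusted degree matrices $Q$ are taken verbatim from Proposition~\ref{prop:classfwppdeg1}. The anticanonical class $w_Z$ follows from Lemma~\ref{lem:x_k_teilt_iota_k}: since $a=1$, the factor $\sqrt{\mu a\xi_1\xi_2}$ equals $3,4,6,5$ for the families coming from $S(9),S(8),S(6),S(5)$, which fixes the free entry of $w_Z$ as $3x_0x_1x_2$, $4x_0x_1x_2$, $6x_0x_1x_2$ and $5x_0x_1x_2$ respectively; the torsion entry is $\bar 1+\bar\eta$, reduced modulo $\mu\in\{9,8,6,5\}$. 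Finally Lemma~\ref{lem:Q2localcl} supplies the local class group orders $\cl(z(k))=\mu u_k$, namely $(9x_0^2,9x_1^2,9x_2^2)$, $(8x_0^2,8x_1^2,16x_2^2)$, $(6x_0^2,12x_1^2,18x_2^2)$ and $(5x_0^2,5x_1^2,25x_2^2)$ for the four families.

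For the Gorenstein indices I would, for each ID and each $k$, determine $\iota_k=n_kx_k$ where $n_k$ is the least positive integer satisfying the modular identity of Remark~\ref{rem:nwinZq}, namely $\bar n(\bar 1+\bar\eta)\bar x_k=\overline{\beta_k(n)}\,\bar\eta_k$ in $\ZZ/\mu\ZZ$, with $\beta_k(n)=n(\sqrt{\mu a\xi_1\xi_2}/\xi_k)(x_0x_1x_2/x_k)$ and $\eta_0=0,\eta_1=1,\eta_2=\eta$. To evaluate these congruences I substitute the residues of the $u_i$ provided by Lemmas~\ref{lem:a9wimod3}, \ref{lem:a_gl_8_w_i_mod_8}, \ref{lem:a_gl_6_w_i_mod_6} and~\ref{lem:a_gl_5_w_i_mod_5}, together with the pairwise coprimeness of the entries from Theorem~\ref{thm:loesungen_Markov_sind_quadratzahlen}. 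After cancelling the common factor $\sqrt{\mu a\xi_1\xi_2}$ each congruence collapses to a condition modulo a single prime ($3$, $2$ or $5$); for the IDs $(1\text{-}9\text{-}8),(1\text{-}8\text{-}3),(1\text{-}8\text{-}7),(1\text{-}6\text{-}5),(1\text{-}5\text{-}4)$ this pins down all three indices uniquely, and for the others it pins down $\iota_0,\iota_1$ and leaves a two-valued ambiguity at a single fixed point.

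The main obstacle lies in the two IDs $(1\text{-}9\text{-}2)$ and $(1\text{-}9\text{-}5)$, where I must show that among $\iota_1,\iota_2$ exactly one equals $x_k$ and the other $3x_k$, producing the two listed constellations. Here the residue lemma gives only $x_i^2\equiv 1\pmod 3$, hence $x_i\equiv\pm 1\pmod 3$, and the two candidate ``small index'' conditions take the form $x_1\equiv\pm x_0x_2$ and $x_2\equiv\mp x_0x_1\pmod 3$. A short substitution---multiplying one congruence by $x_0$ and using $x_0^2\equiv 1\pmod 3$---shows that assuming both conditions, or assuming neither, forces $x_1\equiv 0$ or $x_2\equiv 0\pmod 3$, contradicting $3\nmid x_i$. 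This mutual exclusion yields exactly the stated pair of constellations. In the remaining split IDs the undetermined index is confined to $z(2)$, with the two admissible values $\{x_2,2x_2\}$, $\{x_2,3x_2\}$ or $\{x_2,5x_2\}$ forced by primality of the relevant torsion, and I only record that both occur.

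It remains to read off the $T$-singularity column via Lemma~\ref{lem:tsing}, which declares $z(k)$ a $T$-singularity precisely when $\iota_k^2\mid\cl(z(k))$. Comparing $\iota_k^2$ with $\mu u_k$ in each row is immediate: for $(1\text{-}8\text{-}1),(1\text{-}8\text{-}5),(1\text{-}6\text{-}1),(1\text{-}5\text{-}1),(1\text{-}5\text{-}2),(1\text{-}5\text{-}3)$ one finds $\iota_0^2,\iota_1^2$ over-dividing (e.g.\ $16x^2\nmid 8x^2$, $9x^2\nmid 6x^2$, $36x^2\nmid 12x^2$, $25x^2\nmid 5x^2$), so $z(0),z(1)$ are non-$T$, while at $z(2)$ both candidate indices square-divide $\cl(z(2))$, giving the sign pattern $(-,-,+)$; in every other row all three $\iota_k^2$ divide the matching $\cl(z(k))$, so the pattern is $(+,+,+)$. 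Since these divisibility comparisons are insensitive to the residual index ambiguity, the $T$-constellation column is determined unconditionally, which completes the table.
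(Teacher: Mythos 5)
Your proposal is correct and follows essentially the same route as the paper's proof: it imports the first three columns from Proposition~\ref{prop:classfwppdeg1}, computes $w_Z$ and the congruences for $\iota_k$ via Lemma~\ref{lem:x_k_teilt_iota_k} and Remark~\ref{rem:nwinZq} using the residue Lemmas~\ref{lem:a9wimod3}, \ref{lem:a_gl_8_w_i_mod_8}, \ref{lem:a_gl_6_w_i_mod_6}, \ref{lem:a_gl_5_w_i_mod_5}, and settles the last column by comparing $\iota_k^2$ with $\cl(z(k))=\mu u_k$ via Lemmas~\ref{lem:Q2localcl} and~\ref{lem:tsing}. The only deviation is your clean algebraic mutual-exclusion argument for the linked ambiguity in (1-9-2) and (1-9-5), which replaces the paper's exhaustive residue check and is valid; just note that for the $S(8)$-family the reduced congruence at $z(2)$ lives modulo $4$ rather than a prime, which does not affect the conclusion.
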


\begin{proof}
In all cases, ID, divisor class group and 
degree matrix are as in Proposition~\ref{prop:classfwppdeg4}
and Lemma~\ref{lem:x_k_teilt_iota_k} delivers 
the anticanonical classes.
The subsequent proof tacitly uses Lemma~\ref{lem:Q2localcl}
for the local class group orders,
Lemma~\ref{lem:x_k_teilt_iota_k} and Remark~\ref{rem:nwinZq}
for the local Gorenstein indices
and Lemma~\ref{lem:tsing} for the $T$-singularities.

\medskip
\noindent
\emph{IDs (1-9-$\ast$)}:
Here $\xi_k = 1$, $\cl(z(k)) = 9x_k^2$ and
Lemma~\ref{lem:a9wimod3} yields $\bar x_k \in (\ZZ/9\ZZ)^*$
for $k=0,1,2$.
For the ID (1-9-2), the conditions characterizing 
$nx_kw_Z \in \ZZ \cdot q_k$ are
\[
\begin{array}{llll}
k=0 \colon & n = 1 \colon \ \bar 3 \bar x_0 = \bar 0, & n = 2 \colon \ \bar 6\bar x_0 = \bar 0, & n = 3 \colon \ \bar 0 = \bar 0, 
\\
k=1 \colon & n = 1 \colon \ \bar 3 \bar x_1 = \bar 3 \bar x_0 \bar x_2 & n = 2 \colon \ \bar 6 \bar x_1 = \bar 6 \bar x_0 \bar x_2, & n = 3 \colon \ \bar 0 = \bar 0, 
\\
k=2 \colon & n = 1 \colon \ \bar 3 \bar x_2 =  \bar 6 \bar x_0 \bar x_1, &  n = 2 \colon \ \bar 6 \bar x_2 =   3 \bar x_0 \bar x_1, & n = 3 \colon \ \bar 0 = \bar 0.
\end{array}  
\] 
Thus, $\iota_0=3x_0$. Checking all $(\bar x_0, \bar x_1, \bar x_3)$,
$\bar x_k \in (\ZZ/9\ZZ)^*$ shows that $(\iota_1,\iota_2)$
is one of $(x_1,3x_2)$, $(3x_1,x_2)$.
Hence, all $z(k)$ are $T$-singular.
For the ID (1-9-5), we get
\[
\begin{array}{llll}
k=0 \colon & n = 1 \colon \ \bar 6 \bar x_0 = \bar 0, & n = 2 \colon \ \bar 3 \bar x_0 = \bar 0, & n = 3 \colon \ \bar 0 = \bar 0, 
\\
k=1 \colon & n = 1 \colon \ \bar 6 \bar x_1 = \bar 3 \bar x_0 \bar x_2 & n = 2 \colon \ \bar 3 \bar x_1 = \bar 6 \bar x_0 \bar x_2, & n = 3 \colon \ \bar 0 = \bar 0, 
\\
k=2 \colon & n = 1 \colon \ \bar 6 \bar x_2 =  \bar 6 \bar x_0 \bar x_1, &  n = 2 \colon \ \bar 3 \bar x_2 =   3 \bar x_0 \bar x_1, & n = 3 \colon \ \bar 0 = \bar 0.
\end{array}  
\] 
as characterizing conditions for $nx_kw_Z \in \ZZ \cdot q_k$,
which are equivalent to those of the preceding case after
switching $k=1,2$.
For the ID (1-9-8), the conditions
\[
\begin{array}{llll}
k=0 \colon & n = 1 \colon \ \bar 0 = \bar 0, &  &  
\\
k=1 \colon & n = 1 \colon \ \bar 0 = \bar 3 \bar x_0 \bar x_2, & n = 2 \colon \ \bar 0 = \bar 6 \bar x_0 \bar x_2, & n = 3 \colon \ \bar 0 = \bar 0, 
\\
k=2 \colon & n = 1 \colon \ \bar 0 = \bar 6 \bar x_0 \bar x_1, & n = 2 \colon \ \bar 0 = \bar 3 \bar x_0 \bar x_1, & n = 3 \colon \ \bar 0 = \bar 0
\end{array}  
\] 
characterize $nx_kw_Z \in \ZZ \cdot q_k$. We directly
conclude $\iota_0 = x_0$,  $\iota_1 = 3x_1$,  $\iota_2 = 3x_2$
and see that all $z(k)$ are $T$-singular.

\medskip
\noindent
\emph{IDs (1-8-$\ast$)}:
Here $\xi_0,\xi_1 = 1$, $\xi_2 = 2$, the local class
group orders are $8x_0^2,8x_1^2,16x_2^2$ and 
Lemma~\ref{lem:a_gl_8_w_i_mod_4} implies
$\bar x_k = \bar 1, \bar 3, \bar 5, \bar 7 \in \ZZ/8\ZZ$.
In particular, we always have $\bar 4 \bar x_k = \bar 4$.
For the ID (1-8-1), the conditions characterizing
$nx_kw_Z \in \ZZ \cdot q_k$ are 
\[
\begin{array}{llll}
k=0 \colon & n = 1 \colon \ \bar 2 \bar x_0 = \bar 0, &  n = 2 \colon \ \bar 4 \bar x_0 = \bar 0,  &  n = 3 \colon \ \bar 6 \bar x_0 = \bar 0,
\\
k=1 \colon & n = 1 \colon \ \bar 2 \bar x_1 = \bar 4 \bar x_0 \bar x_2, & n = 2 \colon \ \bar 4 \bar x_1 = \bar 0 & n = 3 \colon \ \bar 6 \bar x_1 = \bar 4 \bar x_0 \bar x_2, 
\\
k=2 \colon & n = 1 \colon \ \bar 2 \bar x_2 = \bar 2 \bar x_0 \bar x_1, & n = 2 \colon \ \bar 4 \bar x_2 = \bar 4 \bar x_0 \bar x_1, & n = 3 \colon \ \bar 6 \bar x_2 = \bar 6 \bar x_0 \bar x_1
\end{array}  
\] 
and $\bar 0 = \bar 0$ for $n=4$ and all $k$.
Using $\bar x_k \in (\ZZ/8\ZZ)^*$ and $\bar 4 \bar x_k = \bar 4$,
we directly see $(\iota_0,\iota_1) = (4x_0,4x_1)$ and further
obtain $\iota_2 = x_2,2x_2$.
Consequently, $z(0)$ and $z(1)$ are not $T$-singular,
whereas $z(2)$ is a $T$-singularity.
For the ID (1-8-3), we get
\[
\begin{array}{llll}
k=0 \colon & n = 1 \colon \ \bar 4 \bar x_0 = \bar 0, &  n = 2 \colon \ \bar 0 = \bar 0,  & 
\\
k=1 \colon & n = 1 \colon \ \bar 4 \bar x_1 = \bar 4 \bar x_0 \bar x_2, &  & 
\\
k=2 \colon & n = 1 \colon \ \bar 4 \bar x_2 = \bar 6 \bar x_0 \bar x_1, & n = 2 \colon \ \bar 0 = \bar 4 \bar x_0 \bar x_1, & n = 3 \colon \ \bar 4 \bar x_2 = \bar 2 \bar x_0 \bar x_1
\end{array}  
\] 
and $\bar 0 = \bar 0$ for $n=4$ and $k=2$.
Thus, using again $\bar x_k \in (\ZZ/8\ZZ)^*$ and $\bar 4 \bar x_k = \bar 4$,
we see $(\iota_0,\iota_1,\iota_2) = (2x_0,x_1,4x_1)$.
Hence all $z(k)$ are $T$-singular.
For the ID (1-8-5), the conditions characterizing
$nx_kw_Z \in \ZZ \cdot q_k$ are 
\[
\begin{array}{llll}
k=0 \colon & n = 1 \colon \ \bar 6 \bar x_0 = \bar 0, &  n = 2 \colon \ \bar 4 \bar x_0 = \bar 0,  &  n = 3 \colon \ \bar 2 \bar x_0 = \bar 0,
\\
k=1 \colon & n = 1 \colon \ \bar 6 \bar x_1 = \bar 4 \bar x_0 \bar x_2, & n = 2 \colon \ \bar 4 \bar x_1 = \bar 0 & n = 3 \colon \ \bar 2 \bar x_1 = \bar 4 \bar x_0 \bar x_2, 
\\
k=2 \colon & n = 1 \colon \ \bar 6 \bar x_2 = \bar 2 \bar x_0 \bar x_1, & n = 2 \colon \ \bar 4 \bar x_2 = \bar 4 \bar x_0 \bar x_1, & n = 3 \colon \ \bar 2 \bar x_2 = \bar 6 \bar x_0 \bar x_1
\end{array}  
\] 
and $\bar 0 = \bar 0$ for $n=4$ and all $k$. Multiplication by $\bar 3$
transforms the cases $k=0,1$ into the corresponding ones of 1-8-1,
hence $(\iota_0,\iota_1) = (4x_0,4x_1)$.
Moreover, using $\bar 4  \bar x_k = \bar 4$, we see $\iota_2 = x_2,2x_2$.
The assertions follow.
For the ID (1-8-7), we have
\[
\begin{array}{llll}
k=0 \colon & n = 1 \colon \ \bar 0 = \bar 0, &   &  
\\
k=1 \colon & n = 1 \colon \ \bar 0 = \bar 4 \bar x_0 \bar x_2, & n = 2 \colon \ \bar 0 = \bar 0, & 
\\
k=2 \colon & n = 1 \colon \ \bar 0 = \bar 6 \bar x_0 \bar x_1, & n = 2 \colon \ \bar 0 = \bar 4 \bar x_0 \bar x_1, & n = 3 \colon \ \bar 0 = \bar 2 \bar x_0 \bar x_1
\end{array}  
\] 
and $\bar 0 = \bar 0$ for $n=4$ and $k=2$.
As a direct consequence, we obtain $\iota_0=x_0$, $\iota_1=2x_1$ and  $\iota_2 = 4x_2$,
as claimed. Moreover, each of the points $z(k)$ is a $T$-singularity.

\medskip
\noindent
\emph{IDs (1-6-$\ast$)}:
Here $\xi_0=1$, $\xi_1=2$, $\xi_2=3$, we have $6x_0^2$, $12x_1^2$, $18x_2^2$
as local class group orders and Lemma~\ref{lem:a_gl_6_w_i_mod_6}
says $\bar x_0  = \bar 1, \bar 5$,
$\bar x_1  \ne \bar 3$ and $\bar x_2 \ne \bar 2, \bar 4$ for
the values in $\ZZ/6\ZZ$.
For the ID (1-6-1), the conditions characterizing 
$nx_kw_Z \in \ZZ \cdot q_k$ are
\[
\begin{array}{llll}
k=0 \colon & n = 1 \colon \ \bar 2 \bar x_0 = \bar 0, & n = 2 \colon \ \bar 4 \bar x_0 = \bar 0, & n = 3 \colon \ \bar 0 = \bar 0, 
\\
k=1 \colon & n = 1 \colon \ \bar 2 \bar x_1 = \bar 3 \bar x_0 \bar x_2 & n = 2 \colon \ \bar 4 \bar x_1 = \bar 0, & n = 3 \colon \ \bar 0 = \bar 3 \bar x_0 \bar x_2, 
\\
           & n = 4 \colon \ \bar 2 \bar x_1 = \bar 0, & n = 5 \colon \ \bar 4 \bar x_0 = \bar 3 \bar x_0 \bar x_2,& n= 6 \colon \ \bar 0 = \bar 0,                                          \\
k=2 \colon & n = 1 \colon \ \bar 2 \bar x_2 =  \bar 2 \bar x_0 \bar x_1, &  n = 2 \colon \ \bar 4 \bar x_2 =   4 \bar x_0 \bar x_1, & n = 3 \colon \ \bar 0 = \bar 0.
\end{array}  
\]
We directly see $\iota_0 = 3x_0$.
For $k=1$, the possible values $\bar x_k \in \ZZ/6\ZZ$
can only satisfy the last equation.
Hence $\iota_1 = 6x_1$.
Finally, $\iota_2 = x_2,3x_2$ by equivalence of the
first two equations in the case $k=2$.
For the ID (1-6-5), we get
\[
\begin{array}{llll}
k=0 \colon & n = 1 \colon \ \bar 0 = \bar 0, &  & 
\\
k=1 \colon & n = 1 \colon \ \bar 0 = \bar 3 \bar x_0 \bar x_2 & n = 2 \colon \ \bar 0 = \bar 0, & 
\\
k=2 \colon & n = 1 \colon \ \bar 0 =  \bar 4 \bar x_0 \bar x_1, &  n = 2 \colon \ \bar 0 =   2 \bar x_0 \bar x_1, & n = 3 \colon \ \bar 0 = \bar 0.
\end{array}  
\]
as conditions characterizing $x_kw_Z, 2x_kw_Z \in \ZZ \cdot q_k$.
Obviously, $\iota_0 = x_0$.
Moreover, according to the possible values $\bar x_k \in \ZZ/6\ZZ$,
we have $\iota_1 = 2x_1$ and $\iota_2 = 3x_2$, as claimed.
Consequently, all $z(k)$ are $T$-singularities.

\medskip
\noindent
\emph{IDs (1-5-$\ast$)}:
Lemma~\ref{lem:a_gl_5_w_i_mod_5} yields $\bar x_0, \bar x_1 \in (\ZZ/5\ZZ)^*$.
We treat (1-5-1), (1-5-2), (1-5-3) at once. For $k=0,1$,
the conditions in $\ZZ/5\ZZ$ 
characterizing $n x_kw_Z \in \ZZ \cdot q_k$ are 
\[
\bar n (\bar 1 + \bar s) \bar x_k \ = \ \bar 0,
\quad
n = 1,2,3,4,5, \ s = 1,2,3.
\]
These are only satisfied for $n=5$, showing
$\iota_0 = 5 x_0$ and $\iota_1 = 5x_1$. 
Moreover, we obtain $\iota_2 = x_2, 5x_2$
as the conditions characterizing 
$nx_2w_Z \in \ZZ \cdot q_2$ are 
\[
\bar n (\bar 1 + \bar s) \bar x_2  \ =  \ \bar n \bar s \bar x_0 \bar x_1,
\quad
n = 1,2,3,4,5, \ s = 1,2,3.
\]
For the ID (1-5-4), we directly see
$x_kw_Z \in \ZZ \cdot q_k$, hence $\iota_k = x_k$
for $k=0,1$.
Moreover, $nx_kw_Z \in \ZZ \cdot q_k$ is characterized
by $\bar 4 \bar n \bar x_0 \bar x_1 = \bar 0$.
Thus, $\iota_2 = 5x_2$.
\end{proof}

%\cleardoublepage

\section{$\KK^*$-surfaces of Picard number one and integral degree}
\label{sec:ratcstar}

We explicitly describe all quasismooth projective rational
$\KK^*$-surfaces of Picard number one and integral degree;
see Theorems~\ref{thm:adjpartner} and~\ref{thm:thm4}, covering
in particular Theorem~\ref{thm:introthm3} from the introduction.
An important step on the way is Proposition~\ref{prop:P1toP},
showing, roughly speaking, that a fake weighted projective
plane is an equivariant limit of a quasismooth $\KK^*$-surface
of the same Picard number and degree provided that at
least one of its toric fixed points is a $T$-singularity.
The second main ingredient is the study of the singularities
of the fake weighted projective planes performed in the
previous section.

We first recall the necessary background on $\KK^*$-surfaces
of Picard number one; see~\cite[Sec.~4]{HaHaHaSp} and \cite{HaHaSp}
for more details. 
These come embedded into certain fake weighted projective spaces.
More precisely, consider an integral $3 \times 4$ projective
generator matrix of the shape
\[
P
\ = \
\left[
\begin{array}{rrrr}
-1 & -1 & l_1 & 0
\\
-1 & -1 & 0 & l_2
\\
0 & d_0 & d_1 & d_2
\end{array}
\right],
\qquad
\begin{array}{c}
\scriptstyle
1 \le d_1 \le l_1 \le l_2, \ \ \gcd(l_i,d_i) = 1, 
\\[3pt]
\scriptstyle
d_0 + \frac{d_1}{l_1} + \frac{d_2}{l_2} \ < \ 0 \ < \ \frac{d_1}{l_1} + \frac{d_2}{l_2}.
\end{array}
\]
Let $Z = Z(P)$ be the associated fake weighted projective space.
Observe that the fake weight vector of $Z$ is given as
\[
w
\ = \   
w(P)  
\ = \
(-l_1l_2d_0 - l_2d_1 - l_1d_2, \, l_2d_1 + l_1d_2, \, - l_2d_0, \, -l_1d_0)
\ \in \
\ZZ_{>0}^4.
\]
Now, denote by $S_1,S_2,S_3$ the coordinates on the acting
torus $\TT^3 \subseteq Z$. Then we obtain a projective surface 
\[
X \ := \ X(P) \ := \  \overline{V(1+S_1+S_2)} \ \subseteq \ Z,
\]
taking the closure of $V(1+S_1+S_2) \subseteq \TT^3$ in $Z$.
The multiplicative group $\KK^*$ acts on $X$ as a subtorus of
the torus $\TT^3 \subseteq Z(P)$ via 
\[
t \cdot s \ = \ (s_1,s_2,ts_3).
\]
The $\KK^*$-surface $X$ is projective, rational, quasismooth,
del Pezzo and of Picard number one.
In homogeneous coordinates on $Z$, we have
\[
X
\ = \
V(T_1T_2 + T_3^{l_1} + T_4^{l_2}) 
\ \subseteq \
Z.
\]
Moreover, $X$ is non-toric if and only if $l_1,l_2  > 1$.
There are precisely three $\KK^*$-fixed points on $X$;
in homogeneous coordinates they are given as
\[
x_0 \ = \ [0,0,\zeta,1], \ \zeta^{l_1} = -1,
\qquad
x_1 \ = \ [0,1,0,0],
\qquad
x_2  \ = \ [1,0,0,0].
\]
The fixed point $x_0$ is hyperbolic and $x_1,x_2$ are both
elliptic.
There are exactly two non-trivial orbits $\KK^* \cdot z_1$
and $\KK^* \cdot z_2$ with non-trivial isotropy groups:
\[
z_1  = [-1,1,0,1], \quad  
\vert \KK^*_{z_1} \vert = l_1,
\qquad\qquad
z_2  = [-1,1,1,0], \quad
\vert \KK^*_{z_2} \vert = l_2.
\]
Moreover, for the local class group orders of the three
$\KK^*$-fixed points $x_0,x_1,x_2 \in X$, we obtain
\[
\cl(x_0)  =  -d_0, 
\qquad
\cl(x_1)  =  w_2, 
\qquad
\cl(x_2)  =  w_1. 
\]
Finally, the self intersection number of the canonical
divisor $\mathcal{K}_X$ on $X$ can be expressed as
follows:
\[
\mathcal{K}_X^2
\ = \
\Bigl( \frac{1}{w_1} + \frac{1}{w_2} \Bigr)
\Bigl(2 + \frac{l_1}{l_2} + \frac{l_2}{l_1} \Bigr)
\ = \
\frac{\cl(x_0)}{\cl(x_1)\cl(x_2)} (l_1+l_2)^2.
\]
As a consequence of \cite[Thm.~4.18, Props.~4.9, 4.15 and~5.1]{HaHaHaSp},
every non-toric, quasismooth, rational, projective $\KK^*$-surface
of Picard number one is isomorphic to an $X(P)$ as above.

\begin{construction}
\label{constr:degs}
See~\cite[Constr.~3.5]{HaKiWr}, compare also~\cites{AkKa,Il}.
Consider a $\KK^*$-surface $X=X(P)$ in $Z=Z(P)$ as above
and set
\[
\mathcal{X}_1 
\ := \
V(T_1T_2 + ST_3^{l_1} + T_4^{l_2})
\ \subseteq \ 
Z \times \KK,
\]
\[
\mathcal{X}_2
\ := \ 
V(T_1T_2 + T_3^{l_1} + ST_4^{l_2})
\ \subseteq \
Z \times \KK,
\]
where the $T_i$ are the homogeneous coordinates
on $Z$ and $S$ is the coordinate on $\KK$.
Then $\mathcal{X}_1$ and $\mathcal{X}_2$
are invariant under the respective $\KK^*$-actions
on $Z \times \KK$ given by 
\[
\vartheta \cdot ([z_1,z_2,z_3,z_4],s)
\ = \
([z_1,z_2,\vartheta^{-1}z_3,z_4],\vartheta s),
\]
\[
\vartheta \cdot ([z_1,z_2,z_3,z_4],s)
\ = \
([z_1,z_2,z_3,\vartheta^{-1}z_4],\vartheta s).
\]
Restricting the projection $Z \times \KK \to \KK$
yields flat families
$\psi_i \colon \mathcal{X}_i \to \KK$
being compatible with the above $\KK^*$-actions
and the scalar multiplication on $\KK$.
Set
\[
\tilde P_1
\ := \
\left[
\begin{array}{ccc}
l_1 & l_1 & -l_2 
\\ 
d_1 & d_1+l_1d_0 & d_2 
\end{array}
\right],
\qquad
\tilde P_2
\ := \
\left[
\begin{array}{ccc}
l_2 & l_2 &  -l_1 
\\
d_2 & d_2+l_2d_0 & d_1 
\end{array}
\right]
\]
and let $\tilde Z_1, \tilde Z_2$ denote 
the associated fake weighted projective planes.
Then the central fiber $\psi_i^{-1}(0)$
equals $\tilde Z_i$ and any other fiber
$\psi_i^{-1}(s)$ is isomorphic to $X$.
\end{construction}

\begin{proposition}
\label{prop:degprops}
Consider $X=X(P)$ in $Z=Z(P)$,
the fake weight vector $w(P) = (w_1,w_2,w_3,w_4)$
and $\mathcal{X}_i \to \KK$ as in
Construction~\ref{constr:degs}.
Then 
\[
w(\tilde P_1)
\ = \
(w_1, \, w_2, \, -d_0l_1^2),
\qquad
w(\tilde P_2)
\ = \
(w_1, \, w_2, \, -d_0l_2^2) 
\]
holds for the fake weight vectors of the central
fibers $\tilde Z_1$ and $\tilde Z_2$.
The canonical self intersection
numbers of $X, \tilde Z_1, \tilde Z_2$  satisfy
\[
\mathcal{K}_X^2
\ = \   
\mathcal{K}_{\tilde Z_1}^2
\ = \   
\mathcal{K}_{\tilde Z_2}^2.
\]
Furthermore, let $Z_i \subseteq Z$ be the affine toric
subvariety containing $x_i \in Z$ and define
$X_i := X \cap Z_i$. Then we have isomorphisms of
affine surfaces
\[
X_1
\ \cong \
\tilde Z_1 \setminus V(T_1)
\ \cong \
\tilde Z_2 \setminus V(T_1),
\qquad  
X_2
\ \cong \
\tilde Z_1 \setminus V(T_0)
\ \cong \
\tilde Z_2 \setminus V(T_0).
\]
Finally, let $Z^h \subseteq Z$ be the affine toric
subvariety given by $\cone(v_1,v_2)$ and set 
$X^h := X \cap Z^h$. Then $X^h$ is the index one
cover of $\tilde Z_i \setminus V(T_2)$, $i = 1,2$.
\end{proposition}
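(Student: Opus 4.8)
The plan is to realise both sides explicitly, the target $\tilde Z_i\setminus V(T_2)$ as an affine toric chart and $X^h$ as a cyclic quotient of a smooth affine surface, and then to produce the covering morphism out of the degeneration family $\psi_i\colon\mathcal{X}_i\to\KK$ of Construction~\ref{constr:degs}.

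First I would pin down the toric target. The labelling already fixed by the isomorphisms $X_1\cong\tilde Z_i\setminus V(T_1)$ and $X_2\cong\tilde Z_i\setminus V(T_0)$ determines which chart is meant, so that $\tilde Z_i\setminus V(T_2)$ is the affine toric surface $U_{\tau_i}$ belonging to the cone $\tau_i$ spanned by the first two columns of $\tilde P_i$, that is to the generator matrix $\left[\begin{smallmatrix} l_i & l_i \\ d_i & d_i+l_id_0\end{smallmatrix}\right]$, where $d_i$ is the entry attached to $l_i$. This is exactly the situation of Example~\ref{ex:tsing} with $l=l_i$, $c=d_i$ and $c_0=d_0<0$, invoking $\gcd(l_i,d_i)=1$; hence $\tilde Z_i\setminus V(T_2)$ is a $T$-singularity with local class group order $-d_0l_i^2$ and local Gorenstein index $l_i$, and Lemma~\ref{lem:tsing} writes its index one cover as an explicit Du Val cyclic quotient.

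Next I would describe $X^h$ near the hyperbolic point $x_0$. On $Z^h=U_{\cone(v_1,v_2)}$ the coordinates $T_3,T_4$ are invertible, so after normalising them by the $H$-action one presents $Z^h$ as a quotient of a smooth space by the residual finite group, and $X^h=X\cap Z^h$ as the corresponding quotient of the smooth affine hypersurface chart $\{T_1T_2+T_3^{l_1}+T_4^{l_2}=0\}$. Since the two special orbits satisfy $T_3\equiv0$ respectively $T_4\equiv0$, they miss $Z^h$, so by Proposition~\ref{prop:clandcr} the only non-free point of this quotient is $x_0$, with $\cl(x_0)=-d_0$; evaluating the induced action on the two transversal coordinates then exhibits $x_0$ as a Du Val point, and hence $X^h$ as a Gorenstein surface with at worst a canonical singularity.

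Finally I would link the two sides through the degeneration. Near the hyperbolic locus the family $\mathcal{X}_i$ is twisted by the $l_i$-th power of the degenerating variable, and extracting the corresponding roots (the base change $S=\sigma^{l_i}$ together with rescaling $T_3$ for $i=1$, respectively $T_4$ for $i=2$) trivialises the family; this equips the central-fibre chart $\tilde Z_i\setminus V(T_2)$ with a cyclic cover whose total space is the general-fibre chart $X^h$. One then certifies that this cover is the index one cover by comparing the two explicit quotient presentations of the previous two steps: $X^h$ is Gorenstein, its covering degree is to be matched with the Gorenstein index of the base, and the deck group must act through the determinant-one subgroup of $\Cl\bigl(\tilde Z_i\setminus V(T_2),z\bigr)$, so that the morphism is \'etale in codimension one over the toric boundary. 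As the construction is symmetric under $T_3\leftrightarrow T_4$ and $l_1\leftrightarrow l_2$, it settles $i=1$ and $i=2$ at once.

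The hard part will be this last matching. Producing \emph{some} cyclic cover from $S=\sigma^{l_i}$ is routine, but identifying it precisely with the index one cover requires carrying the $\KK^*$-weights and the residual finite stabiliser through the normalisation of $T_3,T_4$ and comparing the resulting deck action at $x_0$ with the canonical subgroup of the local class group of $\tilde Z_i\setminus V(T_2)$; the standing coprimalities $\gcd(l_i,d_i)=1$ are exactly what force this comparison to come out correctly.
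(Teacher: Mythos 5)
Your proposal engages only with the last sentence of the proposition. The first two claims (the fake weight vectors $w(\tilde P_i)$ and the equality of the canonical self-intersection numbers) are not addressed at all; the paper disposes of them by citing \cite[Prop.~3.6]{HaKiWr}. The isomorphisms $X_1\cong \tilde Z_i\setminus V(T_1)$ and $X_2\cong \tilde Z_i\setminus V(T_0)$ are treated by you as ``already fixed'' rather than proved, whereas in the paper they are a substantive part of the argument: one writes $X_i\cong \bar X_i/H_i$ with $\bar X_i=V(T_1+T_2^{l_1}+T_3^{l_2})$, parametrizes $\bar X_i$ by $\varphi_i(z)=(-z_1^{l_1}-z_2^{l_2},z_1,z_2)$ and matches degree matrices via Lemma~\ref{lem:locdegmat}. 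So, as it stands, you prove at most one of the four assertions.

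For that last assertion your identification of the target chart via Example~\ref{ex:tsing} agrees with the paper, but the covering mechanism does not. The paper first identifies $X^h$ with the affine toric surface $Z_h$ of generator matrix $A=\left[\begin{smallmatrix}1&1\\0&d_0\end{smallmatrix}\right]$ (via \cite[Prop.~3.4.4.6]{ArDeHaLa}) and then exhibits the cover purely lattice-theoretically: $S_i=\left[\begin{smallmatrix}l_i&0\\d_i&l_i\end{smallmatrix}\right]$ carries the cone of $Z_h$ onto that of $\tilde Z_i\setminus V(T_2)$, whose generator matrix is $S_i\cdot A$; this is a toric quotient map of degree $\vert\det S_i\vert=l_i^2$, consistent with the local class group orders $-d_0$ and $-d_0l_i^2$ of the two fixed points. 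Your plan instead extracts an $l_i$-th root of the deformation parameter and rescales $T_3$ (resp.\ $T_4$). That base change trivializes the family away from the origin, but it produces no finite morphism from the general-fibre chart $X^h$ onto the central-fibre chart: a trivialization relates fibres over nonzero base points to one another, it does not map a general fibre onto the special fibre, and in any case a degree-$l_i$ cyclic cover cannot coincide with the degree-$l_i^2$ map that is actually asserted. Moreover, you explicitly defer the decisive step (``the hard part will be this last matching''), so even granting your setup the argument is not carried to completion.
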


\begin{lemma}
\label{lem:locdegmat}
Let $(l_i,d_i)$, where $i=1,2$ be two pairs of coprime
integers, let $d_0 \in \ZZ$ such that we have affine
generator matrices
\[
B
\ = \
\left[
\begin{array}{cccc}
1 & l_1 & 0
\\
-1 & 0 & l_2
\\
d_0 & d_1 & d_2
\end{array}
\right],
\qquad
\tilde B
\ := \
\left[
\begin{array}{cc}
l_1 & -l_2 
\\ 
d_1+l_1d_0 & d_2 
\end{array}
\right].
\]
Then the associated affine toric varieties share the divisor class groups $K$
and class group order $w$ and these are explicitly given by
\begin{align*}
%\begin{array}{lcl}
w & \ = \ \det(B) \ = \ \det(\tilde B) \ = \ -d_0l_1l_2-d_1l_2-d_2l_1,
\\[3pt]
K & \ =  \ \ZZ^3 / \mathrm{im}(B^*) \ = \ \ZZ^2 / \mathrm{im}(\tilde B^*) \ = \ \ZZ/w\ZZ.
%\end{array}
\end{align*}
Moreover, let $\tilde C = [\bar c_1, \bar c_2]$ be a degree
matrix for $\tilde B$ in $K$.
Then $C = [\bar l_1 \bar c_1, \bar c_1, \bar c_2]$ is a degree
matrix for $B$ in $K$.
\end{lemma}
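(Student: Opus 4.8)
The plan is to express both class groups as cokernels of the respective generator matrices, to identify these two cokernels through a single unimodular reduction, and then to read off the degree-matrix relation as a one-line substitution. Throughout I use the standard description of the divisor class group of the affine toric variety attached to a full-dimensional simplicial cone $\cone(v_1,\dots,v_m)\subseteq\ZZ^m$: writing $M=[v_1,\dots,v_m]$ for its generator matrix, one has $\Cl = \ZZ^m/\operatorname{im}(M^*) \cong \operatorname{coker}(M)$, a finite group of order $|\det M|$, and — in the square, affine analogue of Construction~\ref{constr:dualseq} — a tuple $C=[\bar c_1,\dots,\bar c_m]$ is a degree matrix for $M$ exactly when $\bar c_1,\dots,\bar c_m$ generate $\Cl$ and $M\,(\bar c_1,\dots,\bar c_m)^{\top}=0$ holds in $\Cl$. (Given generation, the relation $M\bar c=0$ forces $\operatorname{im}(M^*)\subseteq\ker C$, and equality of the two finite indices then gives $\ker C=\operatorname{im}(M^*)$, so $C$ is genuinely the cokernel projection.)

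First I would treat $\tilde B$: a $2\times2$ determinant gives $|\det\tilde B| = |d_0l_1l_2+d_1l_2+d_2l_1| = w$, and since $\gcd(l_2,d_2)=1$ the gcd of the entries of $\tilde B$ is $1$, so its Smith normal form is $\diag(1,w)$ and $\Cl(U_{\tilde B})=\ZZ^2/\operatorname{im}(\tilde B^*)\cong\ZZ/w\ZZ$. For $B$ I would exploit the unit entry $\pm1$ in the first column to clear the first row and first column by unimodular row and column operations; these preserve $\operatorname{coker}(B)$ up to isomorphism, split off a trivial $\ZZ$-summand, and leave a residual $2\times2$ block. That block coincides with $\tilde B$ up to a sign change of one row, i.e.\ up to a unimodular factor, which simultaneously yields $|\det B|=w$ and the canonical identification $\Cl(U_B)=\ZZ^3/\operatorname{im}(B^*)\cong\operatorname{coker}(\tilde B)=\Cl(U_{\tilde B})\cong\ZZ/w\ZZ$. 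This settles the first three assertions.

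For the degree-matrix transfer I would substitute $C=[\bar l_1\bar c_1,\,\bar c_1,\,\bar c_2]$ into the criterion of the first paragraph and compute $B\,(\bar l_1\bar c_1,\bar c_1,\bar c_2)^{\top}$ in $K=\ZZ/w\ZZ$. The first coordinate vanishes identically, precisely because the multiplier $l_1$ placed in the leading slot cancels the $l_1$-contribution coming from the second column of $B$; the remaining two coordinates reproduce, up to sign, the two coordinates of $\tilde B\,(\bar c_1,\bar c_2)^{\top}$, which vanish by the hypothesis that $\tilde C=[\bar c_1,\bar c_2]$ is a degree matrix for $\tilde B$. Generation of $K$ is inherited since $\bar c_1,\bar c_2$ already generate it. Hence $C$ satisfies both conditions and is a degree matrix for $B$.

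I expect the reduction step to be the main point requiring care: one must check not merely that the determinants agree but that the residual $2\times2$ block equals $\tilde B$ up to a unimodular change, since it is this canonical identification of $K$ that lets the substitution in the last step land in the correct group. The only genuinely non-formal feature is the factor $\bar l_1$ in the leading column of $C$; it is dictated by the column operation $C_2\mapsto C_2+l_1C_1$ used in the reduction, and anticipating it is exactly what makes the final verification collapse to the defining relations of $\tilde C$.
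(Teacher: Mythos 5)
Your argument is correct and follows essentially the same route as the paper's proof: use the unit pivot in the first column of $B$ to split off a $\ZZ$-summand and identify the residual $2\times 2$ block with $\tilde B$ (giving the determinant, cyclicity and the canonical identification of the class groups), then verify the degree-matrix claim by checking that $C$ generates $K$ and annihilates the rows of $B$. The only caveat concerns a sign inherited from the statement itself: with the entry $B_{11}=+1$ as printed, the first row of $B$ pairs with $C$ to give $2\,\bar l_1\bar c_1$ rather than $0$, so the cancellation you invoke requires $B_{11}=-1$, which is the form in which the lemma is actually applied in the proof of Proposition~\ref{prop:degprops}.
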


\begin{proof}
The statement on the determinants is obvious.
To proceed, we turn $B$ by means of unimodular row operations
into the matrix  
\[
B_0
\ = \
\left[
\begin{array}{cccc}
1 & l_1 & 0
\\
0 & l_1 & -l_2
\\
0 & d_1 + d_0l_1 & d_2
\end{array}
\right].
\]
Then
$K = \ZZ^3 / \mathrm{im}(B^*)  = \ZZ^3 / \mathrm{im}(B_0^*) = \ZZ^2 / \mathrm{im}(\tilde B^*)$.
As $\tilde C$ is a degree matrix, $K$ is generated by $\bar c_1$.
Finally, $B_0$ and $B$ share $C$ as a degree matrix in $K$.
\end{proof}

\begin{proof}[Proof of Proposition~\ref{prop:degprops}]
The first two statements hold by~\cite[Prop.~3.6]{HaKiWr}.
For the remaining statements, consider the following
submatrices
\[
P_1
\ = \
\left[
\begin{array}{rrrr}
-1 & l_1 & 0
\\
-1 & 0 & l_2
\\
0 & d_1 & d_2
\end{array}
\right],
\qquad\qquad
P_2
\ = \
\left[
\begin{array}{rrrr}
-1 & l_1 & 0
\\
-1 & 0 & l_2
\\
d_0 & d_1 & d_2
\end{array}
\right].
\]
The cones spanned by the columns of $P_i$ define
open toric subvarieties $Z_i \subseteq Z$.
Cutting down to $X$ provides us with 
affine $\KK^*$-invariant open neighbourhoods
\[
x_i
\ \in \
X_i
\ := \
X \cap Z_i
\ \cong \ 
X(P_i),
\]
where the affine $\KK^*$-surface $X(P_i)$ is constructed
in an analogous manner to the proceeding in the projective
case; see~\cite{HaWr}:
\[
X(P_i)
\ = \
\bar X_i / H_i,
\qquad
\bar X_i 
\ = \
V(T_1+T_2^{l_1}+T_3^{l_2})
\ \subseteq \ 
Z_i.
\]
Here, respective quasitori $H_{1,2} \subseteq \KK^*$
are the subgroups of $w_{2,1}$-th roots of unity and,
due to Lemma~\ref{lem:locdegmat}, the corresponding
degree matrices are given by
\[
Q_1 \ = \ [\bar l_1\bar b_1, \, \bar b_1, \,\bar c_1],
\qquad\qquad
Q_2 \ = \ [\bar l_1 \bar b_2,  \, \bar b_2, \, \bar c_2],
\]
with degree matrices $\tilde Q_1 = [\bar b_1, \bar c_1]$
for  $\tilde Z_1 \setminus V(T_1)$ and $\tilde Q_2 = [\bar b_2, \bar c_2]$
for $\tilde Z_1 \setminus V(T_0)$.
In particular, we have $H_i$-equivariant isomorphisms
\[
\varphi_i \colon \KK^2 \ \to \ \bar X_i,
\qquad
z \ \mapsto \ (-z_1^{l_1}-z_2^{l_2},z_1,z_2).
\]
Passing to the induced morphisms of the respective quotients
by $H_i$ gives the desired isomorphies of affine surfaces 
\[
\tilde Z_1 \setminus V(T_1) = \KK^2 / H_1 \cong \bar X_1 / H_1 = X_1, 
\quad
\tilde Z_1 \setminus V(T_0)  = \KK^2 / H_2 \cong \bar X_1 / H_1 = X_2.
\]
Finally, we infer from~\cite[Prop.~3.4.4.6]{ArDeHaLa}
that $x_h \in X^h$ is isomorphic to the toric fixed point
in the affine toric surface $Z_h$ by the generator matrix
\[
A
\ := \ 
\left[
\begin{array}{cc}
1 & 1
\\
0 & d_0 
\end{array}
\right].
\]
In terms of generator matrices, one directly checks that $Z_h$
naturally maps onto $\tilde Z_1 \setminus V(T_2)$ and
$\tilde Z_2 \setminus V(T_2)$
and that these maps are the index one covers: 
\[
S_i \cdot A
\ = \ 
\left[
\begin{array}{cc}
l_i & l_i
\\
d_i & d_i + d_0l_i 
\end{array}
\right],
\qquad
S_i
\ := \ 
\left[
\begin{array}{cc}
l_i & 0
\\
d_i & l_i 
\end{array}
\right].
\]
\end{proof}

\begin{proposition}
\label{prop:P1toP}
Let $\tilde Z_1$ be a fake weighted projective plane
with fake weight vector $w = (w_0,w_1,w_2)$ and
degree matrix~$\tilde Q_1$ such that
$z(2) \in \tilde Z_1$ is $T$-singular.
Then $\tilde Q_1$ admits precisely one corresponding
generator matrix $\tilde P_1$ of the form
\[
\tilde P_1  
\ = \
\left[
\begin{array}{ccc}
l_1 & l_1 & -l_2 
\\ 
d_1 & d_1+d_0l_1 & d_2 
\end{array}
\right],
\qquad
\begin{array}{l}
\scriptstyle
l_1 = \iota(z(2)), \ d_0 = -w_2/l_1^2, \ 0 \le d_1 < l_1, 
\\
\scriptstyle
l_2 = l_1(w_0+w_1)/w_2, 
\\
\scriptstyle
d_2 = -(d_1w_0 + d_1w_1 + d_0l_1w_1)/w_2,
\\
\scriptstyle
d_0l_1l_2 + d_1l_2 + l_1d_2 < 0 < d_1l_2 + l_1d_2.
\end{array}
\]
The entries of the matrix $\tilde P_1$ give rise to
generator matrices $P$ and $\tilde P_2$,
defining a $\KK^*$-surface~$X(P)$ and a
fake weighted projective plane $\tilde Z_2$,
namely

\[
P
\ = \
\left[
\begin{array}{rrrr}
-1 & -1 & l_1 & 0
\\
-1 & -1 & 0 & l_2
\\
0 & d_0 & d_1 & d_2
\end{array}
\right],
\qquad\qquad
\tilde P_2
\ = \
\left[
\begin{array}{ccc}
l_2 & l_2 & -l_1
\\ 
d_2 & d_2+d_0l_2 & d_1 
\end{array}
\right].
\]
The $\tilde Z_i$ are the central fibers of the
flat families $\mathcal{X}_i \to \KK$ from
Construction~\ref{constr:degs} applied to
$X(P)$, their fake weight vectors satisfy
\[
w(\tilde P_2) \ = \ \lambda(w(\tilde P_1)),
\]
and the point $z(2) \in \tilde Z_2$ is at most $T$-singular.
Moreover, the $\KK^*$-surface $X(P)$ is non-toric
if and only if $l_1$ and $l_1(w_0+w_1)/w_2$
both differ from one.
\end{proposition}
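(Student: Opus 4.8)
The plan is to read off $\tilde P_1$ from the local structure at $z(2)$, to let the primitive relation among the columns produce the remaining data, and then to recognise the assembled matrices $P$ and $\tilde P_2$ via Construction~\ref{constr:degs}. First I would fix a generator matrix $\tilde P_1 = [v_0, v_1, v_2]$ corresponding to $\tilde Q_1$; by Remark~\ref{rem:genmatiso} any two such differ by a unimodular row operation. Since $z(2) \in \tilde Z_1$ lies in the cone $\cone(v_0,v_1)$ and is $T$-singular, Lemma~\ref{lem:tsing} lets me use such an operation to bring the first two columns into the shape $v_0 = (l_1,d_1)$, $v_1 = (l_1, d_1+d_0l_1)$ with $l_1 = \iota(z(2))$ and $-d_0 = \cl(z(2))/l_1^2 = w_2/l_1^2$, hence $d_0 = -w_2/l_1^2$. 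The residual freedom is adding multiples of the top to the bottom row, which shifts $d_1$ by multiples of $l_1$; normalising $0 \le d_1 < l_1$ pins $\tilde P_1$ down uniquely. By the proof of Proposition~\ref{prop:clandcr}, $w$ lies in the kernel of $\tilde P_1$, so $w_0 v_0 + w_1 v_1 + w_2 v_2 = 0$; reading off the two coordinates of this relation forces $v_2 = (-l_2,d_2)$ with exactly the stated values of $l_2$ and $d_2$, which are integers because $v_2$ is, and whose entries are coprime by primitivity of $v_2$. This gives existence and uniqueness of $\tilde P_1$.

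Next I would establish $d_0l_1l_2 + d_1l_2 + l_1d_2 < 0 < d_1l_2 + l_1d_2$. A direct computation identifies the right-hand and middle expressions with $\det(v_0,v_2)$ and $\det(v_1,v_2)$ respectively, the two differing by $d_0l_1l_2 < 0$. Since $\tilde Z_1 = Z(\tilde P_1)$ is a fake weighted projective plane, the fan $\Sigma(\tilde P_1)$ is complete; with $v_0,v_1$ in the right and $v_2$ in the left half-plane, this positive-spanning property forces precisely the signs $\det(v_0,v_2) > 0 > \det(v_1,v_2)$, i.e.\ the claimed inequalities. The same two inequalities say exactly that the fake weight vector $w(P) = (-l_1l_2d_0 - l_2d_1 - l_1d_2, \, l_2d_1 + l_1d_2, \, -l_2d_0, \, -l_1d_0)$ of the assembled $3 \times 4$ matrix $P$ has all entries positive (using $d_0 < 0$), so $P$ is a projective generator matrix and $X(P)$ is defined; the analogous check applies to $\tilde P_2$.

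Finally I would identify the outputs. Applying Construction~\ref{constr:degs} to $X(P)$ returns central fibers whose generator matrices are literally the $\tilde P_1, \tilde P_2$ written down here, so $\tilde Z_i = \psi_i^{-1}(0)$. Proposition~\ref{prop:degprops} gives $w(\tilde P_1) = (w_0, w_1, -d_0l_1^2)$ and $w(\tilde P_2) = (w_0, w_1, -d_0l_2^2)$, consistent with $w_2 = -d_0l_1^2$; substituting $-d_0 = w_2/l_1^2$ and $l_2 = l_1(w_0+w_1)/w_2$ turns the last entry of $w(\tilde P_2)$ into $(w_0+w_1)^2/w_2$, so $w(\tilde P_2) = \lambda(w(\tilde P_1))$ by the definition of $\lambda$ in Lemma~\ref{lem:mutation}. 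The first two columns of $\tilde P_2$ have the form $(l_2, d_2)$, $(l_2, d_2 + d_0l_2)$ with $d_0 < 0$ and $\gcd(l_2,d_2) = 1$, so Example~\ref{ex:tsing} (or Lemma~\ref{lem:tsing}) shows $z(2) \in \tilde Z_2$ is at most $T$-singular. For the last assertion, the general theory recalled at the beginning of this section gives that $X(P)$ is non-toric exactly when $l_1 > 1$ and $l_2 > 1$, and $l_2 = l_1(w_0+w_1)/w_2$ is the second of the two numbers in the statement.

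The main obstacle I expect is the bookkeeping in the first paragraph: pinning down the \emph{precise} uniqueness of $\tilde P_1$ requires a careful account of which unimodular operations preserve the correspondence with $\tilde Q_1$ and of the sign normalisation forcing $l_1 > 0$. Relatedly, the sign determination $\det(v_0,v_2) > 0 > \det(v_1,v_2)$ in the second paragraph must use the completeness of $\Sigma(\tilde P_1)$ correctly, rather than merely the non-vanishing of the two determinants; everything after that is either a direct computation or an appeal to Construction~\ref{constr:degs} and Proposition~\ref{prop:degprops}.
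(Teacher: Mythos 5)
Your proposal is correct and follows essentially the same route as the paper: normalize the first two columns via Lemma~\ref{lem:tsing}, pin down $d_1$ by the range condition, recover the third column from the relation $w_0v_0+w_1v_1+w_2v_2=0$, check that the resulting data satisfy the positivity conditions making $P$ and $\tilde P_2$ projective generator matrices, and then invoke Construction~\ref{constr:degs}, Proposition~\ref{prop:degprops} and Lemma~\ref{lem:tsing} (resp.\ Example~\ref{ex:tsing}) for the remaining claims. You merely make explicit two points the paper leaves to the reader, namely the derivation of the sign inequalities from completeness of the fan and the substitution showing $w(\tilde P_2)=\lambda(w(\tilde P_1))$.
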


\begin{proof}
Let $P_1$ be any generator matrix corresponding to $\tilde Q_1$.
Lemma~\ref{lem:tsing} tells us that, after multiplying $P_1$
from the left with a suitable unimodular matrix, the
first two columns look as those of $\tilde P_1$.
Observe that the requirements $d_0 < 0$ and $0 \le d_1 < l_1$
uniquely determine $d_1$.
The third column of $P_1$ is determined by the fact that~$P_1$
annihilates the fake weight vector $w(\tilde Q_1)$.
Observe that the conditions on the entries of $\tilde P_1$
ensure the $P$ and $\tilde P_2$ are indeed projective generator
matrices. The remaining statements are clear by
Construction~\ref{constr:degs}, Proposition~\ref{prop:degprops}
and Lemma~\ref{lem:tsing}.
\end{proof}

\begin{corollary}
\label{cor:limchar}
Let $Z$ be a fake weighted projective plane with fake weight vector
$w = (w_0,w_1,w_2)$. Then the following two statements are equivalent.  
\begin{enumerate}
\item
$Z$ is the central fiber of an equivariant test configuration
of a non-toric, quasismooth, rational, projective $\KK^*$-surface
$X$ of Picard number one.
\item
At least one of the three toric fixed points of $Z$, say $z(i)$,
is $T$-singular and satisfies
\[
\qquad
\iota(z(i)) > 1, \quad w_0+w_1+w_2 >  \left(\frac{1}{\iota(z(i))}+1\right) w_i.
\]
\end{enumerate}
\end{corollary}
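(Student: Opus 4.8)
The plan is to derive both implications directly from Proposition~\ref{prop:P1toP}, whose explicit generator matrices translate the geometric condition in~(i) into the numerical conditions in~(ii). The preliminary step is to pin down the test configurations occurring in~(i): by the classification recalled before Construction~\ref{constr:degs} we have $X \cong X(P)$, and by Construction~\ref{constr:degs} together with Proposition~\ref{prop:degprops} a fake weighted projective plane arising as the central fiber of an equivariant test configuration of $X(P)$ is the central fiber of one of the two families $\mathcal{X}_1,\mathcal{X}_2 \to \KK$, namely $\tilde Z_1$ or $\tilde Z_2$. Thus I would read~(i) throughout as $Z \cong \tilde Z_1$ or $Z \cong \tilde Z_2$ for some non-toric $X(P)$.

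For (ii)$\Rightarrow$(i) I would first permute the homogeneous coordinates, using Remark~\ref{rem:genmatiso}, so that the distinguished fixed point becomes $z(2)$; the norm $w_0+w_1+w_2$ is unchanged and the weight $w_i$ moves to position $2$, so the hypotheses read $\iota(z(2)) > 1$ and $w_0+w_1+w_2 > (\tfrac{1}{\iota(z(2))}+1)w_2$. Setting $\tilde Z_1 := Z$, Proposition~\ref{prop:P1toP} applies, its only hypothesis being that $z(2)$ be $T$-singular, and produces generator matrices $\tilde P_1, P, \tilde P_2$ with $l_1 = \iota(z(2))$ and $l_2 = l_1(w_0+w_1)/w_2$, together with the flat family $\mathcal{X}_1 \to \KK$ of Construction~\ref{constr:degs} having central fiber $\tilde Z_1 = Z$ and remaining fibers isomorphic to the quasismooth, rational, projective, Picard-number-one surface $X(P)$. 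The first hypothesis gives $l_1 > 1$, and rearranging the second as $l_1(w_0+w_1) > w_2$ gives $l_2 > 1$; by the final assertion of Proposition~\ref{prop:P1toP} the surface $X(P)$ is then non-toric, so $Z$ is the central fiber of an equivariant test configuration of a non-toric $\KK^*$-surface, which is~(i).

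For (i)$\Rightarrow$(ii) I would, by the identification above, assume $Z \cong \tilde Z_1$. The first two columns of $\tilde P_1$ share the top entry $l_1$ and $\gcd(l_1,d_1)=1$, so Example~\ref{ex:tsing} (equivalently Lemma~\ref{lem:tsing}) shows that $z(2) \in \tilde Z_1$ is at most $T$-singular with $\iota(z(2)) = l_1$ and $\cl(z(2)) = w_2 = -d_0l_1^2$. Since $X(P)$ is non-toric, Proposition~\ref{prop:P1toP} forces $l_1 > 1$ and $l_2 > 1$; hence $z(2)$ is a genuine $T$-singularity with $\iota(z(2)) = l_1 > 1$, and $l_2 = l_1(w_0+w_1)/w_2 > 1$ rearranges exactly to $w_0+w_1+w_2 > (\tfrac{1}{\iota(z(2))}+1)w_2$, which is~(ii). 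The case $Z \cong \tilde Z_2$ is identical after interchanging $l_1$ and $l_2$ (equivalently $\mathcal{X}_1$ and $\mathcal{X}_2$): there $z(2) \in \tilde Z_2$ is $T$-singular with $\iota(z(2)) = l_2 > 1$, and the inequality now encodes $l_1 > 1$, both guaranteed by non-toricity.

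The step I expect to be the crux is the preliminary identification: one must know that an equivariant test configuration of $X(P)$ whose central fiber is a fake weighted projective plane is necessarily the central fiber of one of the two families $\mathcal{X}_1,\mathcal{X}_2$ of Construction~\ref{constr:degs}, so that indeed $Z \cong \tilde Z_1$ or $Z \cong \tilde Z_2$. Once this is granted, the remainder is the bookkeeping of matching the non-toricity criterion $l_1,l_2 > 1$ against the two conditions in~(ii) through Proposition~\ref{prop:P1toP}, the only genuine computations being the determinant evaluation $\cl(z(2)) = -d_0l_1^2$ and the elementary rearrangement of the displayed inequality.
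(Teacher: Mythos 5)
Your proposal follows essentially the same route as the paper, whose entire proof reads ``Combine Construction~\ref{constr:degs} and Proposition~\ref{prop:P1toP} with~\cite[Prop.~5.4]{HaHaSu}.'' The preliminary identification you single out as the crux --- that an equivariant test configuration of $X(P)$ with a fake weighted projective plane as central fiber must be one of the two families $\mathcal{X}_1,\mathcal{X}_2$ of Construction~\ref{constr:degs} --- is exactly the ingredient the paper imports from the external reference \cite[Prop.~5.4]{HaHaSu}, so it cannot be established from the paper's own lemmas; your remaining bookkeeping (the equivalence of $l_2>1$ with the displayed inequality, and the symmetric treatment of $\tilde Z_2$) is correct and matches what the cited proposition plus Proposition~\ref{prop:P1toP} deliver.
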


\begin{proof}
Combine Construction~\ref{constr:degs} and
Proposition~\ref{prop:P1toP} with~\cite[Prop.~5.4]{HaHaSu}.
\end{proof}

\begin{remark}
Consider the families $\psi_i \colon \mathcal{X}_i \to \KK$
from Construction~\ref{constr:degs}.
The fiberwise $\KK^*$-actions and the horizontal $\KK^*$-action
define an action of the two-torus~$\TT^2$ on the $\mathcal{X}_i$.
This allows us to track the three fixed points
$x_0,x_1,x_2$ in fibers $\psi_i^{-1}(1) = X(P)$
in the degeneration process. By properness of $\psi_i$, 
the horizontal $\KK^*$-orbit through~$x_k$ has a 
limit point in $\psi_k^{-1}(0) = \tilde Z_i$.
The hyperbolic fixed point $x_0$ tends to
the toric fixed point $z(2)$
and the two elliptic fixed points $x_1,x_2$ to
toric fixed points $z(0)$, $z(1)$.
\end{remark}

\begin{definition}  
\label{def:adjacent}
We call two generator matrices $P_1$, $P_2$ of fake
weighted projective planes $Z_1$, $Z_2$ \emph{adjacent}
if $P_1$, $P_2$ arise via Construction~\ref{constr:degs}
from a common matrix~$P$.
In this situation, we call any pair $Z_1'$, $Z_2'$ of 
fake weighted projective planes with $Z_1' \cong Z_1$
and $Z_2' \cong Z_2$ as well adjacent.
\end{definition}

\goodbreak

\begin{definition}
\label{def:Qmatadj}
Let $Q_1$ and $Q_2$ be degree matrices of fake weighted
projective planes $Z_1$ and $Z_2$, respectively.
We call $Q_1$, $Q_2$ \emph{adjacent} if the
following holds:
\begin{enumerate}
\item
there exist corresponding generator matrices $P_1$ for $Q_1$ and 
$P_2$ for $Q_2$ such that $P_1$, $P_2$ are adjacent,
\item
we have $w_0 \le w_1$ for the first two (common)
components of the fake weight vectors $w(Q_1)$
and $w(Q_2)$,
\item
up to permuting the columns, each of the $Q_1$, $Q_2$
is adjusted in the sense of Definition~\ref{def:adjusted}.
\end{enumerate}
\end{definition}

\begin{theorem}
\label{thm:adjpartner}
For every fake weighted projective plane $Z_1$ of integral
degree, there is a pair $(Q_1,Q_2)$ of adjacent degree
matrices with $Z_1 \cong Z(Q_1)$.
\end{theorem}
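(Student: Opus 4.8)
The plan is to build the adjacent partner of $Z_1$ directly from the degeneration machinery of Construction~\ref{constr:degs} and Proposition~\ref{prop:P1toP}, using as the single decisive input the uniform fact that, in the adjusted presentation, the third toric fixed point is always a $T$-singularity. First I would invoke Theorem~\ref{thm:introthm1} to write $Z_1 \cong Z(Q_1)$ for one of the listed adjusted degree matrices $Q_1$. Inspecting the $(\pm,\pm,\pm)$-columns of the tables in Propositions~\ref{prop:gisingdeg9865}, \ref{prop:gisingdeg4}, \ref{prop:gisingdeg3}, \ref{prop:gisingdeg2} and~\ref{prop:gisingdeg1}, one observes that for each of the $24$ series the point $z(2) \in Z(Q_1)$ carries a $+$, that is, it is a $T$-singularity. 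This is exactly the hypothesis required to feed $\tilde Z_1 := Z(Q_1)$ and $\tilde Q_1 := Q_1$ into Proposition~\ref{prop:P1toP}.

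Next I would run Proposition~\ref{prop:P1toP}. It returns a generator matrix $\tilde P_1$ corresponding to $Q_1$, a common generator matrix $P$, and a second generator matrix $\tilde P_2$ of a fake weighted projective plane $\tilde Z_2$, with $\tilde P_1$ and $\tilde P_2$ adjacent in the sense of Definition~\ref{def:adjacent}. That $P$ is genuinely a projective generator matrix is guaranteed by Proposition~\ref{prop:P1toP}, and is in fact unobstructed: inserting the formulas for $d_2$ and $l_2$ one finds $d_1l_2+l_1d_2 = w_1$ and $d_0l_1l_2 + d_1l_2 + l_1d_2 = -w_0$, so the two positivity conditions reduce to $w_1 > 0$ and $-w_0 < 0$ and hold in every case. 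By Proposition~\ref{prop:degprops}, $\tilde Z_2$ has the same integral degree $\mathcal{K}^2$ as $Z_1$, hence is again one of the surfaces of Theorem~\ref{thm:introthm1} and admits an adjusted degree matrix.

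It then remains to arrange the two normalizations demanded by Definition~\ref{def:Qmatadj} without disturbing adjacency. For condition~(ii) I would, if necessary, interchange the first two columns of $Q_1$ before applying Proposition~\ref{prop:P1toP}; this leaves $z(2)$ and its $T$-singularity untouched, produces $w_0 \le w_1$ for the first two (shared, by Proposition~\ref{prop:degprops}) components of $w(\tilde P_1)$ and $w(\tilde P_2)$, and keeps $Q_1$ adjusted up to a column permutation as required by~(iii). For $\tilde Z_2$ let $Q_2$ be a degree matrix corresponding to $\tilde P_2$; since $\tilde Z_2$ is in the table it has an adjusted degree matrix, and by Proposition~\ref{prop:isochar} the two differ only by an automorphism $\imath$ of $\Cl(\tilde Z_2)$ and a column permutation. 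The key observation is that composing a degree matrix with $\imath$ does not change its kernel, hence does not change the associated generator matrix; thus I may replace $Q_2$ by $\imath \circ Q_2$, which is adjusted up to a column permutation and still corresponds to the same $\tilde P_2$. With $Q_1,Q_2$ so chosen, condition~(i) holds via the adjacent pair $\tilde P_1,\tilde P_2$, condition~(ii) by the ordering of the first two columns, and condition~(iii) since both matrices are adjusted up to column permutation; therefore $(Q_1,Q_2)$ is the desired adjacent pair with $Z_1 \cong Z(Q_1)$.

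The step I expect to be the main obstacle is the last one: reconciling the degree matrix $Q_2$ that the degeneration naturally produces with the adjusted normal form of Theorem~\ref{thm:introthm1} while preserving the adjacency already established for $\tilde P_1,\tilde P_2$. The resolution rests on the two pieces of slack deliberately built into Definition~\ref{def:Qmatadj} --- the phrase ``up to permuting the columns'' in~(iii), and the fact that~(i) only asks for \emph{some} pair of adjacent corresponding generator matrices --- together with the observation that the class-group automorphisms used in the adjustment act trivially on the underlying generator matrix.
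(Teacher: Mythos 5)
Your proposal is correct and follows essentially the same route as the paper: the paper's proof simply observes that Propositions~\ref{prop:gisingdeg9865}--\ref{prop:gisingdeg1} guarantee at least one $T$-singular toric fixed point and then invokes Proposition~\ref{prop:P1toP}. You additionally verify details the paper leaves implicit (that it is always $z(2)$ carrying the $T$-singularity, that the positivity conditions reduce to $w_1>0$ and $-w_0<0$, and the normalization of $Q_2$ to adjusted form), all of which check out.
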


\begin{proof}
Propositions~\ref{prop:gisingdeg9865} 
to~\ref{prop:gisingdeg1} ensure that at least one of
the toric fixed points of the fake weighted projective
plane $Z_1$ is at most $T$-singular.
Thus Proposition~\ref{prop:P1toP} yields the assertion.
\end{proof}

\begin{definition}
\label{def:ordnontor}
Let $Q_1$, $Q_2$ be adjacent degree matrices.
Set $Z_i = Z(Q_i)$ and denote by $l_i$ the local
Gorenstein indices of $z(2) \in Z_i$.
We call $(Q_1,Q_2)$ \emph{ordered} if $l_1 \le l_2$
and we say that $(Q_1,Q_2)$ is \emph{non-toric} if
$l_1,l_2 \ge 2$.
\end{definition}

By the definition of adjacency, every non-toric,
ordered pair $(Q_1,Q_2)$ of adjacent degree matrices
gives rise to a $\KK^*$-surface $X$, degenerating
via Construction~\ref{constr:degs} to the fake
weighted projective planes associated with $Q_1$, $Q_2$.
Due to Proposition~\ref{prop:P1toP}, the surface~$X$
is non-toric and uniquely determined up to isomorphism
by $(Q_1,Q_2)$. We write $X = X(Q_1,Q_2)$.

\begin{theorem}
\label{thm:thm4}
Let $X$ be a non-toric, quasismooth, rational, projective
$\KK^*$-surface of Picard number one with $\mathcal{K}_X^2 \in \ZZ$. 
Then $X \cong X(Q_1,Q_2)$ with a non-toric, ordered pair
of adjacent degree matrices.
Moreover, distinct ordered pairs $(Q_1,Q_2)$ of adjacent
degree matrices yield non-isomorphic $\KK^*$-surfaces.
\end{theorem}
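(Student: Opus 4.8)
The plan is to prove existence and uniqueness separately, in both cases exploiting that Construction~\ref{constr:degs} and Proposition~\ref{prop:P1toP} set up a reversible passage between a $\KK^*$-surface and its two toric degenerations.

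\emph{Existence.} I would begin with the recollection of~\cite{HaHaHaSp} quoted at the start of Section~\ref{sec:ratcstar}: our $X$ is isomorphic to some $X(P)$ with $P$ in the normalised shape, so $1 \le d_1 \le l_1 \le l_2$ and $\gcd(l_i,d_i)=1$. Applying Construction~\ref{constr:degs} yields the generator matrices $\tilde P_1,\tilde P_2$ and central fibres $\tilde Z_1,\tilde Z_2$, which by Proposition~\ref{prop:degprops} are fake weighted projective planes with $\mathcal{K}_{\tilde Z_i}^2 = \mathcal{K}_X^2 \in \ZZ$. The first two columns of $\tilde P_1$ form exactly the matrix of Example~\ref{ex:tsing} with $l=l_1$, $c=d_1$, $c_0=d_0<0$, so $z(2)\in\tilde Z_1$ is at most $T$-singular with $\iota(z(2))=l_1$, and likewise $\iota(z(2))=l_2$ on $\tilde Z_2$; non-toricity of $X$ gives $l_1,l_2\ge 2$. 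Passing to the adjusted degree matrices $Q_1,Q_2$ of $\tilde Z_1,\tilde Z_2$ (existence from the classification of Section~\ref{sec:classifyfwpp}, uniqueness from Proposition~\ref{rem:adjdegmat}), the pair $(Q_1,Q_2)$ meets Definition~\ref{def:Qmatadj}: the adjacent matrices $\tilde P_1,\tilde P_2$ witness~(i), the common leading weights can be ordered to give~(ii), and (iii) is adjustedness. It is non-toric, and ordered after relabelling so that $l_1\le l_2$. Finally $X(Q_1,Q_2)\cong X$, because by definition $X(Q_1,Q_2)$ feeds $Z(Q_1)\cong\tilde Z_1$ into Proposition~\ref{prop:P1toP}, whose uniqueness clause rebuilds precisely the matrix $P$ and hence the surface $X$.

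\emph{Uniqueness.} Here I would show that the ordered pair $(Q_1,Q_2)$ is an intrinsic invariant of the isomorphism class of $X=X(Q_1,Q_2)$. The two special orbits $\KK^*\cdot z_1,\KK^*\cdot z_2$ with isotropy orders $l_1\le l_2$ are intrinsic to the $\KK^*$-action, and the two degenerations of Construction~\ref{constr:degs} are the equivariant test configurations breaking these orbits (cf.\ Corollary~\ref{cor:limchar}); their central fibres $\tilde Z_1,\tilde Z_2$ are therefore determined, as isomorphism classes, by $X$ together with the presentation $P$, which by~\cite{HaHaHaSp} is unique up to symmetry. Concretely, from $X(Q_1,Q_2)\cong X(Q_1',Q_2')$ I would reconstruct normalised matrices $P,P'$ through Proposition~\ref{prop:P1toP}, observe that an isomorphism of the two $\KK^*$-surfaces identifies these presentations up to the symmetries allowed by~\cite{HaHaHaSp}, and conclude $\tilde Z_i\cong\tilde Z_i'$, whence $Q_i=Q_i'$ by uniqueness of adjusted degree matrices.

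The hard part will be controlling the residual symmetry when $l_1=l_2$. Then both $(Q_1,Q_2)$ and $(Q_2,Q_1)$ satisfy Definition~\ref{def:ordnontor}, so I must rule out that a genuinely distinct ordered pair produces the same surface. Proposition~\ref{prop:degprops} already gives $w(\tilde P_1)=w(\tilde P_2)$ in this case, and I would verify that the arm-swapping symmetry of $P$, which interchanges $\mathcal{X}_1$ and $\mathcal{X}_2$ in Construction~\ref{constr:degs}, induces an isomorphism $\tilde Z_1\cong\tilde Z_2$; by uniqueness of adjusted degree matrices (Proposition~\ref{rem:adjdegmat}) this forces $Q_1=Q_2$, so $(Q_1,Q_2)=(Q_2,Q_1)$ and no ambiguity survives. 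The remaining bookkeeping --- that the normalisations $0\le d_1<l_1$ and $w_0\le w_1$ can always be arranged and that reducing $d_1$ modulo $l_1$ leaves $X$ unchanged --- is routine once this symmetry is understood.
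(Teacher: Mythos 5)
Your existence argument is essentially the paper's: normalize $X\cong X(P)$, run Construction~\ref{constr:degs}, observe via Example~\ref{ex:tsing} and Proposition~\ref{prop:degprops} that the central fibres are fake weighted projective planes of the same degree with $z(2)$ at most $T$-singular of index $l_i\ge 2$, pass to adjusted degree matrices, and close the loop by the uniqueness clause of Proposition~\ref{prop:P1toP}, which rebuilds $P$ from $Q_1$. This half is correct, modulo the bookkeeping you defer (arranging $w_0\le w_1$ uses the symmetry of $P$ that swaps $v_0,v_1$ composed with negating the last row, which exchanges $w_1$ and $w_2$).

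The gap is exactly where you flagged it, in the case $l_1=l_2$, and your proposed fix does not work. The arm-swapping symmetry of $P$ produces a second normalized matrix $P'$ with $X(P')\cong X(P)$ and with $\tilde Z_1(P')\cong \tilde Z_2(P)$, $\tilde Z_2(P')\cong \tilde Z_1(P)$: it permutes the ordered pair of degenerations, but it does \emph{not} identify $\tilde Z_1$ with $\tilde Z_2$. Equality of the fake weight vectors $w(\tilde P_1)=w(\tilde P_2)$ gives nothing here, since the paper itself warns that $w(Q)=w(Q')$ can hold for non-isomorphic planes. A concrete counterexample to your claim sits in the paper's own adjacency graphs: the adjusted degree matrices labelled $(1,4,5;\bar 2)$ and $(1,4,5;\bar 3)$, from the series (1-5-2) and (1-5-3), have corresponding generator matrices which are precisely $\tilde P_1$ and $\tilde P_2$ for $(l_1,l_2,d_0,d_1,d_2)=(5,5,-1,3,1)$; so they are adjacent with $l_1=l_2=5$ for a non-toric $X$ of degree $1$, yet by Proposition~\ref{prop:classfwppdeg1} the two planes are not isomorphic. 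Hence $Q_1\ne Q_2$ here, both $(Q_1,Q_2)$ and $(Q_2,Q_1)$ are non-toric ordered pairs of adjacent degree matrices in the sense of Definitions~\ref{def:Qmatadj} and~\ref{def:ordnontor}, and they yield isomorphic surfaces, so your conclusion ``$Q_1=Q_2$, no ambiguity survives'' is false and the uniqueness argument collapses at this point. Note that the paper's own proof settles uniqueness in a single sentence and is also silent on this case; to get a genuine bijection one has to refine the ordering convention beyond $l_1\le l_2$ (for instance by a tie-break on $d_1,d_2$ when $l_1=l_2$) or pass to unordered pairs in that situation, rather than argue that the two central fibres coincide.
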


\begin{proof}
Let $X$ be a quasismooth, rational, projective
$\KK^*$-surface of Picard number one and integral
degree. 
Then $X \cong X(P)$ with a generator matrix~$P$ as
introduced in Section~\ref{sec:ratcstar};
see also~\cite[Constr.~3.1]{HaKiWr}.
According to Construction~\ref{constr:degs}, we have
$X(P) = X(Q_1,Q_2)$, where $Q_1,Q_2$ are adjacent
degree matrices corresponding to the generator 
matrices $P_1,P_2$ given there, and we may assume
that $(Q_1,Q_2)$ is ordered.
Moreover, Proposition~\ref{prop:P1toP} tells us that 
$(Q_1,Q_2)$ is non-toric.
Finally, the definition of a non-toric, ordered pair
of adjacent degree matrices ensures that distinct
pairs define non-isomorphic $\KK^*$-surfaces.
\end{proof}

%\cleardoublepage

\section{Examples and observations}

In a first example series, we check the
isomorphies for members inside one of
the series (1-9-$\ast$) or (1-8-$\ast$)
in the case of a small fake weight vector
and we discuss the first fake weight vector,
where any two different values of the
entry $\bar \eta$ in the adjusted degree
matrix lead to non-isomorphic fake weighted
projective planes. 
We begin by presenting the tacitly used tool
box.

\begin{lemma}
\label{lem:ZZmuZonto}
Let $\varphi \colon \ZZ\oplus\ZZ/\mu\ZZ \to \ZZ\oplus\ZZ/\mu\ZZ$
be a surjective group homomorphism. 
Then $\varphi$ is an isomorphism.
\end{lemma}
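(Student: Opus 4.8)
The plan is to exploit the canonical decomposition of $G := \ZZ \oplus \ZZ/\mu\ZZ$ into its torsion part and its free quotient. First I would record that the torsion subgroup $T = 0 \oplus \ZZ/\mu\ZZ$ consists precisely of the elements of finite order and is therefore preserved by every endomorphism; in particular $\varphi(T) \subseteq T$, so that $\varphi$ descends to a homomorphism $\bar\varphi \colon G/T \to G/T$ on the free quotient $G/T \cong \ZZ$.

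Next I would analyse the two induced maps separately. Surjectivity of $\varphi$ forces $\bar\varphi$ to be surjective, and a surjective endomorphism of $\ZZ$ is multiplication by $\pm 1$, hence bijective. Knowing that $\bar\varphi$ is injective, I would then show that the restriction $\varphi|_T \colon T \to T$ is surjective: given $t \in T$, write $t = \varphi(g)$ using surjectivity of $\varphi$; the class of $g$ in $G/T$ is sent to $0$ by $\bar\varphi$, so injectivity of $\bar\varphi$ yields $g \in T$, exhibiting $t$ as $\varphi(g)$ with $g \in T$. Since $T \cong \ZZ/\mu\ZZ$ is finite, the surjection $\varphi|_T$ is automatically injective, hence an isomorphism.

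Finally I would assemble these facts, for instance by the five lemma applied to the obvious commutative diagram of short exact sequences $0 \to T \to G \to \ZZ \to 0$ with vertical maps $\varphi|_T$, $\varphi$, $\bar\varphi$, or more directly: if $\varphi(g) = 0$ then $\bar\varphi(\bar g) = 0$, whence $g \in T$ by injectivity of $\bar\varphi$, and then $g = 0$ by injectivity of $\varphi|_T$; so $\varphi$ is injective and therefore bijective. There is no genuine obstacle beyond this bookkeeping, the only substantive inputs being that surjective endomorphisms of $\ZZ$ and of a finite abelian group are automorphisms. An even shorter route reads off from surjectivity that $\varphi(1,\bar 0) = (\pm 1, \bar a)$ and $\varphi(0,\bar 1) = (0,\bar c)$ with $\bar c \in (\ZZ/\mu\ZZ)^*$, which is exactly the shape $\varphi_{\epsilon,\bar a,\bar c}$ proved to be an automorphism in Lemma~\ref{lem:autzopluszn}.
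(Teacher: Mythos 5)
Your proof is correct and follows essentially the same route as the paper: both arguments rest on the observation that the preimage of the torsion part $T = \{0\}\oplus\ZZ/\mu\ZZ$ under $\varphi$ is again $T$, deduce that $\varphi|_T$ is a surjective hence bijective endomorphism of the finite group $T$, and conclude that $\ker\varphi$, being contained in $T$, is trivial. The only difference is how the key fact is obtained: the paper uses a cardinality argument via the homomorphism theorem, while you read it off from injectivity of the induced surjective endomorphism of the free quotient $G/T\cong\ZZ$; this is a cosmetic variation, and your alternative closing route via the explicit shape $\varphi_{\epsilon,\bar a,\bar c}$ of Lemma~\ref{lem:autzopluszn} is also valid.
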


\begin{proof}
We first show that only torsion elements map to torsion elements.
Consider $x,y \in \ZZ$ with
$\varphi(x,\bar{y}) \in \{0\} \oplus \ZZ/\mu \ZZ$.
Then we have
\[
\varphi(\mu x,\bar{0})  
\ = \
\mu \varphi(x,\bar{y})
\ = \
(0, \bar{0}).
\]
Consequently, $\mu x \ZZ \oplus \{\bar 0\} \subseteq \ker(\varphi)$.
The homomorphism theorem yields a commutative diagram
\[
\xymatrix{
{\ZZ\oplus\ZZ/\mu\ZZ}
\ar[rr]^{\varphi}
\ar[dr]
&&
{\ZZ\oplus\ZZ/\mu\ZZ}
\\
&
{\ZZ/\mu x \ZZ \oplus \ZZ/\mu\ZZ}
\ar[ur]_{\bar{\varphi}}
&
}
\]
As $\varphi$ is surjective, also $\bar{\varphi}$ is so.
By group order reasons, $x=0$.
The claim is verified.
As a consequence, we obtain isomorphisms
\[
\xymatrix{
{ \{0\} \oplus \ZZ/\mu \ZZ }
\ar@{=}[r]
&
{ \varphi^{-1}( \{0\} \oplus \ZZ/\mu \ZZ ) }
\ar[r]^{\quad \cong}_{\quad \varphi}
&
{\{0\} \oplus \ZZ/\mu \ZZ}.
}
\]
In particular the kernel of $\varphi \colon \ZZ\oplus\ZZ/\mu\ZZ \to \ZZ\oplus\ZZ/\mu\ZZ$,
being contained in the torsion part, must be trivial.
\end{proof}

\begin{remark}
\label{rem:assocchek}
Consider a degree matrix $Q$ in $\ZZ \oplus \ZZ / \mu \ZZ$
and a $2 \times 3$ projective generator matrix $P$ sharing
the same fake weight vector.
Then Lemma~\ref{lem:ZZmuZonto} says that $Q$ and $P$
correspond to each other if and only if $Q$ annihilates
the rows of $P$.
\end{remark}

\begin{remark}
Let $v = (a,c)$ and $v' = (b,d)$ be primitive
vectors in $\ZZ^2$ generating a two-dimensional
cone in $\QQ^2$ and let $Z$ be the associated
affine toric variety.
Then, according to~\cite[Rem.~3.7]{HaHaSp}, the local
Gorenstein index of the toric fixed point $z \in Z$
is given by
\[
\iota(z)
\ =  \  
\frac{\vert ad -bc \vert}{\gcd(c-d,\, b-a)} .
\]
\end{remark}

\begin{example}
\label{ex:iso-19-111}
In the series (1-9-$\ast$), consider the degree matrices
sharing the fake weight vector $(9, 9, 9)$:
{\small
\[
\left[
\begin{array}{ccc}
1 & 1 & 1
\\[3pt]
\bar 0 & \bar 1 & \bar 2  
\end{array}
\right],
\qquad\quad
\left[
\begin{array}{ccc}
1 & 1 & 1
\\[3pt]
\bar 0 & \bar 1 & \bar 5  
\end{array}
\right],
\qquad\quad
\left[
\begin{array}{ccc}
1 & 1 & 1
\\[3pt]
\bar 0 & \bar 1 & \bar 8  
\end{array}
\right].
\]
}

\noindent
As corresponding generator matrices we obtain the following
ones, all defining isomorphic fake weighted projective planes:
{\small
\[
\left[
\begin{array}{ccc}
3 &   3 & -6
\\[3pt]
1  &  -2 & 1
\end{array}
\right],
\qquad
\left[
\begin{array}{ccc}
1 &    1 & -2
\\[3pt]
0  &  -9 &  9
\end{array}
\right],
\qquad                      
\left[
\begin{array}{ccc}
3 &   3 & -6
\\[3pt]
2  &  -1 &  -1 
\end{array}
\right].
\]
}

\noindent
For the toric fixed points $z(0)$, $z(1)$, $z(2)$, 
the local Gorenstein indices, $T$-singularity identifier
and numbers of exceptional curves of the minimal resolution
are
\[
\begin{array}{ccc}
(3, 1, 3), 
\qquad & \qquad
(3, 3,  1), 
\qquad & \qquad
(1, 3, 3),
\\[3pt]
(+, +, +), 
\qquad & \qquad
(+, +, +), 
\qquad & \qquad
(+, +, +),
\\[3pt]
(2, 8, 2), 
\qquad & \qquad
(2, 2, 8), 
\qquad & \qquad
(8, 2, 2).
\end{array}
\]
\end{example}

\begin{example}
\label{ex:iso-19-114}
In the series (1-9-$\ast$), consider the degree matrices
sharing the fake weight vector $(9, 9, 36)$:
{\small
\[
\left[
\begin{array}{ccc}
1 & 1 & 4
\\[3pt]
\bar 0 & \bar 1 & \bar 2  
\end{array}
\right],
\qquad\quad
\left[
\begin{array}{ccc}
1 & 1 & 4
\\[3pt]
\bar 0 & \bar 1 & \bar 5  
\end{array}
\right],
\qquad\quad
\left[
\begin{array}{ccc}
1 & 1 & 4
\\[3pt]
\bar 0 & \bar 1 & \bar 8  
\end{array}
\right].
\]
}

\noindent
As corresponding generator matrices we obtain the following
ones, where the last two define isomorphic fake weighted projective
planes:
{\small
\[
\left[
\begin{array}{ccc}
2 &   2 & -1
\\[3pt]
1  &  -17 &  4
\end{array}
\right],
\qquad
\left[
\begin{array}{ccc}
6 &    6 & -3
\\[3pt]
1  &  -5 &  1 
\end{array}
\right],
\qquad                      
\left[
\begin{array}{ccc}
6 &   6 & -3
\\[3pt]
5  &  -1 &  -1 
\end{array}
\right].
\]
}

\noindent
For the toric fixed points $z(0)$, $z(1)$, $z(2)$, 
the local Gorenstein indices, $T$-singularity identifier
and numbers of exceptional curves of the minimal resolution
are
\[
\begin{array}{ccc}
(3, 3, 2), 
\qquad & \qquad
(3, 1,  6), 
\qquad & \qquad
(1, 3, 6),
\\[3pt]
(+, +, +), 
\qquad & \qquad
(+, +, +), 
\qquad & \qquad
(+, +, +),
\\[3pt]
(2, 2, 9), 
\qquad & \qquad
(2, 8, 5), 
\qquad & \qquad
(8, 2, 5).
\end{array}
\]
\end{example}

\begin{example}
\label{ex:iso-19-1425}
In the series (1-9-$\ast$), consider the degree matrices
sharing the fake weight vector $(9, 36, 225)$:
{\small
\[
\left[
\begin{array}{ccc}
1 & 4 & 25
\\[3pt]
\bar 0 & \bar 1 & \bar 2  
\end{array}
\right],
\qquad\quad
\left[
\begin{array}{ccc}
1 & 4 & 25
\\[3pt]
\bar 0 & \bar 1 & \bar 5  
\end{array}
\right],
\qquad\quad
\left[
\begin{array}{ccc}
1 & 4 & 25
\\[3pt]
\bar 0 & \bar 1 & \bar 8  
\end{array}
\right].
\]
}

\noindent
As corresponding generator matrices we obtain the following
ones, defining pairwise non-isomorphic fake weighted projective
planes:
{\small
\[
\left[
\begin{array}{ccc}
15 &   15 & -3
\\[3pt]
2  &  -13 &  2 
\end{array}
\right],
\qquad
\left[
\begin{array}{ccc}
 5 &    5 & -1
\\[3pt]
1  &  -44 &  7 
\end{array}
\right],
\qquad                      
\left[
\begin{array}{ccc}
15 &   15 & -3
\\[3pt]
7  &  - 8 &  1 
\end{array}
\right].
\]
}

\noindent
For the toric fixed points $z(0)$, $z(1)$, $z(2)$, 
the local Gorenstein indices, $T$-singularity identifier
and numbers of exceptional curves of the minimal resolution
are
\[
\begin{array}{ccc}
(3, 2, 15), 
\qquad & \qquad
(3, 6,  5), 
\qquad & \qquad
(1, 6, 15),
\\[3pt]
(+, +, +), 
\qquad & \qquad
(+, +, +), 
\qquad & \qquad
(+, +, +),
\\[3pt]
(2, 9, 8), 
\qquad & \qquad
(2, 5, 12), 
\qquad & \qquad
(8, 5, 8).
\end{array}
\]
\end{example}

\begin{example}
\label{ex:iso-18-112}
In the series (1-8-$\ast$), consider the degree matrices
$Q_1, \ldots, Q_4$, sharing the fake weight vector $(8, 8, 16)$:
{\small
\[
\left[
\begin{array}{ccc}
1 & 1 & 2
\\[3pt]
\bar 0 & \bar 1 & \bar 1  
\end{array}
\right],
\qquad\quad
\left[
\begin{array}{ccc}
1 & 1 & 2
\\[3pt]
\bar 0 & \bar 1 & \bar 3  
\end{array}
\right],
\qquad\quad
\left[
\begin{array}{ccc}
1 & 1 & 2
\\[3pt]
\bar 0 & \bar 1 & \bar 5  
\end{array}
\right],
\qquad\quad
\left[
\begin{array}{ccc}
1 & 1 & 2
\\[3pt]
\bar 0 & \bar 1 & \bar 7 
\end{array}
\right].
\]
}

\noindent
As corresponding generator matrices $P_1, \ldots, P_4$,
we obtain the following, showing in particular
$Z(P_2) \cong Z(P_4)$:
{\small
\[
\left[
\begin{array}{ccc}
1 &   1 & -1
\\[3pt]
0  &  -16 & 8
\end{array}
\right],
\quad
\left[
\begin{array}{ccc}
4 &    4 & -4
\\[3pt]
1  &  -3 &  1
\end{array}
\right],
\quad                      
\left[
\begin{array}{ccc}
2 &   2 & -2
\\[3pt]
1  &  -7 & 3
\end{array}
\right],
\quad
\left[
\begin{array}{ccc}
4 &   4 & -4
\\[3pt]
3  &  -1 &  -1 
\end{array}
\right].
\]
}

\noindent
For the toric fixed points $z(0)$, $z(1)$, $z(2)$, 
the local Gorenstein indices, $T$-singularity identifier
and numbers of exceptional curves of the minimal resolution
are
\[
\begin{array}{cccc}
(4, 4, 1), 
\qquad & \qquad
(2, 1,  4), 
\qquad & \qquad
(4, 4,  2), 
\qquad & \qquad
(1, 2, 4),
\\[3pt]
(-, -, +), 
\qquad & \qquad
(+, +, +), 
\qquad & \qquad
(-, -,  +), 
\qquad & \qquad
(+, +, +),
\\[3pt]
(1, 1, 15), 
\qquad & \qquad
(2, 7, 3), 
\qquad & \qquad
(3, 3, 4),
\qquad & \qquad
(7, 2, 3).
\end{array}
\]
\end{example}

\begin{example}
\label{ex:iso-18-192}
In the series (1-8-$\ast$), consider the degree matrices
$Q_1, \ldots, Q_4$, sharing the fake weight vector $(8, 16, 72)$:
{\small
\[
\left[
\begin{array}{ccc}
1 & 9 & 2
\\[3pt]
\bar 0 & \bar 1 & \bar 1  
\end{array}
\right],
\qquad\quad
\left[
\begin{array}{ccc}
1 & 9 & 2
\\[3pt]
\bar 0 & \bar 1 & \bar 3  
\end{array}
\right],
\qquad\quad
\left[
\begin{array}{ccc}
1 & 9 & 2
\\[3pt]
\bar 0 & \bar 1 & \bar 5  
\end{array}
\right],
\qquad\quad
\left[
\begin{array}{ccc}
1 & 9 & 2
\\[3pt]
\bar 0 & \bar 1 & \bar 7 
\end{array}
\right].
\]
}

\noindent
As corresponding generator matrices $P_1, \ldots, P_4$,
we obtain the following, no two of them defining
isomorphic fake weighted projective planes:
{\small
\[
\left[
\begin{array}{ccc}
2 &   2 & -10
\\[3pt]
1  &  -7 & 31
\end{array}
\right],
\quad
\left[
\begin{array}{ccc}
4 &    4 & -20
\\[3pt]
3  &  -1 &  3
\end{array}
\right],
\quad                      
\left[
\begin{array}{ccc}
1 &   1 & -5
\\[3pt]
0  &  -16 & 72
\end{array}
\right],
\quad
\left[
\begin{array}{ccc}
4 &   4 & -20
\\[3pt]
1  &  -3 &  13 
\end{array}
\right].
\]
}

\goodbreak

\noindent
For the toric fixed points $z(0)$, $z(1)$, $z(2)$, 
the local Gorenstein indices, $T$-singularity identifier
and numbers of exceptional curves of the minimal resolution
are
\[
\begin{array}{cccc}
(4, 12, 2), 
\qquad & \qquad
(2, 3, 4), 
\qquad & \qquad
(4, 12, 1), 
\qquad & \qquad
(1, 6, 4),
\\[3pt]
(-, -, +), 
\qquad & \qquad
(+, +, +), 
\qquad & \qquad
(-, -,  +), 
\qquad & \qquad
(+, +, +),
\\[3pt]
(1, 11, 4), 
\qquad & \qquad
(2, 9, 3), 
\qquad & \qquad
(3, 3, 15),
\qquad & \qquad
(7, 6, 3).
\end{array}
\]
\end{example}

We take a closer look at the adjacency relation
on the class of fake weighted projective planes,
as introduced in Definition~\ref{def:adjacent}.
The aim is to indicate, how adjacency can be read
off from adjusted degree matrices.
In the case of at most $T$-singular fake weighted
projective planes, the situation is completely
described by the trees $T(a)$ of the Markov
type equations.

\begin{construction}
\label{constr:adj-graphs}
Let $T(a,\mu)$ denote the set of isomorphy
classes of all at most $T$-singular fake
weighted projective planes of degree
$a \in \ZZ_{>0}$ and multiplicity $\mu$.
We join two distinct classes
$\mathcal{Z}_1, \mathcal{Z}_2 \in T(a,\mu)$
by an edge, if there are representatives
$Z_i \in \mathcal{Z}_i$ such that $Z_1$ and $Z_2$
are adjacent.
\end{construction}

\begin{proposition}
For $a=9,8,6,5,4,3$, each of the graphs $T(a,\mu)$ is connected.
Moreover, we have canonical isomophisms of graphs:
\[
T(9) \cong T(9,1) \cong T(3,3),
\qquad  
T(8) \cong T(8,1) \cong T(4,2),
\]
\[
T(6) \cong T(6,1) \cong T(3,2) \cong T(2,3) \cong T(1,6),
\qquad
T(5) \cong T(5,1) \cong T(1,5),
\]
where $T(a)$ denotes the tree of ascendingly ordered
solution triples of the squared Markov equation from
Remark~\ref{rem:markgraf}.
\end{proposition}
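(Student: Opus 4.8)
The plan is to produce, for every pair $(a,\mu)$ occurring in the statement, a canonical isomorphism of graphs $T(a,\mu) \cong T(\mu a)$, where $\mu a \in \{9,8,6,5\}$ and $T(\mu a)$ is the Markov tree of Remark~\ref{rem:markgraf}. Since each of $T(9),T(8),T(6),T(5)$ is a tree, hence connected, this simultaneously yields the asserted connectedness (note that $\mu a = 9,8,6,5$ runs through exactly the four displayed chains) and all the displayed isomorphisms: two of these graphs sharing the same value of $\mu a$ are identified through their common model $T(\mu a)$. I would first record that for $a \ge 3$ every multiplicity that actually occurs is among the listed pairs, so that the connectedness claim is fully covered by the construction below.

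First I would set up the bijection on vertices. For each listed $(a,\mu)$, Propositions~\ref{prop:classfwppdeg5689}--\ref{prop:classfwppdeg1} present the fake weighted projective planes of degree $a$ and multiplicity $\mu$ by an adjusted degree matrix whose first row is a solution $u \in S(\mu a)$ and whose torsion datum $\bar\eta$ runs over an explicit list; by Theorem~\ref{thm:loesungen_Markov_sind_quadratzahlen} the entries of $u$ are pairwise coprime, so $w = \mu u$ and distinct $u$ give non-isomorphic surfaces. Reading the signature column $(\pm,\pm,\pm)$ of Propositions~\ref{prop:gisingdeg9865}--\ref{prop:gisingdeg1} (equivalently, invoking Theorem~\ref{thm:introthm2}), I would verify that for each of these $(a,\mu)$ exactly one value of $\bar\eta$ yields a surface all of whose toric fixed points are $T$-singular. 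Hence $[Z] \mapsto u$ (ascendingly reordered) is a well-defined bijection $\Phi_{a,\mu}\colon T(a,\mu) \to T(\mu a)$. This is the step where the restriction to the listed pairs is essential: for type $(2,4)$, for instance, two torsion values stay at most $T$-singular and uniqueness fails, which is why that pair does not appear.

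Next I would match edges, the heart of the argument. Mutation is compatible with scaling: Proposition~\ref{prop:scalesolutions} together with the explicit formula for $\lambda$ gives $\lambda_a(\mu u) = \mu\,\lambda_{\mu a}(u)$, so a one-step mutation of the fake weight vector $w = \mu u$ is precisely a one-step mutation of $u$ in $S(\mu a)$. For the forward direction, if two distinct classes are adjacent, Proposition~\ref{prop:P1toP} supplies representatives $\tilde Z_1,\tilde Z_2$ with $w(\tilde P_2) = \lambda(w(\tilde P_1))$; by the scaling identity their underlying solutions differ by a one-step mutation, and since the classes are distinct the solutions are distinct, so $\Phi_{a,\mu}$ carries the edge to an edge of $T(\mu a)$. (When the one-step mutation fixes the triple, i.e. the initial-triple case of Lemma~\ref{lem:norm}, neither graph records an edge, so the two notions of edge agree.) For the converse, given $Z = Z_u$ with $u$ one-step-mutated to $u'$, I would reorder the presentation of $Z$ so that the mutated coordinate becomes $z(2)$ — permissible since every fixed point of $Z$ is $T$-singular — and apply Proposition~\ref{prop:P1toP} to obtain an adjacent partner $\tilde Z_2$ with fake weight vector $\lambda(w) = \mu u'$.

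The one genuine obstacle is to see that this partner $\tilde Z_2$ is again at most $T$-singular of type $(a,\mu)$, so that it is a vertex of $T(a,\mu)$ and hence, by the uniqueness from the vertex step, equals $Z_{u'} = Z'$. Proposition~\ref{prop:P1toP} only guarantees that $z(2) \in \tilde Z_2$ is at most $T$-singular; for the other two points I would invoke Proposition~\ref{prop:degprops}, whose affine isomorphisms identify the neighbourhoods of $z(0)$ and $z(1)$ in $\tilde Z_2$ with those of the same points in $\tilde Z_1 = Z$, both being modelled on the charts $X_1,X_2$ of the common $\KK^*$-surface. As $z(0),z(1) \in Z$ are $T$-singular, so are their counterparts in $\tilde Z_2$, whence $\tilde Z_2$ is at most $T$-singular. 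Combined with the $\gcd$-invariance of mutation (Lemma~\ref{lem:gcdsolutions}), which gives $\tilde Z_2$ multiplicity $\mu$ and degree $a$, this places $\tilde Z_2$ in $T(a,\mu)$ and closes the edge correspondence. Connectedness of each $T(a,\mu)$ and all displayed chains then follow formally from $\Phi_{a,\mu}$ being a graph isomorphism onto the tree $T(\mu a)$.
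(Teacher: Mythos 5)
The paper states this proposition without any proof — it is followed immediately by the discussion of the remaining cases $T(2,4)$, $T(1,9)$, $T(1,8)$, etc. — so there is nothing to compare against line by line; your proposal supplies the argument the authors leave implicit, and it is correct. Your route is exactly the one suggested by the surrounding machinery: the vertex bijection $[Z]\mapsto u$ onto the ascendingly ordered triples of $S(\mu a)$ rests on the observation, read off from Propositions~\ref{prop:gisingdeg9865}--\ref{prop:gisingdeg1}, that for each listed pair $(a,\mu)$ exactly one torsion value $\bar\eta$ produces an at most $T$-singular plane (and your aside explaining why $(2,4)$ is excluded is the right sanity check); the edge correspondence rests on the compatibility $\lambda_a(\mu u)=\mu\,\lambda_{\mu a}(u)$ together with $w(\tilde P_2)=\lambda(w(\tilde P_1))$ from Proposition~\ref{prop:P1toP}. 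The one step that genuinely needs an argument — that the adjacent partner $\tilde Z_2$ produced in the converse direction is again \emph{everywhere} at most $T$-singular, not merely at $z(2)$, so that uniqueness identifies it with $Z_{u'}$ — you handle correctly via the affine isomorphisms of Proposition~\ref{prop:degprops}, which transport the singularities at $z(0)$ and $z(1)$ unchanged from $\tilde Z_1$ to $\tilde Z_2$. Two very minor points: the degree of $\tilde Z_2$ is preserved by Proposition~\ref{prop:degprops} (equality of $\mathcal{K}^2$) rather than by the $\gcd$-invariance of Lemma~\ref{lem:gcdsolutions}, which only gives the multiplicity; and the forward direction needs only Construction~\ref{constr:degs} and Proposition~\ref{prop:degprops} rather than Proposition~\ref{prop:P1toP}. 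Neither affects the validity of the argument.
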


We discuss the remaining cases.
For the subsequent examples, we explicitly
computed the data of Proposition~\ref{prop:P1toP}
for the first adjusted degree matrices in the
series provided by our classification results.
We omit the computation and just present the
resulting graphs, where the vertices, that means
the isomorphy classes of fake weighted projective planes,
are specified by $(u_0,u_1,u_2; \bar \eta)$ with
the first row $u = (u_0,u_1,u_2)$ of the corresponding
adjusted degree matrix $Q$ and the last entry~$\bar \eta$
of the second row of~$Q$.

\begin{remark}
The graph $T(2,4)$ has the series (2-4-1) and (2-4-3) as
its connected components, where each of them is 
isomorphic to $S(8)$ as a graph.
\end{remark}

\begin{remark}
The series (2-3-1) is not connected to $T(2,3)$ by adjacency.
Inside (2-3-1), we observe a simple pattern for the adjacencies:

\bigskip

\begin{center}
  
\begin{tikzpicture}[scale=0.64]
\sffamily

%%%%%%%%%%%%%
%% Level 1 %%
%%%%%%%%%%%%%

\node[right] (1-2-3-b1) at (0,0) {$\scriptscriptstyle (1,2,3; \bar 1)$};

% %%%%%%%%%%%%%
% %% Level 2 %%
% %%%%%%%%%%%%%

\node[right] (1-8-3-b1) at (3,1.25) {$\scriptscriptstyle (1,8,3; \bar 1)$};
\node[right] (25-2-3-b1) at (3,-1.25) {$\scriptscriptstyle (25,2,3; \bar 1)$};

% %%%%%%%%%%%%%
% %% Level 3 %%
% %%%%%%%%%%%%%

\node[right] (1-8-27-b1) at (7,1.75) {$\scriptscriptstyle (1,8,27; \bar 1)$};
\node[right] (121-8-3-b1) at (7,0.75) {$\scriptscriptstyle (121,8,3; \bar 1)$};

\node[right] (25-2-243-b1) at (7,-0.75) {$\scriptscriptstyle (25,2,243; \bar 1)$};
\node[right] (25-392-3-b1) at (7,-1.75) {$\scriptscriptstyle (25,392,3; \bar 1)$};

% %%%%%%%%%%%%%
% %% Level 5 %%
% %%%%%%%%%%%%%

\node[right] (1-98-27-b1) at (12,2) {$\scriptscriptstyle (1,98,27; \bar 1)$};
\node[right] (1225-8-27-b1) at (12,1.5) {$\scriptscriptstyle (1225,8,27; \bar 1)$};

\node[right] (121-8-5547-b1) at (12,1) {$\scriptscriptstyle (121,8,5547; \bar 1)$};
\node[right] (121-1922-3-b1) at (12,0.5) {$\scriptscriptstyle (121,1922,3; \bar 1)$};

\node[right] (25-35912-243-b1) at (12,-0.5) {$\scriptscriptstyle (25,35912,243; \bar 1)$};
\node[right] (196-4489-5-b1) at (12,-1) {$\scriptscriptstyle (2401,2,243; \bar 1)$};

\node[right] (25-392-57963-b1) at (12,-1.5) {$\scriptscriptstyle (25,392,57963; \bar 1)$};
\node[right] (6241-392-3-b1) at (12,-2) {$\scriptscriptstyle (6241,392,3; \bar 1)$};d

%%%%%%%%
%% Edges
%%%%%%%%

\draw[] (1-8-3-b1.east) edge (1-8-27-b1.west);

\draw[] (25-2-3-b1.east) edge (25-2-243-b1.west);

\draw[] (121-8-3-b1.east) edge (121-8-5547-b1.west);

\draw[] (25-392-3-b1.east) edge (25-392-57963-b1.west);

\end{tikzpicture}
\end{center}

\end{remark}

\goodbreak

\begin{remark}
We take a look at the connected graph $T(1,9)$,
where the vertices stem from the three
series (1-9-2), (1-9-5) and (1-9-8):

\bigskip

\begin{center}  
\begin{tikzpicture}[scale=0.64,rotate=270,transform shape]
\sffamily

%%%%%%%%%%%%%
%% Level 1 %%
%%%%%%%%%%%%%

\node[right] (1-1-1) at (0,0) {$\scriptscriptstyle (1,1,1; \bar 2)$};

%%%%%%%%%%%%%
%% Level 2 %%
%%%%%%%%%%%%%

\node[right] (1-1-4-b2) at (2,2.5) {$\scriptscriptstyle (1,1,4; \bar 2)$};
\node[right] (1-1-4-b5) at (2,-2.5) {$\scriptscriptstyle (1,1,4; \bar 5)$};

%%%%%%%%%%%%%
%% Level 3 %%
%%%%%%%%%%%%%

\node[right] (1-4-25-b2) at (4,5) {$\scriptscriptstyle (1,4,25; \bar 2)$};
\node[right] (1-4-25-b5) at (4,0) {$\scriptscriptstyle (1,4,25; \bar 5)$};
\node[right] (1-4-25-b8) at (4,-5) {$\scriptscriptstyle (1,4,25; \bar 8)$};

%%%%%%%%%%%%%
%% Level 4 %%
%%%%%%%%%%%%%

\node[right] (1-25-169-b2) at (7.5,7) {$\scriptscriptstyle (1,25,169; \bar 2)$};
\node[right] (1-25-169-b5) at (7.5,5) {$\scriptscriptstyle (1,25,169; \bar 5)$};
\node[right] (1-25-169-b8) at (7.5,3) {$\scriptscriptstyle (1,25,169; \bar 8)$};

\node[right] (4-25-841-b2) at (7.5,-3) {$\scriptscriptstyle (4,25,841; \bar 2)$};
\node[right] (4-25-841-b5) at (7.5,-5) {$\scriptscriptstyle (4,25,841; \bar 5)$};
\node[right] (4-25-841-b8) at (7.5,-7) {$\scriptscriptstyle (4,25,841; \bar 8)$};

%%%%%%%%%%%%%
%% Level 5 %%
%%%%%%%%%%%%%

\node[right] (1-169-1156-b2) at (13,8.3) {$\scriptscriptstyle (1,169,1156; \bar 2)$};
\node[right] (1-169-1156-b5) at (13,7.3) {$\scriptscriptstyle (1,169,1156; \bar 5)$};
\node[right] (1-169-1156-b8) at (13,6.3) {$\scriptscriptstyle (1,169,1156; \bar 8)$};

\node[right] (25-169-37636-b2) at (13,3.5) {$\scriptscriptstyle (25,169,37636; \bar 2)$};
\node[right] (25-169-37636-b5) at (13,2.5) {$\scriptscriptstyle (25,169,37636; \bar 5)$};
\node[right] (25-169-37636-b8) at (13,1.5) {$\scriptscriptstyle (25,169,37636; \bar 8)$};

\node[right] (4-841-28561-b2) at (13,-1.5) {$\scriptscriptstyle (4,841,28561; \bar 2)$};
\node[right] (4-841-28561-b5) at (13,-2.5) {$\scriptscriptstyle (4,841,28561; \bar 5)$};
\node[right] (4-841-28561-b8) at (13,-3.5) {$\scriptscriptstyle (4,841,28561; \bar 8)$};

\node[right] (25-841-187489-b2) at (13,-6.3) {$\scriptscriptstyle (25,841,187489; \bar 2)$};
\node[right] (25-841-187489-b5) at (13,-7.3) {$\scriptscriptstyle (25,841,187489; \bar 5)$};
\node[right] (25-841-187489-b8) at (13,-8.3) {$\scriptscriptstyle (25,841,187489; \bar 8)$};

%%%%%%%%%%%%%
%% Level 6 %%
%%%%%%%%%%%%%

\node[right] (1-1156-7921-b2) at (19,9) {$\scriptscriptstyle (1,1156,7921; \bar 2)$};
\node[right] (1-1156-7921-b5) at (19,8.5) {$\scriptscriptstyle (1,1156,7921; \bar 5)$};
\node[right] (1-1156-7921-b8) at (19,8) {$\scriptscriptstyle (1,1156,7921; \bar 8)$};
%% 7.3
\node[right] (169-1156-1755625-b2) at (19,6.6) {$\scriptscriptstyle (169,1156,1755625; \bar 2)$};
\node[right] (169-1156-1755625-b5) at (19,6.1) {$\scriptscriptstyle (169,1156,1755625; \bar 5)$};
\node[right] (169-1156-1755625-b8) at (19,5.6) {$\scriptscriptstyle (169,1156,1755625; \bar 8)$};

\node[right] (25-37636-8392609-b2) at (19,4.2) {$\scriptscriptstyle (25,37636,8392609; \bar 2)$};
\node[right] (25-37636-8392609-b5) at (19,3.7) {$\scriptscriptstyle (25,37636,8392609; \bar 5)$};
\node[right] (25-37636-8392609-b8) at (19,3.2) {$\scriptscriptstyle (25,37636,8392609; \bar 8)$};
%% 2.5
\node[right] (169-37636-57168721-b2) at (19,1.8) {$\scriptscriptstyle (169,37636,57168721; \bar 2)$};
\node[right] (169-37636-57168721-b5) at (19,1.3) {$\scriptscriptstyle (169,37636,57168721; \bar 5)$};
\node[right] (169-37636-57168721-b8) at (19,0.8) {$\scriptscriptstyle (169,37636,57168721; \bar 8)$};

\node[right] (4-28561-970225-b2) at (19,-0.8) {$\scriptscriptstyle (4,28561,970225; \bar 2)$};
\node[right] (4-28561-970225-b5) at (19,-1.3) {$\scriptscriptstyle (4,28561,970225; \bar 5)$};
\node[right] (4-28561-970225-b8) at (19,-1.8) {$\scriptscriptstyle (4,28561,970225; \bar 8)$};

\node[right] (841-28561-216119401-b2) at (19,-3.2) {$\scriptscriptstyle (841,28561,216119401; \bar 2)$};
\node[right] (841-28561-216119401-b5) at (19,-3.7) {$\scriptscriptstyle (841,28561,216119401; \bar 5)$};
\node[right] (841-28561-216119401-b8) at (19,-4.2) {$\scriptscriptstyle (841,28561,216119401; \bar 8)$};

\node[right] (25-187489-41809156-b2) at (19,-5.6) {$\scriptscriptstyle (25,187489,41809156; \bar 2)$};
\node[right] (25-187489-41809156-b5) at (19,-6.1) {$\scriptscriptstyle (25,187489,41809156; \bar 5)$};
\node[right] (25-187489-41809156-b8) at (19,-6.6) {$\scriptscriptstyle (25,187489,41809156; \bar 8)$};

\node[right] (841-187489-1418727556-b2) at (19,-8) {$\scriptscriptstyle (841,187489,1418727556; \bar 2)$};
\node[right] (841-187489-1418727556-b5) at (19,-8.5) {$\scriptscriptstyle (841,187489,1418727556; \bar 5)$};
\node[right] (841-187489-1418727556-b8) at (19,-9) {$\scriptscriptstyle (841,187489,1418727556; \bar 8)$};

%%%%%%%%%%%%%
%% Level 7 %%
%%%%%%%%%%%%%

\node[right] (1-7921-54289-b2) at (26,9.3) {$\scriptscriptstyle (1,7921,54289; \bar 2)$};
\node[right] (1-7921-54289-b5) at (26,9) {$\scriptscriptstyle (1,7921,54289; \bar 5)$};
\node[right] (1-7921-54289-b8) at (26,8.7) {$\scriptscriptstyle (1,7921,54289; \bar 8)$};
%% 8.5
\node[right] (1156-7921-82391929-b2) at (26,8.3) {$\scriptscriptstyle (1156,7921,82391929; \bar 2)$};
\node[right] (1156-7921-82391929-b5) at (26,8) {$\scriptscriptstyle (1156,7921,82391929; \bar 5)$};
\node[right] (1156-7921-82391929-b8) at (26,7.7) {$\scriptscriptstyle (1156,7921,82391929; \bar 8)$};

\node[right] (169-1755625-2666792881-b2) at (26,6.9) {$\scriptscriptstyle (169,1755625,2666792881; \bar 2)$};
\node[right] (169-1755625-2666792881-b5) at (26,6.6) {$\scriptscriptstyle (169,1755625,2666792881; \bar 5)$};
\node[right] (169-1755625-2666792881-b8) at (26,6.3) {$\scriptscriptstyle (169,1755625,2666792881; \bar 8)$};
%% 6.1
\node[right] (1156-1755625-18262008769-b2) at (26,5.9) {$\scriptscriptstyle (1156,1755625,18262008769; \bar 2)$};
\node[right] (1156-1755625-18262008769-b5) at (26,5.6) {$\scriptscriptstyle (1156,1755625,18262008769; \bar 5)$};
\node[right] (1156-1755625-18262008769-b8) at (26,5.3) {$\scriptscriptstyle (1156,1755625,18262008769; \bar 8)$};

\node[right] (25-8392609-1871514121-b2) at (26,4.5) {$\scriptscriptstyle (25,8392609,1871514121; \bar 2)$};
\node[right] (25-8392609-1871514121-b5) at (26,4.2) {$\scriptscriptstyle (25,8392609,1871514121; \bar 5)$};
\node[right] (25-8392609-1871514121-b8) at (26,3.9) {$\scriptscriptstyle (25,8392609,1871514121; \bar 8)$};
%% 3.7
\node[right] (37636-8392609-2842761230401-b2) at (26,3.5) {$\scriptscriptstyle (37636,8392609,2842761230401; \bar 2)$};
\node[right] (37636-8392609-2842761230401-b5) at (26,3.2) {$\scriptscriptstyle (37636,8392609,2842761230401; \bar 5)$};
\node[right] (37636-8392609-2842761230401-b8) at (26,2.9) {$\scriptscriptstyle (37636,8392609,2842761230401; \bar 8)$};

\node[right] (169-57168721-86839249225-b2) at (26,2.1) {$\scriptscriptstyle (169,57168721,86839249225; \bar 2)$};
\node[right] (169-57168721-86839249225-b5) at (26,1.8) {$\scriptscriptstyle (169,57168721,86839249225; \bar 5)$};
\node[right] (169-57168721-86839249225-b8) at (26,1.5) {$\scriptscriptstyle (169,57168721,86839249225; \bar 8)$};
%% 1.3
\node[right] (37636-57168721-19364303439121-b2) at (26,1.1) {$\scriptscriptstyle (37636,57168721,19364303439121; \bar 2)$};
\node[right] (37636-57168721-19364303439121-b5) at (26,0.8) {$\scriptscriptstyle (37636,57168721,19364303439121; \bar 5)$};
\node[right] (37636-57168721-19364303439121-b8) at (26,0.5) {$\scriptscriptstyle (37636,57168721,19364303439121; \bar 8)$};

\node[right] (4-970225-32959081-b2) at (26,-0.5) {$\scriptscriptstyle (4,970225,32959081; \bar 2)$};
\node[right] (4-970225-32959081-b5) at (26,-0.8) {$\scriptscriptstyle (4,970225,32959081; \bar 5)$};
\node[right] (4-970225-32959081-b8) at (26,-1.1) {$\scriptscriptstyle (4,970225,32959081; \bar 8)$};
%% -1.3
\node[right] (28561-970225-249393368449-b2) at (26,-1.5) {$\scriptscriptstyle (28561,970225,249393368449; \bar 2)$};
\node[right] (28561-970225-249393368449-b5) at (26,-1.8) {$\scriptscriptstyle (28561,970225,249393368449; \bar 5)$};
\node[right] (28561-970225-249393368449-b8) at (26,-2.1) {$\scriptscriptstyle (28561,970225,249393368449; \bar 8)$};

\node[right] (841-216119401-1635375477124-b2) at (26,-2.9) {$\scriptscriptstyle (841,216119401,1635375477124; \bar 2)$};
\node[right] (841-216119401-1635375477124-b5) at (26,-3.2) {$\scriptscriptstyle (841,216119401,1635375477124; \bar 5)$};
\node[right] (841-216119401-1635375477124-b8) at (26,-3.5) {$\scriptscriptstyle (841,216119401,1635375477124; \bar 8)$};
%% -3.7
\node[right] (28561-216119401-55552843610884-b2) at (26,-3.9) {$\scriptscriptstyle (28561,216119401,55552843610884; \bar 2)$};
\node[right] (28561-216119401-55552843610884-b5) at (26,-4.2) {$\scriptscriptstyle (28561,216119401,55552843610884; \bar 5)$};
\node[right] (28561-216119401-55552843610884-b8) at (26,-4.5) {$\scriptscriptstyle (28561,216119401,55552843610884; \bar 8)$};

\node[right] (25-41809156-9323254249-b2) at (26,-5.3) {$\scriptscriptstyle (25,41809156,9323254249; \bar 2)$};
\node[right] (25-41809156-9323254249-b5) at (26,-5.6) {$\scriptscriptstyle (25,41809156,9323254249; \bar 5)$};
\node[right] (25-41809156-9323254249-b8) at (26,-5.9) {$\scriptscriptstyle (25,41809156,9323254249; \bar 8)$};
%% -6.1
\node[right] (187489-41809156-70548727650241-b2) at (26,-6.3) {$\scriptscriptstyle (187489,41809156,70548727650241; \bar 2)$};
\node[right] (187489-41809156-70548727650241-b5) at (26,-6.6) {$\scriptscriptstyle (187489,41809156,70548727650241; \bar 5)$};
\node[right] (187489-41809156-70548727650241-b8) at (26,-6.9) {$\scriptscriptstyle (187489,41809156,70548727650241; \bar 8)$};

\node[right] (841-1418727556-10735511227081-b2) at (26,-7.7) {$\scriptscriptstyle (841,1418727556,10735511227081; \bar 2)$};
\node[right] (841-1418727556-10735511227081-b5) at (26,-8) {$\scriptscriptstyle (841,1418727556,10735511227081; \bar 5)$};
\node[right] (841-1418727556-10735511227081-b8) at (26,-8.3) {$\scriptscriptstyle (841,1418727556,10735511227081; \bar 8)$};
%% -8.5
\node[right] (187489-1418727556-2393959458891025-b2) at (26,-8.7) {$\scriptscriptstyle (187489,1418727556,2393959458891025; \bar 2)$};
\node[right] (187489-1418727556-2393959458891025-b5) at (26,-9) {$\scriptscriptstyle (187489,1418727556,2393959458891025; \bar 5)$};
\node[right] (187489-1418727556-2393959458891025-b8) at (26,-9.3) {$\scriptscriptstyle (187489,1418727556,2393959458891025; \bar 8)$};

%%%%%%%%
%% Edges
%%%%%%%%

\draw[] (1-1-1.east) edge (1-1-4-b2.west);
\draw[] (1-1-1.east) edge (1-1-4-b5.west);

\draw[] (1-1-4-b2.east) edge (1-4-25-b2.west);
\draw[] (1-1-4-b5.east) edge (1-4-25-b5.west);
\draw[] (1-1-4-b5.east) edge (1-4-25-b8.west);

\draw[] (1-4-25-b2.east) edge (1-25-169-b2.west);
%\draw[color=red!100] (1-4-25-b2.east) edge (4-25-841-b8.west);
\draw[] (1-4-25-b5.east) edge (1-25-169-b5.west);
\draw[] (1-4-25-b5.east) edge (4-25-841-b5.west);
\draw[] (1-4-25-b8.east) edge (1-25-169-b8.west);
\draw[color=red!100] (1-4-25-b2.east) edge (4-25-841-b8.west);
\draw[color=red!100] (1-4-25-b8.east) edge (4-25-841-b2.west);

\draw[] (1-25-169-b2.east) edge (1-169-1156-b2.west);
\draw[] (1-25-169-b5.east) edge (1-169-1156-b5.west);
\draw[] (1-25-169-b8.east) edge (1-169-1156-b8.west);
%\draw[color=red!100] (1-25-169-b5.east) edge (25-169-37636-b8.west);
\draw[] (1-25-169-b2.east) edge (25-169-37636-b2.west);
\draw[color=red!100] (1-25-169-b5.east) edge (25-169-37636-b8.west);
\draw[color=red!100] (1-25-169-b8.east) edge (25-169-37636-b5.west);

\draw[] (4-25-841-b2.east) edge (25-841-187489-b2.west);
\draw[] (4-25-841-b2.east) edge (4-841-28561-b2.west);
%\draw[color=red!100] (4-25-841-b5.east) edge (25-841-187489-b8.west);
\draw[] (4-25-841-b5.east) edge (4-841-28561-b5.west);
%\draw[color=red!100] (4-25-841-b8.east) edge (25-841-187489-b5.west);
\draw[] (4-25-841-b8.east) edge (4-841-28561-b8.west);
\draw[color=red!100] (4-25-841-b5.east) edge (25-841-187489-b8.west);
\draw[color=red!100] (4-25-841-b8.east) edge (25-841-187489-b5.west);

\draw[] (1-169-1156-b2.east) edge (1-1156-7921-b2.west);
%\draw[color=red!100] (1-169-1156-b2.east) edge (169-1156-1755625-b8.west);
\draw[] (1-169-1156-b5.east) edge (1-1156-7921-b5.west);
\draw[] (1-169-1156-b5.east) edge (169-1156-1755625-b5.west);
\draw[] (1-169-1156-b8.east) edge (1-1156-7921-b8.west);
\draw[color=red!100] (1-169-1156-b2.east) edge (169-1156-1755625-b8.west);
\draw[color=red!100] (1-169-1156-b8.east) edge (169-1156-1755625-b2.west);

\draw[] (25-169-37636-b2.east) edge (25-37636-8392609-b2.west);
%\draw[color=red!100] (25-169-37636-b2.east) edge (169-37636-57168721-b8.west);
\draw[] (25-169-37636-b5.east) edge (25-37636-8392609-b5.west);
\draw[] (25-169-37636-b5.east) edge (169-37636-57168721-b5.west);
\draw[] (25-169-37636-b8.east) edge (25-37636-8392609-b8.west);
\draw[color=red!100] (25-169-37636-b8.east) edge (169-37636-57168721-b2.west);
\draw[color=red!100] (25-169-37636-b2.east) edge (169-37636-57168721-b8.west);

%\draw[color=red!100] (4-841-28561-b2.east) edge (841-28561-216119401-b8.west);
\draw[] (4-841-28561-b2.east) edge (4-28561-970225-b2.west);
\draw[] (4-841-28561-b5.east) edge (841-28561-216119401-b5.west);
\draw[] (4-841-28561-b5.east) edge (4-28561-970225-b5.west);
%\draw[color=red!100] (4-841-28561-b8.east) edge (841-28561-216119401-b2.west);
\draw[] (4-841-28561-b8.east) edge (4-28561-970225-b8.west);
\draw[color=red!100] (4-841-28561-b2.east) edge (841-28561-216119401-b8.west);
\draw[color=red!100] (4-841-28561-b8.east) edge (841-28561-216119401-b2.west);

\draw[] (25-841-187489-b2.east) edge (25-187489-41809156-b2.west);
%\draw[color=red!100] (25-841-187489-b2.east) edge (841-187489-1418727556-b8.west);
\draw[] (25-841-187489-b5.east) edge (25-187489-41809156-b5.west);
\draw[] (25-841-187489-b5.east) edge (841-187489-1418727556-b5.west);
\draw[] (25-841-187489-b8.east) edge (25-187489-41809156-b8.west);
\draw[color=red!100] (25-841-187489-b2.east) edge (841-187489-1418727556-b8.west);
\draw[color=red!100] (25-841-187489-b8.east) edge (841-187489-1418727556-b2.west);

\draw[] (1-1156-7921-b2.east) edge (1-7921-54289-b2.west);
\draw[] (1-1156-7921-b5.east) edge (1-7921-54289-b5.west);
\draw[] (1-1156-7921-b8.east) edge (1-7921-54289-b8.west);
\draw[] (1-1156-7921-b2.east) edge (1156-7921-82391929-b2.west);
\draw[color=red!100] (1-1156-7921-b5.east) edge (1156-7921-82391929-b8.west);
\draw[color=red!100] (1-1156-7921-b8.east) edge (1156-7921-82391929-b5.west);

\draw[] (169-1156-1755625-b2.east) edge (169-1755625-2666792881-b2.west);
\draw[] (169-1156-1755625-b5.east) edge (169-1755625-2666792881-b5.west);
\draw[] (169-1156-1755625-b8.east) edge (169-1755625-2666792881-b8.west);
\draw[] (169-1156-1755625-b2.east) edge (1156-1755625-18262008769-b2.west);
\draw[color=red!100] (169-1156-1755625-b5.east) edge (1156-1755625-18262008769-b8.west);
\draw[color=red!100] (169-1156-1755625-b8.east) edge (1156-1755625-18262008769-b5.west);

\draw[] (25-37636-8392609-b2.east) edge (25-8392609-1871514121-b2.west);
\draw[] (25-37636-8392609-b5.east) edge (25-8392609-1871514121-b5.west);
\draw[] (25-37636-8392609-b8.east) edge (25-8392609-1871514121-b8.west);
\draw[] (25-37636-8392609-b2.east) edge(37636-8392609-2842761230401-b2.west);
\draw[color=red!100] (25-37636-8392609-b5.east) edge(37636-8392609-2842761230401-b8.west);
\draw[color=red!100] (25-37636-8392609-b8.east) edge(37636-8392609-2842761230401-b5.west);

\draw[] (169-37636-57168721-b2.east) edge (169-57168721-86839249225-b2.west);
\draw[] (169-37636-57168721-b5.east) edge (169-57168721-86839249225-b5.west);
\draw[] (169-37636-57168721-b8.east) edge (169-57168721-86839249225-b8.west);
\draw[] (169-37636-57168721-b2.east) edge (37636-57168721-19364303439121-b2.west);
\draw[color=red!100] (169-37636-57168721-b5.east) edge (37636-57168721-19364303439121-b8.west);
\draw[color=red!100] (169-37636-57168721-b8.east) edge (37636-57168721-19364303439121-b5.west);

\draw[] (4-28561-970225-b2.east) edge (4-970225-32959081-b2.west);
\draw[] (4-28561-970225-b5.east) edge (4-970225-32959081-b5.west);
\draw[] (4-28561-970225-b8.east) edge (4-970225-32959081-b8.west);
\draw[] (4-28561-970225-b2.east) edge (28561-970225-249393368449-b2.west);
\draw[color=red!100] (4-28561-970225-b5.east) edge (28561-970225-249393368449-b8.west);
\draw[color=red!100] (4-28561-970225-b8.east) edge (28561-970225-249393368449-b5.west);

\draw[] (841-28561-216119401-b2.east) edge (841-216119401-1635375477124-b2.west);
\draw[] (841-28561-216119401-b5.east) edge (841-216119401-1635375477124-b5.west);
\draw[] (841-28561-216119401-b8.east) edge (841-216119401-1635375477124-b8.west);
\draw[] (841-28561-216119401-b2.east) edge (28561-216119401-55552843610884-b2.west);
\draw[color=red!100] (841-28561-216119401-b5.east) edge (28561-216119401-55552843610884-b8.west);
\draw[color=red!100] (841-28561-216119401-b8.east) edge (28561-216119401-55552843610884-b5.west);

\draw[] (25-187489-41809156-b2.east) edge (25-41809156-9323254249-b2.west);
\draw[] (25-187489-41809156-b5.east) edge (25-41809156-9323254249-b5.west);
\draw[] (25-187489-41809156-b8.east) edge (25-41809156-9323254249-b8.west);
\draw[] (25-187489-41809156-b2.east) edge (187489-41809156-70548727650241-b2.west);
\draw[color=red!100] (25-187489-41809156-b5.east) edge (187489-41809156-70548727650241-b8.west);
\draw[color=red!100] (25-187489-41809156-b8.east) edge (187489-41809156-70548727650241-b5.west);

\draw[] (841-187489-1418727556-b2.east) edge (841-1418727556-10735511227081-b2.west);
\draw[] (841-187489-1418727556-b5.east) edge (841-1418727556-10735511227081-b5.west);
\draw[] (841-187489-1418727556-b8.east) edge (841-1418727556-10735511227081-b8.west);
\draw[] (841-187489-1418727556-b2.east) edge (187489-1418727556-2393959458891025-b2.west);
\draw[color=red!100] (841-187489-1418727556-b5.east) edge (187489-1418727556-2393959458891025-b8.west);
\draw[color=red!100] (841-187489-1418727556-b8.east) edge (187489-1418727556-2393959458891025-b5.west);

\end{tikzpicture}
\end{center}

\bigskip

\noindent
Here, the red edges indicate the jumps between the series
(1-9-2), (1-9-5), (1-9-8) for the first seven levels.
We expect the pattern to continue.
\end{remark}

\goodbreak

\begin{remark}
The connected graph $T(1,8)$ takes its vertices from
the two series (1-8-3) and (1-8-7) and we have the
following adjacencies:

\bigskip

\begin{center}
  
\begin{tikzpicture}[scale=0.63]
\sffamily

%%%%%%%%%%%%%
%% Level 1 %%
%%%%%%%%%%%%%

\node[right] (1-1-2-b3) at (-1,0) {$\scriptscriptstyle (1,1,2; \bar 3)$};

%%%%%%%%%%%%%
%% Level 2 %%
%%%%%%%%%%%%%

\node[right] (1-9-2-b3) at (2,2) {$\scriptscriptstyle (1,9,2; \bar 3)$};
\node[right] (1-9-2-b7) at (2,-2) {$\scriptscriptstyle (1,9,2; \bar 7)$};

%%%%%%%%%%%%%
%% Level 3 %%
%%%%%%%%%%%%%

\node[right] (1-9-50-b3) at (6,3.875) {$\scriptscriptstyle (1,9,50; \bar 3)$};
\node[right] (1-9-50-b7) at (6,1.375) {$\scriptscriptstyle (1,9,50; \bar 7)$};

\node[right] (9-121-2-b3) at (6,-1.375) {$\scriptscriptstyle (9,121,2; \bar 3)$};
\node[right] (9-121-2-b7) at (6,-3.875) {$\scriptscriptstyle (9,121,2; \bar 7)$};

%%%%%%%%%%%%%
%% Level 4 %%
%%%%%%%%%%%%%

\node[right] (1-289-50-b3) at (10,4.375) {$\scriptscriptstyle (1,289,50; \bar 3)$};
%3.875
\node[right] (1-289-50-b7) at (10,3.375) {$\scriptscriptstyle (1,289,50; \bar 7)$};

\node[right] (9-3481-50-b3) at (10,1.875) {$\scriptscriptstyle (9,3481,50; \bar 3)$};
%1.375
\node[right] (9-3481-50-b7) at (10,0.875) {$\scriptscriptstyle (9,3481,50; \bar 7)$};

\node[right] (9-121-8450-b3) at (10,-0.875) {$\scriptscriptstyle (9,121,8450; \bar 3)$};
%-1.375
\node[right] (9-121-8450-b7) at (10,-1.875) {$\scriptscriptstyle (9,121,8450; \bar 7)$};

\node[right] (121-1681-2-b3) at (10,-3.375) {$\scriptscriptstyle (121,1681,2; \bar 3)$};
%-1.375
\node[right] (121-1681-2-b7) at (10,-4.375) {$\scriptscriptstyle (121,1681,2; \bar 7)$};

%%%%%%%%%%%%%
%% Level 5 %%
%%%%%%%%%%%%%

\node[right] (1-289-1682-b3) at (15,4.75) {$\scriptscriptstyle (1,289,1682; \bar 3)$};
\node[right] (1-289-1682-b7) at (15,4.25) {$\scriptscriptstyle (1,289,1682; \bar 7)$};

\node[right] (289-114921-50-b3) at (15,3.5) {$\scriptscriptstyle (289,114921,50; \bar 3)$};
\node[right] (289-114921-50-b7) at (15,3) {$\scriptscriptstyle (289,114921,50; \bar 7)$};

\node[right] (9-3481-243602-b3) at (15,2.25) {$\scriptscriptstyle (9,3481,243602; \bar 3)$};
\node[right] (9-3481-243602-b7) at (15,1.75) {$\scriptscriptstyle (9,3481,243602; \bar 7)$};

\node[right] (3481-1385329-50-b3) at (15,1) {$\scriptscriptstyle (3481,1385329,50; \bar 3)$};
\node[right] (3481-1385329-50-b7) at (15,0.5) {$\scriptscriptstyle (3481,1385329,50; \bar 7)$};

\node[right] (9-591361-8450-b3) at (15,-0.5) {$\scriptscriptstyle (9,591361,8450; \bar 3)$};
\node[right] (9-591361-8450-b7) at (15,-1) {$\scriptscriptstyle (9,591361,8450; \bar 7)$};

\node[right] (121-8162449-8450-b3) at (15,-1.75) {$\scriptscriptstyle (121,8162449,8450; \bar 3)$};
\node[right] (121-8162449-8450-b7) at (15,-2.25) {$\scriptscriptstyle (121,8162449,8450; \bar 7)$};

\node[right] (121-1681-1623602-b3) at (15,-3) {$\scriptscriptstyle (121,1681,1623602; \bar 3)$};
\node[right] (121-1681-1623602-b7) at (15,-3.5) {$\scriptscriptstyle (121,1681,1623602; \bar 7)$};

\node[right] (1681-23409-2-b3) at (15,-4.25) {$\scriptscriptstyle (1681,23409,2; \bar 3)$};
\node[right] (1681-23409-2-b7) at (15,-4.75) {$\scriptscriptstyle (1681,23409,2; \bar 7)$};

%%%%%%%%
%% Edges
%%%%%%%%

\draw[] (1-1-2-b3.east) edge (1-9-2-b3.west);
\draw[] (1-1-2-b3.east) edge (1-9-2-b7.west);

\draw[] (1-9-2-b3.east) edge (1-9-50-b3.west);
\draw[] (1-9-2-b7.east) edge (1-9-50-b7.west);
\draw[color=red] (1-9-2-b3.east) edge (9-121-2-b7.west);
\draw[color=red] (1-9-2-b7.east) edge (9-121-2-b3.west);

\draw[] (1-9-50-b3.east) edge (1-289-50-b3.west);
\draw[] (1-9-50-b7.east) edge (1-289-50-b7.west);
\draw[color=red] (1-9-50-b3.east) edge (9-3481-50-b7.west);
\draw[color=red] (1-9-50-b7.east) edge (9-3481-50-b3.west);

\draw[] (9-121-2-b3.east) edge (9-121-8450-b3.west);
\draw[] (9-121-2-b7.east) edge (9-121-8450-b7.west);
\draw[color=red] (9-121-2-b3.east) edge (121-1681-2-b7.west);
\draw[color=red] (9-121-2-b7.east) edge (121-1681-2-b3.west);

\draw[] (1-289-50-b3.east) edge (1-289-1682-b3.west);
\draw[] (1-289-50-b7.east) edge (1-289-1682-b7.west);
\draw[color=red] (1-289-50-b3.east) edge (289-114921-50-b7.west);
\draw[color=red] (1-289-50-b7.east) edge (289-114921-50-b3.west);

\draw[] (9-3481-50-b3.east) edge (9-3481-243602-b3.west);
\draw[] (9-3481-50-b7.east) edge (9-3481-243602-b7.west);
\draw[color=red] (9-3481-50-b3.east) edge (3481-1385329-50-b7.west);
\draw[color=red] (9-3481-50-b7.east) edge (3481-1385329-50-b3.west);

\draw[] (9-121-8450-b3.east) edge (9-591361-8450-b3.west);
\draw[] (9-121-8450-b7.east) edge (9-591361-8450-b7.west);
\draw[color=red] (9-121-8450-b3.east) edge (121-8162449-8450-b7.west);
\draw[color=red] (9-121-8450-b7.east) edge (121-8162449-8450-b3.west);

\draw[] (121-1681-2-b3.east) edge (121-1681-1623602-b3.west);
\draw[] (121-1681-2-b7.east) edge (121-1681-1623602-b7.west);
\draw[color=red]  (121-1681-2-b3.east) edge (1681-23409-2-b7.west);
\draw[color=red]  (121-1681-2-b7.east) edge (1681-23409-2-b3.west);

\end{tikzpicture}
\end{center}

\bigskip

\noindent
Similarly as before, we indicate the jumps between the involved
series by the red edges.
We expect the pattern to continue.
\end{remark}

\begin{remark}
The series (1-8-1) and (1-8-5) are neither connected to $T(1,8)$
via adjacencies nor to each other.
We obtain the following pattern of adjacencies.

\bigskip

\begin{center}
\begin{tikzpicture}[scale=0.64]
\sffamily

%%%%%%%%%%%%%
%% Level 1 %%
%%%%%%%%%%%%%

\node[right] (1-1-2-b1) at (-1,0) {$\scriptscriptstyle (1,1,2; \bar 1)$};

%%%%%%%%%%%%%
%% Level 2 %%
%%%%%%%%%%%%%

\node[right] (1-9-2-b1) at (1.5,0) {$\scriptscriptstyle (1,9,2; \bar 1)$};

%%%%%%%%%%%%%
%% Level 3 %%
%%%%%%%%%%%%%

\node[right] (1-9-50-b1) at (4,1.25) {$\scriptscriptstyle (1,9,50; \bar 1)$};
\node[right] (9-121-2-b1) at (4,-1.25) {$\scriptscriptstyle (9,121,2; \bar 1)$};

%%%%%%%%%%%%%
%% Level 4 %%
%%%%%%%%%%%%%

\node[right] (1-289-50-b1) at (7,1.75) {$\scriptscriptstyle (1,289,50; \bar 1)$};
\node[right] (9-3481-50-b1) at (7,0.75) {$\scriptscriptstyle (9,3481,50; \bar 1)$};
\node[right] (9-121-8450-b1) at (7,-0.75) {$\scriptscriptstyle (9,121,8450; \bar 1)$};
\node[right] (121-1681-2-b1) at (7,-1.75) {$\scriptscriptstyle (121,1681,2; \bar 1)$};

%%%%%%%%%%%%%
%% Level 5 %%
%%%%%%%%%%%%%

\node[right] (1-289-1682-b1) at (11,2) {$\scriptscriptstyle (1,289,1682; \bar 1)$};
\node[right] (289-114921-50-b1) at (11,1.5) {$\scriptscriptstyle (289,114921,50; \bar 1)$};
\node[right] (9-3481-243602-b1) at (11,1) {$\scriptscriptstyle (9,3481,243602; \bar 1)$};
\node[right] (3481-1385329-50-b1) at (11,0.5) {$\scriptscriptstyle (3481,1385329,50; \bar 1)$};

\node[right] (9-591361-8450-b1) at (11,-0.5) {$\scriptscriptstyle (9,591361,8450; \bar 1)$};
\node[right] (121-8162449-8450-b1) at (11,-1) {$\scriptscriptstyle (121,8162449,8450; \bar 1)$};
\node[right] (121-1681-1623602-b1) at (11,-1.5) {$\scriptscriptstyle (121,1681,1623602; \bar 1)$};
\node[right] (1681-23409-2-b1) at (11,-2) {$\scriptscriptstyle (1681,23409,2; \bar 1)$};

%%%%%%%%
%% Edges
%%%%%%%%

\draw[]  (1-9-2-b1.east) edge (1-9-50-b1.west);

\draw[]  (9-121-2-b1.east) edge (9-121-8450-b1.west);

\draw[] (1-289-50-b1.east) edge (1-289-1682-b1.west);

\draw[] (9-3481-50-b1.east) edge (9-3481-243602-b1.west);

\draw[]  (121-1681-2-b1.east) edge (121-1681-1623602-b1.west);

\end{tikzpicture}
\end{center}

\bigskip

\begin{center}
\begin{tikzpicture}[scale=0.64]
\sffamily

%%%%%%%%%%%%%
%% Level 1 %%
%%%%%%%%%%%%%

\node[right] (1-1-2-b5) at (-1,0) {$\scriptscriptstyle (1,1,2; \bar 5)$};

%%%%%%%%%%%%%
%% Level 2 %%
%%%%%%%%%%%%%

\node[right] (1-9-2-b5) at (1.5,0) {$\scriptscriptstyle (1,9,2; \bar 5)$};

%%%%%%%%%%%%%
%% Level 3 %%
%%%%%%%%%%%%%

\node[right] (1-9-50-b5) at (4,1.25) {$\scriptscriptstyle (1,9,50; \bar 5)$};
\node[right] (9-121-2-b5) at (4,-1.25) {$\scriptscriptstyle (9,121,2; \bar 5)$};

%%%%%%%%%%%%%
%% Level 4 %%
%%%%%%%%%%%%%

\node[right] (1-289-50-b5) at (7,1.75) {$\scriptscriptstyle (1,289,50; \bar 5)$};
\node[right] (9-3481-50-b5) at (7,0.75) {$\scriptscriptstyle (9,3481,50; \bar 5)$};
\node[right] (9-121-8450-b5) at (7,-0.75) {$\scriptscriptstyle (9,121,8450; \bar 5)$};
\node[right] (121-1681-2-b5) at (7,-1.75) {$\scriptscriptstyle (121,1681,2; \bar 5)$};

%%%%%%%%%%%%%
%% Level 5 %%
%%%%%%%%%%%%%

\node[right] (1-289-1682-b5) at (11,2) {$\scriptscriptstyle (1,289,1682; \bar 5)$};
\node[right] (289-114921-50-b5) at (11,1.5) {$\scriptscriptstyle (289,114921,50; \bar 5)$};
\node[right] (9-3481-243602-b5) at (11,1) {$\scriptscriptstyle (9,3481,243602; \bar 5)$};
\node[right] (3481-1385329-50-b5) at (11,0.5) {$\scriptscriptstyle (3481,1385329,50; \bar 5)$};

\node[right] (9-591361-8450-b5) at (11,-0.5) {$\scriptscriptstyle (9,591361,8450; \bar 5)$};
\node[right] (121-8162449-8450-b5) at (11,-1) {$\scriptscriptstyle (121,8162449,8450; \bar 5)$};
\node[right] (121-1681-1623602-b5) at (11,-1.5) {$\scriptscriptstyle (121,1681,1623602; \bar 5)$};
\node[right] (1681-23409-2-b5) at (11,-2) {$\scriptscriptstyle (1681,23409,2; \bar 5)$};

%%%%%%%%
%% Edges
%%%%%%%%

\draw[]  (1-9-2-b5.east) edge (1-9-50-b5.west);

\draw[]  (9-121-2-b5.east) edge (9-121-8450-b5.west);

\draw[] (1-289-50-b5.east) edge (1-289-1682-b5.west);

\draw[] (9-3481-50-b5.east) edge (9-3481-243602-b5.west);

\draw[]  (121-1681-2-b5.east) edge (121-1681-1623602-b5.west);

\end{tikzpicture}
\end{center}

\end{remark}

\goodbreak

\begin{remark}
The series (1-6-1) is not connected to $T(1,6)$ by adjacency.
Inside (1-6-1), the pattern of adjacencies equals that of (2,3,1):
\bigskip

\begin{center}
  
\begin{tikzpicture}[scale=0.64]
\sffamily

%%%%%%%%%%%%%
%% Level 1 %%
%%%%%%%%%%%%%

\node[right] (1-2-3-b1) at (0,0) {$\scriptscriptstyle (1,2,3; \bar 1)$};

% %%%%%%%%%%%%%
% %% Level 2 %%
% %%%%%%%%%%%%%

\node[right] (1-8-3-b1) at (3,1.25) {$\scriptscriptstyle (1,8,3; \bar 1)$};
\node[right] (25-2-3-b1) at (3,-1.25) {$\scriptscriptstyle (25,2,3; \bar 1)$};

% %%%%%%%%%%%%%
% %% Level 3 %%
% %%%%%%%%%%%%%

\node[right] (1-8-27-b1) at (7,1.75) {$\scriptscriptstyle (1,8,27; \bar 1)$};
\node[right] (121-8-3-b1) at (7,0.75) {$\scriptscriptstyle (121,8,3; \bar 1)$};

\node[right] (25-2-243-b1) at (7,-0.75) {$\scriptscriptstyle (25,2,243; \bar 1)$};
\node[right] (25-392-3-b1) at (7,-1.75) {$\scriptscriptstyle (25,392,3; \bar 1)$};

% %%%%%%%%%%%%%
% %% Level 5 %%
% %%%%%%%%%%%%%

\node[right] (1-98-27-b1) at (12,2) {$\scriptscriptstyle (1,98,27; \bar 1)$};
\node[right] (1225-8-27-b1) at (12,1.5) {$\scriptscriptstyle (1225,8,27; \bar 1)$};

\node[right] (121-8-5547-b1) at (12,1) {$\scriptscriptstyle (121,8,5547; \bar 1)$};
\node[right] (121-1922-3-b1) at (12,0.5) {$\scriptscriptstyle (121,1922,3; \bar 1)$};

\node[right] (25-35912-243-b1) at (12,-0.5) {$\scriptscriptstyle (25,35912,243; \bar 1)$};
\node[right] (196-4489-5-b1) at (12,-1) {$\scriptscriptstyle (2401,2,243; \bar 1)$};

\node[right] (25-392-57963-b1) at (12,-1.5) {$\scriptscriptstyle (25,392,57963; \bar 1)$};
\node[right] (6241-392-3-b1) at (12,-2) {$\scriptscriptstyle (6241,392,3; \bar 1)$};

%%%%%%%%
%% Edges
%%%%%%%%

\draw[] (1-8-3-b1.east) edge (1-8-27-b1.west);

\draw[] (25-2-3-b1.east) edge (25-2-243-b1.west);

\draw[] (121-8-3-b1.east) edge (121-8-5547-b1.west);

\draw[] (25-392-3-b1.east) edge (25-392-57963-b1.west);

\end{tikzpicture}
\end{center}

\end{remark}

\goodbreak

\begin{remark}
The series (1-5-1) is not connected to any other series (1-5-$*$)
via adjacencies. Inside (1-5-1), we have the following pattern
of adjacencies:
\bigskip

\begin{center}
  
\begin{tikzpicture}[scale=0.64]
\sffamily

%%%%%%%%%%%%%
%% Level 1 %%
%%%%%%%%%%%%%

\node[right] (1-4-5-b1) at (0,0) {$\scriptscriptstyle (1,4,5; \bar 1)$};

% %%%%%%%%%%%%%
% %% Level 2 %%
% %%%%%%%%%%%%%

\node[right] (1-9-5-b1) at (3,1.25) {$\scriptscriptstyle (1,9,5; \bar 1)$};
\node[right] (4-81-5-b1) at (3,-1.25) {$\scriptscriptstyle (4,81,5; \bar 1)$};

% %%%%%%%%%%%%%
% %% Level 3 %%
% %%%%%%%%%%%%%

\node[right] (1-9-20-b1) at (7,1.75) {$\scriptscriptstyle (1,9,20; \bar 1)$};
% %3.875
\node[right] (9-196-5-b1) at (7,0.75) {$\scriptscriptstyle (9,196,5; \bar 1)$};
% %1.375
\node[right] (4-81-1445-b1) at (7,-0.75) {$\scriptscriptstyle (4,81,1445; \bar 1)$};
% %-1.375
\node[right] (81-1849-5-b1) at (7,-1.75) {$\scriptscriptstyle (81,1849,5; \bar 1)$};

% %%%%%%%%%%%%%
% %% Level 5 %%
% %%%%%%%%%%%%%

\node[right] (1-49-20-b1) at (12,2) {$\scriptscriptstyle (1,49,20; \bar 1)$};

\node[right] (9-841-20-b1) at (12,1.5) {$\scriptscriptstyle (9,841,20; \bar 1)$};

\node[right] (9-196-8405-b1) at (12,1) {$\scriptscriptstyle (9,196,8405; \bar 1)$};

\node[right] (196-4489-5-b1) at (12,0.5) {$\scriptscriptstyle (196,4489,5; \bar 1)$};

\node[right] (4-1445-25921-b1) at (12,-0.5) {$\scriptscriptstyle (4,25921,1445; \bar 1)$};

\node[right] (81-582169-1445-b1) at (12,-1) {$\scriptscriptstyle (81,582169,1445; \bar 1)$};

\node[right] (81-1849-744980-b1) at (12,-1.5) {$\scriptscriptstyle (81,1849,744980; \bar 1)$};

\node[right] (1849-42436-5-b1) at (12,-2) {$\scriptscriptstyle (1849,42436,5; \bar 1)$};

%%%%%%%%
%% Edges
%%%%%%%%

\draw[] (1-9-5-b1.east) edge (1-9-20-b1.west);

\draw[] (4-81-5-b1.east) edge (4-81-1445-b1.west);

\draw[] (9-196-5-b1.east) edge (9-196-8405-b1.west);

\draw[] (81-1849-5-b1.east) edge (81-1849-744980-b1.west);

\end{tikzpicture}
\end{center}

\bigskip

\noindent
The series (1-5-2), (1-5-3) are not connected
to $T(1,5)$ by adjancency.
Inside these series, the adjacency pattern
resembles the previous one, but only jumps occur:

\bigskip

\begin{center}
  
\begin{tikzpicture}[scale=0.64]
\sffamily

%%%%%%%%%%%%%
%% Level 1 %%
%%%%%%%%%%%%%

\node[right] (1-4-5-b2) at (0,2) {$\scriptscriptstyle (1,4,5; \bar 2)$};
\node[right] (1-4-5-b3) at (0,-2) {$\scriptscriptstyle (1,4,5; \bar 3)$};

% %%%%%%%%%%%%%
% %% Level 2 %%
% %%%%%%%%%%%%%

\node[right] (1-9-5-b2) at (3,3.875) {$\scriptscriptstyle (1,9,5; \bar 2)$};
\node[right] (1-9-5-b3) at (3,1.375) {$\scriptscriptstyle (1,9,5; \bar 3)$};

\node[right] (4-81-5-b2) at (3,-1.375) {$\scriptscriptstyle (4,81,5; \bar 2)$};
\node[right] (4-81-5-b3) at (3,-3.875) {$\scriptscriptstyle (4,81,5; \bar 3)$};

% %%%%%%%%%%%%%
% %% Level 3 %%
% %%%%%%%%%%%%%

\node[right] (1-9-20-b2) at (7,4.375) {$\scriptscriptstyle (1,9,20; \bar 2)$};
% %3.875
\node[right] (1-9-20-b3) at (7,3.375) {$\scriptscriptstyle (1,9,20; \bar 3)$};

\node[right] (9-196-5-b2) at (7,1.875) {$\scriptscriptstyle (9,196,5; \bar 2)$};
% %1.375
\node[right] (9-196-5-b3) at (7,0.875) {$\scriptscriptstyle (9,196,5; \bar 3)$};

\node[right] (4-81-1445-b2) at (7,-0.875) {$\scriptscriptstyle (4,81,1445; \bar 2)$};
% %-1.375
\node[right] (4-81-1445-b3) at (7,-1.875) {$\scriptscriptstyle (4,81,1445; \bar 3)$};

\node[right] (81-1849-5-b2) at (7,-3.375) {$\scriptscriptstyle (81,1849,5; \bar 2)$};
% %-1.375
\node[right] (81-1849-5-b3) at (7,-4.375) {$\scriptscriptstyle (81,1849,5; \bar 3)$};

% %%%%%%%%%%%%%
% %% Level 5 %%
% %%%%%%%%%%%%%

\node[right] (1-49-20-b2) at (12,4.75) {$\scriptscriptstyle (1,49,20; \bar 2)$};
\node[right] (1-49-20-b3) at (12,4.25) {$\scriptscriptstyle (1,49,20; \bar 3)$};

\node[right] (9-841-20-b2) at (12,3.5) {$\scriptscriptstyle (9,841,20; \bar 2)$};
\node[right] (9-841-20-b3) at (12,3) {$\scriptscriptstyle (9,841,20; \bar 3)$};

\node[right] (9-196-8405-b2) at (12,2.25) {$\scriptscriptstyle (9,196,8405; \bar 2)$};
\node[right] (9-196-8405-b3) at (12,1.75) {$\scriptscriptstyle (9,196,8405; \bar 3)$};

\node[right] (196-4489-5-b2) at (12,1) {$\scriptscriptstyle (196,4489,5; \bar 2)$};
\node[right] (196-4489-5-b3) at (12,0.5) {$\scriptscriptstyle (196,4489,5; \bar 3)$};

\node[right] (4-1445-25921-b2) at (12,-0.5) {$\scriptscriptstyle (4,25921,1445; \bar 2)$};
\node[right] (4-1445-25921-b3) at (12,-1) {$\scriptscriptstyle (4,25921,1445; \bar 3)$};

\node[right] (81-1445-582169-b2) at (12,-1.75) {$\scriptscriptstyle (81,582169,1445; \bar 2)$};
\node[right] (81-1445-582169-b3) at (12,-2.25) {$\scriptscriptstyle (81,582169,1445; \bar 3)$};

\node[right] (81-1849-744980-b2) at (12,-3) {$\scriptscriptstyle (81,1849,744980; \bar 2)$};
\node[right] (81-1849-744980-b3) at (12,-3.5) {$\scriptscriptstyle (81,1849,744980; \bar 3)$};

\node[right] (1849-42436-5-b2) at (12,-4.25) {$\scriptscriptstyle (1849,42436,5; \bar 2)$};
\node[right] (1849-42436-5-b3) at (12,-4.75) {$\scriptscriptstyle (1849,42436,5; \bar 3)$};

%%%%%%%%
%% Edges
%%%%%%%%

\draw[color=red] (1-4-5-b2) edge (1-4-5-b3);

\draw[color=red] (1-9-5-b2.east) edge (1-9-20-b3.west);
\draw[color=red] (1-9-5-b3.east) edge (1-9-20-b2.west);

\draw[color=red] (4-81-5-b2.east) edge (4-81-1445-b3.west);
\draw[color=red] (4-81-5-b3.east) edge (4-81-1445-b2.west);

\draw[color=red] (9-196-5-b2.east) edge (9-196-8405-b3.west);
\draw[color=red] (9-196-5-b3.east) edge (9-196-8405-b2.west);

\draw[color=red] (81-1849-5-b2.east) edge (81-1849-744980-b3.west);
\draw[color=red] (81-1849-5-b3.east) edge (81-1849-744980-b2.west);

\end{tikzpicture}
\end{center}

\end{remark}

\begin{remark}
Note that Construction~\ref{constr:adj-graphs} does not
take ``self-adjacency'' into account, that means fake
weighted projective planes $Z$ of integral degree that
are adjacent to themselves.
This happens precisely in the following 16 cases:
\[
\begin{array}{ll}
u = (1,1,2): & \text{(8-1-0), (4-2-1), (2-4-1), (2-4-3), (1-8-1), (1-8-3), (1-8-5),}
\\[3pt]
u = (1,2,3): & \text{(6-1-0), (3-2-1), (2-3-1), (2-3-2), (1-6-1), (1-6-5),}
\\[3pt]
u = (1,2,3): & \text{(5-1-0), (1-5-1), (1-5-4),}
\end{array}
\]
where $u$ denotes the first row of the degree matrix.
Among these cases, we have $Z \reflectbox{$\leadsto$} X \hbox{$\leadsto$} Z$
with a non-toric $X$ from Construction~\ref{constr:degs}
in precisely 6 cases, namely
\[
\text{(2-4-3), \ (1-8-3), \ (1-8-5), \ (1-6-5), \ (1-5-1), \ (1-5-4).}
\]
\end{remark}

\begin{thanx}
We would like to thank Martin Bohnert for many inspiring and
enjoyable discussions.
\end{thanx}

%\cleardoublepage

\begin{bibdiv}
\begin{biblist}

% \bib{AkGaEtAl}{article}{
%    author={Akhtar, Mohammad},
%    author={Coates, Tom},
%    author={Galkin, Sergey},
%    author={Kasprzyk, Alexander M.},
%    title={Minkowski polynomials and mutations},
%    journal={SIGMA Symmetry Integrability Geom. Methods Appl.},
%    volume={8},
%    date={2012},
%    pages={Paper 094, 17},
% }

\bib{AkKa}{article}{
   author={Akhtar, Mohammad E.},
   author={Kasprzyk, Alexander M.},
   title={Mutations of fake weighted projective planes},
   journal={Proc. Edinb. Math. Soc. (2)},
   volume={59},
   date={2016},
   number={2},
   pages={271--285},
   issn={0013-0915},
 }

\bib{ArDeHaLa}{book}{
   author={Arzhantsev, Ivan},
   author={Derenthal, Ulrich},
   author={Hausen, J\"urgen},
   author={Laface, Antonio},
   title={Cox rings},
   series={Cambridge Studies in Advanced Mathematics},
   volume={144},
   publisher={Cambridge University Press, Cambridge},
   date={2015},
   pages={viii+530},
}

% \bib{AsBeEtAl}{article}{
%   author={Ascher, Kenneth},
%   author={Bejleri, Dori},
%     author={Blum, Harold},
%     author={DeVleming, Kristin},
%     author={Inchiostro, Giovanni},
%     author={Liu, Yuchen},
%     author={Wang, Xiaowei},
%     title={Moduli of boundary polarized Calabi-Yau pairs},
%     year={2023},
%    eprint={arXiv:2307.06522},
%   }

% \bib{CoLiSc}{book}{
%    author={Cox, David A.},
%    author={Little, John B.},
%    author={Schenck, Henry K.},
%    title={Toric varieties},
%    series={Graduate Studies in Mathematics},
%    volume={124},
%    publisher={American Mathematical Society, Providence, RI},
%    date={2011},
%    pages={xxiv+841},
% }

 \bib{GyMa}{article}{
   author={Gyoda, Yasuaki},
   author={Matsushita, Kodai},
   title={Generalization of Markov Diophantine equation via generalized
   cluster algebra},
   journal={Electron. J. Combin.},
   volume={30},
   date={2023},
   number={4},
   pages={Paper No. 4.10, 20},
}

\bib{HaPro}{article}{
   author={Hacking, Paul},
   author={Prokhorov, Yuri},
   title={Smoothable del Pezzo surfaces with quotient singularities},
   journal={Compos. Math.},
   volume={146},
   date={2010},
   number={1},
   pages={169--192},
   issn={0010-437X},
}

\bib{HaHaHaSp}{article}{
   author = {H\"{a}ttig, Daniel},
   author = {Hafner, Beatrix},
   author = {Hausen, J\"{u}rgen},
   author = {Springer, Justus},
   title = {Del Pezzo surfaces of Picard number one admitting a torus action},
   year = {2022},
   eprint={arXiv:2207.14790},
 }

\bib{HaHaSp}{article}{
  author = {H\"{a}ttig, Daniel},
  author = {Hausen, J\"{u}rgen},
  author = {Springer, Justus},
  title = {Classifying log del Pezzo surfaces with torus action},
  year = {2023},
  eprint={arXiv:2302.03095},
}

% \bib{LDPDB}{article}{
%   author = {H\"{a}ttig, Daniel},
%   author = {Hausen, J\"{u}rgen},
%   author = {Springer, Justus},
%   title = {A database for log del Pezzo surfaces with torus action},
%   year = {2023},
%   eprint={https://www.math.uni-tuebingen.de/forschung/algebra/ldp-database/index.html},
%   }

\bib{HaHaSu}{article}{
  author = {H\"{a}ttig, Daniel},
  author = {Hausen, J\"{u}rgen},
  author = {S\"{u}\ss, Hendrik},
  title={Log del Pezzo $\CC^*$-surfaces, K\"{a}hler-Einstein metrics,
     K\"{a}hler-Ricci solitons and Sasaki-Einstein metrics},
  journal={To appear in Michigan Math. J.},
  eprint={arXiv:2306.03796},
  }

\bib{HaKiWr}{article}{
  author = {Hausen, J\"{u}rgen},
  author = {Kir\'{a}ly, Katharina},
  author = {Wrobel, Milena},
  title={Markov numbers and rational $\CC^*$-surfaces},
  eprint={arXiv:2405.04862},
  }

\bib{HaWr}{article}{
   author={Hausen, J\"urgen},
   author={Wrobel, Milena},
   title={Non-complete rational $T$-varieties of complexity one},
   journal={Math. Nachr.},
   volume={290},
   date={2017},
   number={5-6},
   pages={815--826},
   issn={0025-584X},
} 

\bib{Hu}{article}{
  author = {Hurwitz, Adolf},
  title={\"Uber eine Aufgabe der unbestimmten Analysis},
  journal={In: Mathematische Werke, Band~II: Zahlentheorie, Algebra und Geometrie},
  pages={410--421},
  publisher={Springer, Basel},
  date={1963},
  }

\bib{Il}{article}{
   author={Ilten, Nathan Owen},
   title={Mutations of Laurent polynomials and flat families with toric
   fibers},
   journal={SIGMA Symmetry Integrability Geom. Methods Appl.},
   volume={8},
   date={2012},
   pages={Paper 047, 7},
}

\bib{KaNo}{article}{
   author={Karpov, B. V.},
   author={Nogin, D. Yu.},
   title={Three-block exceptional sets on del Pezzo surfaces},
   language={Russian, with Russian summary},
   journal={Izv. Ross. Akad. Nauk Ser. Mat.},
   volume={62},
   date={1998},
   number={3},
   pages={3--38},
   issn={1607-0046},
   translation={
      journal={Izv. Math.},
      volume={62},
      date={1998},
      number={3},
      pages={429--463},
      issn={1064-5632},
   },
}

% \bib{LiXuZh}{article}{
%    author={Liu, Yuchen},
%    author={Xu, Chenyang},
%    author={Zhuang, Ziquan},
%    title={Finite generation for valuations computing stability thresholds
%    and applications to K-stability},
%    journal={Ann. of Math. (2)},
%    volume={196},
%    date={2022},
%    number={2},
%    pages={507--566},
%    issn={0003-486X},
% }

% \bib{Ma}{article}{
%    author={Manetti, Marco},
%    title={Normal degenerations of the complex projective plane},
%    journal={J. Reine Angew. Math.},
%    volume={419},
%    date={1991},
%    pages={89--118},
%    issn={0075-4102},
% }

% \bib{Pe}{article}{
%    author={Petracci, Andrea},
%    title={Homogeneous deformations of toric pairs},
%    journal={Manuscripta Math.},
%    volume={166},
%    date={2021},
%    number={1-2},
%    pages={37--72},
%    issn={0025-2611},
%  }

% \bib{UrZu}{article}{
%    author={Giancarlo Urz\'{u}a}
%    author={Juan Pablo Z\'{u}\~{n}iga}
%    title={The birational geometry of Markov numbers}, 
%    eprint={arXiv:2310.17957},

%}

\end{biblist}
\end{bibdiv}

\end{document}